\newcounter{f1}
\newcounter{f2}
\newcounter{f3}
\newcounter{f4}
\newcounter{f5}
\newcounter{f6}
\newcounter{f7}
\newcounter{f8}
\newcounter{f9}
\newcounter{f10}
\newcounter{f11}
\newcounter{f12}
\newtheorem{theorem}{Theorem}[section]
\newtheorem{proposition}[theorem]{Proposition}
\newtheorem{lemma}[theorem]{Lemma}
\newtheorem{corollary}[theorem]{Corollary}
\newtheorem{remark}[theorem]{Remark}
\newtheorem{main-theorem}{Main Theorem}
\title[The orbit decomposition and orbit type]
{The classification of orbits
on certain exceptional Jordan algebra
under the automorphism group.}
\author{Akihiro Nishio}
\date{2012/4/21}
\begin{document}

\begin{abstract}
Let $\mathcal{J}^1$ be the real form of 
complex simple Jordan algebra
with the automorphism group $G$ of type
$\mathrm{F}_{4(-20)}$.
Explicitly, we give
the orbit decomposition of $\mathcal{J}^1$
under the action of $G$ 
and
determine the Lie group structure of 
stabilizer for each $G$-orbit 
on $\mathcal{J}^1$. 
\end{abstract}

\subjclass[2010]{Primary 20G41, Secondary 17C30, 57S20}

\address{
Graduate~school~of~Engineering,
University~of~Fukui,
Fukui-shi, 910-8507, Japan}

\email{nishio@quantum.apphy.u-fukui.ac.jp}

\maketitle

\addtocounter{section}{-1}
\section{Introduction.}\label{itd}
Let $G$ be an exceptional linear Lie group of type
$\mathrm{F}_4$ defined by the automorphism group
of an exceptional Jordan algebra.
The objective of this article is for $G=\mathrm{F}_{4(-20)},$  
to solve the following problem:

A classification of $G$-orbits:
\begin{quote}
(A)
the decomposition of the space of elements
in which $G$ is represented,
into equivalence classes or "orbits".
\smallskip

\noindent
(B)
the determining the Lie group structure of the
stabilizer for each $G$-orbit.
\end{quote}
The orbit decompositions
are given  for $G=\mathrm{F}_4^{\mathbb{C}}$ and $\mathrm{F}_{4(4)}$
in \cite{NY2010}. 
\medskip

The definition of an exceptional Jordan algebra $\mathcal{J}^1$ 
with identity $E$ is given in \S 1.
For $X,Y\in\mathcal{J}^1$,
the Jordan product of $X$ and $Y$ is denoted by $X\circ Y$.
$\mathcal{J}^1$ has the trace $\mathrm{tr}(X)$ as usual,
and  
$(X|Y)=\mathrm{tr}(X\circ Y)$ gives
a non-degenerate indefinite inner product.  
Moreover, $\mathcal{J}^1$ has the cross product 
of H. Freudenthal $X\times Y$.
Then
the determinant $\mathrm{det}(X)$,
the characteristic polynomial $\Phi_X(\lambda)$ degree $3$
and the characteristic roots of $X$ are defined.
The linear Lie group $\mathrm{F}_{4(-20)}$ is 
defined to be the automorphism group of  
$\mathcal{J}^1$
with the Jordan product.
The action of 
$\mathrm{F}_{4(-20)}$ preserves 
the identity element $E$,
the trace,
the inner product, 
the cross product,
the determinant,
the characteristic polynomial
and the characteristic roots
with multiplicity.

We give the elements $E_1,~E_2,~E_3,~P^+,~P^-,~
Q^+(1) \in \mathcal{J}^1$ in \S 1
and
determine typical orbits
in \S 5.
These orbits are
the {\it exceptional hyperbolic planes}
$\mathcal{H}({\bf O})$, $\mathcal{H}'({\bf O})$,
and the {\it exceptional null cones} $\mathcal{N}_1^+({\bf O})$,
$\mathcal{N}_1^-({\bf O})$, $\mathcal{N}_2({\bf O})$.

\begin{proposition}\label{hp-np}
The following equations hold.
\begin{align*}
\tag{\ref{hp-np}.a}
\mathcal{H}({\bf O})&=Orb_{\mathrm{F}_{4(-20)}}(E_1).\\
\tag{\ref{hp-np}.b}
\mathcal{H}'({\bf O})&=Orb_{\mathrm{F}_{4(-20)}}(E_2)
=Orb_{\mathrm{F}_{4(-20)}}(E_3).\\
\tag{\ref{hp-np}.c}
\mathcal{N}_1^+({\bf O})&=Orb_{\mathrm{F}_{4(-20)}}(P^+).\\
\tag{\ref{hp-np}.d}
\mathcal{N}_1^-({\bf O})&=Orb_{\mathrm{F}_{4(-20)}}(P^-).\\
\tag{\ref{hp-np}.e}
\mathcal{N}_2({\bf O})&=Orb_{\mathrm{F}_{4(-20)}}(Q^+(1)).
\end{align*}
\end{proposition}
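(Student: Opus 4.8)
The plan is to prove each of the five identities (\ref{hp-np}.a)--(\ref{hp-np}.e) by a double inclusion, where the first inclusion is formal and the second carries the real content. Throughout, write $\mathcal{H}=\mathcal{H}(\mathbf{O})$, $\mathcal{H}'=\mathcal{H}'(\mathbf{O})$, $\mathcal{N}_1^{\pm}=\mathcal{N}_1^{\pm}(\mathbf{O})$, $\mathcal{N}_2=\mathcal{N}_2(\mathbf{O})$ for the subsets of $\mathcal{J}^1$ defined in \S\,5; by construction each of them is a level set of the invariant data of $\mathcal{J}^1$ (the trace, the inner product, the cross product, the determinant, and the characteristic roots).

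First I would settle the inclusions ``$Orb_{\mathrm{F}_{4(-20)}}(\,\cdot\,)\subseteq$ (corresponding set)''. Because every $g\in\mathrm{F}_{4(-20)}$ preserves $E$, $\mathrm{tr}$, $(\,\cdot\,|\,\cdot\,)$, $\times$, $\det$ and the characteristic roots with multiplicity (Introduction), the four sets $\mathcal{H},\mathcal{H}',\mathcal{N}_1^{\pm},\mathcal{N}_2$ are $\mathrm{F}_{4(-20)}$-stable; hence it is enough to check, by a direct computation with the explicit matrices listed in \S\,1, that $E_1\in\mathcal{H}$, $E_2,E_3\in\mathcal{H}'$, $P^{+}\in\mathcal{N}_1^{+}$, $P^{-}\in\mathcal{N}_1^{-}$ and $Q^{+}(1)\in\mathcal{N}_2$. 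Granting the reverse inclusion $\mathcal{H}'\subseteq Orb_{\mathrm{F}_{4(-20)}}(E_2)$, the remaining equality $Orb_{\mathrm{F}_{4(-20)}}(E_2)=Orb_{\mathrm{F}_{4(-20)}}(E_3)$ in (\ref{hp-np}.b) follows automatically from $E_3\in\mathcal{H}'$ (alternatively, the transposition $(2\,3)$ of indices, combined with the appropriate octonionic conjugations, is induced by an element of $\mathrm{F}_{4(-20)}$ carrying $E_2$ to $E_3$).

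The substantive part is the reverse inclusions: given $X$ in one of the sets, produce $g\in\mathrm{F}_{4(-20)}$ with $gX$ equal to the corresponding base point. I would argue by a staged normalization in the Peirce decomposition of $\mathcal{J}^1$ relative to $E_1$, using the subgroups of $\mathrm{F}_{4(-20)}$ made available in the earlier sections --- notably a maximal compact subgroup (isomorphic to $\mathrm{Spin}(9)$) and the one-parameter subgroups generating the non-compact directions: (i) apply such subgroups to bring the off-diagonal octonionic entries of $X$ into a real canonical form; (ii) use the remaining reductive/abelian freedom to normalize the diagonal entries; (iii) feed in the equations that define the set in question --- rank $1$ and the prescribed characteristic roots for $\mathcal{H}$ and $\mathcal{H}'$, and additionally the sign of the distinguished component for $\mathcal{N}_1^{\pm}$ --- to force the normalized $X$ to coincide with $E_1$, $E_2$, $P^{+}$, $P^{-}$ respectively. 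For $\mathcal{N}_2$ one first brings $X$ to a rank-$2$ null normal form and then runs the same scheme on the complementary $2\times 2$ octonionic block.

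The step I expect to be the main obstacle is (iii) for $\mathcal{N}_2$, that is, the identity (\ref{hp-np}.e): reducing an arbitrary rank-$2$ null element to $Q^{+}(1)$ requires controlling a $2\times 2$ octonionic block together with its coupling to the third coordinate, and the non-associativity of $\mathbf{O}$ obstructs the naive $2\times 2$ linear algebra, so the normalizing one-parameter subgroups must be chosen so that each stage preserves the entries already normalized and stays inside $\mathrm{F}_{4(-20)}$. A secondary, recurring issue in (a)--(d) is that the indefinite signature of $(\,\cdot\,|\,\cdot\,)$ must be tracked throughout, so that one proves transitivity on the genuine $\mathrm{F}_{4(-20)}$-orbit rather than on a larger orbit of a split or complexified group; this is exactly where the sign conventions of \S\,1 and the $\pm$-labels attached to the null cones become essential.
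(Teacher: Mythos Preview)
Your ``easy direction'' contains a genuine gap. You claim that $\mathcal{H}(\mathbf{O})$, $\mathcal{H}'(\mathbf{O})$, $\mathcal{N}_1^{\pm}(\mathbf{O})$ are $\mathrm{F}_{4(-20)}$-stable because they are cut out by invariant data, but this is false as stated: each of these sets is defined using the condition $(X\mid E_1)\geq 1$, $\leq 0$, $>0$, or $<0$, and $E_1$ is \emph{not} fixed by $\mathrm{F}_{4(-20)}$. Only the ambient sets $\mathcal{H}$ and $\mathcal{N}_1(\mathbf{O})$ are invariantly defined. The paper circumvents this by proving both inclusions simultaneously (Lemma~\ref{spn-04}): one shows $\mathcal{H}=\mathcal{H}(\mathbf{O})\amalg\mathcal{H}'(\mathbf{O})$ (Lemma~\ref{hp-01}), checks separately that $Orb_{\mathrm{F}_{4(-20)}}(E_1)\neq Orb_{\mathrm{F}_{4(-20)}}(E_2)$ via a signature argument (Lemma~\ref{cce-11}) and $Orb_{\mathrm{F}_{4(-20)}}(P^+)\neq Orb_{\mathrm{F}_{4(-20)}}(P^-)$ via the algebraic fact $\mathcal{R}^+=\emptyset$ (Lemma~\ref{cce-12}), and only then deduces that each piece is a full orbit. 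You could alternatively argue that the pieces are clopen in the invariant ambient set and that $\mathrm{F}_{4(-20)}$ is connected, but you would still need the orbit-distinctness lemmas, and you do not mention them.

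For the reverse inclusions in (a)--(d) your outline is close to the paper's: reduce by $K=\mathrm{Spin}(9)$ into $\mathcal{J}^1(2;\mathbf{O})$ (Lemma~\ref{hp-02}), then act by $(\mathrm{F}_{4(-20)})_{E_3}$ on the $9$-dimensional quadratic space $\mathcal{J}^1_{L^\times(2E_3),-1}$ (Lemma~\ref{spn-06}); the crucial bookkeeping is that $K$ preserves $(X\mid E_1)$, which is what lets the $\pm$ labels survive the reduction. For (e), however, the paper takes a route that sidesteps exactly the obstacle you flag. Rather than normalizing the rank-$2$ element $X$ directly, it observes that $X^{\times 2}\in\mathcal{N}_1^-(\mathbf{O})$ (Lemma~\ref{hp-06}, using $\mathcal{R}^+=\emptyset$), applies the already-proved (\ref{hp-np}.d) to arrange $(gX)^{\times 2}=P^-$, and then uses the explicit parametrization $\mathcal{R}^-=\{rP^-+Q^+(x):x\in S^7\}$ from Lemma~\ref{cce-12} together with the element $\alpha_{1,2}(t)\in(\mathrm{F}_{4(-20)})_{P^-}$ (Lemma~\ref{hp-04}) and the $\mathrm{Spin}(7)$-action on $S^7$ to reach $Q^+(1)$ (Lemma~\ref{hp-05}). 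This avoids any direct manipulation of a $2\times 2$ octonionic block.
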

\medskip

Let $X\in\mathcal{J}^1$.
In \S 1, we define the minimal space $V_X$ which 
is the minimal dimensional linear
subspace of $\mathcal{J}^1$ being closed under the
cross product and
containing the elements $E,X\in \mathcal{J}^1$.
In order to present the intersection of $V_X$ and typical
orbits, we define the elements 
$E_{X,\lambda_1},~W_{X,\lambda_1} \in V_X$ 
with $\lambda_1\in\mathbb{R}$ in \S 1
and the traceless component of $X$: $p(X)\in V_X$.
Using
the set of all characteristic roots of $X$
with multiplicities
and
the intersection of $V_X$ and typical
orbits,
the $\mathrm{F}_{4(-20)}$-orbit of $X$ can be described. 
\medskip

\begin{main-theorem}\label{orb-decomposition}
$\mathrm{F}_{4(-20)}$-orbits on $\mathcal{J}^1$ 
are given as follows.
\par
{\rm (1)} Assume that $X\in\mathcal{J}^1$ admits 
characteristic roots $\lambda_1>\lambda_2>\lambda_3.$
Then 
there exists a unique $i\in\{1,2,3\}$ 
such that 
\begin{align*}
{\rm (i)}&~
\mathcal{H}({\bf O})\cap V_X
=\{E_{X,\lambda_i}\},\\
{\rm (ii)}&~
\mathcal{H}'({\bf O})\cap V_X
=\{E_{X,\lambda_{i+1}},~E_{X,\lambda_{i+2}}\}
~~\text{with}~E_{X,\lambda_{i+1}}\ne E_{X,\lambda_{i+2}}
\end{align*}
where indexes $i,$ $i+1,$ $i+2$ are counted modulo $3$. 
In this case, $X$ can be transformed to one of 
the following canonical forms by the action of $\mathrm{F}_{4(-20)}$.
\begin{center}
\begin{tabular}{ll}
\hline
Cases & Canonical forms of $X$\\
\hline\hline
{\rm 1.} $E_{X,\lambda_1}\in\mathcal{H}({\bf O})$
     & ${\rm diag}(\lambda_1,\lambda_2,\lambda_3)$\\
\smallskip
{\rm 2.} $E_{X,\lambda_2}\in\mathcal{H}({\bf O})$
     & ${\rm diag}(\lambda_2,\lambda_3,\lambda_1)$\\
\smallskip
{\rm 3.} $E_{X,\lambda_3}\in\mathcal{H}({\bf O})$
     & ${\rm diag}(\lambda_3,\lambda_1,\lambda_2)$\\
\hline
\end{tabular}\\
\end{center}
\par
{\rm (2)} Assume that $X\in\mathcal{J}^1$ admits 
characteristic roots $\lambda_1\in\mathbb{R},$ 
$p\pm \sqrt{-1}q$ with $p\in\mathbb{R}$ and $q>0$.
Then $X$ can be transformed to 
the following canonical form by the action of $\mathrm{F}_{4(-20)}$.
\begin{center}
\begin{tabular}{lll}
\hline
\quad\quad\quad & 
The canonical form of $X$\\
\hline\hline
{\rm 4.} \quad\quad\quad & 
${\rm diag}(p,p,\lambda_1)+F_3^1(q)$
\\
\hline
\end{tabular}\\
\end{center}
\par
{\rm (3)} Assume that $X\in\mathcal{J}^1$ 
admits characteristic roots $\lambda_1$ 
of multiplicity $1$ and $\lambda_2$ of multiplicity $2.$
Then 
\begin{align*}
{\rm (i)}&~E_{X,\lambda_1}\in\mathcal{H}({\bf O})
\coprod \mathcal{H}'({\bf O}),~~
{\rm (ii)}~
W_{X,\lambda_1}\in \{0\}\coprod \mathcal{N}_1^+({\bf O})
\coprod \mathcal{N}_1^-({\bf O}),\\
{\rm (iii)}&~E_{X,\lambda_1}\in\mathcal{H}({\bf O})
\Rightarrow W_{X,\lambda_1}=0,
\quad 
{\rm (iv)}~
W_{X,\lambda_1}\ne 0 
\Rightarrow E_{X,\lambda_1}\in\mathcal{H}'({\bf O})
\end{align*}
In this case, $X$ can be transformed to 
one of the following canonical forms by the action of
 $\mathrm{F}_{4(-20)}$.
\begin{center}
\begin{tabular}{lll}
\hline
Cases & Canonical forms of $X$\\
\hline\hline
{\rm 5.} $E_{X,\lambda_1}\in \mathcal{H}({\bf O})$  & 
    ${\rm diag}(\lambda_1,\lambda_2,\lambda_2)$\\
\smallskip
{\rm 6.} $E_{X,\lambda_1}\in \mathcal{H}'({\bf O}),~
W_{X,\lambda_1}=0$ & 
    ${\rm diag}(\lambda_2,\lambda_2,\lambda_1)$\\
\smallskip
{\rm 7.} $W_{X,\lambda_1}\in \mathcal{N}_1^+({\bf O})$
    & 
    ${\rm diag}(\lambda_2,\lambda_2,\lambda_1)+P^+$\\
\smallskip
{\rm 8.} $W_{X,\lambda_1}\in \mathcal{N}_1^-({\bf O})$
    & 
    ${\rm diag}(\lambda_2,\lambda_2,\lambda_1)+P^-$\\
\hline
\end{tabular}\\
\end{center}
\par
{\rm (4)} Assume that $X\in\mathcal{J}^1$ 
admits a characteristic root of multiplicity $3.$
Then 
\[p(X)\in\{0\}\coprod
\mathcal{N}_1^+({\bf O})\coprod\mathcal{N}_1^-({\bf O})
\coprod\mathcal{N}_2({\bf O}).\]
In this case, 
$X$ can be transformed to 
one of the following canonical forms by the action of
 $\mathrm{F}_{4(-20)}$.
\begin{center}
\begin{tabular}{lll}
\hline
Cases & Canonical forms of $X$\\
\hline\hline
{\rm ~9.} $p(X)=0$  & 
  $3^{-1}\mathrm{tr}(X)E$\\
\smallskip
{\rm 10.} $p(X)\in \mathcal{N}_1^+({\bf O})$  &
  $3^{-1}\mathrm{tr}(X)E+P^+$\\
  \smallskip
{\rm 11.} $p(X)\in \mathcal{N}_1^-({\bf O})$  &
  $3^{-1}\mathrm{tr}(X)E+P^-$\\
\smallskip
{\rm 12.} $p(X)\in \mathcal{N}_2({\bf O})$ &
 $3^{-1}\mathrm{tr}(X)E+Q^+(1)$\\
\hline
\end{tabular}\\
\end{center}
\par
{\rm (5)}  
Under the action of  $\mathrm{F}_{4(-20)}$,
these canonical 
forms {\rm 1} -- {\rm 12}
in {\rm (1)} -- {\rm (4)}
cannot be transformed from each other.
\end{main-theorem}
\bigskip

The Heisenberg group ${\rm H}_{{\rm Im}{\bf O}, {\bf O}}$
in the sense of J.A.~Wolf 
\cite{Wja1975,Wja2007}
 and
its subgroups ${\rm Im}{\bf O}$
and ${\rm H}_{{\rm Im}{\bf O},{\rm Im}{\bf O}}$
are given in \S \ref{sdp}.
The Lie group structure of
stabilizer for each $\mathrm{F}_{4(-20)}$-orbit 
on $\mathcal{J}^1$ is given as follows.

\begin{main-theorem}\label{stabilizer-main}
{\rm (Orbit types of $\mathrm{F}_{4(-20)}$-orbits
on $\mathcal{J}^1$).}

\noindent
The Lie group types of stabilizers of 
the canonical forms $1$--$12$
in Main-Theorem~\ref{orb-decomposition}
are given in the following table.

\begin{center}
 \begin{tabular}{ll}
  \hline
   Canonical forms  & 
Types of stabilizers\\
  \hline\hline
  {\rm ~1.} $ {\rm diag}(\lambda_1,\lambda_2,\lambda_3)$
     & ${\rm Spin}(8)$\\
  \smallskip
  {\rm ~2.} ${\rm diag}(\lambda_2,\lambda_3,\lambda_1)$
     & ${\rm Spin}(8)$\\
  \smallskip
  {\rm ~3.} ${\rm diag}(\lambda_3,\lambda_1,\lambda_2)$
     & ${\rm Spin}(8)$\\
  \smallskip
  {\rm ~4.} 
  ${\rm diag}(p,p,\lambda_1)+F_3^1(q)$
   & ${\rm Spin}^0(7,1)$\\
 {\rm ~5.}  
    ${\rm diag}(\lambda_1,\lambda_2,\lambda_2)$
     & ${\rm Spin}(9)$\\
 \smallskip
 {\rm ~6.} 
    ${\rm diag}(\lambda_2,\lambda_2,\lambda_1)$
     & ${\rm Spin}^0(8,1)$\\
 \smallskip
 {\rm ~7.} 
    ${\rm diag}(\lambda_2,\lambda_2,\lambda_1)+P^+$
     &  ${\rm Spin}(7)\ltimes 
{\rm Im}{\bf O}$\\
 \smallskip
 {\rm ~8.} 
    ${\rm diag}(\lambda_2,\lambda_2,\lambda_1)+P^-$
     & ${\rm Spin}(7)\ltimes 
{\rm Im}{\bf O}$\\
 \smallskip
  {\rm ~9.} $3^{-1}\mathrm{tr}(X)E$
   & $\mathrm{F}_{4(-20)}$\\
 \smallskip
  {\rm 10.} 
  $3^{-1}\mathrm{tr}(X)E+P^+$
    & ${\rm Spin}(7)
\ltimes {\rm H}_{{\rm Im}{\bf O}, {\bf O}}$\\
  \smallskip
  {\rm 11.}
  $3^{-1}\mathrm{tr}(X)E+P^-$
    & ${\rm Spin}(7)
\ltimes {\rm H}_{{\rm Im}{\bf O}, {\bf O}}$\\
 \smallskip
 {\rm 12.} 
 $3^{-1}\mathrm{tr}(X)E+Q^+(1)$
   & $\mathrm{G}_2\ltimes 
{\rm H}_{{\rm Im}{\bf O}, 
{\rm Im}{\bf O}}$\\
 \hline
 \end{tabular}\\
\end{center}
\end{main-theorem}

\section{Preliminaries.}\label{prl}
Denote the Cartesian $n$-power of a set $X$
by $X^n:=X\times \cdots \times X$ ($n$ times)
such as $\mathrm{SO}(8)^3
=\mathrm{SO}(8) \times \mathrm{SO}(8) \times
\mathrm{SO}(8)$.
Let $\mathbb{R}$ be the field of real numbers 
and $\mathbb{C}:=\mathbb{R}\oplus \sqrt{-1}\mathbb{R}$ 
the field of complex numbers.
Denote $\mathbb{F}=\mathbb{R}$ or $\mathbb{C}$. 
Let $V$ be a $\mathbb{F}$-linear space,
$\mathrm{GL}_{\mathbb{F}}(V)$ 
the group of $\mathbb{F}$-linear automorphism of $V$, and
${\rm End}_{\mathbb{F}}(V)$ the linear space of 
$\mathbb{F}$-linear endomorphisms on $V$.
For a mapping $f:V\rightarrow V$ and $c\in\mathbb{F}$, 
put $V_{f,c}:=\{v\in V|~ f(v)=cv\}$
and $V_f:=V_{f,1}$.
A subset $C$ in $V$ is said to be a {\it cone}
if $x\in C$ and $\lambda>0$ imply that $\lambda x\in C$.
The {\it exponential} of 
$f\in {\rm End}_{\mathbb{F}}(V)$
is defined by 
$\exp f=\sum_{k=0}^{\infty}\frac{f^k}{k!}
\in \mathrm{GL}_{\mathbb{F}}(V).$ 
Let $G$ be a subgroup of 
$\mathrm{GL}_{\mathbb{F}}(V)$ 
and $\phi$ an automorphism  on $G$.
Denote the subgroup $\{g\in G |~ \phi g=g\}$ of $G$
as $G^\phi$. 
For $v_1,\cdots,v_n\in V$,
the pointwize  stabilizer of $\{v_1,\cdots,v_n\}$
in $G$ is denoted by $G_{v_1,\cdots,v_n}$.
For $v\in V$, 
the $G$-{\it orbit} of $v$ is denoted by
$Orb_G(v):=\{g v|~g\in G\}$.

Let $V$ be an $\mathbb{R}$-linear space.
Its complexification 
$V \otimes_{\mathbb{R}} \mathbb{C}$
denoted by $V^\mathbb{C}$.
For $f\in{\rm End}_{\mathbb{R}}(V)$, 
its complexification 
is written 
by the same letter $f$. 
The complex conjugation on $V^\mathbb{C}$ with respect to 
$V$ is denoted by $\tau$: 
$\tau (u+\sqrt{-1}v):=u-\sqrt{-1}v$
for all $u+\sqrt{-1}v
\in V^\mathbb{C}$ with $u,v\in V$.

Let $V$ be a $\mathbb{F}$-linear space.
A {\it quadratic form} on $V$ 
is a mapping $\mathrm{q}:V \to \mathbb{F}$
such that
(i) $\mathrm{q}(\lambda v)=\lambda^2\mathrm{q}(v)$
for all $\lambda\in\mathbb{F}$ and $v\in V,$
(ii) the associated symmetric 
form $\mathrm{q}:V\times V\to \mathbb{F}$:
$\mathrm{q}(v,w):
=2^{-1}\left(\mathrm{q}(v)+\mathrm{q}(w)
-\mathrm{q}(v-w)\right)$
is bilinear. 
The pair $(V,\mathrm{q})$ is called a {\it quadratic space}.
For
a quadratic space $(V',\mathrm{q}')$
and  a $\mathbb{F}-$linear map $f:V\rightarrow V'$,
the quadratic form $f^*q'$ on $V$
is given
by
$f^*q'(x):=q'(fx)$.
An {\it isomorphism}
$f:(V,\mathrm{q})\rightarrow(V',\mathrm{q}')$
is defined as 
$f:V\rightarrow V'$ is 
a $\mathbb{F}-$linear isomorphism
and $f^*q'=q$.
Denote the {\it orthogonal group} of $(V,\mathrm{q})$
by
$\mathrm{O}(V,\mathrm{q}):=
\{g\in \mathrm{GL}_\mathbb{F}(V)|
~g^*\mathrm{q}=\mathrm{q}\}=
\{g\in \mathrm{GL}_\mathbb{F}(V)|
~\mathrm{q}(gv,gw)=\mathrm{q}(v,w)\}$
and the {\it special orthogonal group} of $(V,\mathrm{q})$
by $\mathrm{SO}(V,\mathrm{q}):=\{g\in\mathrm{O}(V,\mathrm{q})|
~\mathrm{det}(g)=1\}$
where $\mathrm{det}(g)$ is the determinant of 
$g\in\mathrm{End}_\mathbb{F}(V)$.
Let $k,l$ be non-negative integers
with $k+l>0$.  
A quadratic from $\mathrm{q}_{k,l}$
on  $\mathbb{R}^{k+l}$ is defined by
$\mathrm{q}_{k,l}(x):=-\sum_{i=1}^k x_i^2
+\sum_{i=1}^lx_{k+i}^2$
for $x=(x_1,\cdots,x_{k+l})$,
and denote the quadratic space 
by $(\mathbb{R}^{k,l},\mathrm{q}_{k,l})$.
Assume that $(V,\mathrm{q})$ is an $\mathbb{R}-$quadratic space 
and the quadratic form $\mathrm{q}$ 
is not the trivial quadratic from $\mathrm{q}'\equiv 0$.
Put the subspace
$\mathrm{rad}(V,\mathrm{q}):=\{v\in V|~\mathrm{q}(v,w)=0
~~\text{for~all}~
w\in V\}$ in $V$.
Then there exist a subspace $W$ of $V$ such that 
$V=W\oplus \mathrm{rad}(V,\mathrm{q})$
and $(W,\mathrm{q})$ is isomorphic to 
$(\mathbb{R}^{k,l},\mathrm{q}_{k,l})$ for some integers $k,l$.
The pair of integers $(k, l)$ 
depends only on the quadratic form
$\mathrm{q}$ and 
is called the {\it signature} of
the quadratic form $\mathrm{q}$.
\medskip

Let ${\bf O}$ be the $\mathbb{R}$-algebra of 
{\it octonions} \cite{Fh1951,Dd1978,Yi_arxiv} with a base 
$1$, $e_1$, $e_2$, $e_3$, 
$e_4$, $e_5$, $e_6$, $e_7$ 
and the multiplications among them are 
given as follows: 
$1$ is the unit of $\mathbb{R};$
$e_i^2=-1;$ $e_ie_j+e_je_i=0$ for $i\neq j;$ 
$e_le_m=e_n,$ $e_me_n=e_l$ and $e_ne_l=e_m$
for each $(l,m,n)\in\{(1,2,3),~(3,5,6),~(6,7,1),~(1,4,5),\\
(3,4,7),~(6,4,2),~(2,5,7)\}.$
We write $e_0$ for the unit $1$ of ${\bf O}$.
Let 
${\bf O}^\mathbb{C}$
be the complexification of ${\bf O}$ 
with the complex conjugation $\tau$.
Denote $\tilde{\bf O}:=$ ${\bf O}$ 
or ${\bf O}^\mathbb{C}$.
Let $x=\sum_{i=0}^7x_ie_i$ and   
$y=\sum_{i=0}^7y_ie_i
\in \tilde{\bf O}$ with $x_i,y_i\in\mathbb{F}$.
The {\it conjugation} is defined by
$\overline{x}:=x_0-\sum_{i=1}^7x_ie_i$,
the {\it inner product} $(x|y):=\sum_{i=0}^7x_iy_i$, 
the {\it quadratic form}
${\rm n}(x):=(x|x)$,
the {\it
vector part} ${\rm Im}(x):=2^{-1}(x-\overline{x})$
and the {\it scalar part}
${\rm Re}(x):=2^{-1}(x+\overline{x})=(1|x)$,
respectively.

\begin{lemma}\label{prl-01}
{\rm (cf. \cite{Fh1951},
\cite{Dd1978}, \cite{YiJ1971})}.
Let $x,y,z,a,b\in\tilde{\bf O}$.
\begin{align*}
\tag{\ref{prl-01}.a} &(xy|xy)=(x|x)(y|y).\\
\tag{\ref{prl-01}.b} &(ax|ay)=(a|a)(x|y) = (xa|ya).\\
\tag{\ref{prl-01}.d} &(ax|by) + (bx|ay) = 2(a|b)(x|y).\\
\tag{\ref{prl-01}.e} &(ax|y) = (x|\overline{a}y),~~
(xa|y) = (x|y\overline{a}).\\
\tag{\ref{prl-01}.f} &\overline{\overline{x}} = x,
~~ \overline{x + y} = \overline{x} + \overline{y},
~~ \overline{xy} = \overline{y}~\overline{x}.\\
\tag{\ref{prl-01}.g} &
\left\{\begin{array}{l}
(x|y) = (y|x) 
=2^{-1}(x\overline{y} + y\overline{x})
=2^{-1}(\overline{x}y+ \overline{y}x),\\
x\overline{x} = \overline{x}x = (x|x).
\end{array}\right.\\
\tag{\ref{prl-01}.h} &\left\{\begin{array}{l}
a(\overline{a}x) = (a\overline{a})x,
a(x\overline{a}) = (ax)\overline{a},
x(a\overline{a}) = (xa)\overline{a},\\
a(ax) = (aa)x, a(xa) = (ax)a, 
x(aa) = (xa)a.
\end{array}\right.\\
\tag{\ref{prl-01}.i} &\overline{b}(ax) + \overline{a}(bx) 
= 2(a|b)x = (xa)\overline{b} + (xb)\overline{a}.\\
\tag{\ref{prl-01}.j} 
&\left\{
\begin{array}{l}
(ax)y + x(ya) = a(xy) + (xy)a,\\
(xa)y + (xy)a = x(ay) + x(ya),\\
(ax)y + (xa)y = a(xy) + x(ay).
\end{array}\right.\\
\tag{\ref{prl-01}.k} 
&(ax)(ya) = a(xy)a \quad (\text{Moufang's formula}).\\
\tag{\ref{prl-01}.l} &{\rm Re}(xy) = {\rm Re}(yx),~~
{\rm Re}(x(yz)) = {\rm Re}(y(zx))
 = {\rm Re}(z(xy)).
\end{align*}
\end{lemma}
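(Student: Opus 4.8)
The plan is to derive every identity of Lemma~\ref{prl-01} from three facts about $\tilde{\bf O}$ that are read off the multiplication table on $e_0,\dots,e_7$ and extended to ${\bf O}^{\mathbb C}$ by $\mathbb C$-bilinearity: the \emph{degree-two identity} $x^2=2\,{\rm Re}(x)\,x-{\rm n}(x)$, the \emph{composition law} ${\rm n}(xy)={\rm n}(x){\rm n}(y)$ (which is (a)), and the \emph{alternativity} of $\tilde{\bf O}$. The first two are finite verifications, or may be quoted from the Cayley--Dickson description in \cite{Fh1951,Dd1978,Yi_arxiv}. Using ${\rm Re}(x)=(1|x)$ and $\overline x=2\,{\rm Re}(x)-x$, the degree-two identity immediately yields $x\overline x=\overline x x={\rm n}(x)=(x|x)$ by expanding $x\bigl(2\,{\rm Re}(x)-x\bigr)$, and substituting $x+y$ for $x$ into it yields the linearised relation $xy+yx=2\,{\rm Re}(x)\,y+2\,{\rm Re}(y)\,x-2(x|y)$; the identities $\overline{\overline x}=x$ and $\overline{x+y}=\overline x+\overline y$ in (f) are trivial.

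Next I would treat the bilinear-form identities. Polarising (a) in $x$ and then in $y$ gives $(xa|ya)=(x|y){\rm n}(a)$ and $(ax|ay)={\rm n}(a)(x|y)$, i.e.\ (b); polarising either of these in $a$ gives (d). Putting $b=1$ in (d) and writing $\overline a y=2(a|1)y-ay$ converts $(ax|y)+(x|ay)=2(a|1)(x|y)$ into $(ax|y)=(x|\overline a y)$, and the right-hand version follows symmetrically, proving (e). Polarising $x\overline x=(x|x)$ and $\overline x x=(x|x)$ gives $x\overline y+y\overline x=2(x|y)=\overline x y+\overline y x$, which is (g). For the reversal law $\overline{xy}=\overline y\,\overline x$ of (f), I would expand $\overline y\,\overline x=\bigl(2\,{\rm Re}(y)-y\bigr)\bigl(2\,{\rm Re}(x)-x\bigr)$ using the linearised degree-two identity, and expand $\overline{xy}=2\,{\rm Re}(xy)-xy$ using the formula ${\rm Re}(xy)=(xy|1)=(x|\overline y)=2\,{\rm Re}(x){\rm Re}(y)-(x|y)$ coming from (e); both sides collapse to $4\,{\rm Re}(x){\rm Re}(y)-2(x|y)-xy$. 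That same formula for ${\rm Re}(xy)$ is symmetric in $x$ and $y$, giving ${\rm Re}(xy)={\rm Re}(yx)$, and ${\rm Re}$ of an associator vanishes because $(x|\overline{yz})=(x|\overline z\,\overline y)$ by (f); together these give (l).

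The remaining structural ingredient, alternativity, I would obtain operator-theoretically. By (b), left translation $L_a\colon u\mapsto au$ satisfies $L_a^{*}L_a={\rm n}(a)\,\mathrm{id}$ for the non-degenerate form, while (e) gives $L_a^{*}=L_{\overline a}$; hence $L_{\overline a}L_a={\rm n}(a)\,\mathrm{id}$, that is $\overline a(ax)={\rm n}(a)\,x$ for ${\rm n}(a)\neq 0$, and hence for all $a$ (trivially over ${\bf O}$, by Zariski density over ${\bf O}^{\mathbb C}$), and dually $(xa)\overline a={\rm n}(a)\,x$. Substituting $\overline a=2\,{\rm Re}(a)-a$ and using the degree-two identity turns these into the left and right alternative laws $a(ax)=a^2x$ and $(xa)a=xa^2$; linearising each shows the associator $[x,y,z]:=(xy)z-x(yz)$ is skew in its first two and in its last two arguments, hence totally alternating, and in particular flexible, so that $a(xy)a$ is unambiguous. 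Then each identity of (h) is the vanishing of an associator two of whose arguments lie in ${\rm span}_{\mathbb F}\{1,a\}$; (i) is the polarisation in $a$ of $\overline a(ax)={\rm n}(a)x$ and of $(xa)\overline a={\rm n}(a)x$; each line of (j) is $[a,x,y]=[x,y,a]$ or one of its transposes, i.e.\ a symmetry of the alternating associator; and Moufang's identity (k), $(ax)(ya)=a(xy)a$, I would either quote as a standard property of alternative algebras (\cite{Dd1978,YiJ1971}) or obtain by one further linearisation of the alternative laws.

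The step I expect to cost real effort is packaging alternativity and (k) compactly: the operator identity $L_{\overline a}L_a={\rm n}(a)\,\mathrm{id}$ is clean, but passing from the two one-sided alternative laws to the fully alternating associator, and thence to Moufang's identity, requires keeping careful track of which linearisations are used (and of the scalars $2$ and $3$). If brevity is preferred, I would simply cite \cite{Fh1951,Dd1978,YiJ1971} for (h)--(l) and give the derivations of (a)--(g) in full.
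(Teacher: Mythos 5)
Your proposal is correct, but it is worth noting that the paper itself offers no argument at all for this lemma: it is stated as standard and attributed to Freudenthal, Drucker and Yokota via the ``cf.'' citations. What you do differently is give an essentially self-contained derivation from three inputs (the quadratic equation $x^2=2\,{\rm Re}(x)x-{\rm n}(x)$, the composition law ${\rm n}(xy)={\rm n}(x){\rm n}(y)$, and nondegeneracy of $(\,\cdot\,|\,\cdot\,)$), and the chain you describe does go through: polarisation gives (a)$\Rightarrow$(b)$\Rightarrow$(d)$\Rightarrow$(e), the conjugation identities (f), (g), (l) follow as you compute, the operator identity $L_{\overline a}L_a={\rm n}(a)\,\mathrm{id}$ (valid for every $a$, so the ${\rm n}(a)\neq 0$ caveat and Zariski-density remark are unnecessary) yields the alternative laws, and skewness of the associator in positions $(1,2)$ and $(2,3)$ does force full alternation, from which (h), (i), (j) are exactly the instances you list. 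Two small caveats: the extension of the composition law to ${\bf O}^{\mathbb C}$ should be phrased through its full polarisation (the norm is quadratic, not bilinear, so ``extend by $\mathbb C$-bilinearity'' applies to the four-variable identity (d), from which (a), (b) are recovered by specialisation); and the middle Moufang identity (k) genuinely needs more than ``one further linearisation'' of the alternative laws — the standard derivation inside an alternative algebra uses repeated applications of the linearised associator identities — so your fallback of quoting it from the references is the right call and is, in effect, what the paper does for the entire lemma. The trade-off is clear: the paper's citation keeps the preliminaries short, while your route makes the section self-contained at the cost of the one nontrivial finite verification (the eight-square identity) and either a citation or a short extra computation for (k).
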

\medskip

Denote 
${\rm Im}\tilde{\bf O}:=\{x\in\tilde{\bf O}|~{\rm Re}(x)=0\}
=\{x\in\tilde{\bf O}|~\overline{x}=-x\}$.
The quadratic spaces
$({\bf O},{\rm n})$ and
$({\rm Im}{\bf O},{\rm n})$
are isomorphic to 
$(\mathbb{R}^{0,8},\mathrm{q}_{0,8})$ 
and $(\mathbb{R}^{0,7},\mathrm{q}_{0,7})$,
respectively.

Let ${\bf K}$ be a $\mathbb{F}$-subalgebra 
of $\tilde{\bf O}$ such that
${\bf K}$ have the unit $1$
and $\overline{x}\in {\bf K}$ for all $x\in {\bf K}$.
Let $M(n,{\bf K})$ be
the set of all $n\times n$ matrices with entries in ${\bf K}$.
For $A \in M(n, {\bf K})$ 
with the $(i, j)$-entry $a_{ij}\in{\bf K}$,
let $^t A \in M(n, {\bf K})$ be the transposed matrix  
having the $(i, j)$-entry $a_{j i}$, 
$\bar{A} \in M(n, {\bf K})$ the conjugate matrix  
having the $(i, j)$-entry $\overline{a}_{ij}$, 
and denote $A^*:=~^t \bar{A} \in M(n, {\bf K})$.
The matrix in $M(n, {\bf K})$ 
with $1$ at the $(i,j)$-th place and zeros elsewhere
is denoted by $E_{i,j}$,
and  
the diagonal matrix $\sum_{i=1}^n a_{ii}E_{i,i}$
by ${\rm diag}(a_{11},\cdots,a_{nn})$.
In particular,
denote 
$E:={\rm diag}(1,\cdots,1)$ 
and $I_p:=-\sum_{i=1}^pE_{i,i}
+\sum_{i=1}^qE_{p+i,p+i}
\in M(p+q,{\bf K})$.
The subalgebra
${\bf C}:=\{x_0+x_1e_1|~x_i\in\mathbb{R}\}$
of ${\bf O}$ is isomorphic with
the field of complex numbers.
We use the following notations 
about some of classical Lie groups:
${\rm O}(n)
:=\{A\in M(n,\mathbb{R})|~{}^tAA=E\}$,
${\rm SO}(n)
:=\{A\in M(n,\mathbb{R})
|~{}^tAA=E,~\mathrm{det}(A)=1\}$,
${\rm SU}(n):=\{A\in M(n,{\bf C})
|~
A^*A=E,~\mathrm{det}(A)=1\}$,
${\rm O}(p,q)
:=\{A\in M(n,\mathbb{R})
|~{}^tAI_pA=I_p\}$
where $\mathrm{det}(A)$ and $\mathrm{tr}(A)$ denote the
determinant of $A\in M(n,{\bf C})$
and
the trace of $A\in M(n,{\bf C})$, respectively.
For differential manifolds $X$ and $Y$,
$X\simeq Y$ denotes that $X$ and $Y$ 
are diffeomorphic.
Let $G$ be a Lie group. 
Its Lie algebra is denoted by $Lie(G)$
and the identity connected component of $G$
by $G^0$.
For Lie groups $G$ and $G'$,
$G\cong G'$ denotes that $G$ and $G'$ are isomorphic 
as Lie group.
Let $N$ be a normal subgroup of $G$ and
$H$ a subgroup of $G.$
If 
$G=HN$ and $N\cap H=\{1\}$
hold
where $1$ denotes the identity element of $G$,
then $G$ is called a {\it 
semidirect product of $N$ and $H$},
and is denoted by $H\ltimes N$.
\medskip

Let $i\in\{1,2,3\}$ and 
indexes $i,i+1,i+2$ be counted modulo $3$.
For $\xi=(\xi_1,\xi_2,\xi_3)\in \mathbb{C}^3$ and
$x=(x_1,x_2,x_3)\in ({\bf O^{\mathbb{C}}})^3$,
denote the hermitian matrix 
\[h(\xi;x):=\begin{pmatrix}
\xi_1 &x_3 & \overline{x_2}\\
\overline{x_3} & \xi_2 & x_1\\
x_2 &\overline{x_1} & \xi_3
\end{pmatrix}.
\]
The complex exceptional Jordan algebra $\mathcal{J}^\mathbb{C}$
is defined by
\[
\mathcal{J}^\mathbb{C}:=\{X\in M(3,{\bf O}^\mathbb{C})|
~X^*=X\}
=\{h(\xi;x)|~
\xi\in \mathbb{C}^3,
x\in ({\bf O}^{\mathbb{C}})^3\}
\]
with the Jordan product \[X\circ Y:=2^{-1}(XY+YX)
\quad
\text{for}~X,Y \in \mathcal{J}^\mathbb{C}.\]
Then $E$ is
the identity element
of the Jordan product.
Denote the elements 
$E_i,~F_i(x)\in \mathcal{J}^\mathbb{C}$ as
\[E_i:=E_{i,i},\quad F_i(x):=x E_{i+1,i+2}
+\overline{x}E_{i+2,i+1}\]
where $x\in{\bf O}^{\mathbb{C}}$.
Then \[h(\xi_1,\xi_2,\xi_3;x_1,x_2,x_3)
=\sum{}_{i=1}^3(\xi_iE_i+F_i(x_i)).\]
Let $X=\sum_{i=1}^3(\xi_iE_i+F_i(x_i)),
~Y\in \mathcal{J}^\mathbb{C}$.
The {\it trace} $\mathrm{tr}(X)$
is defined by
$\mathrm{tr}(X):=\xi_1+\xi_2+\xi_3$
and
the inner product $(X|Y)$
by $(X|Y):=\mathrm{tr}(X\circ Y)$.
Then the inner product $(X|Y)$
is a non-degenerate inner product.
The
{\it cross product} of H. Freudenthal is defined
by 
\[
X\times Y:=2^{-1}\bigl(2X\circ Y
-\mathrm{tr}(X)Y-\mathrm{tr}(Y)X
+(\mathrm{tr}(X)\mathrm{tr}(Y)-(X|Y))E\bigr)\]
\cite{Fh1953} (cf. 
\cite[p.232,(47)]{Jn1968},
\cite{Yi_arxiv}, \cite{SV2000})
with $X^{\times 2}:=X\times X$. The trilinear
from $(X|Y|Z)$ and
the {\it determinant}  
$\mathrm{det}(X)$ are defined by
\[(X|Y|Z):=(X|Y\times Z),\quad
\mathrm{det}(X):=3^{-1}(X|X|X)\]
respectively.
The {\it characteristic polynomial} 
$\Phi_X(\lambda)$ of $X\in \mathcal{J}^{\mathbb{C}}$ 
is defined by
$\Phi _X(\lambda):=\mathrm{det}(\lambda E-X)$
and a solution of 
$\Phi _{X}(\lambda)=0$ in $\mathbb{C}$
is called
a {\it characteristic root} of 
$X$.
From direct calculations, we have the following two lemmas.

\begin{lemma}\label{prl-02}
{\rm (cf. \cite{NY2010}).}
Let 
$i\in\{1,2,3\}$
and indexes $i,i+1,i+2$ be counted modulo $3$.
Let
$X=\sum_{i=1}^3(\xi_iE_i+F_i(x_i)),~
Y=\sum_{i=1}^3(\eta_iE_i+F_i(y_i))
\in \mathcal{J}^{\mathbb{C}}$.
Then
the following equations hold:
\begin{gather*}
\tag{\ref{prl-02}.a}
(X|Y)=\sum{}_{i=1}^3 
\bigl(\xi_i\eta_i+2(x_i|y_i)\bigr),\\
\tag{\ref{prl-02}.b}
X\times Y
=\sum{}_{i=1}^3\left(
2^{-1}(\xi_{i+1}\eta_{i+2}+\eta_{i+1}\xi_{i+2})
-(x_i|y_i)\right)E_i
\\
+\sum{}_{i=1}^3F_i\left(2^{-1}
(
\overline{x_{i+1}y_{i+2}}
+\overline{y_{i+1}x_{i+2}}
-\xi_iy_i-\eta_ix_i
)\right) ,\\
\tag{\ref{prl-02}.c}
\mathrm{det}(X)=\xi_1\xi_2\xi_3
+2{\rm Re}(x_1x_2x_3)
-\sum{}_{i=1}^3\xi _i(x_i|x_i).
\end{gather*}
\end{lemma}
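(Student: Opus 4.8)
The plan is to treat all three identities as direct computations, reducing each to the products of the building blocks $E_i=E_{i,i}$ and $F_i(x)=xE_{i+1,i+2}+\overline{x}E_{i+2,i+1}$ in $M(3,{\bf O}^{\mathbb{C}})$ and then reading off coefficients in the basis $\{E_i,F_i(\cdot)\}$, using bilinearity of $(\cdot\,|\,\cdot)$, $\circ$ and $\times$. First I would tabulate the elementary products from $E_{k,l}E_{m,n}=\delta_{lm}E_{k,n}$: one gets $E_iE_j=\delta_{ij}E_i$, that $E_i\circ F_j(x)$ vanishes for $i=j$ and equals $\tfrac12F_j(x)$ for $i\in\{j+1,j+2\}$, that $F_i(x)F_i(y)=(x\overline{y})E_{i+1,i+1}+(\overline{x}y)E_{i+2,i+2}$ so that $F_i(x)\circ F_i(y)=(x|y)(E_{i+1}+E_{i+2})$ by (\ref{prl-01}.g), and that $F_i(x)\circ F_{i+1}(y)=\tfrac12F_{i+2}(\overline{xy})$, whose only octonionic input is $\overline{xy}=\overline{y}\,\overline{x}$ from (\ref{prl-01}.f). (In each matrix-entry product at most one octonion multiplication occurs, so non-associativity never intrudes.) Assembling $X\circ Y$ in the basis, its $E_i$-coefficient is $\xi_i\eta_i+(x_{i+1}|y_{i+1})+(x_{i+2}|y_{i+2})$; summing over $i$ and using $\sum_i\big((x_{i+1}|y_{i+1})+(x_{i+2}|y_{i+2})\big)=2\sum_i(x_i|y_i)$ gives (\ref{prl-02}.a).

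For (\ref{prl-02}.b), the same assembly shows the $F_i$-coefficient of $X\circ Y$ is $\tfrac12\big((\xi_{i+1}+\xi_{i+2})y_i+(\eta_{i+1}+\eta_{i+2})x_i+\overline{x_{i+1}y_{i+2}}+\overline{y_{i+1}x_{i+2}}\big)$. I would then substitute into the Freudenthal formula $X\times Y=\tfrac12\big(2X\circ Y-\mathrm{tr}(X)Y-\mathrm{tr}(Y)X+(\mathrm{tr}(X)\mathrm{tr}(Y)-(X|Y))E\big)$, using (\ref{prl-02}.a) to rewrite $(X|Y)$. Since $\xi_{i+1}+\xi_{i+2}-\mathrm{tr}(X)=-\xi_i$, the $F_i$-coefficient collapses to $\tfrac12(\overline{x_{i+1}y_{i+2}}+\overline{y_{i+1}x_{i+2}}-\xi_iy_i-\eta_ix_i)$; expanding $\mathrm{tr}(X)\mathrm{tr}(Y)=\sum_k\xi_k\eta_k+\sum_{k\neq l}\xi_k\eta_l$, the $E_i$-coefficient collapses to $\tfrac12(\xi_{i+1}\eta_{i+2}+\eta_{i+1}\xi_{i+2})-(x_i|y_i)$, which is (\ref{prl-02}.b).

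Finally, for (\ref{prl-02}.c) I would set $Y=X$ in (\ref{prl-02}.b) to obtain $X^{\times 2}=\sum_i\big(\xi_{i+1}\xi_{i+2}-(x_i|x_i)\big)E_i+\sum_iF_i\big(\overline{x_{i+1}x_{i+2}}-\xi_ix_i\big)$, then compute $\mathrm{det}(X)=\tfrac13(X|X^{\times 2})$ using (\ref{prl-02}.a), which gives $(X|X^{\times 2})=\sum_i\xi_i\xi_{i+1}\xi_{i+2}-3\sum_i\xi_i(x_i|x_i)+2\sum_i(x_i|\overline{x_{i+1}x_{i+2}})$. Now $\sum_i\xi_i\xi_{i+1}\xi_{i+2}=3\xi_1\xi_2\xi_3$, and rewriting $(x_i|\overline{x_{i+1}x_{i+2}})=\mathrm{Re}\big(x_i(x_{i+1}x_{i+2})\big)$ via the identities (\ref{prl-01}.f),(\ref{prl-01}.g) (which yield $(a|b)=\mathrm{Re}(a\overline b)$ and $\overline{\overline z}=z$), the three cyclic versions agree by (\ref{prl-01}.l), so $\sum_i(x_i|\overline{x_{i+1}x_{i+2}})=3\,\mathrm{Re}(x_1x_2x_3)$. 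Dividing by $3$ yields $\mathrm{det}(X)=\xi_1\xi_2\xi_3-\sum_i\xi_i(x_i|x_i)+2\,\mathrm{Re}(x_1x_2x_3)$, which is (\ref{prl-02}.c). I expect the only delicate point to be exactly this last identification together with keeping the cyclic indices modulo $3$ straight under conjugation; since no unbracketed triple product of octonions ever occurs inside a single matrix entry, the identities (\ref{prl-01}.f)--(\ref{prl-01}.l) are more than enough and no further structural facts about ${\bf O}^{\mathbb{C}}$ are needed.
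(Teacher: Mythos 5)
Your proposal is correct and is exactly the "direct calculation" the paper invokes (the paper gives no further proof of Lemma \ref{prl-02} beyond that phrase): you reduce everything to the products of the basis elements $E_i$, $F_i(x)$, feed the resulting $E_i$- and $F_i$-components of $X\circ Y$ into the defining Freudenthal formula for $X\times Y$, and obtain (\ref{prl-02}.c) from $\mathrm{det}(X)=3^{-1}(X|X^{\times 2})$ together with (\ref{prl-01}.g) and (\ref{prl-01}.l). All the individual steps (the elementary products, the cancellations $\xi_{i+1}+\xi_{i+2}-\mathrm{tr}(X)=-\xi_i$, and the identification $\sum_i(x_i|\overline{x_{i+1}x_{i+2}})=3\,\mathrm{Re}(x_1x_2x_3)$) check out.
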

\medskip

\begin{lemma}\label{prl-03}
{\rm (cf. \cite{NY2010}).}
Let $X,Y\in \mathcal{J}^{\mathbb{C}}$.
Then
\begin{gather*}
\tag{\ref{prl-03}.a}
\mathrm{tr}(X\times Y)
=2^{-1}(\mathrm{tr}(X)\mathrm{tr}(Y)-(X|Y)),\\
\tag{\ref{prl-03}.b}
\left\{
\begin{array}{rl}
{\rm (i)}&E\times E=E,\quad
{\rm (ii)}~~
X\times E=2^{-1}(\mathrm{tr}(X)E-X),
\\
\smallskip
{\rm (iii)}&
(X^{\times 2})\times E
=2^{-1}(\mathrm{tr}(X^{\times 2})E-X^{\times 2}),\\
\smallskip
{\rm (iv)}&
(X^{\times 2})^{\times 2}=\mathrm{det}(X)X,
\\
\smallskip
{\rm (v)}&
(X^{\times 2})\times X
=2^{-1}\left(-\mathrm{tr}(X)X^{\times 2}
-\mathrm{tr}(X^{\times 2})X \right.\\
&\quad\left. 
\quad+(\mathrm{tr}(X^{\times 2})\mathrm{tr}(X)
-\mathrm{det}(X))E\right),
\end{array}
\right.
\end{gather*}
\begin{align*}
\tag{\ref{prl-03}.c}
\Phi _X(\lambda)&=
\lambda^3-\mathrm{tr}(X)\lambda^2
+\mathrm{tr}(X^{\times 2})\lambda
-\mathrm{det}(X)\\
&=\lambda^3-\mathrm{tr}(X)\lambda^2
+2^{-1}(\mathrm{tr}(X)^2-(X|X))\lambda
-\mathrm{det}(X).
\end{align*}
\end{lemma}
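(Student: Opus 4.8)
The plan is to derive every item from the defining formula of the Freudenthal cross product together with the coordinate formulas of Lemma~\ref{prl-02} and the octonion identities of Lemma~\ref{prl-01}, using throughout the elementary facts $\mathrm{tr}(E)=3$, $X\circ E=X$, $(X|E)=\mathrm{tr}(X)$, $(E|E)=3$ and $\det(Y)=3^{-1}(Y|Y^{\times 2})$.

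For (\ref{prl-03}.a) I would apply $\mathrm{tr}$ to the definition of $X\times Y$; since $\mathrm{tr}$ is $\mathbb{C}$-linear, $\mathrm{tr}(X\circ Y)=(X|Y)$ and $\mathrm{tr}(E)=3$, the right-hand side collapses to $2^{-1}(\mathrm{tr}(X)\mathrm{tr}(Y)-(X|Y))$. Items (\ref{prl-03}.b)(i) and (ii) come from substituting $X=Y=E$, resp. $Y=E$, into the definition of $\times$ and simplifying; (iii) is (ii) with $X$ replaced by $X^{\times 2}$. For (\ref{prl-03}.c) I would expand $\Phi_X(\lambda)=\det(\lambda E-X)=3^{-1}\bigl(\lambda E-X\bigm|(\lambda E-X)^{\times 2}\bigr)$, compute $(\lambda E-X)^{\times 2}=(\lambda^2-\lambda\,\mathrm{tr}(X))E+\lambda X+X^{\times 2}$ via (ii), and expand the inner product by bilinearity with $(E|X^{\times 2})=\mathrm{tr}(X^{\times 2})$ and $(X|X^{\times 2})=3\det(X)$; the coefficient of $\lambda$ then rearranges to $\mathrm{tr}(X^{\times 2})$ after inserting $\mathrm{tr}(X^{\times 2})=2^{-1}(\mathrm{tr}(X)^2-(X|X))$, the case $Y=X$ of (\ref{prl-03}.a), which also produces the second displayed form. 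None of these items needs (\ref{prl-03}.b)(iv).

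The genuinely substantial point is the adjoint identity (\ref{prl-03}.b)(iv), $(X^{\times 2})^{\times 2}=\det(X)X$. Writing $X=\sum_i(\xi_iE_i+F_i(x_i))$, formula (\ref{prl-02}.b) with $Y=X$ gives $X^{\times 2}=\sum_i(\xi_{i+1}\xi_{i+2}-\mathrm{n}(x_i))E_i+\sum_iF_i\bigl(\overline{x_{i+1}x_{i+2}}-\xi_ix_i\bigr)$; applying (\ref{prl-02}.b) once more to this element and comparing with $\det(X)X$, where $\det(X)$ is supplied by (\ref{prl-02}.c), reduces the identity to a \emph{diagonal} and an \emph{off-diagonal} octonion equation for each index $i$. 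The diagonal one follows quickly from $\mathrm{n}(ab)=\mathrm{n}(a)\mathrm{n}(b)$ ((\ref{prl-01}.a)) together with $(\overline{x_{i+1}x_{i+2}}|x_i)=\mathrm{Re}(x_1x_2x_3)$ (from (\ref{prl-01}.g), (\ref{prl-01}.l)). The off-diagonal one, after expanding the products $\overline{y_{i+1}y_{i+2}}$ with the conjugation rule (\ref{prl-01}.f), the alternative laws (\ref{prl-01}.h) and Moufang's formula (\ref{prl-01}.k), reduces to the single octonion identity $aua+\mathrm{n}(a)\overline{u}=2\,\mathrm{Re}(au)\,a$, itself a short consequence of (\ref{prl-01}.g), (\ref{prl-01}.h). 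I expect this off-diagonal reduction to be the main obstacle, essentially because of bookkeeping rather than any conceptual difficulty. A shortcut that avoids most of it, at the cost of invoking material outside this section, is to observe that $X\mapsto(X^{\times 2})^{\times 2}$ and $X\mapsto\det(X)X$ are polynomial and $\mathrm{F}_4^{\mathbb{C}}$-equivariant, that they agree on the easily handled diagonal elements $\mathrm{diag}(\xi_1,\xi_2,\xi_3)$, and that the union of the $\mathrm{F}_4^{\mathbb{C}}$-orbits of these is Zariski dense in $\mathcal{J}^{\mathbb{C}}$.

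Finally I would deduce (\ref{prl-03}.b)(v) from (iv) rather than compute it directly. Linearizing $(X^{\times 2})^{\times 2}=\det(X)X$ in a direction $Y$, using $(X+tY)^{\times 2}=X^{\times 2}+2t(X\times Y)+t^2Y^{\times 2}$ and the expansion $\det(X+tY)=\det(X)+t(X^{\times 2}|Y)+O(t^2)$ (read off from (\ref{prl-02}.c) and (\ref{prl-01}.l)), one obtains $4(X^{\times 2})\times(X\times Y)=(X^{\times 2}|Y)X+\det(X)Y$. Setting $Y=E$ and substituting (ii), (iii) and $(X^{\times 2}|E)=\mathrm{tr}(X^{\times 2})$ then rearranges to the stated expression for $(X^{\times 2})\times X$.
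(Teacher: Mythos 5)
Your proposal is correct and amounts to the same ``direct calculation'' route that the paper (following \cite{NY2010}) merely invokes: (\ref{prl-03}.a), (\ref{prl-03}.b)(i)--(iii) and (\ref{prl-03}.c) by expanding the definition of $\times$, the adjoint identity (\ref{prl-03}.b)(iv) by the coordinate formulas of Lemma~\ref{prl-02} reduced---exactly as you say---to the single octonion identity $aua+\mathrm{n}(a)\overline{u}=2\,\mathrm{Re}(au)\,a$ via (\ref{prl-01}.f), (\ref{prl-01}.h), (\ref{prl-01}.k), and (\ref{prl-03}.b)(v) by linearizing (iv), which is a tidy economy over a second brute-force expansion. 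Only the optional Zariski-density shortcut for (iv) should be dropped, since the $\mathrm{F}_4^{\mathbb{C}}$-equivariance of the cross product and the diagonalization facts it needs (Proposition~\ref{prl-04} and later orbit results) are established after this lemma and would risk circularity.
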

\medskip

The linear Lie group $\mathrm{F}_4^{\mathbb{C}}$ is defined by
\[
\mathrm{F}_4^{\mathbb{C}}:=\{g\in 
\mathrm{GL}_{\mathbb{C}}(\mathcal{J}^{\mathbb{C}})
|~g(X\circ Y)=g X\circ g Y\}.\]

The following result is proved in \cite{Yi1990}, \cite{NY2010}
after O. Shukuzawa and I. Yokota \cite{SY1979a}.

\begin{proposition}\label{prl-04}
Let $X,Y,Z\in \mathcal{J}^{\mathbb{C}}$.

{\rm (1)} For all $g\in \mathrm{F}_4^{\mathbb{C}}$, 
\[
\tag{\ref{prl-04}.a}
\mathrm{tr}(g X)=\mathrm{tr}(X).
\]

{\rm (2)} The following equations hold.
\begin{align*}
\tag{\ref{prl-04}.b}
\mathrm{F}_4^{\mathbb{C}}
&=\{g\in \mathrm{GL}_{\mathbb{C}}(\mathcal{J}^{\mathbb{C}})|
~\mathrm{det} (g X)=X,~g E=E\}\\
&=\{g\in \mathrm{GL}_{\mathbb{C}}(\mathcal{J}^{\mathbb{C}})~|
~\Phi _{g X}(\lambda)=\Phi _X(\lambda)\}\\
&=\{g\in \mathrm{GL}_{\mathbb{C}}(\mathcal{J}^{\mathbb{C}})|~
\mathrm{det} (g X)=X,~(g X|g Y)=(X|Y)\}\\
&=\left\{g\in 
\mathrm{GL}_{\mathbb{C}}(\mathcal{J}^{\mathbb{C}})
\biggm|
~\begin{array}{rcl}(g X| g Y| g Z)&=&(X|Y|Z),\\
(g X|g Y)&=&(X|Y)
\end{array}
\right\}\\
&=\{g\in \mathrm{GL}_{\mathbb{C}}(\mathcal{J}^{\mathbb{C}})|
~g (X\times Y)=g X\times g Y\}.
\end{align*}
\end{proposition}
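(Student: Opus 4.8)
The plan is to establish part~(1) first, then to deduce all five descriptions of part~(2) from it, and finally to prove the reverse inclusions by reducing them to the single assertion that a linear automorphism preserving the Freudenthal cross product already preserves the Jordan product; below, $\mathrm{Tr}$ denotes the ordinary trace of an endomorphism of $\mathcal{J}^{\mathbb{C}}$, to be distinguished from the Jordan trace $\mathrm{tr}$. For part~(1), I would first observe that $gE=E$ for every $g\in\mathrm{F}_4^{\mathbb{C}}$: putting $Y=E$ in $g(X\circ Y)=gX\circ gY$ gives $gX=gX\circ gE$, and since $g$ is bijective this yields $W=gE\circ W$ for all $W$, so $gE$ is an identity element of $(\mathcal{J}^{\mathbb{C}},\circ)$ and hence equals $E$. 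Next, writing $L_X(Y):=X\circ Y$, one has $gL_Xg^{-1}=L_{gX}$, hence $\mathrm{Tr}(L_{gX})=\mathrm{Tr}(L_X)$; a direct computation in the basis $\{E_1,E_2,E_3\}\cup\{F_i(e_j)\}$ gives $\mathrm{Tr}(L_{E_i})=9$ and $\mathrm{Tr}(L_{F_i(x)})=0$, so $\mathrm{Tr}(L_X)=9\,\mathrm{tr}(X)$ and therefore $\mathrm{tr}(gX)=\mathrm{tr}(X)$, which is $(\ref{prl-04}.a)$. (Alternatively, one may derive from Lemma~\ref{prl-03} the Cayley--Hamilton relation $X^{\circ 3}-\mathrm{tr}(X)X^{\circ 2}+\mathrm{tr}(X^{\times 2})X-\mathrm{det}(X)E=0$ and read off invariance of its coefficients.)

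For the inclusions ``$\subseteq$'' in part~(2), I would chain the consequences of $gE=E$ and $\mathrm{tr}(gX)=\mathrm{tr}(X)$: first $(gX|gY)=\mathrm{tr}(gX\circ gY)=\mathrm{tr}\bigl(g(X\circ Y)\bigr)=(X|Y)$; then, feeding invariance of $\circ$, $\mathrm{tr}$, $(\,\cdot\,|\,\cdot\,)$ and $E$ into the definition of $\times$, also $g(X\times Y)=gX\times gY$; then $(gX|gY|gZ)=(gX|gY\times gZ)=(gX|g(Y\times Z))=(X|Y\times Z)=(X|Y|Z)$, whence $\mathrm{det}(gX)=3^{-1}(gX|gX|gX)=\mathrm{det}(X)$ and $\Phi_{gX}(\lambda)=\mathrm{det}(\lambda E-gX)=\mathrm{det}\bigl(g(\lambda E-X)\bigr)=\Phi_X(\lambda)$. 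This puts $\mathrm{F}_4^{\mathbb{C}}$ inside each of the five sets on the right.

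For the reverse inclusions, put $A:=\{g\in\mathrm{GL}_{\mathbb{C}}(\mathcal{J}^{\mathbb{C}})\mid g(X\times Y)=gX\times gY\ \text{for all}\ X,Y\}$. I would show that each of the other four sets lies in $A$: if $g$ preserves $\mathrm{det}$ and $gE=E$, then $\Phi_{gX}=\Phi_X$ exactly as above; if $g$ preserves $\Phi$, it preserves $\mathrm{tr}$, $\mathrm{tr}(X^{\times 2})$, $\mathrm{det}$ by $(\ref{prl-03}.c)$, hence $(X|X)=\mathrm{tr}(X)^2-2\,\mathrm{tr}(X^{\times 2})$ by $(\ref{prl-03}.a)$ and hence $(\,\cdot\,|\,\cdot\,)$ by polarization; and if $g$ preserves $\mathrm{det}$ (equivalently its symmetric trilinear polarization $(X|Y|Z)$) together with $(\,\cdot\,|\,\cdot\,)$ — which also subsumes the hypotheses of the fourth set — then $(gX|gY\times gZ)=(gX|gY|gZ)=(X|Y|Z)=(gX|g(Y\times Z))$ for every $X$, so non-degeneracy of $(\,\cdot\,|\,\cdot\,)$ and surjectivity of $g$ force $gY\times gZ=g(Y\times Z)$. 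Thus everything reduces to $A\subseteq\mathrm{F}_4^{\mathbb{C}}$.

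To prove $A\subseteq\mathrm{F}_4^{\mathbb{C}}$, take $g\in A$ and set $Z:=gE$. Applying $g$ to $E\times E=E$ gives $Z^{\times 2}=Z$; applying $g$ to $X\times E=2^{-1}(\mathrm{tr}(X)E-X)$ gives $W\times Z=2^{-1}(\phi(W)Z-W)$ for all $W$, where $\phi(W):=\mathrm{tr}(g^{-1}W)$ and $\phi(Z)=\mathrm{tr}(E)=3$. Comparing this with the definition of $W\times Z$ shows $2L_Z=(\mathrm{tr}(Z)-1)\,\mathrm{id}+S$ with $S$ of rank $\le 2$ and image in $\mathbb{C}Z+\mathbb{C}E$; taking $\mathrm{Tr}$ and using $\mathrm{Tr}(L_Z)=9\,\mathrm{tr}(Z)$ forces $\mathrm{tr}(Z)=3$. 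Consequently $e:=2^{-1}(Z-E)$ is a Jordan idempotent with $\mathrm{tr}(e)=0$ (using $Z^{\times 2}=Z$, $\mathrm{tr}(Z)=3$ and the relation $X^{\times 2}=X^{\circ 2}-\mathrm{tr}(X)X+\mathrm{tr}(X^{\times 2})E$); from $e^{\circ 2}=e$ and $(e|e)=\mathrm{tr}(e^{\circ 2})=0$ one gets $e^{\times 2}=e$ and $\mathrm{det}(e)=0$, so $e=(e^{\times 2})^{\times 2}=\mathrm{det}(e)\,e=0$ by $(\ref{prl-03}.b)(\mathrm{iv})$, that is, $gE=E$. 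With $gE=E$ in hand, applying $g$ to $X\times E=2^{-1}(\mathrm{tr}(X)E-X)$ gives $\mathrm{tr}(gX)=\mathrm{tr}(X)$; then $\mathrm{tr}(gX\times gY)=\mathrm{tr}(g(X\times Y))=\mathrm{tr}(X\times Y)$ and $(\ref{prl-03}.a)$ give $(gX|gY)=(X|Y)$; and substituting invariance of $\times$, $\mathrm{tr}$, $(\,\cdot\,|\,\cdot\,)$, $E$ into $2X\circ Y=2(X\times Y)+\mathrm{tr}(X)Y+\mathrm{tr}(Y)X-\bigl(\mathrm{tr}(X)\mathrm{tr}(Y)-(X|Y)\bigr)E$ gives $g(X\circ Y)=gX\circ gY$, i.e. $g\in\mathrm{F}_4^{\mathbb{C}}$. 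The routine inputs are the basis identity $\mathrm{Tr}(L_X)=9\,\mathrm{tr}(X)$ and the polarization steps; the genuine obstacle is forcing $gE=E$ from the bare hypothesis that $g$ respects $\times$, and that is exactly where the trace computation for $L_Z$ — equivalently the spectral data $\mathrm{tr}(Z)=3$, $Z^{\times 2}=Z$ — seems unavoidable.
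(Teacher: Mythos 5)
Your proof is correct, but note that the paper itself offers no argument for Proposition~\ref{prl-04}: it is quoted from \cite{Yi1990}, \cite{NY2010} after \cite{SY1979a}, so there is no internal proof to match and the comparison is with that literature. Your forward inclusions and the chain (first set) $\subseteq$ (second) $\subseteq$ (third) $\subseteq$ (fifth) are the routine part; the genuine content is your reduction of everything to the single claim that a cross-product--preserving $g$ lies in $\mathrm{F}_4^{\mathbb{C}}$, and your mechanism for forcing $gE=E$ there is both correct and pleasantly self-contained: I checked the bookkeeping, and with $Z=gE$, $S$ of rank $\le 2$ one indeed gets $\mathrm{Tr}(S)=\bigl(\phi(Z)+\mathrm{tr}(Z)\bigr)+\bigl((E|Z)-3\,\mathrm{tr}(Z)\bigr)=3-\mathrm{tr}(Z)$, so $18\,\mathrm{tr}(Z)=27(\mathrm{tr}(Z)-1)+3-\mathrm{tr}(Z)$ gives $\mathrm{tr}(Z)=3$, after which $e=2^{-1}(Z-E)$ satisfies $e^{\circ 2}=e$, $\mathrm{tr}(e)=(e|e)=0$, $e^{\times 2}=e$, $\mathrm{det}(e)=0$, and $(e^{\times 2})^{\times 2}=\mathrm{det}(e)e$ kills $e$. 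This route (operator trace $\mathrm{Tr}(L_X)=9\,\mathrm{tr}(X)$ plus the idempotent trick) is more in the spirit of Springer--Veldkamp than of Yokota's explicit matrix computations in \cite{Yi1990}, and it buys a uniform treatment of all five descriptions from the one inclusion $A\subseteq\mathrm{F}_4^{\mathbb{C}}$. Two points you should make explicit rather than tacit: (i) the step ``$g$ preserves $\mathrm{det}$ $\Leftrightarrow$ $g$ preserves $(X|Y|Z)$'' uses that $(X|Y|Z)=(X|Y\times Z)$ is symmetric in all three arguments (so that it really is the polarization of the cubic form $3\,\mathrm{det}$); this follows from the associativity of the trace form, $(X\circ Y|Z)=(X|Y\circ Z)$, but it is stated nowhere in the paper and deserves a line; (ii) the parenthetical Cayley--Hamilton alternative for part (1) is not as immediate as written, since for special $X$ the elements $X^{\circ 2},X,E$ are linearly dependent and one needs a genericity or density argument to ``read off'' the coefficients --- harmless, since your main argument via $\mathrm{Tr}(L_X)=9\,\mathrm{tr}(X)$ does not use it.
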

\medskip

The $\mathbb{R}$-exceptional 
Jordan algebra $\mathcal{J}$ is defined by
\[\mathcal{J}:=\{X\in M(3,{\bf O})|~X^*=X\}
=\{h(\xi;x)|~
\xi\in \mathbb{R}^3,~
x\in {\bf O}^3\}
\]
with the Jordan product $X\circ Y=2^{-1}(XY+YX)$.
Denote the complex conjugation $\tau$ with respect to 
$\mathcal{J}$ in $\mathcal{J}^{\mathbb{C}}$.
We define 
$\sigma\in\mathrm{GL}_{\mathbb{C}}
(\mathcal{J}^{\mathbb{C}})$ by
\[\sigma h(\xi_1,\xi_2,\xi_3;x_1,x_2,x_3)
:=h(\xi_1,\xi_2,\xi_3;x_1,-x_2,-x_3).\]
Because of $\mathrm{det} (\sigma X)=X$ and $\sigma E=E$,
applying (\ref{prl-04}.b),
we see
$\sigma \in \mathrm{F}_4^{\mathbb{C}}$ 
and clearly $\sigma^2=1$
where $1$ denotes the identity element 
of $\mathrm{F}_4^{\mathbb{C}}$.
We consider the complex conjugation $\tau\sigma$ 
in $\mathcal{J}^{\mathbb{C}}$ and define
the involutive automorphism 
$\widetilde{\tau\sigma}$ of $\mathrm{F}_4^{\mathbb{C}}$ by 
\[\widetilde{\tau\sigma}(g)=\tau\sigma g\sigma\tau\quad 
\text{for}~g\in \mathrm{F}_4^{\mathbb{C}}.\]
For $\xi=(\xi_1,\xi_2,\xi_3)\in \mathbb{R}^3$ and
$x=(x_1,x_2,x_3)\in {\bf O}^3,$
denote 
\[
h^1(\xi;x):=\begin{pmatrix}
\xi_1 &\sqrt{-1}x_3 & \sqrt{-1}\overline{x_2}\\
\sqrt{-1}\overline{x_3} & \xi_2 & x_1\\
\sqrt{-1}x_2 &\overline{x_1} & \xi_3
\end{pmatrix}
.\]
The {\it exceptional Jordan algebra} 
$\mathcal{J}^1$ is defined by
\[\mathcal{J}^1:=(\mathcal{J}^{\mathbb{C}})_{\tau\sigma}
=\{h^1(\xi;x)|~
\xi\in\mathbb{R}^3, x\in{\bf O}^3\}\]
with the Jordan product $X\circ Y=2^{-1}(XY+YX)$.
Then the Jordan algebra $\mathcal{J}^1$ has 
the trace $\mathrm{tr}(X)\in\mathbb{R}$,  
the identity element $E$ of Jordan product,
the inner product $(X|Y)\in\mathbb{R}$,
the cross product $X\times Y\in\mathcal{J}^1$,
the trilinear 
from $(X|Y|Z)\in\mathbb{R}$,
the determinant 
$\mathrm{det}(X)\in\mathbb{R}$
and
the characteristic polynomial $\Phi_X(\lambda)$
is a polynomial with $\mathbb{R}$-coefficients.
Let $i\in\{1,2,3\}$ and $x\in{\bf O}$. Denote the elements
\[
F^1_1(x):=F_1(x),\quad F^1_j(x):=F_j(\sqrt{-1}x)
\quad\text{with}
~j\in\{2,3\}.
\] 
Then we see
\[h^1(\xi_1,\xi_2,\xi_3;x_1,x_2,x_3)
=\sum{}_{i=1}^3 (\xi_iE_i+ F^1_i(x_i))\]
and for $X=\sum_{i=1}^3 (\xi_iE_i+ F^1_i(x_i))\in \mathcal{J}^1$,
denote
\[(X)_{E_i}:=\xi_i=(X|E_i),\quad(X)_{F^1_i}:=x_i\]
respectively.
Moreover, denote the elements
\begin{align*}
 P^+&:=h^1(1,-1,0;0,0,1),
 &P^-&:=h^1(-1,1,0;0,0,1),\\
 Q^+(x)&:=h^1(0,0,0;x,\overline{x},0),
 &Q^-(x)&:=h^1(0,0,0;x,-\overline{x},0)
\end{align*}
and the subspaces 
$F_i^1({\bf O})
:=\{F_i^1(x)|~x\in{\bf O}\},$
$F_i^1({\rm Im}{\bf O})
:=\{F_i^1(p)|~p\in{\rm Im}{\bf O}\},$
$Q^+({\bf O}):=\{Q^+(x)|~x\in{\bf O}\}$,
$Q^-({\bf O}):=\{Q^-(x)|~x\in{\bf O}\}$,
respectively.
Easily we have the following decompositions
of $\mathcal{J}^1$. 

\begin{lemma}\label{prl-05}
\begin{align*}
\tag{\ref{prl-05}.a}
\mathcal{J}^1
&=
\mathbb{R}E_1\oplus
\mathbb{R}E_2\oplus
\mathbb{R}E_3\oplus
F_1^1({\bf O})\oplus
F_2^1({\bf O})\oplus
F_3^1({\bf O}),\\
\tag{\ref{prl-05}.b}
\mathcal{J}^1&=\mathbb{R}(-E_1+E_2)
\oplus \mathbb{R}P^-\oplus \mathbb{R}E\oplus
\mathbb{R}E_3\oplus F_3^1({\rm Im}{\bf O})\\
&\quad
\oplus Q^+({\bf O})\oplus Q^-({\bf O}).
\end{align*}
\end{lemma}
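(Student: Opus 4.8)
\emph{Proof proposal.} The plan is to treat both identities as elementary $\mathbb{R}$-linear algebra, using the explicit parametrization $X=h^1(\xi;x)=\sum_{i=1}^3(\xi_iE_i+F_i^1(x_i))$ and the fact that $\dim_{\mathbb{R}}\mathcal{J}^1=27$ (since $h^1$ identifies $\mathcal{J}^1$ with $\mathbb{R}^3\times{\bf O}^3$).

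For (\ref{prl-05}.a): the identity $h^1(\xi_1,\xi_2,\xi_3;x_1,x_2,x_3)=\sum_{i=1}^3(\xi_iE_i+F_i^1(x_i))$ together with $\mathcal{J}^1=\{h^1(\xi;x)\mid\xi\in\mathbb{R}^3,\ x\in{\bf O}^3\}$ already shows that the six listed subspaces span $\mathcal{J}^1$. For directness I would note that, inside $M(3,{\bf O}^{\mathbb{C}})$, the $E_i=E_{i,i}$ have support on the three diagonal slots, while for each $i$ the matrix $F_i^1(x)$ has support on the single off-diagonal pair $\{E_{i+1,i+2},E_{i+2,i+1}\}$, and these three pairs are mutually disjoint; since each $x\mapsto F_i^1(x)$ is $\mathbb{R}$-linear and injective, a relation $\sum_i(\xi_iE_i+F_i^1(x_i))=0$ forces every $\xi_i=0$ and every $x_i=0$. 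The count $3+3\cdot 8=27=\dim_{\mathbb{R}}\mathcal{J}^1$ is consistent.

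For (\ref{prl-05}.b): first observe that the seven summands have dimensions $1,1,1,1,7,8,8$, summing to $27=\dim_{\mathbb{R}}\mathcal{J}^1$; hence it suffices to prove that they span $\mathcal{J}^1$, directness then being automatic. I would recover the $(\mathrm{a})$-summands from the $(\mathrm{b})$-generators: from $Q^{\pm}(x)=F_1^1(x)\pm F_2^1(\overline{x})$ one gets $2F_1^1(x)=Q^+(x)+Q^-(x)$ and $2F_2^1(\overline{x})=Q^+(x)-Q^-(x)$, so $F_1^1({\bf O})$ and $F_2^1({\bf O})$ both lie in $Q^+({\bf O})+Q^-({\bf O})$; from $P^-=-E_1+E_2+F_3^1(1)$ one gets $F_3^1(1)\in\mathbb{R}(-E_1+E_2)+\mathbb{R}P^-$, which combined with $F_3^1({\rm Im}{\bf O})$ and the splitting ${\bf O}=\mathbb{R}1\oplus{\rm Im}{\bf O}$ recovers all of $F_3^1({\bf O})$ as well as $\mathbb{R}(-E_1+E_2)$; and since $E=E_1+E_2+E_3$, the triple $-E_1+E_2,\ E,\ E_3$ has coefficient matrix of determinant $-2\neq 0$, hence spans $\mathbb{R}E_1\oplus\mathbb{R}E_2\oplus\mathbb{R}E_3$. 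Assembling these, the span of the seven subspaces contains $\mathbb{R}E_1\oplus\mathbb{R}E_2\oplus\mathbb{R}E_3\oplus F_1^1({\bf O})\oplus F_2^1({\bf O})\oplus F_3^1({\bf O})$, which is $\mathcal{J}^1$ by part (\ref{prl-05}.a).

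There is no genuine obstacle here; the whole argument is bookkeeping. The only points needing a moment's care are the injectivity of $x\mapsto F_i^1(x)$ and $x\mapsto Q^{\pm}(x)$, the disjointness of the off-diagonal supports of $F_1^1,F_2^1,F_3^1$ (so that no hidden relation appears there), and the elementary $3\times 3$ determinant for the diagonal part; the dimension count then closes both claims, consistent with the word ``Easily'' in the statement.
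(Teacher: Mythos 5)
Your proposal is correct: the explicit parametrization $h^1(\xi;x)=\sum_i(\xi_iE_i+F_i^1(x_i))$ with disjoint matrix supports gives (\ref{prl-05}.a), and the identities $Q^{\pm}(x)=F_1^1(x)\pm F_2^1(\overline{x})$, $P^-=-E_1+E_2+F_3^1(1)$ together with the dimension count $1+1+1+1+7+8+8=27$ give (\ref{prl-05}.b). This is exactly the elementary bookkeeping the paper intends by "Easily we have the following decompositions," so your argument matches the paper's (omitted) proof.
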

\medskip

We use the notation $\epsilon_i(j)$
in this article.
If $i=j$ then $\epsilon_i(j):=1$ 
else $\epsilon_i(j):=-1$.
Then $\epsilon_i(j)=(-1)^{1+\delta_{i,j}}$
where $\delta_{i,j}$ is the Kronecker delta
and we write the notation $\epsilon(i)$ instead of $\epsilon_1(i)$
for short.
Using Lemma~\ref{prl-02}, the following lemma  follows from
direct calculations.

\begin{lemma}\label{prl-06}
Let $x,y\in{\bf O}$,
$i,j \in \{1,2,3\}$ and indexes
$i+1,i+2\in\{1,2,3\}$ be counted modulo $3$.
Let
$X=\sum_{i=1}^3(\xi_iE_i+F_i^1(x_i)),
~Y=\sum_{i=1}^3(\eta_iE_i+F_i^1(y_i))\in \mathcal{J}^1$.
Then the following equations hold:
\begin{gather*}
\tag{\ref{prl-06}.a}
\left\{
\begin{array}{rlrl}
{\rm (i)}& E_i \times E_i=0,&
{\rm (ii)}&E_i \times E_{i+1}=2^{-1}E_{i+2},\\
{\rm (iii)}&
E_i \times F^1_i(x)=F^1_i\left(-2^{-1}x\right),&
{\rm (iv)}&E_i \times F^1_j(x)=0,~i\ne j,\\
\smallskip
{\rm (v)}&\multicolumn{3}{l}{F^1_i(x)\times F^1_i(y)
=-\epsilon(i)(x|y)E_i,}\\
\smallskip
{\rm (vi)}&
\multicolumn{3}{l}{F^1_{i+1}(x)\times F^1_{i+2}(y)=
F^1_i\left(-\epsilon(i)2^{-1}\overline{xy}\right),}
\end{array}
\right.
\\
\tag{\ref{prl-06}.b}
(X|Y)=\sum{}_{i=1}^3 
(\xi_i\eta_i+\epsilon(i)2(x_i|y_i)),\\
\tag{\ref{prl-06}.c}
\mathrm{det}(X)=\xi_1\xi_2\xi_3
+2{\rm Re}(x_1x_2x_3)
-\sum{}_{i=1}^3\epsilon(i)\xi _i(x_i|x_i),\\
\tag{\ref{prl-06}.d}
X^{\times 2}=\sum{}_{i=1}^3 ((\xi_{i+1}\xi_{i+2}
-\epsilon(i)(x_i|x_i))E_i
+F^1_i(-\epsilon(i)
\overline{x_{i+1}x_{i+2}}-\xi_ix_i)).
\end{gather*}
\end{lemma}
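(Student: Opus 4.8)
The plan is to deduce Lemma~\ref{prl-06} from Lemma~\ref{prl-02} by transporting every computation from $\mathcal{J}^1$ into the ambient complex algebra $\mathcal{J}^{\mathbb{C}}$. The bridge is the identity $h^1(\xi_1,\xi_2,\xi_3;x_1,x_2,x_3)=h(\xi_1,\xi_2,\xi_3;x_1,\sqrt{-1}x_2,\sqrt{-1}x_3)$, equivalently $F^1_1(x)=F_1(x)$ and $F^1_j(x)=F_j(\sqrt{-1}x)$ for $j\in\{2,3\}$, which exhibits $X=\sum_{i=1}^3(\xi_iE_i+F^1_i(x_i))\in\mathcal{J}^1$ as the element $\sum_{i=1}^3(\xi_iE_i+F_i(y_i))\in\mathcal{J}^{\mathbb{C}}$ with $y_1=x_1$, $y_2=\sqrt{-1}x_2$, $y_3=\sqrt{-1}x_3$. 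Since the trace, inner product, cross product and determinant on $\mathcal{J}^1$ are just the restrictions of those on $\mathcal{J}^{\mathbb{C}}$, I would insert this substitution into the formulas of Lemma~\ref{prl-02} and, wherever an $F_i(\,\cdot\,)$-term is produced, rewrite it back by $F_j(z)=F^1_j(-\sqrt{-1}z)$ for $j\in\{2,3\}$. Each surviving power of $\sqrt{-1}$ is exactly the sign $\epsilon(i)=(-1)^{1+\delta_{1,i}}$ appearing in the statement: one picks up $(\sqrt{-1})^2=-1$ precisely when two slots whose index lies in $\{2,3\}$ are multiplied or paired, and the $\sqrt{-1}$'s cancel when only one such slot occurs.

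Concretely, for (\ref{prl-06}.b) and (\ref{prl-06}.c) I would substitute into (\ref{prl-02}.a) and (\ref{prl-02}.c); because the form on ${\bf O}^{\mathbb{C}}$ is $\mathbb{C}$-bilinear one has $(\sqrt{-1}a\mid\sqrt{-1}b)=-(a\mid b)$, which promotes $2(x_i\mid y_i)$ to $2\epsilon(i)(x_i\mid y_i)$ and $\xi_i(y_i\mid y_i)$ to $\epsilon(i)\xi_i(x_i\mid x_i)$, while, since $\sqrt{-1}$ is a scalar and pulls out of the triple product in the determinant formula, that product reduces to $(\sqrt{-1})^2$ times a product of the $x_i$ (re-association being invisible to ${\rm Re}$ by (\ref{prl-01}.l)). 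For (\ref{prl-06}.a) I would first specialize (\ref{prl-02}.b) to the basis elements to record the $\mathcal{J}^{\mathbb{C}}$-identities $E_i\times E_i=0$, $E_i\times E_{i+1}=2^{-1}E_{i+2}$, $E_i\times F_i(x)=F_i(-2^{-1}x)$, $E_i\times F_j(x)=0$ for $i\ne j$, $F_i(x)\times F_i(y)=-(x\mid y)E_i$ and $F_{i+1}(x)\times F_{i+2}(y)=F_i(2^{-1}\overline{xy})$, and then substitute; here the conjugation rules $\overline{\sqrt{-1}x}=\sqrt{-1}\,\overline{x}$ and $\overline{ab}=\overline{b}\,\overline{a}$ of (\ref{prl-01}.f) are what move the scalars past the bar. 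Finally (\ref{prl-06}.d) is (\ref{prl-02}.b) with $Y=X$ followed by the same conversion of each $F_i$-component: for $i=1$ the term $\overline{y_2y_3}=-\overline{x_2x_3}$ already carries the sign $-\epsilon(1)$, whereas for $i\in\{2,3\}$ the lone $\sqrt{-1}$ in $\overline{y_{i+1}y_{i+2}}$ and in $\xi_iy_i$ is absorbed when $F_i$ is rewritten as $F^1_i$, leaving the coefficient $-\epsilon(i)\overline{x_{i+1}x_{i+2}}-\xi_ix_i$; and the $E_i$-coefficient $\xi_{i+1}\xi_{i+2}-(y_i\mid y_i)$ becomes $\xi_{i+1}\xi_{i+2}-\epsilon(i)(x_i\mid x_i)$ as above.

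As the sentence preceding the statement indicates, there is no conceptual difficulty here — the lemma is a direct calculation — and the single point that needs genuine care, which I would treat as the main obstacle, is the bookkeeping. Octonion multiplication is non-associative, so one must invoke the alternativity and Moufang identities of Lemma~\ref{prl-01} to give meaning to the triple products and to commute conjugations past products, and one must follow each of the (at most three) factors of $\sqrt{-1}$ through the six coordinate slots $E_1,E_2,E_3,F^1_1,F^1_2,F^1_3$ to verify that it reassembles into the stated sign $\epsilon(i)$ and leaves no stray minus sign behind.
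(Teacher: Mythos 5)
Your route --- writing $X\in\mathcal{J}^1$ as $\sum_{i}(\xi_iE_i+F_i(y_i))\in\mathcal{J}^{\mathbb{C}}$ with $y_1=x_1$, $y_j=\sqrt{-1}\,x_j$ for $j\in\{2,3\}$, pushing the formulas of Lemma~\ref{prl-02} through this substitution, and converting back via $F_j(z)=F^1_j(-\sqrt{-1}z)$ --- is exactly the ``direct calculation using Lemma~\ref{prl-02}'' that the paper intends, and your sign bookkeeping for (\ref{prl-06}.a), (\ref{prl-06}.b) and (\ref{prl-06}.d) is correct: a factor $(\sqrt{-1})^2=-1$ appears precisely when two slots with index in $\{2,3\}$ are paired, which is the content of $\epsilon(i)$, and a lone $\sqrt{-1}$ is absorbed in the conversion back to $F^1_j$. (Incidentally, no Moufang or alternativity identities are needed anywhere; (\ref{prl-01}.f) and (\ref{prl-01}.l) suffice, since $\sqrt{-1}$ is a central scalar of ${\bf O}^{\mathbb{C}}$.)

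The genuine problem is in your treatment of (\ref{prl-06}.c). Your own computation gives $2\mathrm{Re}(y_1y_2y_3)=(\sqrt{-1})^2\,2\mathrm{Re}(x_1x_2x_3)=-2\mathrm{Re}(x_1x_2x_3)$, yet you offer this as a derivation of the displayed formula, whose middle term is $+2\mathrm{Re}(x_1x_2x_3)$; the discrepancy is neither noticed nor resolved, so as written the argument does not establish the statement as printed. In fact your substitution is right and the printed sign is a misprint: for $X=F^1_1(1)+F^1_2(1)+F^1_3(1)$, i.e.\ $F_1(1)+F_2(\sqrt{-1})+F_3(\sqrt{-1})$ in $\mathcal{J}^{\mathbb{C}}$, formula (\ref{prl-02}.c) gives $\mathrm{det}(X)=2\mathrm{Re}(\sqrt{-1}\cdot\sqrt{-1})=-2$, consistent with $\mathrm{det}(X)=3^{-1}(X|X^{\times 2})=-2$ computed from (\ref{prl-06}.b) and (\ref{prl-06}.d) (which you derive correctly), whereas the printed (\ref{prl-06}.c) would give $+2$; the correct statement is $\mathrm{det}(X)=\xi_1\xi_2\xi_3-2\mathrm{Re}(x_1x_2x_3)-\sum_{i=1}^3\epsilon(i)\xi_i(x_i|x_i)$ (a sign that is harmless elsewhere in the paper, where (\ref{prl-06}.c) is only applied to elements with vanishing $\mathrm{Re}(x_1x_2x_3)$). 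A blind proof has to either produce the stated sign or explicitly flag and correct it; silently asserting agreement where your own bookkeeping yields the opposite sign is the gap you need to close.
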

\medskip

For all $X\in \mathcal{J}^1,$ the {\it minimal subspace} $V_X$ 
of $X$
is defined by 
\[
V_X:=\{aX^{\times 2}+bX+cE~|~a,b,c\in\mathbb{R}\}.\] 

\begin{lemma}\label{prl-07}
For all $X\in\mathcal{J}^1$,
the minimal space $V_X$ is closed under the cross product.
\end{lemma}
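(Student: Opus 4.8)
The plan is to use bilinearity of the Freudenthal cross product to reduce the claim to a finite list of identities among the three spanning elements $E$, $X$, $X^{\times 2}$ of $V_X$. Since $\times$ is $\mathbb{R}$-bilinear and symmetric and $V_X=\mathrm{span}_{\mathbb{R}}\{E,\,X,\,X^{\times 2}\}$, the subspace $V_X$ is closed under $\times$ as soon as the six products $E\times E$, $E\times X$, $E\times X^{\times 2}$, $X\times X$, $X\times X^{\times 2}$, and $X^{\times 2}\times X^{\times 2}$ all lie in $\mathrm{span}_{\mathbb{R}}\{E,\,X,\,X^{\times 2}\}=V_X$. So after recording this reduction I would simply verify each of the six products in turn.

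First I would dispose of the three products involving $E$ using Lemma~\ref{prl-03}: by (\ref{prl-03}.b)(i), $E\times E=E\in V_X$; by (\ref{prl-03}.b)(ii), $X\times E=2^{-1}(\mathrm{tr}(X)E-X)\in V_X$; and by (\ref{prl-03}.b)(iii), $X^{\times 2}\times E=2^{-1}(\mathrm{tr}(X^{\times 2})E-X^{\times 2})\in V_X$. Next, $X\times X=X^{\times 2}\in V_X$ by the very definition of $X^{\times 2}$. Then (\ref{prl-03}.b)(v) exhibits $X^{\times 2}\times X$ as an explicit $\mathbb{R}$-linear combination of $E$, $X$, $X^{\times 2}$, with coefficients built from $\mathrm{tr}(X)$, $\mathrm{tr}(X^{\times 2})$ and $\mathrm{det}(X)$, hence $X^{\times 2}\times X\in V_X$. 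Finally, (\ref{prl-03}.b)(iv) gives $(X^{\times 2})^{\times 2}=\mathrm{det}(X)\,X\in V_X$. Having checked all six spanning products, bilinearity yields $V_X\times V_X\subseteq V_X$.

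I do not anticipate any real obstacle: every identity needed is already packaged in Lemma~\ref{prl-03}(b), which is valid on $\mathcal{J}^{\mathbb{C}}$ and therefore restricts to the real form $\mathcal{J}^1$, and the excerpt already records that $\mathcal{J}^1$ is closed under the cross product, so there is no concern about products leaving $\mathcal{J}^1$. The only step that merits being stated with any care is the opening reduction — that closure of a linear subspace under a symmetric bilinear operation is equivalent to closure on a spanning set — after which the proof is a one-line invocation of (\ref{prl-03}.b)(i)--(v).
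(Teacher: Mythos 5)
Your proposal is correct and follows exactly the paper's route: the paper's proof is the one-line citation ``It follows from (\ref{prl-03}.b)'', and your argument is simply that citation spelled out, reducing by bilinearity to the spanning products $E\times E$, $E\times X$, $E\times X^{\times 2}$, $X^{\times 2}\times X$, $(X^{\times 2})^{\times 2}$ and checking each against Lemma~\ref{prl-03}(b).
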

\begin{proof}
It follows from (\ref{prl-03}.b).
\end{proof}
\medskip

The linear Lie group $\mathrm{F}_{4(-20)}$ is defined by
\[\mathrm{F}_{4(-20)}:=
\{g\in \mathrm{GL}_{\mathbb{R}}(\mathcal{J}^1)|
~g (X\circ Y)=g X\circ g Y\}.
\]
Because of $\mathcal{J}^1
=(\mathcal{J}^{\mathbb{C}})_{\tau\sigma}$,
we can write
$\mathrm{F}_{4(-20)}=
(\mathrm{F}_4^{\mathbb{C}})^{\widetilde{\tau\sigma}}$.
In \cite[Theorem 2.2.2]{Yi1990}, I.~Yokota  shows that
$\mathrm{F}_{4(-20)}$ is an exceptional linear Lie group
of type {\bf F}$_{4(-20)}$.
From
Proposition~\ref{prl-04},
we have the following proposition. 

\begin{proposition}\label{prl-08}
Let $X,Y,Z\in \mathcal{J}^1$.

{\rm (1)} For all $g\in \mathrm{F}_{4(-20)}$, 
\[\tag{\ref{prl-08}.a}
\mathrm{tr}(g X)=\mathrm{tr}(X).\]

{\rm (2)} The following equations hold.
\begin{align*}
\tag{\ref{prl-08}.b}
\mathrm{F}_{4(-20)}
&=\{g\in \mathrm{GL}_{\mathbb{R}}(\mathcal{J}^1)|
~\mathrm{det} (g X)=X,~g E=E\}\\
&=\{g\in \mathrm{GL}_{\mathbb{R}}(\mathcal{J}^1)~|
~\Phi _{g X}(\lambda)=\Phi _X(\lambda)\}\\
&=\{g\in \mathrm{GL}_{\mathbb{R}}(\mathcal{J}^1)|~
\mathrm{det} (g X)=X,~(g X|g Y)=(X|Y)\}\\
&=\left\{g\in \mathrm{GL}_{\mathbb{R}}(\mathcal{J}^1)
\biggm|
~\begin{array}{rcl}(g X| g Y| g Z)&=&(X|Y|Z),\\
(g X|g Y)&=&(X|Y)
\end{array}
\right\}\\
&=\{g\in \mathrm{GL}_{\mathbb{R}}(\mathcal{J}^1)|
~g (X\times Y)=g X\times g Y\}.
\end{align*}
\end{proposition}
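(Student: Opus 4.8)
The plan is to reduce everything to Proposition~\ref{prl-04} via the identification $\mathrm{F}_{4(-20)}=(\mathrm{F}_4^{\mathbb{C}})^{\widetilde{\tau\sigma}}$ recorded just above the statement. Recall that $\mathcal{J}^1=(\mathcal{J}^{\mathbb{C}})_{\tau\sigma}$ is a real form of $\mathcal{J}^{\mathbb{C}}$, so $\mathcal{J}^{\mathbb{C}}=\mathcal{J}^1\oplus\sqrt{-1}\,\mathcal{J}^1$ with $\tau\sigma$ the associated conjugation, and an $\mathbb{R}$-basis of $\mathcal{J}^1$ is a $\mathbb{C}$-basis of $\mathcal{J}^{\mathbb{C}}$. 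First I would make precise the passage between the two groups: every $g\in\mathrm{GL}_{\mathbb{R}}(\mathcal{J}^1)$ has a unique $\mathbb{C}$-linear extension $g^{\mathbb{C}}\in\mathrm{GL}_{\mathbb{C}}(\mathcal{J}^{\mathbb{C}})$, and a $\mathbb{C}$-linear automorphism of $\mathcal{J}^{\mathbb{C}}$ arises in this way exactly when it preserves $\mathcal{J}^1$, equivalently when it commutes with $\tau\sigma$, i.e.\ is fixed by $\widetilde{\tau\sigma}$. Since the Jordan product on $\mathcal{J}^1$ is the restriction of the ($\mathbb{C}$-bilinear) Jordan product on $\mathcal{J}^{\mathbb{C}}$, and two $\mathbb{C}$-bilinear maps agreeing on the spanning set $\mathcal{J}^1\times\mathcal{J}^1$ agree everywhere, $g$ is a Jordan automorphism of $\mathcal{J}^1$ if and only if $g^{\mathbb{C}}$ is one of $\mathcal{J}^{\mathbb{C}}$. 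This gives the bijection $g\mapsto g^{\mathbb{C}}$ between $\mathrm{F}_{4(-20)}$ and $(\mathrm{F}_4^{\mathbb{C}})^{\widetilde{\tau\sigma}}$, under which the trace, inner product, cross product, trilinear form, determinant and characteristic polynomial on $\mathcal{J}^1$ are the restrictions of the corresponding data on $\mathcal{J}^{\mathbb{C}}$.

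Granting this, part {\rm (1)} is immediate: for $g\in\mathrm{F}_{4(-20)}$ and $X\in\mathcal{J}^1$, apply (\ref{prl-04}.a) to $g^{\mathbb{C}}$ and to $X$ regarded as an element of $\mathcal{J}^{\mathbb{C}}$. For the inclusions ``$\subseteq$'' in (\ref{prl-08}.b): if $g\in\mathrm{F}_{4(-20)}$ then $g^{\mathbb{C}}\in\mathrm{F}_4^{\mathbb{C}}$, so every defining identity in (\ref{prl-04}.b) holds for all $X,Y,Z\in\mathcal{J}^{\mathbb{C}}$, hence a fortiori for all $X,Y,Z\in\mathcal{J}^1$, and $gE=E$ because $g^{\mathbb{C}}E=E$ with $E\in\mathcal{J}^1$. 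Thus $\mathrm{F}_{4(-20)}$ is contained in each of the five sets on the right-hand side of (\ref{prl-08}.b).

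The substance of the proof is the reverse inclusions. Take $g\in\mathrm{GL}_{\mathbb{R}}(\mathcal{J}^1)$ satisfying one of the conditions on the right of (\ref{prl-08}.b), and form $g^{\mathbb{C}}$. Each such condition is a polynomial or multilinear identity in the vector arguments ($\lambda$ being an inert parameter in the characteristic-polynomial case), and after inserting the $\mathbb{C}$-linear map $g^{\mathbb{C}}$ both sides become $\mathbb{C}$-polynomial maps on $\mathcal{J}^{\mathbb{C}}$, since the determinant, cross product, inner product and trilinear form are polynomial. A $\mathbb{C}$-polynomial map on $\mathcal{J}^{\mathbb{C}}$ that vanishes on the real form $\mathcal{J}^1$ vanishes identically: choosing a $\mathbb{C}$-basis of $\mathcal{J}^{\mathbb{C}}$ inside $\mathcal{J}^1$ identifies $\mathcal{J}^1=\mathbb{R}^N\subset\mathbb{C}^N$ with $N=\dim_{\mathbb{C}}\mathcal{J}^{\mathbb{C}}$, and a polynomial vanishing on $\mathbb{R}^N$ is zero. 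Hence the identity assumed over $\mathcal{J}^1$ propagates to all of $\mathcal{J}^{\mathbb{C}}$, and Proposition~\ref{prl-04} yields $g^{\mathbb{C}}\in\mathrm{F}_4^{\mathbb{C}}$. Since $g^{\mathbb{C}}$ restricts to $g$ on $\mathcal{J}^1$ it preserves $\mathcal{J}^1$, hence commutes with $\tau\sigma$, hence lies in $(\mathrm{F}_4^{\mathbb{C}})^{\widetilde{\tau\sigma}}=\mathrm{F}_{4(-20)}$; its restriction being $g$, we conclude $g\in\mathrm{F}_{4(-20)}$. This establishes each right-hand set is contained in $\mathrm{F}_{4(-20)}$ and completes (\ref{prl-08}.b).

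I expect the only genuine obstacle to be this propagation step: one must check that each of the five conditions is really expressed through polynomial or multilinear operations in the vector variables, so that real-form rigidity applies, and that Proposition~\ref{prl-04} supplies the constraints omitted by a given condition (such as $gE=E$, or invariance of the inner product) in the cases where fewer equalities are listed. But this is exactly the content of (\ref{prl-04}.b) for $\mathrm{F}_4^{\mathbb{C}}$, so no computation beyond \S\ref{prl} is required; the argument is a transfer of Proposition~\ref{prl-04} across the conjugation $\tau\sigma$.
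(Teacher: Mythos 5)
Your proposal is correct and follows essentially the same route as the paper, which simply derives Proposition~\ref{prl-08} from Proposition~\ref{prl-04} via the identification $\mathrm{F}_{4(-20)}=(\mathrm{F}_4^{\mathbb{C}})^{\widetilde{\tau\sigma}}$ and the fact that $\mathcal{J}^1$ is a real form of $\mathcal{J}^{\mathbb{C}}$. Your explicit complexification-and-rigidity argument (polynomial or multilinear identities valid on the real form extend to $\mathcal{J}^{\mathbb{C}}$) just fills in the transfer step the paper leaves implicit.
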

\medskip

By Proposition~\ref{prl-08}, 
the identity element $E$,
the trace,
the inner product,
the determinant,
the trilinear form,    
the cross product,
the characteristic polynomial
and the set of all characteristic roots 
with multiplicities
are invariant under the action of $\mathrm{F}_{4(-20)}$
and we use this fact without notice.

Let $X\in\mathcal{J}^1$ 
and $\lambda_0\in\mathbb{R}$. 
The elements $p(X),~E_{X,\lambda_0},
W_{X,\lambda_0}\in V_X$ 
(see Lemma~\ref{prl-07})
are defined as
\begin{align*}
p(X)&:=X-3^{-1}\mathrm{tr}(X)E,\\
E_{X,\lambda_0}&:=
{\mathrm{tr}((\lambda_0 E-X)^{\times 2})}^{-1}
(\lambda_0 E-X)^{\times 2},\\
W_{X,\lambda_0}&:=
X-\left(\lambda_0E_{X,\lambda_0}
+2^{-1}(\mathrm{tr}(X)-\lambda_0)
(E-E_{X,\lambda_0})\right)
\end{align*}
respectively.
Immediately, we have the following lemma.

\begin{lemma}\label{prl-09}
Let $X \in \mathcal{J}^1$.
If 
$\mathrm{tr}((\lambda_0 E-X)^{\times 2})\ne 0$
then
$E_{X,\lambda_0}$ and  $W_{X,\lambda_0}$
are well-defined 
and the following equation holds.
\[
\tag{\ref{prl-09}}
X=\lambda_0E_{X,\lambda_0}+2^{-1}(\mathrm{tr}(X)-\lambda_0)
(E-E_{X,\lambda_0})+W_{X,\lambda_0}.\]
\end{lemma}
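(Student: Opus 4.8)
The strategy is to unwind the definitions of $E_{X,\lambda_0}$ and $W_{X,\lambda_0}$ purely formally and observe that equation~(\ref{prl-09}) is a tautology once well-definedness is granted. First I would address well-definedness: the scalar $\mathrm{tr}((\lambda_0 E-X)^{\times 2})$ appears in the denominator defining $E_{X,\lambda_0}$, so the hypothesis $\mathrm{tr}((\lambda_0 E-X)^{\times 2})\ne 0$ is exactly what is needed for $E_{X,\lambda_0}\in\mathcal{J}^1$ to make sense; and $W_{X,\lambda_0}$ is then an honest $\mathbb{R}$-linear combination of $X$, $E$, and $E_{X,\lambda_0}$, all lying in $\mathcal{J}^1$, hence $W_{X,\lambda_0}\in\mathcal{J}^1$ as well. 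I would also note that all three elements lie in $V_X$: by Lemma~\ref{prl-07}, $V_X$ is closed under the cross product, and $(\lambda_0 E-X)^{\times 2}$ is a cross product of elements of $V_X$, so $(\lambda_0 E-X)^{\times 2}\in V_X$; scaling keeps it in $V_X$, giving $E_{X,\lambda_0}\in V_X$, and then $W_{X,\lambda_0}\in V_X$ since $V_X$ is a linear subspace containing $X$ and $E$. (Alternatively one can expand $(\lambda_0 E-X)^{\times 2}$ via bilinearity of $\times$ together with (\ref{prl-03}.b)(i),(ii) to write it explicitly as $aX^{\times2}+bX+cE$.)

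For the identity itself, write $\mu:=2^{-1}(\mathrm{tr}(X)-\lambda_0)$ for brevity. The definition of $W_{X,\lambda_0}$ reads
\[
W_{X,\lambda_0}=X-\bigl(\lambda_0 E_{X,\lambda_0}+\mu(E-E_{X,\lambda_0})\bigr),
\]
and rearranging this single equation gives
\[
X=\lambda_0 E_{X,\lambda_0}+\mu(E-E_{X,\lambda_0})+W_{X,\lambda_0},
\]
which is precisely~(\ref{prl-09}) after substituting back $\mu=2^{-1}(\mathrm{tr}(X)-\lambda_0)$. So the decomposition is immediate from the definition of $W_{X,\lambda_0}$ — there is genuinely nothing to compute here; the content of the lemma is the bookkeeping assertion that the three summands are well-defined elements of $V_X\subseteq\mathcal{J}^1$ under the stated nonvanishing hypothesis.

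There is no real obstacle in this lemma; the only point requiring a word of care is confirming that the hypothesis $\mathrm{tr}((\lambda_0E-X)^{\times2})\neq0$ is the \emph{only} thing that can go wrong, i.e. that no further genericity is needed for $W_{X,\lambda_0}$ to be defined. That is clear since, once $E_{X,\lambda_0}$ exists, $W_{X,\lambda_0}$ is manufactured from it by linear operations alone. I would therefore present the proof in three short lines: (i) the denominator is nonzero by hypothesis, so $E_{X,\lambda_0}$ is defined, and then so is $W_{X,\lambda_0}$; (ii) all three lie in $V_X$ by Lemma~\ref{prl-07} and linearity; (iii) equation~(\ref{prl-09}) is the definition of $W_{X,\lambda_0}$ rewritten. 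The more substantive work — identifying when this hypothesis actually holds, and interpreting $E_{X,\lambda_0}$ as an idempotent and $W_{X,\lambda_0}$ as a nilpotent-type remainder tied to the characteristic roots — is deferred to the later sections where the Main Theorems are proved.
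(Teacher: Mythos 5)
Your proposal is correct and matches the paper, which offers no written proof at all ("Immediately, we have the following lemma"): the nonvanishing of $\mathrm{tr}((\lambda_0 E-X)^{\times 2})$ is exactly the well-definedness of $E_{X,\lambda_0}$ (hence of $W_{X,\lambda_0}$), and equation~(\ref{prl-09}) is just the definition of $W_{X,\lambda_0}$ rearranged. Your extra remark that all terms lie in $V_X$ via Lemma~\ref{prl-07} is harmless additional bookkeeping, not a deviation in method.
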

\medskip

\begin{lemma}\label{prl-10} 
Let $X\in \mathcal{J}^1$.
For all $g\in \mathrm{F}_{4(-20)},$ 
\[
\tag{\ref{prl-10}}
\left\{
\begin{array}{rlrl}
{\rm (i)}&g(V_X)=V_{g X},&
{\rm (ii)}&gp(X)=p(gX),\\
\smallskip
{\rm (iii)}&g E_{X,\lambda_1}=E_{g X,\lambda_1},&
{\rm (iv)}&g W_{X,\lambda_1}=W_{g X,\lambda_1}.
\end{array}\right.
\]
\end{lemma}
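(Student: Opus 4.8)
The plan is to use the structural facts recorded in Proposition~\ref{prl-08}: every $g\in\mathrm{F}_{4(-20)}$ is an $\mathbb{R}$-linear automorphism of $\mathcal{J}^1$ that fixes the identity ($gE=E$), preserves the trace ($\mathrm{tr}(gX)=\mathrm{tr}(X)$), and commutes with the cross product ($g(X\times Y)=gX\times gY$, hence $g(X^{\times 2})=(gX)^{\times 2}$). Each of the four objects $V_X$, $p(X)$, $E_{X,\lambda_1}$, $W_{X,\lambda_1}$ is assembled from $X$, $E$, $X^{\times 2}$ and the scalars $\mathrm{tr}(X)$ and $\lambda_1$ by $\mathbb{R}$-linear operations, so in each case the identity reduces to transporting these building blocks through $g$.

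For (i): by linearity, $g(aX^{\times 2}+bX+cE)=a\,g(X^{\times 2})+b\,gX+c\,gE=a(gX)^{\times 2}+b(gX)+cE$ for all $a,b,c\in\mathbb{R}$, using $g(X^{\times 2})=(gX)^{\times 2}$ and $gE=E$; letting $a,b,c$ range over $\mathbb{R}$ gives $g(V_X)=V_{gX}$. For (ii): $gp(X)=g(X-3^{-1}\mathrm{tr}(X)E)=gX-3^{-1}\mathrm{tr}(X)E=gX-3^{-1}\mathrm{tr}(gX)E=p(gX)$, where the middle equality uses $gE=E$ and the last uses trace invariance.

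For (iii): since $gE=E$ and $g$ is linear, $g(\lambda_1E-X)=\lambda_1E-gX$, and therefore $g\big((\lambda_1E-X)^{\times 2}\big)=(\lambda_1E-gX)^{\times 2}$. Applying $\mathrm{tr}$ and using its $g$-invariance gives $\mathrm{tr}\big((\lambda_1E-X)^{\times 2}\big)=\mathrm{tr}\big((\lambda_1E-gX)^{\times 2}\big)$; in particular the well-definedness hypothesis of Lemma~\ref{prl-09} holds for $X$ if and only if it holds for $gX$, and we may divide the previous displayed identity by this common nonzero scalar to obtain $gE_{X,\lambda_1}=E_{gX,\lambda_1}$. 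Finally (iv) follows by applying $g$ to the definition $W_{X,\lambda_1}=X-\big(\lambda_1E_{X,\lambda_1}+2^{-1}(\mathrm{tr}(X)-\lambda_1)(E-E_{X,\lambda_1})\big)$: using $gE=E$, $\mathrm{tr}(gX)=\mathrm{tr}(X)$, and part (iii), one gets $gW_{X,\lambda_1}=gX-\big(\lambda_1E_{gX,\lambda_1}+2^{-1}(\mathrm{tr}(gX)-\lambda_1)(E-E_{gX,\lambda_1})\big)=W_{gX,\lambda_1}$.

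There is no genuine obstacle; the lemma is a bookkeeping consequence of the invariance properties in Proposition~\ref{prl-08}. The only point deserving a word of care is the well-definedness clause attached to (iii) and (iv), and as noted it is handled for free by the $g$-invariance of the trace and the cross product. One may also remark, for consistency with (i), that $E_{X,\lambda_1},W_{X,\lambda_1}\in V_X$ by construction and Lemma~\ref{prl-07}, so the conclusions of (iii)–(iv) are compatible with $g(V_X)=V_{gX}$.
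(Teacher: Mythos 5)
Your proof is correct and follows exactly the route the paper intends: the paper's own proof is the one-line remark that the lemma follows from Proposition~\ref{prl-08}, and your argument simply spells out that deduction (linearity of $g$, $gE=E$, trace invariance, and $g(X\times Y)=gX\times gY$), including the worthwhile observation that the well-definedness condition $\mathrm{tr}((\lambda_1E-X)^{\times 2})\ne 0$ is itself $g$-invariant.
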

\begin{proof}
It follows from
Proposition~\ref{prl-08}.
\end{proof}

\section{The principle of triality.}\label{trl}
In this section, we explain the groups $\mathrm{Spin}(8)$
and $\mathrm{Spin}(7)$
by means of the triality.
In the next section,
we explain that
these groups are  
isomorphic to some stabilizers in $\mathrm{F}_{4(-20)}$,
respectively.

We write
$S^0$ for the subset $\{{\rm diag}(1,1,\cdots,1),
{\rm diag}(-1,1,\cdots,1)\}
\cong \mathbb{Z}_2$  of $M(n,\mathbb{R})$
and ${\rm SO}(0)$ for the group $\{1\}$,
respectively.

\begin{lemma}\label{trl-01}
{\rm (cf. \cite{YiJ1971,YiJ1990})}.
Let $n$ be a natural number, 
and $p,q$ non-negative integers with $p+q>0$.

{\rm (1)} {\rm (cf. \cite[Theorem 20(2)]{YiJ1971}).}
${\rm O}(n)=S^0\ltimes {\rm SO}(n).$
Especially, ${\rm O}^0(n)={\rm SO}(n).$
\smallskip

{\rm (2)} {\rm (cf. \cite[Theorem 6.12(2)]{YiJ1990}).}
${\rm O}^0(p,q)\simeq 
({\rm SO}(p)\times {\rm SO}(q))\times \mathbb{R}^{pq}.$
\smallskip

{\rm (3)} For all $n\geq 3,$ the fundamental group  
$\pi_1({\rm O}^0(n,1))
=\mathbb{Z}_2$ 
\end{lemma}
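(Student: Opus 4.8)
The plan is to reduce all three assertions to standard structure theory of the (indefinite) orthogonal groups, invoking the references quoted in the statement for (1) and (2) and deducing (3) from (2). \emph{Part (1).} The determinant restricts to a continuous surjective homomorphism $\det\colon\mathrm{O}(n)\to\{\pm1\}$ whose kernel is $\mathrm{SO}(n)$, so $\mathrm{SO}(n)$ is normal in $\mathrm{O}(n)$. The nontrivial element $\mathrm{diag}(-1,1,\dots,1)$ of $S^0$ has determinant $-1$, whence $S^0\cap\mathrm{SO}(n)=\{1\}$ and $\mathrm{O}(n)=S^0\cdot\mathrm{SO}(n)$; by the definition of semidirect product in \S\ref{prl} this says precisely that $\mathrm{O}(n)=S^0\ltimes\mathrm{SO}(n)$. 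Since $\mathrm{SO}(n)$ is connected — one inducts on $n$ through the fibre bundle $\mathrm{SO}(n-1)\hookrightarrow\mathrm{SO}(n)\to S^{n-1}$ arising from the transitive action on the unit sphere, the base case being $\mathrm{SO}(1)=\{1\}$ — and $\det$ separates the two cosets, the identity component is $\mathrm{O}^0(n)=\mathrm{SO}(n)$; the bookkeeping is \cite[Theorem~20(2)]{YiJ1971}.

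\emph{Part (2).} Here I would use the polar (Cartan) decomposition of $\mathrm{O}(p,q)$ with respect to the inner product $I_p$: every $g\in\mathrm{O}(p,q)$ factors uniquely as $g=h\exp(S)$, where $h$ lies in the block-diagonal maximal compact subgroup $\mathrm{O}(p)\times\mathrm{O}(q)$ and $S$ is a real symmetric matrix anticommuting with $I_p$ — equivalently, one whose two diagonal blocks (of sizes $p$ and $q$) vanish, so that $S$ is determined by an arbitrary real $p\times q$ matrix and ranges over a space isomorphic to $\mathbb{R}^{pq}$; moreover $\exp$ carries this space diffeomorphically onto the set of positive-definite elements of $\mathrm{O}(p,q)$. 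Hence $(h,S)\mapsto h\exp(S)$ is a diffeomorphism $(\mathrm{O}(p)\times\mathrm{O}(q))\times\mathbb{R}^{pq}\to\mathrm{O}(p,q)$. Passing to identity components — the Euclidean factor being connected and, by part (1), $(\mathrm{O}(p)\times\mathrm{O}(q))^0=\mathrm{SO}(p)\times\mathrm{SO}(q)$ — yields $\mathrm{O}^0(p,q)\simeq(\mathrm{SO}(p)\times\mathrm{SO}(q))\times\mathbb{R}^{pq}$; the detailed verification is \cite[Theorem~6.12(2)]{YiJ1990}.

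\emph{Part (3).} Apply part (2) with $(p,q)=(n,1)$. Since $\mathrm{SO}(1)=\{1\}$ by the convention fixed just above the lemma, $\mathrm{O}^0(n,1)\simeq\mathrm{SO}(n)\times\mathbb{R}^n$; as $\mathbb{R}^n$ is contractible, $\pi_1(\mathrm{O}^0(n,1))\cong\pi_1(\mathrm{SO}(n))$. For $n\geq3$ one has $\pi_1(\mathrm{SO}(n))=\mathbb{Z}_2$: $\mathrm{SO}(3)$ is diffeomorphic to $\mathbb{R}\mathrm{P}^3$, so $\pi_1(\mathrm{SO}(3))=\mathbb{Z}_2$, and for $n\geq4$ the long exact homotopy sequence of the bundle $\mathrm{SO}(n-1)\hookrightarrow\mathrm{SO}(n)\to S^{n-1}$, together with $\pi_2(S^{n-1})=\pi_1(S^{n-1})=0$, gives $\pi_1(\mathrm{SO}(n-1))\cong\pi_1(\mathrm{SO}(n))$, so the assertion follows by induction. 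Therefore $\pi_1(\mathrm{O}^0(n,1))=\mathbb{Z}_2$. The only delicate point is the hypothesis $n\geq3$: for $n=2$ the same reduction would give $\pi_1(\mathrm{SO}(2))=\mathbb{Z}$ and for $n=1$ the group is trivial, so the restriction is genuinely needed; apart from that there is no real obstacle here, the content being routine compact/non-compact Lie theory already carried out in the cited references.
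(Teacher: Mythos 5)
Your argument is correct and, for part (3) — the only part the paper actually proves rather than cites — it follows the same route as the paper: combine the diffeomorphism of part (2) for $(p,q)=(n,1)$ with contractibility of the Euclidean factor and $\pi_1(\mathrm{SO}(n))=\mathbb{Z}_2$ for $n\geq 3$. The only difference is that you supply self-contained proofs of (1), (2) and of $\pi_1(\mathrm{SO}(n))=\mathbb{Z}_2$ (polar decomposition, sphere fibrations), where the paper simply cites \cite{YiJ1971} and \cite{YiJ1990} for these facts.
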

\begin{proof} (3) We note $\pi_1({\rm SO}(n))
=\mathbb{Z}_2$ $(n\geq 3)$ 
(cf. \cite[Theorem 59(2)]{YiJ1971}).
By (2), $\pi_1({\rm O}^0(n,1))
=\pi_1({\rm SO}(n))\times \pi_1({\rm SO}(0))
\times \pi_1(\mathbb{R}^{pq})=\mathbb{Z}_2$.
\end{proof}
\medskip

From now on, the groups ${\rm O}(8),$ ${\rm SO}(8),$ 
${\rm O}(7)$ and ${\rm SO}(7)$ 
are identified with the groups:
${\rm O}(8)=
\{g\in \mathrm{GL}_{\mathbb{R}}({\bf O})|
~(g x|g y)=(x|y)\}$,
${\rm SO}(8)=\{g\in \mathrm{GL}_{\mathbb{R}}({\bf O})|~
(g x|g y)=(x|y),~\mathrm{det}(g)=1\}$,
${\rm O}(7)=\{g\in 
{\rm O}(8)|
~g 1=1\}$,
${\rm SO}(7)=\{g
\in {\rm SO}(8)|
~g 1=1\}$,
respectively.
The element $\epsilon \in{\rm O}(8)$ is defined by
\[
\epsilon x:=\overline{x}\quad{\rm for}~x\in{\bf O}.\]
Then $\epsilon^2=1$ and its determinant is $-1:$ 
$\mathrm{det}(\epsilon)=-1.$ 
The involutive automorphism $t$ of the group 
${\rm SO}(8)^3$ is defined by
\[t(g_1,g_2,g_3):=
(g_1,g_2,\epsilon g_3\epsilon)\quad{\rm for}~
(g_1,g_2,g_3)\in {\rm SO}(8)^3.\]
The subgroup $T({\bf O})$ of
${\rm SO}(8)^3$ is defined by
\[
T({\bf O}):=
\{(g_1,g_2,g_3)\in {\rm SO}(8)^3
|~(g_1 x)(g_2 y)
=g_3(xy)~{\rm for~all}~x,y\in{\bf O}
\}\]
(cf. \cite[(2.4.6)]{Fh1951}, 
\cite{My1954}, \cite{SV2000}, \cite{Yi_arxiv})
and the subgroup $\tilde{D}_4$ of
${\rm SO}(8)^3$ by
\begin{align*}
\tilde{D}_4&:=t^{-1}(T({\bf O}))
=\{(g_1,g_2,g_3)\in {\rm SO}(8)^3~|~
t(g_1,g_2,g_3)\in T({\bf O})\}\\
&=\{(g_1,g_2,g_3)\in {\rm SO}(8)^3
|~(g_1 x)(g_2 y)
=\epsilon g_3\epsilon(xy)~{\rm for~all}~x,y\in{\bf O}
\}.
\end{align*}
The equation
$(g_1 x)(g_2 y)
=g_3(xy)$ or
$(g_1 x)(g_2 y)
=\epsilon g_3\epsilon (xy)$
is called the {\it triality}.
The following result is proved in \cite{My1954}
(cf. \cite[Lemma~1.14.3]{Yi_arxiv}).

\begin{lemma}\label{trl-02}
Let 
$i \in \{1,2,3\}$ and
indexes $i,i+1,i+2$ be counted modulo $3.$
Assume that
$(g_1,g_2,g_3)\in {\rm O}(8)^3$ 
satisfies
$(g_i x)(g_{i+1} y)
=\epsilon g_{i+2}\epsilon(xy)$
for all $x,y\in{\bf O}$.
Then
$(g_{i+1} x)(g_{i+2} y)
=\epsilon g_i\epsilon(xy)$
for all $x,y\in{\bf O}$.
Especially,
\[
\tag{\ref{trl-02}}
(g_1,g_2,g_3)\in \tilde{D}_4\Leftrightarrow
(g_2,g_3,g_1)\in \tilde{D}_4
\Leftrightarrow
(g_3,g_1,g_2)\in \tilde{D}_4.
\]
\end{lemma}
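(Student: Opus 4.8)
The plan is to reduce the statement to the classical triality symmetry of $T(\mathbf{O})$, exploiting the conjugation device $t$ that relates $\tilde{D}_4$ to $T(\mathbf{O})$. First I would observe that the octonion identities in Lemma~\ref{prl-01} (particularly the conjugation rule $\overline{xy}=\overline{y}\,\overline{x}$ of (\ref{prl-01}.f) and the inner-product relations (\ref{prl-01}.b), (\ref{prl-01}.e)) give the following elementary fact: for $g\in\mathrm{O}(8)$ one has $\epsilon g\epsilon\in\mathrm{O}(8)$, and the map $g\mapsto\epsilon g\epsilon$ is an involutive automorphism of $\mathrm{O}(8)$. Hence it suffices to prove the ``$i=1$'' instance of the cyclic implication, namely: if $(g_1x)(g_2y)=\epsilon g_3\epsilon(xy)$ for all $x,y$, then $(g_2x)(g_3y)=\epsilon g_1\epsilon(xy)$ for all $x,y$; the remaining instances follow by relabelling.

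The core step is the classical cyclic symmetry of the triality group: if $(h_1,h_2,h_3)\in\mathrm{O}(8)^3$ satisfies $(h_1x)(h_2y)=h_3(xy)$ for all $x,y\in\mathbf{O}$, then $(h_2,h_3,h_1)$ and $(h_3,h_1,h_2)$ satisfy the analogous identities. This is the content of the result of Mitsui cited just before the lemma (and \cite[Lemma~1.14.3]{Yi_arxiv}), so I may invoke it. The derivation of this cyclic symmetry itself runs through conjugation: from $(h_1x)(h_2y)=h_3(xy)$ one takes octonion conjugates, uses $\overline{ab}=\overline{b}\,\overline{a}$ to get $(\overline{h_2y})(\overline{h_1x})=\overline{h_3(xy)}=\overline{h_3}\,(\overline{y}\,\overline{x})$, and then substitutes $x\to\overline{x}$, $y\to\overline{y}$ together with the relation $\overline{h_i z}=(\epsilon h_i\epsilon)\overline{z}$; matching the resulting identity against the definition yields a new triple, and iterating produces the full $\mathbb{Z}_3$ action. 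If the paper wants a self-contained argument I would spell this computation out; otherwise I cite Mitsui.

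Given the cyclic symmetry of $T(\mathbf{O})$, the lemma follows by transporting along $t$. By definition $(g_1,g_2,g_3)\in\tilde{D}_4$ means $t(g_1,g_2,g_3)=(g_1,g_2,\epsilon g_3\epsilon)\in T(\mathbf{O})$, i.e.\ $(g_1x)(g_2y)=\epsilon g_3\epsilon(xy)$. Writing $h_1=g_1$, $h_2=g_2$, $h_3=\epsilon g_3\epsilon$, the cyclic symmetry gives $(h_2x)(h_3y)=h_1(xy)$ and $(h_3x)(h_1y)=h_2(xy)$. Rewriting these in terms of the $g_i$ — using $h_3=\epsilon g_3\epsilon$, $h_1=g_1=\epsilon(\epsilon g_1\epsilon)\epsilon$, and similarly for the index-$2$ slot — produces exactly the statements $(g_{i+1}x)(g_{i+2}y)=\epsilon g_i\epsilon(xy)$ for $i=1$, hence $(g_2,g_3,g_1)\in\tilde D_4$ and $(g_3,g_1,g_2)\in\tilde D_4$. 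Conversely, applying the same argument starting from $(g_2,g_3,g_1)\in\tilde D_4$ recovers $(g_1,g_2,g_3)\in\tilde D_4$, giving the chain of equivalences in (\ref{trl-02}).

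The only genuine obstacle is bookkeeping: making sure the $\epsilon$-conjugations land on the correct index slots when passing between $T(\mathbf{O})$ and $\tilde D_4$, and that the substitution $x\mapsto\overline{x}$ in the conjugation argument is tracked consistently. There is no deep difficulty once the Mitsui cyclic-symmetry result is in hand; the statement about ``$(g_ix)(g_{i+1}y)=\epsilon g_{i+2}\epsilon(xy)\Rightarrow(g_{i+1}x)(g_{i+2}y)=\epsilon g_i\epsilon(xy)$'' is literally the cyclic symmetry conjugated by $(\mathrm{id},\mathrm{id},\epsilon(\cdot)\epsilon)$ in one slot, so the proof is essentially a one-line reduction plus the cited lemma.
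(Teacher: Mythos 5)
There is a genuine gap: your ``core step'' --- that the \emph{unconjugated} triality group has cyclic symmetry, i.e.\ that $(h_1x)(h_2y)=h_3(xy)$ for all $x,y$ implies $(h_2x)(h_3y)=h_1(xy)$ --- is false. The Moufang triple $(L_a,R_a,T_a)\in T({\bf O})$ gives a counterexample: with $a=e_1$ and $x=y=1$ one gets $(R_{e_1}1)(T_{e_1}1)=e_1\cdot e_1^2=-e_1$, whereas $L_{e_1}(1\cdot 1)=e_1$. The conjugation computation you sketch does not rescue this: starting from $(h_1x)(h_2y)=h_3(xy)$, taking octonion conjugates and substituting $x\mapsto\overline{x}$, $y\mapsto\overline{y}$ yields $\bigl((\epsilon h_2\epsilon)x\bigr)\bigl((\epsilon h_1\epsilon)y\bigr)=(\epsilon h_3\epsilon)(xy)$, which is the order-two \emph{swap} symmetry $(h_1,h_2,h_3)\mapsto(\epsilon h_2\epsilon,\epsilon h_1\epsilon,\epsilon h_3\epsilon)$; iterating it returns to the original triple, so it cannot ``produce the full $\mathbb{Z}_3$ action.'' The genuine cyclic symmetry on $T({\bf O})$, obtained by transporting the present lemma along $t$, is $(g_1,g_2,g_3)\mapsto(g_2,\epsilon g_3\epsilon,\epsilon g_1\epsilon)$, i.e.\ it carries conjugation twists; the twist-free cyclic statement lives precisely on $\tilde{D}_4$, and that statement \emph{is} Lemma~\ref{trl-02}. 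So your reduction runs the wrong way: you would be deducing the lemma from an intermediate claim that is either false (untwisted form) or identical to the lemma itself (twisted form).

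Concerning the citation: the result invoked before the lemma is due to Matsushima \cite{My1954} (not Mitsui), and both there and in \cite[Lemma~1.14.3]{Yi_arxiv} it is stated for the conjugated relation $(g_1x)(g_2y)=\epsilon g_3\epsilon(xy)$ --- that is, it is literally the statement being proved here, which is why the paper itself offers no argument beyond the citation. Simply citing that result is acceptable and is what the paper does; but if you want a self-contained proof, the $3$-cycle requires a different computation than conjugation alone (e.g.\ specializing variables and using the linearized Moufang identities (\ref{prl-01}.h)--(\ref{prl-01}.j) to pass from the pair $(g_i,g_{i+1})$ to $(g_{i+1},g_{i+2})$), and your outline as written would fail at exactly that step.
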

\medskip

For $i\in\{1,2,3\},$ the homomorphism 
$p_i: \tilde{D}_4\rightarrow {\rm SO}(8)$ 
is defined by
\[p_i(g_1,g_2,g_3):=g_i\quad\quad{\rm for}~
(g_1,g_2,g_3)\in \tilde{D}_4.\]

\begin{lemma}\label{trl-03} 
Let $x,y\in{\bf O}.$

{\rm (1)} Let $g\in{\rm SO}(7).$ 
Then
\[
\tag{\ref{trl-03}.a}
{\rm (i)}~~~\overline{g x}
=g\overline{x},\quad
{\rm (ii)}~~~\epsilon g\epsilon=g,\quad
{\rm (iii)}~~~g({\rm Im}(x))={\rm Im}(g x).
\]
Especially, $g(\mathrm{Im}{\bf O})\subset \mathrm{Im}{\bf O}$.
\smallskip

{\rm (2)} Let 
$(g_1,g_2,g_3)\in \tilde{D}_4$,
$i \in \{1,2,3\}$ and indexes $i,i+1,i+2$
be counted modulo $3$.
Then \[
\tag{\ref{trl-03}.b}
g_i 1=1~~
\Leftrightarrow~~g_{i+1}=\epsilon g_{i+2}\epsilon
~~
\Leftrightarrow~~g_{i+2}=\epsilon g_{i+1}\epsilon.\]

{\rm (3)} Assume that $(g_1,g_2,g_3)
\in \tilde{D}_4$
and $g_3 1=1.$
Then
\[
\tag{\ref{trl-03}.c}
\left\{
\begin{array}{rlrl}
{\rm (i)}&
g_3(x\overline{y})
=(g_1x)(\overline{g_1 y}),&
{\rm (ii)}&
g_3({\rm Im}(x\overline{y}))
={\rm Im}((g_1x)(\overline{g_1 y})),\\
\smallskip
{\rm (iii)}&
g_1(xy)=(g_3x)(g_1y).
\end{array}\right.
\]
\end{lemma}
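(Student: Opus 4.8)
The plan is to deduce all three parts from the defining triality relation $(g_1x)(g_2y)=\epsilon g_3\epsilon(xy)$ of $\tilde{D}_4$, the cyclic symmetry provided by Lemma~\ref{trl-02}, and the elementary splitting ${\bf O}=\mathbb{R}1\oplus{\rm Im}{\bf O}$, $x={\rm Re}(x)+{\rm Im}(x)$. I do not expect a genuine obstacle: the arguments are short once Lemma~\ref{trl-02} is in hand. The only points that need care are keeping the cyclic index bookkeeping of Lemma~\ref{trl-02} consistent, and recalling that ${\rm SO}(7)$ is exactly the stabilizer of $1$ in ${\rm SO}(8)$, so that Part~(1) can be applied to the map $g_3$ inside Part~(3).

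\textbf{Part (1).} Fix $g\in{\rm SO}(7)$, so $g1=1$ and $(gu|gv)=(u|v)$. Since ${\rm Re}(x)$ is a real multiple of $1$ it is fixed by $g$, so by linearity $gx={\rm Re}(x)+g({\rm Im}(x))$; moreover $(1|g({\rm Im}(x)))=(g1|g({\rm Im}(x)))=(1|{\rm Im}(x))={\rm Re}({\rm Im}(x))=0$, so $g({\rm Im}(x))\in{\rm Im}{\bf O}$. By uniqueness of the decomposition ${\bf O}=\mathbb{R}1\oplus{\rm Im}{\bf O}$ this yields ${\rm Re}(gx)={\rm Re}(x)$ and ${\rm Im}(gx)=g({\rm Im}(x))$, which is (iii), and in particular $g({\rm Im}{\bf O})\subset{\rm Im}{\bf O}$. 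Then (i) follows, $\overline{gx}={\rm Re}(x)-g({\rm Im}(x))=g\bigl({\rm Re}(x)-{\rm Im}(x)\bigr)=g\overline{x}$, and (ii) follows from (i): $\epsilon g\epsilon(x)=\overline{g\overline{x}}=g\overline{\overline{x}}=gx$, the middle equality being (i) applied to $\overline{x}$.

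\textbf{Part (2).} By Lemma~\ref{trl-02} we have $(g_i,g_{i+1},g_{i+2})\in\tilde{D}_4$ for the given $i$, so $(g_ix)(g_{i+1}y)=\epsilon g_{i+2}\epsilon(xy)$ for all $x,y$; putting $x=1$ gives $(g_i1)(g_{i+1}y)=\epsilon g_{i+2}\epsilon(y)$ for all $y$. If $g_i1=1$, the left-hand side is $g_{i+1}y$, hence $g_{i+1}=\epsilon g_{i+2}\epsilon$. Conversely, if $g_{i+1}=\epsilon g_{i+2}\epsilon$, then $(g_i1)(g_{i+1}y)=g_{i+1}y$ for all $y$; since $g_{i+1}\in{\rm SO}(8)$ is invertible, left multiplication by $g_i1$ is the identity of ${\bf O}$, which forces $g_i1=(g_i1)\cdot1=1$. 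Finally, $g_{i+1}=\epsilon g_{i+2}\epsilon\Leftrightarrow g_{i+2}=\epsilon g_{i+1}\epsilon$ is immediate from $\epsilon^2=1$.

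\textbf{Part (3).} Suppose $(g_1,g_2,g_3)\in\tilde{D}_4$ and $g_31=1$. Part~(2) with $i=3$ gives $g_2=\epsilon g_1\epsilon$, hence $\epsilon g_2\epsilon=g_1$; and since $g_3\in{\rm SO}(8)$ fixes $1$ we have $g_3\in{\rm SO}(7)$, so Part~(1) applies to $g_3$, in particular $\epsilon g_3\epsilon=g_3$. For (iii): Lemma~\ref{trl-02} gives $(g_3,g_1,g_2)\in\tilde{D}_4$, so $(g_3x)(g_1y)=\epsilon g_2\epsilon(xy)=g_1(xy)$. For (i): the defining relation reads $(g_1x)(g_2y)=\epsilon g_3\epsilon(xy)=g_3(xy)$; replacing $y$ by $\overline{y}$ and using $g_2\overline{y}=\epsilon g_1\epsilon(\overline{y})=\epsilon(g_1y)=\overline{g_1y}$ gives $(g_1x)(\overline{g_1y})=g_3(x\overline{y})$. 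For (ii): applying Part~(1)(iii) to $g_3\in{\rm SO}(7)$ with the argument $x\overline{y}$ gives $g_3({\rm Im}(x\overline{y}))={\rm Im}(g_3(x\overline{y}))={\rm Im}((g_1x)(\overline{g_1y}))$, the last equality by (i).
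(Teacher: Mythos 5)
Your proof is correct and follows essentially the same route as the paper: part (1) from $g1=1$ and invariance of the inner product, part (2) by substituting $x=1$ in the triality relation for $(g_i,g_{i+1},g_{i+2})$ obtained via Lemma~\ref{trl-02}, and part (3) by combining $g_2=\epsilon g_1\epsilon$, $\epsilon g_3\epsilon=g_3$ (from (1), since $g_31=1$ puts $g_3\in{\rm SO}(7)$) and the cyclic shift $(g_3,g_1,g_2)\in\tilde{D}_4$. The only cosmetic difference is in the converse of (2), where you argue via surjectivity of $g_{i+1}$ and evaluation at $1$ instead of right-multiplying by $(\epsilon g_{i+2}\epsilon 1)^{-1}$ as the paper does; both are fine.
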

\begin{proof}
(1) 
Because of $g1=1$,
$\overline{x}=2(1|x)-x$
and ${\rm Im}(x)=x-(1|x)$,
we see
$g\overline{x}
=2(1|g x)-g x=\overline{g x}$.
and 
$g({\rm Im}(x))
=g(x-(1|x))=g x-(1|g x)
={\rm Im}(g x)$.
Thus (i) and (iii) follow.
From (i), 
$\epsilon g\epsilon x
=g(\overline{\overline{x}})=g x$.

(2) Obviously $g_{i+1}=\epsilon g_{i+2}\epsilon$
iff $g_{i+2}=\epsilon g_{i+1}\epsilon.$
We show
$g_i 1=1$ iff
$g_{i+1}=\epsilon g_{i+2}\epsilon.$
By (\ref{trl-02}),
$(g_1,g_2,g_3)\in \tilde{D}_4$ iff
$(g_i,g_{i+1},g_{i+2})\in \tilde{D}_4$
so that
\[\tag{i}(g_i x)(g_{i+1}y)
=\epsilon g_{i+2}\epsilon(xy)\quad
\text{for~all}~x,y\in{\bf O}.\]
Suppose $g_i1=1$. 
Substituting $x=1$ in (i),
we obtain $g_{i+1}=\epsilon g_{i+2}\epsilon$.
Conversely, 
suppose $g_{i+1}=\epsilon g_{i+2}\epsilon$. 
Substituting $x=y=1$ in (i), 
$(g_i 1)(\epsilon g_{i+2}\epsilon 1)
=\epsilon g_{i+2}\epsilon 1.$
Multiplying $(\epsilon g_{i+2} \epsilon 1)^{-1}$ 
from right,
$g_i 1=1$.

(3)
By (\ref{trl-03}.b),
$g_2=\epsilon g_1\epsilon$
and $g_1=\epsilon g_2\epsilon$. 
First, because of
$g_3
=\epsilon g_3\epsilon$ (by (\ref{trl-03}.a)(ii))
and
$(g_1,g_2,g_3)=(g_1,\epsilon g_1\epsilon,g_3)
\in \tilde{D}_4$, we see that
$g_3(x\overline{y})
=\epsilon g_3\epsilon(x\overline{y})
=(g_1x)(\epsilon g_1\epsilon\overline{y})
=(g_1x)(\overline{g_1y})$.
Second, by (\ref{trl-03}.a)(iii) 
and (\ref{trl-03}.c)(i),
we see that 
$g_3({\rm Im}(x\overline{y}))={\rm Im}(g_3(x\overline{y}))
={\rm Im}((g_1x)(\overline{g_1 y}))$.
Last, because of
$g_1=\epsilon g_2\epsilon$ and
$(g_3,g_1,g_2)
\in \tilde{D}_4$ (by (\ref{trl-02})),
we obtain 
$g_1(xy)=\epsilon g_2 \epsilon (xy)
=(g_3x)(g_1y)$.
\end{proof}
\medskip

The subgroup $\tilde{B}_3$ 
of $\tilde{D}_4$ is defined by 
\begin{align*}
\tilde{B}_3&:=\{(g_1,g_2,g_3)\in\tilde{D}_4
|~g_3 1=1\}\\
&=\{(g_1,g_2,g_3)\in\tilde{D}_4
|~g_2=\epsilon g_1\epsilon\}
=\{(g_1,g_2,g_3)\in\tilde{D}_4
~|~g_1=\epsilon g_2\epsilon\}
\end{align*}
and the homomorphism 
$q: \tilde{B}_3\rightarrow {\rm SO}(7)$ 
by $q:=p_3|\tilde{B}_3$: 
$q(g_1,g_2,g_3)=g_3.$
The linear Lie group $\mathrm{G}_2$
is defined by
\[\mathrm{G}_2:={\rm Aut}({\bf O})
=\{g\in\mathrm{GL}_{\mathbb{R}}({\bf O})~|
~(g x)(g y)=g(xy)\}.\]
Then $\mathrm{G}_2$ is a subgroup of ${\rm SO}(7)$
(cf. \cite[Lemma~1.2.1, Theorem~1.9.3]{Yi_arxiv}).
In particular, $(gx|gy)=(x|y)$ and $g 1=1$
for all $g\in \mathrm{G}_2$.
Also, for any $g\in \mathrm{G}_2,$ considering 
$(g,g,g)
\in{\rm SO}(8)^3,$ 
$\mathrm{G}_2$ is a subgroup of $\tilde{B}_3.$
Now $S^7=\{a\in{\bf O}|~{\rm n}(a)=1\}$ 
and $S^6=\{a\in{\rm Im}{\bf O}|~{\rm n}(a)=1\}.$
For all $a\in S^7,$ 
the elements
$L_a,R_a,T_a\in \mathrm{GL}_{\mathbb{R}}({\bf O})$
are defined
by
\[
L_a x:=ax,~R_a x:=xa,~T_a x:=axa\quad
{\rm for}~x\in{\bf O}\]
respectively.
By (\ref{prl-01}.b),
$L_a,R_a,T_a\in {\rm O}(8)$.
Because $S^7$ is connected
and for the unite $1\in{\bf O}$,
$L_1=R_1=T_1=1_{\bf O}$
where $1_{\bf O}$ denotes
the identity element of $\mathrm{O}(8)$,
we obtain
$L_a,R_a,T_a\in {\rm O}^0(8)={\rm SO}(8)$
(see Lemma~\ref{trl-01}(1)).
For any $a_i\in S^7,$ 
denote the elements $L_{a_n,\cdots,a_1}$,
$R_{a_n,\cdots,a_1}$,
$T_{a_n,\cdots,a_1} \in {\rm SO}(8)$
as
\[
\left\{
\begin{array}{c}
L_{a_n,\cdots,a_1}
:=L_{a_n}\cdots L_{a_1},\quad
R_{a_n,\cdots,a_1}:=R_{a_n}\cdots R_{a_1},\\
T_{a_n,\cdots,a_1}:=T_{a_n}\cdots T_{a_1}.
\end{array}
\right.\]
\begin{lemma}\label{trl-04}
{\rm (1)} {\rm (cf. \cite[Theorem 1.9.1,Theorem 
1.9.2]{Yi_arxiv})}.
\begin{gather*}
\tag{\ref{trl-04}.a}S^6=Orb_{\mathrm{G}_2}(e_1),\\
(\mathrm{G}_2)_{e_1}\cong {\rm SU}(3),\quad
\mathrm{G}_2/{\rm SU}(3)\simeq S^6.
\end{gather*}
Furthermore, $\mathrm{G}_2$ is connected.
\smallskip

{\rm (2)} If $a_i\in S^7,$
then 
$(L_{a_n,\cdots ,a_1},R_{a_n,\cdots,a_1},
\epsilon{T_{a_n,\cdots,a_1}}\epsilon)\in 
\tilde{D}_4$.
\smallskip

{\rm (3)} 
If $a,b\in S^6,$  
then
$(L_{b,a},R_{b,a},T_{b,a})
\in\tilde{B}_3$.

{\rm (4)} Let $j\in\{1,2\}$.
\[
\tag{\ref{trl-04}.b}
\mathrm{G}_2=\{(g_1,g_2,g_3)
\in \tilde{B}_3
|~g_j 1=1\}.
\]
\end{lemma}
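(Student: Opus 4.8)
The plan is to dispose of (1) by citation, to derive (2) from Moufang's formula by induction, to obtain (3) from (2) by noting that conjugation acts trivially on $T_a$ when $a$ is imaginary, and finally to read off (4) from the structure of $\tilde B_3$ together with Lemma~\ref{trl-03}. For (1) the displayed identities are exactly \cite[Theorem~1.9.1, Theorem~1.9.2]{Yi_arxiv}; connectedness of $\mathrm{G}_2$ then follows because the orbit map $\mathrm{G}_2\to S^6$, $g\mapsto g e_1$, makes $\mathrm{G}_2$ the total space of a fiber bundle with connected fiber $\mathrm{SU}(3)$ over the connected base $S^6$.

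For (2) I would induct on $n$. For a single $a\in S^7$ and all $x,y\in{\bf O}$, Moufang's formula (\ref{prl-01}.k) gives
\[
(L_a x)(R_a y)=(ax)(ya)=a(xy)a=T_a(xy),
\]
and since $L_a,R_a,T_a\in\mathrm{SO}(8)$ (established in the text) and $\epsilon\bigl(\epsilon T_a\epsilon\bigr)\epsilon=T_a$, this is precisely the statement $(L_a,R_a,\epsilon T_a\epsilon)\in\tilde D_4$. For the inductive step write $g=L_{a_{n-1},\cdots,a_1}$, $h=R_{a_{n-1},\cdots,a_1}$, $k=T_{a_{n-1},\cdots,a_1}$ and assume $(gx)(hy)=k(xy)$ for all $x,y$; applying Moufang's formula to $a_n$,
\[
(L_{a_n,\cdots,a_1}x)(R_{a_n,\cdots,a_1}y)=(a_n(gx))((hy)a_n)=a_n\bigl((gx)(hy)\bigr)a_n=a_n\bigl(k(xy)\bigr)a_n,
\]
and the right-hand side equals $T_{a_n,\cdots,a_1}(xy)$, so $(L_{a_n,\cdots,a_1},R_{a_n,\cdots,a_1},\epsilon T_{a_n,\cdots,a_1}\epsilon)\in\tilde D_4$.

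For (3), I would first note that for $a\in\mathrm{Im}{\bf O}$ one has, by (\ref{prl-01}.f) and $\overline a=-a$, the identity $\epsilon T_a\epsilon\,x=\overline{a\overline x a}=\overline a\,x\,\overline a=axa=T_a x$, so $\epsilon T_a\epsilon=T_a$, and hence $\epsilon T_{b,a}\epsilon=T_{b,a}$ whenever $a,b\in S^6$; together with (2) this yields $(L_{b,a},R_{b,a},T_{b,a})\in\tilde D_4$. Moreover, for $a\in S^6$ we have $a^2=-\mathrm{n}(a)=-1$ by (\ref{prl-01}.g), so $T_a1=a^2=-1$ and then $T_{b,a}1=T_b(-1)=-b^2=1$; since the third component fixes $1$, the triple lies in $\tilde B_3$ by the definition of $\tilde B_3$.

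For (4) I would prove both inclusions. If $g\in\mathrm{G}_2$ then $(g,g,g)\in\tilde B_3$ (noted in the text) and $g1=1$, so $(g,g,g)$ lies in the right-hand side for $j=1$ and $j=2$. Conversely, let $(g_1,g_2,g_3)\in\tilde B_3$ with $g_j1=1$. Since $g_31=1$, Lemma~\ref{trl-03}(1) gives $\epsilon g_3\epsilon=g_3$; plugging this into (\ref{trl-03}.b) with $i=j$ shows $g_{j+1}=g_{j+2}$ (indices mod $3$), and combining this with the defining relation of $\tilde B_3$ and the fact that $\epsilon$ fixes whichever of $g_1,g_3$ fixes $1$ forces $g_1=g_2=g_3=:g$; the triality identity then reads $(gx)(gy)=g(xy)$, i.e. $g\in\mathrm{Aut}({\bf O})=\mathrm{G}_2$. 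The one place demanding care is purely organizational: in (4) one must keep the three equivalent descriptions of $\tilde B_3$ and the mod-$3$ index shift in (\ref{trl-03}.b) aligned so that the argument runs uniformly for both values of $j$; the analytic ingredients (Moufang's identity, the triviality of conjugation on $T_a$, and the value $a^2=-1$ on $S^6$) are routine.
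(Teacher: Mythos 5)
Your proposal is correct and follows essentially the same route as the paper: Moufang's formula plus induction for (2), the identity $\epsilon T_{b,a}\epsilon=T_{b,a}$ together with $T_{b,a}1=1$ for (3), and the equivalences of (\ref{trl-03}.b) with the defining relations of $\tilde B_3$ to force $g_1=g_2=g_3\in\mathrm{Aut}({\bf O})$ for (4). The only deviations are cosmetic: you verify $\epsilon T_a\epsilon=T_a$ directly for imaginary $a$ instead of first checking $\epsilon T_{b,a}\epsilon\,1=1$ and invoking ${\rm SO}(7)$-membership, and you supply the fiber-bundle argument for connectedness in (1), which the paper leaves to the cited references.
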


\begin{proof}
(2) Let $a\in S^7.$ 
By (\ref{prl-01}.k),
\[(L_a x)(R_a y)=T_a(xy)
=\epsilon(\epsilon T_a\epsilon)\epsilon(xy).\]
Because of $L_a,~R_a,~
\epsilon T_a\epsilon \in{\rm SO}(8),$
we see
$(L_a,R_a,\epsilon T_a\epsilon)\in \tilde{D}_4$.
Next, from the induction, it follows that
\[(L_{a_n,\cdots,a_1} x)(R_{a_n,\cdots,a_1} y)
=\epsilon(\epsilon T_{a_n,\cdots,a_1}
\epsilon)\epsilon(xy).\]
Hence
(2) follows.

(3) From (2), 
$(L_{b,a},R_{b,a},\epsilon 
T_{b,a}\epsilon)\in\tilde{D}_4.$
Because of $a^2=b^2=-1$, 
we see
$\epsilon T_{b,a}\epsilon 1
=\overline{b(a(\overline{1})a)b}=1$.
Then $\epsilon T_{b,a}\epsilon 
\in \mathrm{SO}(7)$
and by (\ref{trl-03}.a)(ii),
$\epsilon T_{b,a}\epsilon=T_{b,a}$.
Hence (3) follows.

(4) 
Suppose that $(g_1,g_2,g_3)
\in \tilde{B}_3$. Because of
$g_2=\epsilon g_1 \epsilon$,
we see that $g_1 1=1$ iff $g_2 1=1$.
Thus it is enough to show
$\mathrm{G}_2=\{(g_1,g_2,g_3)
\in \tilde{B}_3
|~g_1 1=1\}$.
Put $G=\{(g_1,g_2,g_3)
\in \tilde{B}_3
|~g_1 1=1\}.$
Immediately, $\mathrm{G}_2\subset G$. 
Conversely, take $g=(g_1,g_2,g_3)\in G$.
Because of $g \in G$, 
we see
$g_1 1=g_2 1=g_3 1=1$.
Because of $g_i 1=g_{i+1} 1=1$  
and (\ref{trl-03}.b), we see
$g_{i+1}=\epsilon g_{i+2}\epsilon$
and $g_{i+2}=\epsilon g_{i}\epsilon$.
Then
$g_{i+1}=\epsilon g_{i+2}\epsilon
=\epsilon(\epsilon g_{i}\epsilon)\epsilon=g_{i}$.
Moving $i\in \{1,2,3\}$,
$g_1=g_2=g_3$.
Thus  $(g_1,g_2,g_3)\in \mathrm{G}_2$
and so $G\subset \mathrm{G}_2.$
Hence $G=\mathrm{G}_2$.
\end{proof}
\medskip

\begin{lemma}\label{trl-05}
{\rm (1)} $\tilde{B}_3/\mathrm{G}_2\simeq S^7.$
Furthermore, $\tilde{B}_3$ is connected.
\smallskip

{\rm (2)} $\tilde{D}_4/\tilde{B}_3\simeq S^7.$
Furthermore, $\tilde{D}_4$ is connected.
\end{lemma}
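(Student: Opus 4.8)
The plan is to identify each of the two quotients with the sphere $S^7$ by producing, in each case, a smooth transitive action of the ambient group on $S^7$ whose isotropy subgroup at the unit $1\in{\bf O}$ is exactly the subgroup being divided out; connectedness then drops out of the associated fibre bundle. Throughout I will use that $\tilde{B}_3$ and $\tilde{D}_4$ are closed subgroups of the compact group $\mathrm{SO}(8)^3$, hence compact (in particular second countable) Lie groups, together with the standard fact that a smooth transitive action of such a group on a manifold $M$ makes the orbit map $g\mapsto g\cdot x_0$ into a locally trivial bundle and induces a diffeomorphism $G/G_{x_0}\simeq M$.

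For (1), let $\tilde{B}_3$ act on $S^7$ by $g\cdot a:=p_1(g)a=g_1a$; this is well defined and smooth since $p_1$ is a continuous homomorphism into $\mathrm{SO}(8)$, which preserves ${\rm n}$. To get transitivity I would use Lemma~\ref{trl-04}(3): for $a,b\in S^6$ the triple $(L_{b,a},R_{b,a},T_{b,a})$ lies in $\tilde{B}_3$ and carries $1$ to $L_{b,a}1=ba$, so it suffices to check $\{ba\mid a,b\in S^6\}=S^7$. Given $c\in S^7$ with ${\rm Im}(c)\neq 0$, write $c=\alpha+\beta u$ with $\alpha={\rm Re}(c)$, $u\in S^6$ and $\beta>0$; pick $a\in S^6$ with $(a|u)=0$ and set $b:=-ca$. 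Then ${\rm n}(b)={\rm n}(c){\rm n}(a)=1$ and ${\rm Re}(b)=-(\overline{c}|a)=-\alpha(1|a)+\beta(u|a)=0$, so $b\in S^6$, while the alternative law (\ref{prl-01}.h) gives $ba=-(ca)a=-c(a^2)=c$; the cases $c=\pm1$ are immediate. The isotropy at $1$ is $\{g\in\tilde{B}_3\mid g_11=1\}$, which equals $\mathrm{G}_2$ by Lemma~\ref{trl-04}(4), so the orbit map induces $\tilde{B}_3/\mathrm{G}_2\simeq S^7$. This exhibits $\tilde{B}_3$ as the total space of a bundle with connected fibre $\mathrm{G}_2$ (connected by Lemma~\ref{trl-04}(1)) over the connected base $S^7$; hence $\tilde{B}_3$ is connected.

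Part (2) is entirely parallel, now letting $\tilde{D}_4$ act on $S^7$ by $g\cdot a:=p_3(g)a=g_3a$ (well defined since $g_3\in\mathrm{SO}(8)$). For transitivity I would use that $(L_a,R_a,\epsilon T_a\epsilon)\in\tilde{D}_4$ for every $a\in S^7$ by Lemma~\ref{trl-04}(2); its third component sends $1$ to $\epsilon T_a\epsilon 1=\epsilon(a^2)=\overline{a^2}$, and writing $a=\cos\phi+\sin\phi\,u$ with $u\in S^6$ gives $\overline{a^2}=\cos 2\phi-\sin 2\phi\,u$, which runs over all of $S^7$ for a suitable choice of $\phi$ and $u$. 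The isotropy at $1$ is $\{g\in\tilde{D}_4\mid g_31=1\}=\tilde{B}_3$ by the very definition of $\tilde{B}_3$, so the orbit map gives $\tilde{D}_4/\tilde{B}_3\simeq S^7$; since $\tilde{B}_3$ (just shown) and $S^7$ are connected, so is $\tilde{D}_4$.

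The step that genuinely needs care is the transitivity computation: because ${\bf O}$ is nonassociative, the manipulations $ba=-(ca)a=-c(a^2)$ and $\epsilon T_a\epsilon 1=\overline{a^2}$ must be justified through the flexible and alternative identities in Lemma~\ref{prl-01} rather than by moving parentheses freely. Everything else—smoothness of the actions, the reading off of the isotropy groups from Lemma~\ref{trl-04}, and the passage from a transitive smooth action to the homogeneous-space diffeomorphism and its bundle structure—is routine, and the two connectedness assertions are then formal consequences.
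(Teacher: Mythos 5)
Your proof is correct, and its overall skeleton coincides with the paper's: the action of $\tilde{B}_3$ on $S^7$ through $p_1$ and of $\tilde{D}_4$ through $p_3$, the isotropy groups at $1$ identified as $\mathrm{G}_2$ (via (\ref{trl-04}.b)) and as $\tilde{B}_3$ (by its definition), and connectedness read off from a fibration with connected fibre and base. Where you genuinely diverge is the transitivity step. For (1) the paper moves an arbitrary $x=\cos\theta+a\sin\theta$ to $1$ in four stages, alternating $\mathrm{G}_2$-transitivity on $S^6$ ((\ref{trl-04}.a)) with the elements $(L_{e_1,e_2},R_{e_1,e_2},T_{e_1,e_2})$ and $(L_{e_3,e_2},R_{e_3,e_2},T_{e_3,e_2})$ of Lemma~\ref{trl-04}(3); you instead prove the factorization $S^7=\{ba\mid a,b\in S^6\}$ (taking $a$ orthogonal to ${\rm Im}(c)$, $b=-ca$, and using $x(aa)=(xa)a$ from (\ref{prl-01}.h)), so a single triple $(L_{b,a},R_{b,a},T_{b,a})\in\tilde{B}_3$ carries $1$ to any prescribed point; this is shorter and does not even need transitivity of $\mathrm{G}_2$ on $S^6$, which you invoke only for connectedness of the fibre. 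For (2) the roles are reversed: the paper's argument is the slicker one, using the cyclic symmetry (\ref{trl-02}) so that $(R_{\overline{x}},\epsilon T_{\overline{x}}\epsilon,L_{\overline{x}})\in\tilde{D}_4$ sends $x$ to $\overline{x}x=1$ in one stroke, whereas you compute the orbit of $1$ as $\{\overline{a^2}\mid a\in S^7\}$ and verify by the half-angle computation that it exhausts $S^7$; both are valid, yours costing a small extra computation but avoiding the appeal to Lemma~\ref{trl-02}. Your closing caution that $-(ca)a=-c(a^2)$ and $T_a1=a^2$ must be justified by the alternative and flexible laws is exactly right and is covered by (\ref{prl-01}.h).
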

\begin{proof}
(1)
We consider the action of $\tilde{B}_3$ on $S^7$ 
as
$x\mapsto p_1(g_1,g_2,g_3)x
=g_1 x$
for
$x\in S^7$ and
$(g_1,g_2,g_3)\in\tilde{B}_3.$
Fix $x\in S^7$.
Then $x$ can be expressed by
\[x=\cos \theta+a \sin \theta
\quad\text{for~some}~ 
a\in S^6~
\text{and}~\theta\in\mathbb{R}.\]
First, 
by (\ref{trl-04}.a),
there exists $g_1\in \mathrm{G}_2$
such that $g_1a=e_1$. Obviously $g_1 1=1$
and we set $h_1=(g_1,g_1,g_1)\in \mathrm{G}_2\subset
\mathrm{Spin}(7)$.
Then
\[p_1(h_1)x=\cos \theta +e_1\sin \theta.\]
Second, put
$h_{e_1,e_2}=(L_{e_1,e_2},R_{e_1,e_2},T_{e_1,e_2})$.
Because of $e_i\in S^6$ and Lemma~\ref{trl-04}(3),
we see
$h_{e_1,e_2}\in\tilde{B}_3$
and 
\[p_1(h_{e_1,e_2})p_1(h_1) x=
e_1(e_2(\cos \theta +e_1\sin \theta))
=e_3\cos\theta+e_2\sin\theta.\]
Third,
because of $e_3\cos\theta+e_2\sin\theta\in S^6$,
there exists $g_2\in \mathrm{G}_2$
such that $g_2(e_3\cos\theta+e_2\sin\theta)=e_1$.
Then letting $h_2=(g_2,g_2,g_2)\in \mathrm{G}_2
\subset \mathrm{Spin}(7)$,
\[p_1(h_2)p_1
(h_{e_1,e_2})p_1(h_1) x=e_1.\]
Last, letting $h_{e_3,e_2}
=(L_{e_3,e_2},R_{e_3,e_2},T_{e_3,e_2})
\in\tilde{B}_3$,
\[p_1(h_{e_3,e_2})p_1(h_2)
p_1(h_{e_1,e_2})p_1(h_1) x
=e_3(e_2e_1)
=1.\]
Hence $\tilde{B}_3$ acts transitively on $S^7$.
By (\ref{trl-04}.b), $(\tilde{B}_3)_1=\mathrm{G}_2$.
Thus $\tilde{B}_3/\mathrm{G}_2\simeq S^7$ follows.
Since $\mathrm{G}_2$ and $S^7$ are connected,
$\tilde{B}_3$ is also connected.

(2) 
We consider the action of $\tilde{D}_4$ on $S^7$ 
as
$x\mapsto p_3(g_1,g_2,g_3)x=g_3 x$
for $x\in S^7$
and $(g_1,g_2,g_3)\in\tilde{D}_4.$
Let  $x\in S^7.$
Because of $(\overline{x}|\overline{x})=1$,
$\overline{x}\in S^7$.
By Lemma~\ref{trl-04}(2),
$(L_{\overline{x}},R_{\overline{x}},
\epsilon T_{\overline{x}}\epsilon)
\in \tilde{D}_4$.
Then  it follows
from (\ref{trl-02})  that
$(R_{\overline{x}},
\epsilon T_{\overline{x}}\epsilon,L_{\overline{x}})
\in \tilde{D}_4$ and
\[p_3(R_{\overline{x}},
\epsilon T_{\overline{x}}\epsilon,L_{\overline{x}})x
=\overline{x}x=1.\]
Thus $\tilde{D}_4$  acts transitively on $S^7$.
Because of $(\tilde{D})_1=\tilde{B}_3$,
we see $\tilde{D}_4/\tilde{B}_3\simeq S^7$.
Since $\tilde{B}_3$  
and $S^7$ are connected,  
$\tilde{D}_4$ is also connected.
\end{proof}
\medskip

The following result is shown in \cite{Fh1951}
(cf. \cite[Theorems 1.16.2, 1.15.2]{Yi_arxiv}).

\begin{proposition}\label{trl-07}
{\rm (1)}
The following sequence is exact:
\[
\tag{\ref{trl-07}.a}
1\to 
\{(1,1,1),(\epsilon_i(1),\epsilon_i(2),\epsilon_i(3))\} 
\to \tilde{D}_4\xrightarrow{p_i} {\rm SO}(8)\to 1.\]

{\rm (2)} 
The following sequence is exact:
\[
\tag{\ref{trl-07}.b}
1\to \{(1,1,1),(-1,-1,1)\} 
\to \tilde{B}_3\xrightarrow{q} {\rm SO}(7)\to 1.\]
\end{proposition}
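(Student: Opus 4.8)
The plan is to prove (1) and (2) together: first pin down the kernels $\ker p_i$ and $\ker q$ explicitly, and then deduce surjectivity of $p_i$ and $q$ from the connectedness and fibration results of \S\ref{trl} via a dimension count. Throughout I identify a sign $s\in\{\pm1\}$ with the scalar map $s\,\mathrm{id}_{\bf O}\in\mathrm{SO}(8)$, so that $(\epsilon_i(1),\epsilon_i(2),\epsilon_i(3))$ denotes a genuine element of $\mathrm{SO}(8)^3$. Note also that $\tilde D_4$ and $\tilde B_3$, being cut out of the Lie group $\mathrm{SO}(8)^3$ by the polynomial triality identities (and the closed condition $g_31=1$), are closed subgroups, hence Lie groups.

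\textbf{Step 1 (the kernels).} Let $(g_1,g_2,g_3)\in\tilde D_4$ with $g_3=\mathrm{id}_{\bf O}$; since $\epsilon^2=1$, the defining identity reads $(g_1x)(g_2y)=xy$ for all $x,y\in{\bf O}$. Setting $y=1$ gives $(g_1x)\,b=x$ with $b:=g_21$, so $R_b\circ g_1=\mathrm{id}$; since $g_2\in\mathrm{SO}(8)$ we have $\mathrm{n}(b)=\mathrm{n}(g_21)=1$, and by $x(a\overline a)=(xa)\overline a$ from (\ref{prl-01}.h) one gets $R_b^{-1}=R_{\overline b}$, whence $g_1=R_{\overline b}$. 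Symmetrically $x=1$ gives $g_2=L_{\overline a}$ with $a:=g_11$, $\mathrm{n}(a)=1$; and $g_11=\overline b$ forces $b=\overline a$, so $g_1=R_a$, $g_2=L_{\overline a}$. The identity now reads $(xa)(\overline a y)=xy$; replacing $x$ by $x\overline a$ and using $(x\overline a)a=x(\overline a a)=x$ yields $x(\overline a y)=(x\overline a)y$ for all $x,y$, i.e. the associator $(x,\overline a,y)$ vanishes identically; since ${\bf O}$ has trivial nucleus this forces $\overline a\in\mathbb R\cdot1$. Hence $a=\pm1$, so $(g_1,g_2,g_3)\in\{(1,1,1),(-\mathrm{id}_{\bf O},-\mathrm{id}_{\bf O},\mathrm{id}_{\bf O})\}$; conversely $(-\mathrm{id},-\mathrm{id},\mathrm{id})$ visibly satisfies the triality identity, so $\ker p_3=\{(1,1,1),(\epsilon_3(1),\epsilon_3(2),\epsilon_3(3))\}$. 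For $i=1,2$ the same follows by applying this computation after the cyclic relabelling of (\ref{trl-02}): e.g. $(g_1,g_2,g_3)\in\ker p_1$ iff $g_1=\mathrm{id}$ and $(g_2,g_3,\mathrm{id})\in\tilde D_4$, i.e. iff $(g_1,g_2,g_3)\in\{(1,1,1),(\epsilon_1(1),\epsilon_1(2),\epsilon_1(3))\}$. Finally, since $q=p_3|_{\tilde B_3}$ and both elements of $\ker p_3$ have third component $\mathrm{id}$ (which fixes $1$, hence lies in $\tilde B_3$), we get $\ker q=\ker p_3\cap\tilde B_3=\{(1,1,1),(-1,-1,1)\}$. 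In every case the kernel is a group of order $2$.

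\textbf{Step 2 (surjectivity and exactness).} By Lemma~\ref{trl-05}(2), Lemma~\ref{trl-05}(1) and Lemma~\ref{trl-04}(1) (which gives $\dim\mathrm{G}_2=\dim\mathrm{SU}(3)+\dim S^6=14$), the group $\tilde D_4$ is connected with $\dim\tilde D_4=\dim\mathrm{G}_2+\dim S^7+\dim S^7=28=\dim\mathrm{SO}(8)$, and $\tilde B_3$ is connected with $\dim\tilde B_3=\dim\mathrm{G}_2+\dim S^7=21=\dim\mathrm{SO}(7)$; also, if $(g_1,g_2,g_3)\in\tilde B_3$ then $g_3\in\mathrm{SO}(8)$ fixes $1$, so $g_3\in\mathrm{SO}(7)$ and $q$ indeed takes values in $\mathrm{SO}(7)$. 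Since the kernels are finite by Step 1, $dp_i$ and $dq$ are injective, hence — the dimensions matching — isomorphisms, so $p_i$ and $q$ are local diffeomorphisms and in particular open maps. Their images are therefore open, hence closed, subgroups of the connected groups $\mathrm{SO}(8)$, resp. $\mathrm{SO}(7)$, so $p_i$ and $q$ are surjective. Combined with Step 1, this yields exactness of (\ref{trl-07}.a) and (\ref{trl-07}.b).

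\textbf{Expected obstacle.} Everything outside Step 1 is bookkeeping — dimension counts and the standard fact that an open subgroup of a connected group is the whole group. Inside Step 1, the one substantive algebraic input is that the nucleus of ${\bf O}$ is $\mathbb R\cdot1$ (equivalently, no nonreal octonion associates with every pair), which is precisely what collapses $\ker p_i$ to two elements; this is the place where the non-associativity of ${\bf O}$ enters essentially, and it is the only point that is not formal.
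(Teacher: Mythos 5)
Your proof is correct, but it cannot be "the same as the paper's": the paper offers no proof of Proposition~\ref{trl-07} at all, quoting it from \cite{Fh1951} and \cite[Theorems 1.15.2, 1.16.2]{Yi_arxiv}. Compared with those classical arguments, your kernel computation is the standard one (reduce to $g_1=R_a$, $g_2=L_{\overline a}$ with ${\rm n}(a)=1$, then use that the nucleus of ${\bf O}$ is $\mathbb{R}1$), while your surjectivity step takes a genuinely different and shorter route: Freudenthal/Yokota obtain ontoness of $p_i$ by integrating the infinitesimal triality (the analogue of Lemma~\ref{cce-05}, which this paper only introduces in \S\ref{cce}) or by factoring a given element of ${\rm SO}(8)$ through maps of the form $L_a,R_a,T_a$, whereas you read it off from the homogeneous-space facts already proved in \S\ref{trl} ($\mathrm{G}_2/{\rm SU}(3)\simeq S^6$, $\tilde B_3/\mathrm{G}_2\simeq S^7$, $\tilde D_4/\tilde B_3\simeq S^7$) via the dimension count $\dim\tilde D_4=14+7+7=28=\dim{\rm SO}(8)$, $\dim\tilde B_3=21=\dim{\rm SO}(7)$, discreteness of the kernels, openness of a local diffeomorphism, and connectedness of the targets; this keeps the entire argument inside the paper's own lemmas, which is a real advantage here. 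The only inputs not in the paper's toolkit are routine: $\dim{\rm SU}(3)=8$ together with $\dim G=\dim H+\dim(G/H)$, and the triviality of the nucleus of ${\bf O}$, which is not among the identities of Lemma~\ref{prl-01} and deserves one explicit line — if a nonreal $c$ satisfied $x(cy)=(xc)y$ for all $x,y$, so would its imaginary part $p\ne 0$ (the associator is trilinear and real scalars are nuclear); automorphisms preserve this property, so by (\ref{trl-04}.a) one may assume $p=e_1$, contradicting $(e_2e_1)e_4=-e_7\ne e_7=e_2(e_1e_4)$. With that line added, your proof is complete.
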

\medskip

By Lemma~\ref{trl-05}(2) 
and (\ref{trl-07}.a), 
$\tilde{D}_4$ is connected 
and a two-hold covering group of ${\rm SO}(8)$,
and by Lemma~\ref{trl-05}(1) 
and (\ref{trl-07}.b),
$\tilde{B}_3$ is connected 
and a two-hold covering group of ${\rm SO}(7).$
So denote 
\[{\rm Spin}(8):=\tilde{D}_4,\quad
{\rm Spin}(7):=\tilde{B}_3.\]

\section{The construction of concrete
elements of $\mathrm{F}_{4(-20)}$.}\label{cce}
In order to give the orbit decomposition
of $\mathcal{J}^1$
under the action of $\mathrm{F}_{4(-20)}$,
we must know concrete
elements of $\mathrm{F}_{4(-20)}$ and these operation on
$\mathcal{J}^1$.
In this section, we present
concrete
elements $\varphi_0(g_1,g_2,g_3)$ and 
$\exp (t\tilde{A}_i^1(a))$.

\begin{lemma}\label{cce-01} 
The following equations hold.
\begin{gather*}
\tag{\ref{cce-01}.a}
(\mathrm{F}_{4(-20)})_{F_3^1(1)}
=(\mathrm{F}_{4(-20)})_{E_3,F_3^1(1)}.\\
\tag{\ref{cce-01}.b}
(\mathrm{F}_{4(-20)})_{E_1,F_3^1(1)}
=(\mathrm{F}_{4(-20)})_{E_1,E_2,E_3,F_3^1(1)}
=(\mathrm{F}_{4(-20)})_{E_2,F_3^1(1)}.
\end{gather*}
\end{lemma}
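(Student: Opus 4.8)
The plan is to reduce both identities to the single computation $(F_3^1(1))^{\times 2}=E_3$ together with the invariance properties of $\mathrm{F}_{4(-20)}$ recorded in Proposition~\ref{prl-08}: every $g\in\mathrm{F}_{4(-20)}$ commutes with the cross product and fixes the identity $E=E_1+E_2+E_3$.

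First I would verify $(F_3^1(1))^{\times 2}=E_3$. Applying Lemma~\ref{prl-06}.a(v) (or equivalently Lemma~\ref{prl-06}.d) with $i=3$ and $x=y=1$, and using $\epsilon(3)=-1$ together with $(1|1)={\rm n}(1)=1$, one gets $F_3^1(1)\times F_3^1(1)=-\epsilon(3)(1|1)E_3=E_3$. Consequently, if $g\in\mathrm{F}_{4(-20)}$ satisfies $gF_3^1(1)=F_3^1(1)$, then $gE_3=g\big((F_3^1(1))^{\times 2}\big)=(gF_3^1(1))^{\times 2}=(F_3^1(1))^{\times 2}=E_3$, so $g$ also stabilizes $E_3$. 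This yields the inclusion $(\mathrm{F}_{4(-20)})_{F_3^1(1)}\subseteq(\mathrm{F}_{4(-20)})_{E_3,F_3^1(1)}$; the opposite inclusion is immediate, and this proves (\ref{cce-01}.a).

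For (\ref{cce-01}.b) I would combine the previous observation with $gE=E$. If $g$ fixes $E_1$ and $F_3^1(1)$, then it fixes $E_3$ by the above, hence $gE_2=g(E-E_1-E_3)=E_2$, so $g$ lies in $(\mathrm{F}_{4(-20)})_{E_1,E_2,E_3,F_3^1(1)}$; symmetrically, if $g$ fixes $E_2$ and $F_3^1(1)$ it fixes $E_3$ and therefore $E_1=E-E_2-E_3$. Since the reverse inclusions are trivial, all three groups in (\ref{cce-01}.b) coincide with $(\mathrm{F}_{4(-20)})_{E_1,E_2,E_3,F_3^1(1)}$. There is no serious obstacle here; the only points that need care are the sign bookkeeping in Lemma~\ref{prl-06}.a(v), where $\epsilon(3)=-1$ makes the two minus signs cancel so that $(F_3^1(1))^{\times 2}$ equals $+E_3$ rather than $-E_3$, and the explicit use of $gE=E$, which is exactly what lets one recover the missing primitive idempotent from the other two.
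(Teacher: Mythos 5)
Your proposal is correct and follows essentially the same route as the paper: the key identity $(F_3^1(1))^{\times 2}=E_3$ (which the paper uses without spelling out the sign check from Lemma~\ref{prl-06}) gives $gE_3=E_3$ from $gF_3^1(1)=F_3^1(1)$, proving (\ref{cce-01}.a), and then $gE=E$ recovers the remaining idempotent for (\ref{cce-01}.b), exactly as in the paper's argument.
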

\begin{proof}
For all $g\in (\mathrm{F}_{4(-20)})_{F_3^1(1)}$, 
$g E_3=g(F_3^1(1)^{\times2})
=(g F_3^1(1))^{\times 2}
=F_3^1(1)^{\times 2}=E_3$. 
Thus $g\in (\mathrm{F}_{4(-20)})_{F_3^1(1),E_3}$
and so $(\mathrm{F}_{4(-20)})_{F_3^1(1)}\subset
(\mathrm{F}_{4(-20)})_{F_3^1(1),E_3}$.
From $(\mathrm{F}_{4(-20)})_{F_3^1(1),E_3}
\subset(\mathrm{F}_{4(-20)})_{F_3^1(1)}$,
(\ref{cce-01}.a) follows. 
Next, 
obviously,
$(\mathrm{F}_{4(-20)})_{E_1,E_2,E_3,F_3^1(1)}\subset 
(\mathrm{F}_{4(-20)})_{E_1,F_3^1(1)}$. 
Conversely, fix 
$g\in (\mathrm{F}_{4(-20)})_{E_1,F_3^1(1)}$.
By (\ref{cce-01}.a), 
$g\in (\mathrm{F}_{4(-20)})_{E_1,E_3,F_3^1(1)}$
and then
$g E_2=g(E-(E_1+E_3))=E_2$.
Thus
$g\in (\mathrm{F}_{4(-20)})_{E_1,E_2,E_3,F_3^1(1)}$
and so $(\mathrm{F}_{4(-20)})_{E_1,F_3^1(1)}\subset
(\mathrm{F}_{4(-20)})_{E_1,E_2,E_3,F_3^1(1)}$.
Hence $(\mathrm{F}_{4(-20)})_{E_1,E_2,E_3,F_3^1(1)}
=(\mathrm{F}_{4(-20)})_{E_1,F_3^1(1)}$.
Similarly,
$(\mathrm{F}_{4(-20)})_{E_1,E_2,E_3,F_3^1(1)}
=(\mathrm{F}_{4(-20)})_{E_2,F_3^1(1)}$.
\end{proof}
\medskip

\begin{lemma}\label{cce-02} 
{\rm (1)} 
The group ${\rm Spin}(8)$
is isomorphic to the stabilizer
$(\mathrm{F}_{4(-20)})_{E_1,E_2,E_3}$
of $\mathrm{F}_{4(-20)}$,
that is, the isomorphism
$\varphi_0:{\rm Spin}(8)\to 
(\mathrm{F}_{4(-20)})_{E_1,E_2,E_3}$
is given by
\[
\tag{\ref{cce-02}}
\varphi_0(g_1,g_2,g_3)
(\sum{}_{i=1}^3(\xi_iE_i+F_i^1(x_i)))
=\sum{}_{i=1}^3(\xi_iE_i+F_i^1(g_ix_i)).
\]

{\rm (2)}
The restriction  of  $\varphi_0$
on the subgroup ${\rm Spin}(7)$ 
is an isomorphism from ${\rm Spin}(7)$
onto $(\mathrm{F}_{4(-20)})_{E_1,E_2,E_3,F_3^1(1)}$.
\end{lemma}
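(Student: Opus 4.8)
The plan is to construct $\varphi_0$ directly from formula~(\ref{cce-02}) and verify in turn that it is a well-defined map into $(\mathrm{F}_{4(-20)})_{E_1,E_2,E_3}$, a group homomorphism, injective, and surjective; part (2) then follows by restriction.

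First I would fix $(g_1,g_2,g_3)\in{\rm Spin}(8)=\tilde{D}_4$ and let $\varphi_0(g_1,g_2,g_3)\in{\rm End}_{\mathbb{R}}(\mathcal{J}^1)$ be the linear map given by the right-hand side of~(\ref{cce-02}); it is invertible since, under the decomposition $\mathcal{J}^1=\mathbb{R}E_1\oplus\mathbb{R}E_2\oplus\mathbb{R}E_3\oplus F_1^1({\bf O})\oplus F_2^1({\bf O})\oplus F_3^1({\bf O})$ of~(\ref{prl-05}.a) it acts as the identity on the lines $\mathbb{R}E_i$ and as $g_i\in{\rm SO}(8)$ on $F_i^1({\bf O})$, and it fixes $E_1,E_2,E_3$ by construction. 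To see $\varphi_0(g_1,g_2,g_3)\in\mathrm{F}_{4(-20)}$ I would use the characterization $\mathrm{F}_{4(-20)}=\{g\mid g(X\times Y)=gX\times gY\}$ from Proposition~\ref{prl-08} and check it on all cross products of the spanning elements $E_i$ and $F_i^1(x)$ listed in~(\ref{prl-06}.a). Every case is immediate --- for $F_i^1(x)\times F_i^1(y)=-\epsilon(i)(x|y)E_i$ one uses that $g_i$ preserves the octonion inner product --- except the case $F_{i+1}^1(x)\times F_{i+2}^1(y)=F_i^1(-\epsilon(i)2^{-1}\overline{xy})$, where one needs $\overline{(g_{i+1}x)(g_{i+2}y)}=g_i(\overline{xy})$; but $(g_1,g_2,g_3)\in\tilde{D}_4$ gives, via the cyclic form of the triality equation (equation~(\ref{trl-02}) together with the definition of $\tilde{D}_4$), that $(g_{i+1}x)(g_{i+2}y)=\epsilon g_i\epsilon(xy)$, and applying the conjugation $\epsilon$ --- which squares to $1$ --- yields precisely the required identity. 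Hence $\varphi_0$ maps $\tilde{D}_4$ into $(\mathrm{F}_{4(-20)})_{E_1,E_2,E_3}$. That $\varphi_0$ is a homomorphism is immediate from the formula (composition multiplies the $g_i$ componentwise in ${\rm SO}(8)^3$), and $\varphi_0(g_1,g_2,g_3)={\rm id}$ forces $g_ix=x$ for all $x\in{\bf O}$, hence $g_i=1$; so $\varphi_0$ is injective.

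The substantive step is surjectivity. Let $g\in(\mathrm{F}_{4(-20)})_{E_1,E_2,E_3}$. Using~(\ref{prl-06}.a) one checks that $F_i^1({\bf O})=\{Z\in\mathcal{J}^1\mid E_i\times Z=-2^{-1}Z,\ E_{i+1}\times Z=0\}$; since $g$ fixes every $E_k$ and commutes with the cross product, it preserves each $F_i^1({\bf O})$, and it fixes $\mathbb{R}E_1\oplus\mathbb{R}E_2\oplus\mathbb{R}E_3$ pointwise. So there are linear maps $g_i\in\mathrm{GL}_{\mathbb{R}}({\bf O})$ with $gF_i^1(x)=F_i^1(g_ix)$, i.e.\ $g$ is given by the right-hand side of~(\ref{cce-02}). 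Applying $g$ to the identities~(\ref{prl-06}.a)(v) and~(vi), using that $g$ fixes $E_i$ and commutes with the cross product, we obtain $(g_ix|g_iy)=(x|y)$ (so $g_i\in{\rm O}(8)$) and $(g_{i+1}x)(g_{i+2}y)=\epsilon g_i\epsilon(xy)$ for all $x,y\in{\bf O}$; for $i=3$ this is exactly the triality equation defining $\tilde{D}_4$. It remains to upgrade $g_i\in{\rm O}(8)$ to $g_i\in{\rm SO}(8)$. Setting $y=1$, resp.\ $x=1$, in the triality equation exhibits $\epsilon g_3\epsilon$ as $R_{g_2(1)}\circ g_1$, resp.\ $L_{g_1(1)}\circ g_2$, with $g_i(1)\in S^7$; since $L_a,R_a\in{\rm SO}(8)$ this gives $\det g_1=\det g_2=\det g_3=:\delta$. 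On the other hand $g$ preserves the nondegenerate inner product $(\cdot|\cdot)$ on $\mathcal{J}^1$ (Proposition~\ref{prl-08}) and $\mathrm{F}_{4(-20)}$ is connected, so $\det(g)=1$ as an endomorphism of $\mathcal{J}^1$; but the block decomposition above gives $\det(g)=\det(g_1)\det(g_2)\det(g_3)=\delta^3$, whence $\delta=1$. Therefore $(g_1,g_2,g_3)\in\tilde{D}_4={\rm Spin}(8)$ and $g=\varphi_0(g_1,g_2,g_3)$, which proves (1). The main obstacle is precisely this last point --- recognizing the triple $(g_1,g_2,g_3)$ extracted from $g$ as a genuine element of ${\rm Spin}(8)$ rather than merely an orthogonal triality triple --- and the determinant count above is what settles it.

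For part (2), recall ${\rm Spin}(7)=\tilde{B}_3=\{(g_1,g_2,g_3)\in\tilde{D}_4\mid g_31=1\}$. If $(g_1,g_2,g_3)\in\tilde{B}_3$ then $\varphi_0(g_1,g_2,g_3)F_3^1(1)=F_3^1(g_31)=F_3^1(1)$, so $\varphi_0$ sends $\tilde{B}_3$ into $(\mathrm{F}_{4(-20)})_{E_1,E_2,E_3,F_3^1(1)}$. Conversely, any $h$ in this stabilizer lies in $(\mathrm{F}_{4(-20)})_{E_1,E_2,E_3}$, so by (1) $h=\varphi_0(g_1,g_2,g_3)$ for a unique $(g_1,g_2,g_3)\in\tilde{D}_4$, and $F_3^1(1)=hF_3^1(1)=F_3^1(g_31)$ forces $g_31=1$, i.e.\ $(g_1,g_2,g_3)\in\tilde{B}_3$; injectivity is inherited from (1). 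Thus $\varphi_0|_{{\rm Spin}(7)}$ is an isomorphism onto $(\mathrm{F}_{4(-20)})_{E_1,E_2,E_3,F_3^1(1)}$, which by~(\ref{cce-01}.b) also equals $(\mathrm{F}_{4(-20)})_{E_1,F_3^1(1)}=(\mathrm{F}_{4(-20)})_{E_2,F_3^1(1)}$.
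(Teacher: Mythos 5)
Your proof is correct, and it is worth noting that for part (1) the paper gives no argument at all: it simply remarks that the claim ``can be proved similar to \cite[Theorem 2.7.1]{Yi_arxiv}''. Your write-up supplies exactly that missing argument by the standard route: you check that $\varphi_0(g_1,g_2,g_3)$ preserves all cross products of the spanning elements via the table (\ref{prl-06}.a) (the only nontrivial case being handled by the cyclic triality relation of (\ref{trl-02}) together with $\epsilon^2=1$), invoke the cross-product characterization of $\mathrm{F}_{4(-20)}$ in Proposition~\ref{prl-08}, and, for surjectivity, use a cross-product description of the subspaces $F_i^1({\bf O})$ to extract from any element of the stabilizer an orthogonal triple satisfying the triality relation. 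Your part (2) coincides with the paper's own proof (restrict $\varphi_0$, evaluate on $F_3^1(1)$, conclude $g_31=1$, hence $(g_1,g_2,g_3)\in{\rm Spin}(7)$).

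The one step you should reinforce is the upgrade from $g_i\in{\rm O}(8)$ to $g_i\in{\rm SO}(8)$. Your identities $\epsilon g_3\epsilon=R_{g_21}\circ g_1=L_{g_11}\circ g_2$ correctly give $\det g_1=\det g_2=\det g_3=:\delta$, but to conclude $\delta=1$ you invoke connectedness of $\mathrm{F}_{4(-20)}$. That fact is true and is contained in the result of \cite{Yi1990} quoted in the paper, but it is proved nowhere in this paper, and in Yokota's own development the connectedness of $\mathrm{F}_{4(-20)}$ is obtained \emph{after}, and using, precisely this identification of $(\mathrm{F}_{4(-20)})_{E_1,E_2,E_3}$ with ${\rm Spin}(8)$ (via the polar decomposition with $K\cong{\rm Spin}(9)$); so as written this step is not self-contained and carries a risk of circularity. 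It is easy to repair without connectedness: if $\delta=-1$, use the surjectivity of $p_3$ in (\ref{trl-07}.a) to compose $(g_1,g_2,g_3)$ with an element of $\tilde{D}_4$ whose third component is $g_3^{-1}\epsilon$ (this has determinant $+1$), obtaining a triple $(g_1',g_2',\epsilon)$ in ${\rm O}(8)^3$ satisfying the triality relation. Setting $y=1$ and $x=1$ gives $g_1'x=\overline{x}\,\overline{b}$ and $g_2'y=b\overline{y}$ with $b=g_2'1\in S^7$, so the relation becomes $(x\overline{b})(by)=yx$ for all $x,y$ after replacing $x,y$ by $\overline{x},\overline{y}$; taking $y=\overline{b}$ and using alternativity forces $x\overline{b}=\overline{b}x$ for all $x$, hence $b=\pm1$, and then the relation asserts that ${\bf O}$ is commutative, a contradiction. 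Hence $\delta=1$ and $(g_1,g_2,g_3)\in\tilde{D}_4$, completing your argument with no appeal to connectedness.
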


\begin{proof} 
(1) We can prove it similar to
\cite[Theorem 2.7.1]{Yi_arxiv}.

(2)
By (1),  $\varphi_0$ is a mono-morphism.
Thus it is enough to show that $\varphi_0$ is onto.
Fix $g\in \mathrm{B}_3.$
By (1), $g=\varphi_0(g_1,g_2,g_3)$
for some $(g_1,g_2,g_3)
\in {\rm Spin}(8).$
Then 
$F_3^1(1)=\varphi_0(g_1,g_2,g_3)F_3^1(1)
=F_3^1(g_3 1).$
Thus $g_3 1=1$
and so
$(g_1,g_2,g_3)
\in {\rm Spin}(7)$.
Hence  $\varphi_0:{\rm Spin}(7)\to 
(\mathrm{F}_{4(-20)})_{E_1,E_2,E_3,F_3^1(1)}$ is onto.
\end{proof}

Denote the subgroups
$\mathrm{D}_4:=\varphi_0(\mathrm{Spin}(8))
=(\mathrm{F}_{4(-20)})_{E_1,E_2,E_3}$
and $\mathrm{B}_3:=\varphi_0(\mathrm{Spin}(7))
=(\mathrm{F}_{4(-20)})_{E_1,E_2,E_3,F_3^1(1)}$, 
respectively.

\begin{lemma}\label{cce-03} 
Let $g
=(g_1,g_2,g_3)
\in {\rm Spin}(7)$
and $p,x \in {\bf O}.$
\[
\tag{\ref{cce-03}}
\left\{
\begin{array}{l}
\begin{array}{rlrl}
{\rm (i)}&
\multicolumn{3}{l}{
\varphi_0(g)
(-E_1+E_2)=-E_1+E_2,}\\
\smallskip
{\rm (ii)}&
\varphi_0(g)P^-=P^-,&
{\rm (iii)}&\varphi_0(g)F_3^1(p)
=F_3^1(g_3 p),\\
\smallskip
{\rm (iv)}&\varphi_0(g)E_3=E_3,
&{\rm (v)}&\varphi_0(g)E=E,
\end{array}\\
\smallskip
\begin{array}{rl}
{\rm (vi)}&
\varphi_0(g)
Q^+(x)=Q^+(g_1 x),\\
\smallskip
{\rm (vii)}&
\varphi_0(g)
Q^-(x)=Q^-(g_1 x).
\end{array}
\end{array}
\right.
\]
\end{lemma}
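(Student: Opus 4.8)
The plan is to reduce every assertion to the defining formula~(\ref{cce-02}) for $\varphi_0$, together with the two structural facts that characterize $\mathrm{Spin}(7)=\tilde B_3$ inside $\tilde D_4$: for $g=(g_1,g_2,g_3)\in\mathrm{Spin}(7)$ one has $g_31=1$ and $g_2=\epsilon g_1\epsilon$, both read off from the definition of $\tilde B_3$. (By Lemma~\ref{cce-02}(2), $\varphi_0(g)\in(\mathrm{F}_{4(-20)})_{E_1,E_2,E_3,F_3^1(1)}\subset\mathrm{F}_{4(-20)}$, so~(\ref{cce-02}) indeed describes its action.) First I would put each target element into the coordinate form $\sum_{i=1}^3(\xi_iE_i+F_i^1(x_i))$: one has $E=h^1(1,1,1;0,0,0)$, $-E_1+E_2=h^1(-1,1,0;0,0,0)$, $E_3=h^1(0,0,1;0,0,0)$, $F_3^1(p)=h^1(0,0,0;0,0,p)$, $P^-=h^1(-1,1,0;0,0,1)=(-E_1+E_2)+F_3^1(1)$, and $Q^{\pm}(x)=F_1^1(x)+F_2^1(\pm\overline{x})$.

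With these expansions in hand, parts (i), (iv), (v) are immediate from~(\ref{cce-02}), since $\varphi_0(g)$ acts as the identity on all the $E_i$-coefficients; part (iii) is a direct instance of~(\ref{cce-02}); and for part (ii), applying~(\ref{cce-02}) to $P^-=(-E_1+E_2)+F_3^1(1)$ yields $(-E_1+E_2)+F_3^1(g_31)$, which equals $P^-$ because $g_31=1$.

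The only assertions that use the link between $g_1$ and $g_2$ are (vi) and (vii). Applying~(\ref{cce-02}) to $Q^{\pm}(x)=F_1^1(x)+F_2^1(\pm\overline{x})$ gives $F_1^1(g_1x)+F_2^1(\pm g_2\overline{x})$, so it remains only to verify $g_2\overline{x}=\overline{g_1x}$; this holds because $g_2=\epsilon g_1\epsilon$ gives $g_2\overline{x}=\epsilon g_1\epsilon\overline{x}=\epsilon g_1 x=\overline{g_1x}$, and then the two right-hand sides collapse to $Q^{\pm}(g_1x)$. No step here is a genuine obstacle — the whole argument is bookkeeping — the only things deserving care being the identification $F_j^1(x)=F_j(\sqrt{-1}x)$ for $j\in\{2,3\}$ when reading off coordinates, and invoking the correct one of the equivalent descriptions of $\tilde B_3$ (the form $g_31=1$ for (ii), the form $g_2=\epsilon g_1\epsilon$ for (vi)--(vii)).
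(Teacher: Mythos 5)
Your proposal is correct and follows essentially the same route as the paper: the first five identities come from the explicit action (\ref{cce-02}) together with $g_31=1$ (equivalently, $\varphi_0(g)$ fixing $E_1,E_2,E_3,F_3^1(1)$), and (vi)--(vii) come from $g_2=\epsilon g_1\epsilon$ giving $g_2\overline{x}=\overline{g_1x}$, exactly as in the paper's proof. The coordinate expansions you list ($P^-=(-E_1+E_2)+F_3^1(1)$, $Q^{\pm}(x)=F_1^1(x)+F_2^1(\pm\overline{x})$, etc.) are accurate, so no gap remains.
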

\begin{proof} 
From $\varphi_0(g)\in
\mathrm{B}_3=
(\mathrm{F}_{4(-20)})_{E_1,E_2,E_3,F_3^1(1)}$,
the first five equations follow.
Because of $g_2=\epsilon g_1 \epsilon$, we see
$\varphi_0(g)F_2^1(\overline{x})
=F_2^1(\epsilon g_1\epsilon\overline{x})
=F_2^1(\overline{g_1 x})$.
Thus
(vi) and (vii) follow.
\end{proof}
\medskip

\begin{proposition}\label{cce-04}
Let $Y={\rm diag}(r_1,r_2,r_3)\in\mathcal{J}^1$
where $r_1,r_2,r_3$ are different from each other.
Then  
$(\mathrm{F}_{4(-20)})_Y=\mathrm{D}_4.$
\end{proposition}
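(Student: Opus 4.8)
The plan is to prove the two inclusions separately. The inclusion $\mathrm{D}_4 \subset (\mathrm{F}_{4(-20)})_Y$ is immediate from the definition $\mathrm{D}_4 = (\mathrm{F}_{4(-20)})_{E_1,E_2,E_3}$: since $Y = r_1 E_1 + r_2 E_2 + r_3 E_3$, any $g \in \mathrm{F}_{4(-20)}$ fixing $E_1, E_2, E_3$ also fixes $Y$.

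For the reverse inclusion I would identify the primitive idempotents $E_i$ inside $V_Y$ as the elements $E_{Y, r_i}$. Applying (\ref{prl-06}.d) to $Y$ (all off-diagonal octonion entries being zero) yields $(\lambda E - Y)^{\times 2} = \sum_{i=1}^3 (\lambda - r_{i+1})(\lambda - r_{i+2}) E_i$. Evaluating at $\lambda = r_i$ and using that $r_1, r_2, r_3$ are pairwise distinct, we get $(r_i E - Y)^{\times 2} = \bigl(\prod_{j \ne i}(r_i - r_j)\bigr) E_i$ and $\mathrm{tr}((r_i E - Y)^{\times 2}) = \prod_{j \ne i}(r_i - r_j) \ne 0$; hence $E_{Y, r_i}$ is well-defined and equals $E_i$ for each $i \in \{1,2,3\}$.

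Now let $g \in (\mathrm{F}_{4(-20)})_Y$. Since the numbers $r_i$ are characteristic roots of $Y = gY$, Lemma~\ref{prl-10}(iii) gives $g E_i = g E_{Y, r_i} = E_{gY, r_i} = E_{Y, r_i} = E_i$ for $i = 1, 2, 3$, so $g \in (\mathrm{F}_{4(-20)})_{E_1,E_2,E_3} = \mathrm{D}_4$. This proves $(\mathrm{F}_{4(-20)})_Y \subset \mathrm{D}_4$ and finishes the proof. The argument is purely computational, and the only point needing care is the bookkeeping in the cross-square identity (\ref{prl-06}.d) together with the non-vanishing of the normalizing traces --- which is precisely where the hypothesis that $r_1, r_2, r_3$ are distinct enters; I do not anticipate any genuine obstacle.
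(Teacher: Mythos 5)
Your proposal is correct and follows essentially the same route as the paper: the easy inclusion from $Y=r_1E_1+r_2E_2+r_3E_3$, then the computation $(r_iE-Y)^{\times 2}=(r_{i+1}-r_i)(r_{i+2}-r_i)E_i$ with nonzero trace to identify $E_{Y,r_i}=E_i$, and Lemma~\ref{prl-10}(iii) to conclude $gE_i=E_i$, hence $g\in\mathrm{D}_4$. No issues.
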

\begin{proof} Obviously
 $\mathrm{D}_4=(\mathrm{F}_{4(-20)})_{E_1,E_2,E_3}
\subset(\mathrm{F}_{4(-20)})_Y$.
Conversely, fix $g\in (\mathrm{F}_{4(-20)})_Y$.
Let $i\in\{1,2,3\}$ and
indexes $i,i+1,i+2$ are counted modulo $3$. 
Because of
$(r_iE-Y)^{\times 2}
=(r_{i+1}-r_i)(r_{i+2}-r_i)E_i$
and $\mathrm{tr}((r_iE-Y)^{\times 2})
=(r_{i+1}-r_i)(r_{i+2}-r_i)\ne 0$,
we see that
$E_{Y,r_i}$ is well-defined and
$E_{Y,r_i}=E_i$. 
By (\ref{prl-10})(iii),
$g E_i=g E_{Y,r_i}=E_{g Y,r_i}
= E_{Y,r_i}=E_i$.
Thus  $g \in \mathrm{D}_4$ and so 
$(\mathrm{F}_{4(-20)})_Y\subset \mathrm{D}_4.$ 
Hence $(\mathrm{F}_{4(-20)})_Y=\mathrm{D}_4.$ 
\end{proof}
\medskip

Denote the Lie algebra 
$\mathfrak{f}_{4(-20)}:=Lie(\mathrm{F}_{4(-20)})$
({\it resp}. $\mathfrak{f}_4^{\mathbb{C}}
:=Lie(\mathrm{F}_4^{\mathbb{C}})$).
By Proposition~\ref{prl-08} ({\it resp}.
Proposition~\ref{prl-04}),
\begin{gather*}
\mathfrak{f}_{4(-20)}
=\{ \delta \in 
\mathrm{End}_{\mathbb{R}}(\mathcal{J}^1)|
~\delta(X \times Y)=\delta X \times Y+ X \times 
\delta Y\}\\
(resp.~\mathfrak{f}_4^{\mathbb{C}}
=\{ \delta \in 
\mathrm{End}_{\mathbb{C}}(\mathcal{J}^{\mathbb{C}})|
~\delta(X \times Y)=\delta X \times Y+ 
X \times \delta Y\}).
\end{gather*}
Given
$\delta \in \mathfrak{f}_{4(-20)}$
({\it resp}. $\mathfrak{f}_4^{\mathbb{C}}$),
\[\mathrm{tr}(\delta X)=0,~~
\delta E = 0,~~(\delta X|Y)+(X | \delta Y)=0,
~~(\delta X|X |X)=0\]
for all $X,Y \in \mathcal{J}^1$
({\it resp}. $\mathcal{J}^{\mathbb{C}}$).
The Lie subalgebra $\mathfrak{d}_4$ 
({\it resp}. $\mathfrak{d}_4^{\mathbb{C}}$)
of 
$\mathfrak{f}_{4(-20)}$ ({\it resp}. 
$\mathfrak{f}_4^{\mathbb{C}}$) is defined by
\begin{gather*}
\mathfrak{d}_4
:=\{D \in\mathfrak{f}_{4(-20)}
|~D E_i=0,~i=1,2,3\}\\
(resp.~\mathfrak{d}_4^{\mathbb{C}}
:=\{D\in\mathfrak{f}_4^{\mathbb{C}}
|~D E_i=0,~i=1,2,3\})
\end{gather*}
and 
the Lie algebra $\mathfrak{D}_4$ by
\[
\mathfrak{D}_4
:=\{D\in{\rm End}_{\mathbb{R}}({\bf O})
|~(Dx|y)+(x|Dy)=0\}.
\]

The following lemma is proved
in \cite{Fh1951}(cf. \cite[Lemmas 1.3.6, 1.3.7,
Proposition 2.3.7]{Yi_arxiv}).
\begin{lemma}\label{cce-05}
Let $x,y\in {\bf O}$.

{\rm (1)}
 For all $D_1\in
\mathfrak{D}_4$ , there exist $D_2,D_3\in
\mathfrak{D}_4$  such that
\[(D_1x)y + x(D_2y) = \epsilon D_3\epsilon(xy).\]
Also such $D_2$ and $D_3$ 
are uniquely determined for $D_1.$
\smallskip

{\rm (2)} 
For $D_1,~D_2,~D_3\in\mathfrak{D}_4$,
the relation
\[(D_1x)y+x(D_2y)=\epsilon D_3\epsilon(xy)\]
implies that
\[(D_2x)y+x(D_3y)=\epsilon D_1\epsilon(xy),\quad
(D_3x)y+x(D_1y)=\epsilon D_2\epsilon (xy).
\]

{\rm (3)}
The Lie algebra
$\mathfrak{d}_4$ 
is isomorphic to the Lie algebra $\mathfrak{D}_4$
under the correspondence
$D_1\mapsto d\varphi_0(D_1,D_2,D_3)$
given by
\[\tag{\ref{cce}.5}
d\varphi_0(D_1,D_2,D_3)(\sum_{i=1}^3(
\xi_iE_i+F_i^1(x_i)))
=\sum_{i=1}^3 F_i^1(D_ix_i).\]
where $D_2$ and $D_3$ are elements 
of $\mathfrak{D}_4$ 
which are determined by $D_1$ from the infinitesimal
triality:
\[(D_1x)y + x(D_2y) = \epsilon D_3\epsilon(xy).\]
\end{lemma}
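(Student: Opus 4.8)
The plan is to obtain all three parts from the \emph{global} triality for $\tilde{D}_4$ by differentiation, using that $p_1:\tilde{D}_4\to\mathrm{SO}(8)$ is a two-fold covering map (Proposition~\ref{trl-07}(1) together with the connectedness of $\tilde{D}_4$, Lemma~\ref{trl-05}(2)), so that $dp_1:Lie(\tilde{D}_4)\to\mathfrak{D}_4$ is a Lie algebra isomorphism; here $\mathfrak{D}_4=Lie(\mathrm{SO}(8))$ by the defining relations.

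I would first introduce the \emph{infinitesimal triality algebra}
\[
\mathfrak{t}:=\{(D_1,D_2,D_3)\in(\mathfrak{D}_4)^3\mid
(D_1x)y+x(D_2y)=\epsilon D_3\epsilon(xy)\ \text{for all }x,y\in{\bf O}\},
\]
a linear subspace of $(\mathfrak{D}_4)^3$. Differentiating the defining relation $(g_1x)(g_2y)=\epsilon g_3\epsilon(xy)$ of $\tilde{D}_4$ along a smooth curve through the identity shows $Lie(\tilde{D}_4)\subseteq\mathfrak{t}$. The key point is to check that the first projection $\pi:\mathfrak{t}\to\mathfrak{D}_4$, $(D_1,D_2,D_3)\mapsto D_1$, is injective: if $D_1=0$ then putting $x=1$ in the relation gives $D_2=\epsilon D_3\epsilon$, hence $D_3=\epsilon D_2\epsilon$, and the relation collapses to $x(D_2y)=D_2(xy)$ for all $x,y$; setting $y=1$ yields $D_2x=xc$ with $c:=D_2 1\in{\rm Im}{\bf O}$, and then $x(yc)=(xy)c$ for all $x,y\in{\bf O}$ forces $c$ into the associative nucleus $\mathbb{R}1$ of ${\bf O}$, so $c=0$ and $D_2=D_3=0$. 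Since $dp_1$ restricted to $Lie(\tilde{D}_4)\subseteq\mathfrak{t}$ coincides with $\pi$ and is bijective, comparing dimensions forces $\mathfrak{t}=Lie(\tilde{D}_4)$ and makes $\pi:\mathfrak{t}\xrightarrow{\ \sim\ }\mathfrak{D}_4$ an isomorphism.

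Part (1) then follows immediately: surjectivity of $\pi$ gives the existence of $D_2,D_3$, and injectivity of $\pi$, applied to the difference of two candidate triples (again an element of the linear space $\mathfrak{t}$), gives their uniqueness. For part (2), the cyclic permutation $(g_1,g_2,g_3)\mapsto(g_2,g_3,g_1)$ preserves $\tilde{D}_4$ by Lemma~\ref{trl-02}, so its differential $(D_1,D_2,D_3)\mapsto(D_2,D_3,D_1)$ preserves $Lie(\tilde{D}_4)=\mathfrak{t}$; applying this once and twice to a triple in $\mathfrak{t}$ yields precisely the two asserted relations. For part (3), since $\varphi_0:\mathrm{Spin}(8)\to\mathrm{D}_4=(\mathrm{F}_{4(-20)})_{E_1,E_2,E_3}$ is an isomorphism of Lie groups (Lemma~\ref{cce-02}(1)), its differential is a Lie algebra isomorphism $d\varphi_0:Lie(\mathrm{Spin}(8))=\mathfrak{t}\to\mathfrak{d}_4$; composing with $\pi^{-1}:\mathfrak{D}_4\xrightarrow{\ \sim\ }\mathfrak{t}$ gives the desired isomorphism $\mathfrak{D}_4\cong\mathfrak{d}_4$, $D_1\mapsto d\varphi_0(D_1,D_2,D_3)$, and differentiating the formula (\ref{cce-02}) along a curve in $\mathrm{Spin}(8)=\tilde{D}_4$ with tangent vector $(D_1,D_2,D_3)$ — which exists precisely because $\mathfrak{t}=Lie(\tilde{D}_4)$ — produces the explicit expression (\ref{cce}.5).

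The step I expect to carry the real content is the identification $\mathfrak{t}=Lie(\tilde{D}_4)$, and within it the injectivity of the first projection $\pi$, i.e. the non-associativity argument $D_1=0\Rightarrow D_2=D_3=0$; once that is in place, parts (1)--(3) are just transfers of already-established global facts (the covering $p_1$, Lemma~\ref{trl-02}, and Lemma~\ref{cce-02}) to the Lie algebra level.
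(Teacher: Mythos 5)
Your proposal is essentially correct, but note that the paper itself contains no proof of Lemma~\ref{cce-05}: it is quoted from Freudenthal and from Yokota, where the infinitesimal triality is established first and by direct computation (one constructs $D_2,D_3$ explicitly for generators of $\mathfrak{D}_4$ and proves uniqueness by an argument like yours), and the group-level triality is then deduced from it. Your route runs in the opposite direction: you differentiate the defining relation of $\tilde{D}_4$ to get $Lie(\tilde{D}_4)\subseteq\mathfrak{t}$, prove injectivity of the first projection $\pi$ on $\mathfrak{t}$ by the non-associativity argument, and use the covering $p_1$ (Proposition~\ref{trl-07}(1), Lemma~\ref{trl-05}(2)) to force $\mathfrak{t}=Lie(\tilde{D}_4)$ by a dimension count; parts (1), (2), (3) then fall out of bijectivity of $\pi$, the differential of the cyclic symmetry of Lemma~\ref{trl-02}, and $d\varphi_0$ from Lemma~\ref{cce-02}(1), respectively. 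Within this paper's organization (where those results all precede Lemma~\ref{cce-05}) this is a legitimate and cleaner derivation; what it buys is the replacement of the classical case-by-case construction of $D_2,D_3$ by one transversality-free argument, at the price of a dependency caveat: in the cited source the exact sequence (\ref{trl-07}.a) is itself proved \emph{via} the infinitesimal triality, so your argument is non-circular only if one grants an independent proof of the global triality (such proofs exist, e.g.\ via Clifford algebras); this should be said explicitly.

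Two small points need to be filled in. First, your key step asserts that $c:=D_2 1\in{\rm Im}{\bf O}$ (immediate from skew-symmetry of $D_2$) and that $x(yc)=(xy)c$ for all $x,y\in{\bf O}$ forces $c=0$; the latter uses that the nucleus of ${\bf O}$ is $\mathbb{R}1$, a fact the paper never records, so give the one-line verification: the set of such $c$ is $\mathrm{G}_2$-stable, so by (\ref{trl-04}.a) one may assume $c=e_1$ after scaling, and then $(e_2e_4)e_1=e_7\ne -e_7=e_2(e_4e_1)$ from the multiplication table, a contradiction unless $c=0$. Second, in part (3) you implicitly identify $Lie(\mathrm{D}_4)$ with $\mathfrak{d}_4$; this is immediate because any $D\in\mathfrak{d}_4$ satisfies $\exp(tD)E_i=E_i$, so that $\mathfrak{d}_4\subseteq Lie(\mathrm{D}_4)$, the reverse inclusion coming from differentiating $gE_i=E_i$, but it deserves a sentence. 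With these additions the proof is complete.
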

\medskip

Let $i \in \{ 1, 2, 3\}$ 
and indexes $i, i+1, i+2$ be
counted modulo $3$.
For $a \in {\bf O}^{\mathbb{C}}$,
the skew-hermitian matrix 
$A_i(a) \in M(3,{\bf O}^{\mathbb{C}})$
is denoted by
\[A_i(a):=a E_{i+1,i+2}- \overline{a} E_{i+2,i+1},\]
the linear subspace $\mathfrak{u}_i^{\mathbb{C}}$
of $M(3,{\bf O}^{\mathbb{C}})$ by
$\mathfrak{u}_i^{\mathbb{C}}
:= \{A_i(a) |~a \in {\bf O}^{\mathbb{C}} \}$
and the linear subspace $\mathfrak{R}^{\mathbb{C}}$ of 
$M(3,{\bf O}^{\mathbb{C}})$ by
\[\mathfrak{R}^{\mathbb{C}}
:=\mathfrak{u}_1^{\mathbb{C}}
\oplus
\mathfrak{u}_2^{\mathbb{C}}
\oplus
\mathfrak{u}_3^{\mathbb{C}}
=\{A\in M(3,{\bf O}^{\mathbb{C}})
|~A^*=-A,~\mathrm{diag}(A)=0\}\]
where $\mathrm{diag}(A)=0$ means 
that all diagonal elements
$a_{ii}$ of $A$ are $0$.
For $A \in \mathfrak{R}^{\mathbb{C}}$,
the element $\tilde{A}
\in \mathrm{End}_{\mathbb{C}}(\mathcal{J}^{\mathbb{C}})$
is
defined 
by
\[
\tilde{A} X := A X- X A
\quad \text{for}~ X \in \mathcal{J}^{\mathbb{C}}
\]
the linear subspaces $\tilde{\mathfrak{u}}_i^{\mathbb{C}}$
of ${\rm End}_{\mathbb{C}}(\mathcal{J}^{\mathbb{C}})$
by
$\tilde{\mathfrak{u}}_i^{\mathbb{C}}
:= \{\tilde{A}_i(a) |~a \in {\bf O}^{\mathbb{C}} \}$
and the linear subspace $\tilde{\mathfrak{R}^{\mathbb{C}}}$
of ${\rm End}_{\mathbb{C}}(\mathcal{J}^{\mathbb{C}})$
by
\[\tilde{\mathfrak{R}}^{\mathbb{C}}:=\{\tilde{A}
|~A \in \mathfrak{R}^{\mathbb{C}}\}
=\tilde{\mathfrak{u}}_1^{\mathbb{C}}
\oplus
\tilde{\mathfrak{u}}_2^{\mathbb{C}}
\oplus
\tilde{\mathfrak{u}}_3^{\mathbb{C}}.\]
By direct calculations, we have the following lemma.

\begin{lemma}\label{cce-06}
Let $a \in {\bf O}^{\mathbb{C}}$,
$i \in \{1,2,3\}$ and indexes $i,i+1,i+2$
be counted modulo $3$.
Then the operation of 
$\tilde{A}_i(a)$ on $\mathcal{J}^{\mathbb{C}}$ 
is given by
\[
\tag{\ref{cce-06}}
\left\{
\begin{array}{l}
\begin{array}{rlrl}
{\rm (i)}&
\tilde{A}_i(a)E_i=0,\\
\smallskip
{\rm (ii)}&
\tilde{A}_i(a) E_{i+1}=F_i(-a),&
{\rm (iii)}&
\tilde{A}_i(a) E_{i+2}=F_i(a),
\end{array}
\\
\smallskip
\begin{array}{rl}
{\rm (iv)}&
\tilde{A}_i(a) F_i(x)
=2(a|x)(E_{i+1}-E_{i+2}),\\
\smallskip
{\rm (v)}&
\tilde{A}_i(a) F_{i+1}(x)
=F_{i+2}(\overline{ax}),\\
\smallskip
{\rm (vi)}&
\tilde{A}_i(a) F_{i+2}(x)
=F_{i+1}(-\overline{xa}).
\end{array}
\end{array}
\right.
\]
\end{lemma}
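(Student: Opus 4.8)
The plan is to verify the six identities by a single direct matrix computation, since $\tilde A_i(a)X = A_i(a)X - XA_i(a)$ is to be evaluated on the explicit generators $E_j$ and $F_j(x)$ of $\mathcal{J}^{\mathbb{C}}$. First I would unwind the definitions: write $A_i(a) = aE_{i+1,i+2} - \overline{a}E_{i+2,i+1}$, and recall $E_j = E_{j,j}$ and $F_j(x) = xE_{j+1,j+2} + \overline{x}E_{j+2,j+1}$, all indices read modulo $3$. The basic computational tool is that for single-entry matrices over ${\bf O}^{\mathbb{C}}$ one has $(bE_{k,l})(cE_{m,n}) = \delta_{l,m}(bc)E_{k,n}$; since a product of two such matrices involves only one octonion multiplication, no associativity issue intervenes, and $A_i(a)X - XA_i(a)$ is in each case a sum of at most four such terms.

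For (i)--(iii) the computation is immediate: as $A_i(a)$ has nonzero entries only in rows/columns $i+1$ and $i+2$, both products with $E_{i,i}$ vanish, giving (i); and sandwiching $A_i(a)$ between $E_{i+1,i+1}$ (resp.\ $E_{i+2,i+2}$) on left and right and subtracting collects the two off-diagonal entries of $A_i(a)$ with the correct signs, which is exactly $F_i(-a)$ (resp.\ $F_i(a)$). For (iv), expanding $A_i(a)F_i(x) - F_i(x)A_i(a)$ produces the diagonal expression $(a\overline{x} + x\overline{a})E_{i+1,i+1} - (\overline{a}x + \overline{x}a)E_{i+2,i+2}$; then I would invoke the polarization identities $a\overline{x}+x\overline{a} = \overline{a}x+\overline{x}a = 2(a|x)$ from (\ref{prl-01}.g) to rewrite this as $2(a|x)(E_{i+1}-E_{i+2})$.

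For (v) and (vi) the same expansion yields two off-diagonal contributions: using $F_{i+1}(x) = xE_{i+2,i} + \overline{x}E_{i,i+2}$ one gets $(ax)E_{i+1,i} + (\overline{x}\,\overline{a})E_{i,i+1}$, and rewriting $\overline{x}\,\overline{a} = \overline{ax}$ via (\ref{prl-01}.f) and comparing with $F_{i+2}(\overline{ax}) = \overline{ax}E_{i,i+1} + (ax)E_{i+1,i}$ gives (v); the analogous computation with $F_{i+2}(x) = xE_{i,i+1} + \overline{x}E_{i+1,i}$ and $\overline{a}\,\overline{x} = \overline{xa}$ gives (vi). The only point requiring care -- and the closest this proof comes to an obstacle -- is the bookkeeping of the cyclic indices and the placement of conjugations; there is no conceptual difficulty, and (\ref{prl-01}.g) together with $\overline{xy}=\overline{y}\,\overline{x}$ is the entire octonion input.
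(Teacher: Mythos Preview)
Your proposal is correct and follows the same approach as the paper, which simply states that the lemma follows ``by direct calculations'' without spelling out any details. Your write-up is a careful expansion of exactly that computation, and the octonion identities you invoke from (\ref{prl-01}.f) and (\ref{prl-01}.g) are precisely what is needed.
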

\medskip

The following lemma is proved in 
\cite{Fh1951}(cf. \cite[Proposition 2.3.6,
Theorem 2.3.8]{Yi_arxiv}).
\begin{lemma}\label{cce-07}
{\rm (1)}
$\tilde{\mathfrak{R}}^{\mathbb{C}}$
 is a $\mathbb{C}$-linear subspace of
$\mathfrak{f}_4^{\mathbb{C}}$.
\smallskip

{\rm (2)}
Any element $\delta\in \mathfrak{f}_4^{\mathbb{C}}$
is uniquely expressed by
\[\delta=D+\tilde{A}\quad
\text{for~some}~D \in \mathfrak{d}_4^{\mathbb{C}}
~\text{and}~\tilde{A} 
\in \tilde{\mathfrak{R}}^{\mathbb{C}}.\]
Especially,
\[
\tag{\ref{cce-07}}
\mathfrak{f}_4^{\mathbb{C}}
=\mathfrak{d}_4^{\mathbb{C}} 
\oplus \tilde{\mathfrak{R}}^{\mathbb{C}}
=\mathfrak{d}_4^{\mathbb{C}} 
\oplus \tilde{\mathfrak{u}}_1^{\mathbb{C}}
\oplus \tilde{\mathfrak{u}}_2^{\mathbb{C}}
\oplus \tilde{\mathfrak{u}}_3^{\mathbb{C}}.
\]
\end{lemma}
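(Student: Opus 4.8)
The plan is to get part (1) first and then bootstrap part (2) from it, using essentially nothing beyond the explicit action formulas of Lemma~\ref{cce-06}. For part (1), note that $\tilde{\mathfrak R}^{\mathbb C}$ is by construction the $\mathbb C$-span of the operators $\tilde A_i(a)$ ($i\in\{1,2,3\}$, $a\in{\bf O}^{\mathbb{C}}$), and $a\mapsto\tilde A_i(a)$ is $\mathbb C$-linear, so $\tilde{\mathfrak R}^{\mathbb C}$ is automatically a $\mathbb C$-linear subspace of $\mathrm{End}_{\mathbb C}(\mathcal J^{\mathbb C})$; the real content is the inclusion $\tilde{\mathfrak R}^{\mathbb C}\subseteq\mathfrak f_4^{\mathbb C}$, i.e. that each $\tilde A_i(a)$ is a derivation of the Freudenthal cross product. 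Since $\delta(U\times V)-\delta U\times V-U\times\delta V$ is bilinear in $(U,V)$, it suffices to verify the derivation identity with $U,V$ running over the spanning set $\{E_1,E_2,E_3\}\cup\{F_j(x): j\in\{1,2,3\},\ x\in{\bf O}^{\mathbb{C}}\}$ of $\mathcal J^{\mathbb C}$. Feeding in the action formulas of Lemma~\ref{cce-06} together with the cross-product structure constants read off from Lemma~\ref{prl-02}.b, every case collapses to an octonion identity of Lemma~\ref{prl-01}, chiefly Moufang's formula (\ref{prl-01}.k) and its linearizations. Equivalently one may argue integrally: for fixed $i,a$ all entries of $\exp(tA_i(a))$ lie in the commutative associative subalgebra $\mathbb C[a]\subset{\bf O}^{\mathbb{C}}$, so $X\mapsto\exp(tA_i(a))\,X\,\exp(tA_i(a))^{*}$ is a well-defined path in $\mathrm{F}_4^{\mathbb{C}}$ fixing $E$ and $\mathrm{det}$, whose derivative at $t=0$ is $\tilde A_i(a)$. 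I expect this to be the one genuinely technical point, as it is where the non-associativity of ${\bf O}^{\mathbb{C}}$ actually has to be handled.

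\emph{Directness.} Suppose $D+\tilde A=0$ with $D\in\mathfrak d_4^{\mathbb C}$ and $\tilde A\in\tilde{\mathfrak R}^{\mathbb C}$, say $A=A_1(a_1)+A_2(a_2)+A_3(a_3)$. Applying both sides to $E_1$ and using $DE_1=0$ together with Lemma~\ref{cce-06} gives $0=\tilde AE_1=F_2(a_2)-F_3(a_3)$, hence $a_2=a_3=0$; applying to $E_2$ gives $0=\tilde AE_2=-F_1(a_1)$, hence $a_1=0$. So $\tilde A=0$ and $D=0$. The same computation shows that $A\mapsto\tilde A$ is injective and that the three summands $\tilde{\mathfrak u}_i^{\mathbb C}$ are in direct sum, which is the last displayed equality in the statement.

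\emph{Spanning.} Given $\delta\in\mathfrak f_4^{\mathbb C}$, I would first pin down $\delta E_i$. Since $E_i\times E_i=0$, the derivation property gives $\delta E_i\times E_i=0$; expanding $\delta E_i$ in the standard basis of $\mathcal J^{\mathbb C}$ and using the structure constants, this forces the $E_{i+1}$-, $E_{i+2}$- and $F_i$-components of $\delta E_i$ to vanish, while $\mathrm{tr}(\delta E_i)=0$ kills the remaining $E_i$-component. Thus $\delta E_i=F_{i+1}(u_i)+F_{i+2}(v_i)$ for unique $u_i,v_i\in{\bf O}^{\mathbb{C}}$, and $\delta E=\delta E_1+\delta E_2+\delta E_3=0$ forces $u_1+v_3=u_2+v_1=u_3+v_2=0$. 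Setting $a_1:=u_3$, $a_2:=u_1$, $a_3:=u_2$ and $A:=A_1(a_1)+A_2(a_2)+A_3(a_3)\in\mathfrak R^{\mathbb C}$, Lemma~\ref{cce-06} then yields $\tilde AE_i=\delta E_i$ for $i=1,2,3$. Hence $D:=\delta-\tilde A$ lies in $\mathfrak f_4^{\mathbb C}$ by part (1) and satisfies $DE_i=0$ for all $i$, so $D\in\mathfrak d_4^{\mathbb C}$ and $\delta=D+\tilde A$. Together with directness this gives $\mathfrak f_4^{\mathbb C}=\mathfrak d_4^{\mathbb C}\oplus\tilde{\mathfrak R}^{\mathbb C}$.

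In short, the only real obstacle is part (1) — checking that the inner derivations $\tilde A_i(a)$ respect Freudenthal's product — and once that is in hand, part (2) is a bounded linear-algebra computation driven entirely by Lemma~\ref{cce-06} and the two infinitesimal invariance relations $\mathrm{tr}\circ\delta=0$ and $\delta E=0$.
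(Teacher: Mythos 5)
Your argument is correct, but note that the paper itself offers no proof of this lemma at all: it is quoted from the literature (``proved in \cite{Fh1951}, cf.\ \cite[Proposition 2.3.6, Theorem 2.3.8]{Yi_arxiv}''), so the comparison is with those references rather than with anything internal to the paper. Your part (2) is essentially the classical argument given there, and the bookkeeping checks out: from $0=\delta(E_i\times E_i)=2\,E_i\times\delta E_i$, the structure constants kill the $E_{i+1}$-, $E_{i+2}$- and $F_i$-components of $\delta E_i$, the trace condition kills the $E_i$-component, $\delta E=0$ gives the three relations $u_1+v_3=u_2+v_1=u_3+v_2=0$, and with $a_1=u_3$, $a_2=u_1$, $a_3=u_2$ the formulas of Lemma~\ref{cce-06} indeed give $\tilde A E_i=\delta E_i$ for $i=1,2,3$; the injectivity computation on $E_1,E_2$ is likewise correct and yields both the uniqueness and the directness of $\tilde{\mathfrak u}_1^{\mathbb C}\oplus\tilde{\mathfrak u}_2^{\mathbb C}\oplus\tilde{\mathfrak u}_3^{\mathbb C}$. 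For part (1), which you rightly identify as the real content, your primary route (bilinearity plus a case-check of the derivation identity on $\{E_j\}\cup\{F_j(x)\}$, each case reducing to identities of Lemma~\ref{prl-01}) is sound and is how the cited references handle it. One caution: your ``integral'' alternative is not actually a shortcut. Well-definedness of $\exp(tA_i(a))\,X\,\exp(tA_i(a))^{*}$ requires Artin's two-generator associativity term by term, and the key assertion that this path preserves $\mathrm{det}$ (so that it lies in $\mathrm{F}_4^{\mathbb C}$ via (\ref{prl-04}.b)) is essentially as hard as the derivation identity you are trying to prove; as written it assumes what is to be shown, so the case-check should be regarded as the actual proof of (1), with the exponential remark at best a heuristic.
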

\medskip

We denote the differential of the involutive automorphism 
$\widetilde{\tau\sigma}$ of $\mathrm{F}_4^{\mathbb{C}}$ as
same notation
$\widetilde{\tau\sigma}$.
Looking again $\mathfrak{f}_4^{\mathbb{C}}$
as an $\mathbb{R}$-Lie algebra,
the involutive $\mathbb{R}$-automorphism 
$\widetilde{\tau\sigma}$ of $\mathfrak{f}_4^{\mathbb{C}}$
induces
the $\mathbb{R}$-Lie subalgebra 
$(\mathfrak{f}_4^{\mathbb{C}})_{\widetilde{\tau\sigma}}$
of $\mathfrak{f}_4^{\mathbb{C}}$.
Then we can write
$\mathfrak{f}_{4(-20)}
=(\mathfrak{f}_4^{\mathbb{C}})_{\widetilde{\tau\sigma}}=\{
\delta \in \mathfrak{f}_4^{\mathbb{C}}|~\tau \sigma \delta
\sigma \tau =\delta\}$.

\begin{lemma}\label{cce-08}
The following equations hold.
\[
\tag{\ref{cce-08}}
\left\{
\begin{array}{rrl}
{\rm (i)}&
(\mathfrak{d}_4^{\mathbb{C}})_{\widetilde{\tau\sigma}}
&=\mathfrak{d}_4,\\
\smallskip
{\rm (ii)}&
(\tilde{\mathfrak{u}}_1^{\mathbb{C}})_{\widetilde{\tau\sigma}}
&=\{ \tilde{A}_1(a)|~a \in {\bf O}\},\\
\smallskip
{\rm (iii)}&
(\tilde{\mathfrak{u}}_j^{\mathbb{C}})_{\widetilde{\tau\sigma}}
&=\{ \tilde{A}_j(\sqrt{-1}a)|~a \in {\bf O}\}\quad
\text{with}~j=2,3.
\end{array}
\right.\]
\end{lemma}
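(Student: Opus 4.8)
The plan is to reduce the whole statement to one structural observation: the map $\sigma$ acts on $\mathcal{J}^{\mathbb{C}}$ as conjugation by the real diagonal matrix $I_1=\mathrm{diag}(-1,1,1)$, that is, $\sigma X = I_1 X I_1$ for every $X\in\mathcal{J}^{\mathbb{C}}$ (and $I_1^2=E$, $I_1^*=I_1$). This is immediate from the defining formula $\sigma h(\xi_1,\xi_2,\xi_3;x_1,x_2,x_3)=h(\xi_1,\xi_2,\xi_3;x_1,-x_2,-x_3)$: conjugating a $3\times3$ matrix by $\mathrm{diag}(d_1,d_2,d_3)$ multiplies the $(p,q)$-entry by $d_pd_q$, and with $(d_1,d_2,d_3)=(-1,1,1)$ this fixes the diagonal and the $(2,3),(3,2)$ entries (the $F_1$-component) while negating the $(1,2),(2,1),(1,3),(3,1)$ entries (the $F_3$- and $F_2$-components), which is exactly the effect of $\sigma$.

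For (i) there is almost nothing to do beyond unwinding the definitions. Since $\sigma$ and $\tau$ both fix each $E_i$, the automorphism $\widetilde{\tau\sigma}$ preserves the subspace $\mathfrak{d}_4^{\mathbb{C}}=\{D\in\mathfrak{f}_4^{\mathbb{C}}\mid DE_i=0\}$, and hence $(\mathfrak{d}_4^{\mathbb{C}})_{\widetilde{\tau\sigma}}=\mathfrak{d}_4^{\mathbb{C}}\cap(\mathfrak{f}_4^{\mathbb{C}})_{\widetilde{\tau\sigma}}=\mathfrak{d}_4^{\mathbb{C}}\cap\mathfrak{f}_{4(-20)}=\{D\in\mathfrak{f}_{4(-20)}\mid DE_i=0\}=\mathfrak{d}_4$, using $\mathfrak{f}_{4(-20)}=(\mathfrak{f}_4^{\mathbb{C}})_{\widetilde{\tau\sigma}}$.

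For (ii) and (iii), the key is that conjugating the \emph{endomorphism} $\tilde A$ (for $A\in\mathfrak{R}^{\mathbb{C}}$, $\tilde A X=AX-XA$) by $\sigma$ and by $\tau$ amounts to conjugating the \emph{matrix} $A$. Because $I_1$ is a real diagonal matrix, left/right multiplication by $I_1$ commutes with everything in sight --- each entry of a matrix product over ${\bf O}^{\mathbb{C}}$ is a single octonion product, so no nonassociativity obstruction arises --- giving $\sigma\tilde A\sigma=\widetilde{I_1 A I_1}$; and since $\tau$ is an involutive algebra automorphism of ${\bf O}^{\mathbb{C}}$ acting entrywise, $\tau\tilde A\tau=\widetilde{\tau(A)}$. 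Therefore $\widetilde{\tau\sigma}(\tilde A)=\tau\sigma\tilde A\sigma\tau=\widetilde{I_1\,\tau(A)\,I_1}$. Now $\tau(A_i(a))=A_i(\tau a)$, and a direct inspection of the sign factors $d_{i+1}d_{i+2}$ shows $I_1 A_1(b)I_1=A_1(b)$ while $I_1 A_j(b)I_1=A_j(-b)$ for $j\in\{2,3\}$. Hence $\widetilde{\tau\sigma}(\tilde A_1(a))=\tilde A_1(\tau a)$ and $\widetilde{\tau\sigma}(\tilde A_j(a))=\tilde A_j(-\tau a)$ for $j\in\{2,3\}$. Since $a\mapsto\tilde A_i(a)$ is injective (by $\tilde A_i(a)E_{i+1}=F_i(-a)$ from Lemma~\ref{cce-06}), taking fixed points gives $(\tilde{\mathfrak{u}}_1^{\mathbb{C}})_{\widetilde{\tau\sigma}}=\{\tilde A_1(a)\mid\tau a=a\}=\{\tilde A_1(a)\mid a\in{\bf O}\}$ and $(\tilde{\mathfrak{u}}_j^{\mathbb{C}})_{\widetilde{\tau\sigma}}=\{\tilde A_j(a)\mid\tau a=-a\}=\{\tilde A_j(\sqrt{-1}b)\mid b\in{\bf O}\}$ for $j\in\{2,3\}$, which are (ii) and (iii).

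The only place where care is genuinely needed is the sign bookkeeping: identifying $\sigma$ with conjugation by $I_1$ and then computing $I_1A_i(b)I_1$, where the asymmetry between $i=1$ and $i\in\{2,3\}$ is exactly what produces the $\sqrt{-1}$-twist in (iii). Once those signs are pinned down, the rest is formal.
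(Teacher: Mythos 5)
Your proof is correct, but it runs along a different track from the paper's. The paper argues in two steps, both powered by the explicit operation formulas of Lemma~\ref{cce-06}: for necessity it applies $\tau\sigma\tilde{A}_i(z)\sigma\tau$ and $\tilde{A}_i(z)$ to the single test element $E_{i+1}$ and compares, getting $\epsilon(i)\tau z=z$; for sufficiency it verifies $\tau\sigma\tilde{A}_1(a)\sigma\tau X=\tilde{A}_1(a)X$ and $\tau\sigma\tilde{A}_j(\sqrt{-1}a)\sigma\tau X=\tilde{A}_j(\sqrt{-1}a)X$ on the spanning set $S=\{E_i,F_i^1(x)\}$ of $\mathcal{J}^{\mathbb{C}}$. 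You instead prove the single structural identity $\widetilde{\tau\sigma}(\tilde{A})=\widetilde{I_1\,\tau(A)\,I_1}$ with $I_1=\mathrm{diag}(-1,1,1)$, which immediately gives $\widetilde{\tau\sigma}(\tilde{A}_1(z))=\tilde{A}_1(\tau z)$ and $\widetilde{\tau\sigma}(\tilde{A}_j(z))=\tilde{A}_j(-\tau z)$ for $j=2,3$, so both directions (and the invariance of each $\tilde{\mathfrak{u}}_i^{\mathbb{C}}$ under $\widetilde{\tau\sigma}$) drop out at once; Lemma~\ref{cce-06} is needed only for the injectivity of $a\mapsto\tilde{A}_i(a)$ via $\tilde{A}_i(a)E_{i+1}=F_i(-a)$. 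The one genuinely delicate point in your route — whether $\sigma\tilde{A}\sigma=\widetilde{I_1AI_1}$ survives the nonassociativity of ${\bf O}^{\mathbb{C}}$ — you handle correctly: $I_1$ has real scalar entries, so left/right multiplication by $I_1$ associates with any octonionic matrix product, and $I_1^2=E$ closes the computation. What your approach buys is a cleaner, evaluation-free description of how $\widetilde{\tau\sigma}$ acts on all of $\tilde{\mathfrak{R}}^{\mathbb{C}}$ (with the $i=1$ versus $i=2,3$ asymmetry visible directly in the signs $d_{i+1}d_{i+2}$); what the paper's buys is that it stays entirely within the already-tabulated action formulas (\ref{cce-06}) and never needs the matrix realization of $\sigma$. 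Your treatment of (i) matches the paper's (which only says it follows easily) and is fine.
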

\begin{proof}
Easily, (i) follows.
Suppose that $\tilde{A}_i(z) \in 
(\tilde{\mathfrak{u}}_i^{\mathbb{C}})_{\widetilde{\tau\sigma}}$
with $z \in{\bf O}^{\mathbb{C}}$.
From (\ref{cce-06}), we see
$\tau \sigma \tilde{A}_i(z) \sigma \tau E_{i+1}
=\epsilon(i)F_i(-\tau z)$ and
$\tilde{A}_i(z) E_{i+1}
=\epsilon(i)F_i(-z)$,
so that $\epsilon(i)\tau z=z$.
Thus, if $i=1$ then $z \in {\bf O}$,
else $z=\sqrt{-1}a$ for some $a \in {\bf O}$.
Therefore, we have the necessary conditions
of (ii) and (iii).
Conversely, put
$S=\{E_i,F_i^1(x)|~x\in {\bf O},~i=1,2,3\}$.
$S$
spans the linear space of $\mathcal{J}^{\mathbb{C}}$
over $\mathbb{C}$.
Let $X \in S$ and $a\in{\bf O}$.
Because of (\ref{cce-06}) and $\tau \sigma X=X$,
we see that
$\tau \sigma \tilde{A}_1(a) \sigma \tau X
=\tilde{A}_1(a) X$
and $\tau \sigma \tilde{A}_j(\sqrt{-1}a) \sigma \tau X
=\tilde{A}_j(\sqrt{-1}a) X$ $(j=2,3)$.
Hence the sufficient condition follows.
\end{proof}
\medskip

For $a \in {\bf O}$,
skew-hermitian matrix $A_i^1(a) \in 
M(3,{\bf O}^{\mathbb{C}})$ is defined by
\[
A_1^1(a) := A_1(a),
\quad
A_j(a) := A_j(\sqrt{-1}a)
\quad\text{with}~
j\in\{2,3\}.
\]
The element
$\tilde{A}_i^1(a) \in 
\mathfrak{f}_{4(-20)}$ 
is defined
by
\[
\tilde{A}_i^1(a)X:=A_i^1(a)X-XA_i^1(a)
\quad\text{for}~X\in \mathcal{J}^1\]
and the linear subspace $\tilde{\mathfrak{u}}_i^1$ of
$\mathfrak{f}_{4(-20)}$ by
$\tilde{\mathfrak{u}}_i^1
:=\{\tilde{A}_i^1(a)|~a\in{\bf O}\}$.
Then
we have the following lemma.

\begin{lemma}\label{cce-09}
{\rm (1)} The following equation holds.
\[
\tag{\ref{cce-09}.a}
\mathfrak{f}_{4(-20)}
=\mathfrak{d}_4
\oplus \tilde{\mathfrak{u}}_1^1
\oplus \tilde{\mathfrak{u}}_2^1
\oplus \tilde{\mathfrak{u}}_3^1.\]

{\rm (2)}
Let $a \in {\bf O}$,
$i \in \{1,2,3\}$ and indexes $i,i+1,i+2$
be counted modulo $3$.
Then the operation of 
$\tilde{A}_i^1(a)$ on $\mathcal{J}^1$ is given by
\[
\tag{\ref{cce-09}.b}
\left\{
\begin{array}{l}
\begin{array}{rlrl}
{\rm (i)}&\tilde{A}_i^1(a)E_i=0,\\
\smallskip
{\rm (ii)}&
\tilde{A}_i^1(a) E_{i+1}=F_i^1(-a),&
{\rm (iii)}&
\tilde{A}_i^1(a) E_{i+2}=F_i^1(a),
\end{array}
\\
\smallskip
\begin{array}{rl}
{\rm (iv)}&\tilde{A}_i^1(a) F_i^1(x)
=\epsilon(i)2(a|x)(E_{i+1}-E_{i+2}),\\
\smallskip
{\rm (v)}&\tilde{A}_i^1(a) F_{i+1}^1(x)
=F_{i+2}^1(-\epsilon(i+2)\overline{ax}),\\
\smallskip
{\rm (vi)}&\tilde{A}_i^1(a) F_{i+2}^1(x)
=F_{i+1}^1(\epsilon(i+1)\overline{xa}).
\end{array}
\end{array}\right.
\]
\end{lemma}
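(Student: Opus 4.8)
The plan is to obtain both parts by intersecting the complex statements of Lemma~\ref{cce-07} and Lemma~\ref{cce-08} with the fixed spaces of $\widetilde{\tau\sigma}$, and then to read off (2) from Lemma~\ref{cce-06} after translating between the normalizations $F_i$ and $F_i^1$.

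For (1), I would begin from the direct sum in Lemma~\ref{cce-07}, namely $\mathfrak{f}_4^{\mathbb{C}} = \mathfrak{d}_4^{\mathbb{C}} \oplus \tilde{\mathfrak{u}}_1^{\mathbb{C}} \oplus \tilde{\mathfrak{u}}_2^{\mathbb{C}} \oplus \tilde{\mathfrak{u}}_3^{\mathbb{C}}$, and check that the involutive $\mathbb{R}$-automorphism $\widetilde{\tau\sigma}$ carries each of the four summands into itself. For $\mathfrak{d}_4^{\mathbb{C}}$ this is immediate, since $\sigma$ and $\tau$ fix each $E_i$, so conjugation by $\sigma\tau$ preserves the condition $D E_i = 0$; for $\tilde{\mathfrak{u}}_i^{\mathbb{C}}$ it is precisely the computation made in the proof of Lemma~\ref{cce-08}, where $\tau\sigma\tilde{A}_i(z)\sigma\tau$ is seen to be again of the form $\tilde{A}_i(w)$. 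Since forming the $\widetilde{\tau\sigma}$-fixed subspace commutes with a direct sum all of whose summands are $\widetilde{\tau\sigma}$-invariant, and directness is inherited, I obtain $\mathfrak{f}_{4(-20)} = (\mathfrak{d}_4^{\mathbb{C}})_{\widetilde{\tau\sigma}} \oplus (\tilde{\mathfrak{u}}_1^{\mathbb{C}})_{\widetilde{\tau\sigma}} \oplus (\tilde{\mathfrak{u}}_2^{\mathbb{C}})_{\widetilde{\tau\sigma}} \oplus (\tilde{\mathfrak{u}}_3^{\mathbb{C}})_{\widetilde{\tau\sigma}}$, and then (\ref{cce-08}) identifies these four summands with $\mathfrak{d}_4$, $\{\tilde{A}_1(a) \mid a \in {\bf O}\}$ and $\{\tilde{A}_j(\sqrt{-1}a) \mid a \in {\bf O}\}$ for $j \in \{2,3\}$. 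By the definitions $A_1^1(a) = A_1(a)$ and $A_j^1(a) = A_j(\sqrt{-1}a)$ these are exactly $\tilde{\mathfrak{u}}_1^1$, $\tilde{\mathfrak{u}}_2^1$, $\tilde{\mathfrak{u}}_3^1$, which gives (\ref{cce-09}.a); in passing this also shows each $\tilde{A}_i^1(a)$ is a well-defined element of $\mathfrak{f}_{4(-20)}$, i.e.\ preserves $\mathcal{J}^1$.

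For (2), I would simply restrict the six formulas of Lemma~\ref{cce-06} to $\mathcal{J}^1 = (\mathcal{J}^{\mathbb{C}})_{\tau\sigma}$, using $\tilde{A}_1^1(a) = \tilde{A}_1(a)$, $\tilde{A}_j^1(a) = \tilde{A}_j(\sqrt{-1}a)$ for $j \in \{2,3\}$, together with $F_1^1(x) = F_1(x)$ and $F_j^1(x) = F_j(\sqrt{-1}x)$ for $j \in \{2,3\}$, and the facts that the octonion product and inner product on ${\bf O}^{\mathbb{C}}$ are $\mathbb{C}$-bilinear while $\overline{\sqrt{-1}z} = \sqrt{-1}\,\overline{z}$. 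Each line of (\ref{cce-09}.b) then drops out of the corresponding line of (\ref{cce-06}) after carrying the appropriate factor of $\sqrt{-1}$ through to the argument. The only feature demanding attention is the appearance of the signs $\epsilon(i)$, $\epsilon(i+1)$, $\epsilon(i+2)$ in (iv)--(vi): they are produced precisely by the mismatches $(\sqrt{-1})^2 = -1$ that occur when the two normalizations disagree on whether a given index equals $1$ or lies in $\{2,3\}$, and one verifies by running through the three cases $i = 1, 2, 3$ that these factors reassemble into exactly the stated $\epsilon$'s. I do not expect a conceptual obstacle; the main work is carrying out these sign checks without error, which is mechanical given Lemma~\ref{cce-06}.
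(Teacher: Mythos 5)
Your proposal is correct and follows exactly the route the paper intends: the paper states Lemma~\ref{cce-09} without proof, treating (\ref{cce-09}.a) as the immediate combination of the decomposition (\ref{cce-07}) with the fixed-point identifications (\ref{cce-08}), and (\ref{cce-09}.b) as a direct calculation from (\ref{cce-06}) after inserting the $\sqrt{-1}$ normalizations; your sign bookkeeping with $\epsilon(i)$ checks out in all three cases.
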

\medskip
 
\begin{lemma}\label{cce-10} 
Let $t\in \mathbb{R}$, $a\in S^7,$ 
$\xi,\eta\in\mathbb{R}^3$ and
$x,y\in{\bf O}^3$.
Let
$i \in \{1,2,3\}$ and indexes $i,i+1,i+2$ be
counted modulo $3$.

{\rm (1)}
When $i=1$,
let 
$h^1(\eta;y)\in\mathcal{J}^1$ be
\[\tag{\ref{cce-10}.a}
\left\{
\begin{array}{ccl}
\eta_1&=&\xi_1,\\
\eta_2&=&2^{-1}((\xi_2+\xi_3)
+(\xi_2-\xi_3)\cos 2t)+(a|x_1) \sin 2t,\\
\eta_3&=&2^{-1}((\xi_2+\xi_3)
-(\xi_2-\xi_3)\cos 2t)-(a|x_1) \sin 2t,\\
y_1&=&x_1-2^{-1}(\xi_2-\xi_3) a\sin 2t
-2(a|x_1)a\sin^2 t,\\
y_2&=&x_2\cos t-\overline{x_3a} \sin t,\\
y_3&=&x_3\cos t+\overline{ax_2} \sin t
\end{array}\right.
\]
and when $i\in\{2,3\}$, 
let
$h^1(\eta;y)\in\mathcal{J}^1$ be
\[\tag{\ref{cce-10}.b}
\left\{
\begin{array}{ccl}
\eta_i&=&\xi_i,\\
\eta_{i+1}
&=&2^{-1}((\xi_{i+1}+\xi_{i+2})
+(\xi_{i+1}-\xi_{i+2})\cosh 2t)-(a|x_i) \sinh 2t,\\
\eta_{i+2}&=&2^{-1}((\xi_{i+1}+\xi_{i+2})
-(\xi_{i+1}-\xi_{i+2})\cosh 2t)+(a|x_i) \sinh 2t,\\
y_i&=&x_i
-2^{-1}(\xi_{i+1}-\xi_{i+2})a\sinh 2t+2(a|x_i)a\sinh^2 t.\\
y_{i+1}&=&x_{i+1}\cosh t+\overline{x_{i+2}a}\sinh t,\\
y_{i+2}&=&x_{i+2}\cosh t+\overline{ax_i} \sinh t.
\end{array}\right.
\]
Then $h^1(\eta;y)=\exp(t\tilde{A}_i^1(a))h^1(\xi;x)$
and $\exp(t\tilde{A}_i^1(a))\in (\mathrm{F}_{4(-20)})_{E_i}^0.$
\smallskip

{\rm (2)} 
If $a\in S^6,$ then $\exp(t\tilde{A}_i^1(a))
\in(\mathrm{F}_{4(-20)})_{F_i^1(1)}^0$
for all $i\in\{1,2,3\}$.
\end{lemma}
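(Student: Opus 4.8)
\medskip
\noindent\textit{Proof proposal.}
The idea is to realize $\exp(t\tilde A_i^1(a))$ as the time-$t$ flow of the linear vector field $Z\mapsto\tilde A_i^1(a)Z$ on $\mathcal J^1$ and to compute that flow from the explicit formulas (\ref{cce-09}.b). Two of the three assertions are essentially free. First, $\tilde A_i^1(a)\in\mathfrak f_{4(-20)}$ by Lemma~\ref{cce-09}(1) (which rests on Lemma~\ref{cce-08}), so each $\exp(s\tilde A_i^1(a))$ lies in $\mathrm{F}_{4(-20)}$, and the path $s\mapsto\exp(s\tilde A_i^1(a))$, $s\in[0,t]$, joins it to the identity inside $\mathrm{F}_{4(-20)}$; hence $\exp(t\tilde A_i^1(a))\in\mathrm{F}_{4(-20)}^0$. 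Second, (\ref{cce-09}.b)(i) gives $\tilde A_i^1(a)E_i=0$, so $E_i$ is an eigenvector with eigenvalue $0$ and is fixed along the entire flow; thus the above path in fact lies in $(\mathrm{F}_{4(-20)})_{E_i}$, whence $\exp(t\tilde A_i^1(a))\in(\mathrm{F}_{4(-20)})_{E_i}^0$. It remains to identify $\exp(t\tilde A_i^1(a))$ with the explicit map $h^1(\xi;x)\mapsto h^1(\eta;y)$.

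For this I would first split $\mathcal J^1$ into $\tilde A_i^1(a)$-invariant subspaces using (\ref{cce-09}.b). Write $a^{\perp}$ for the orthogonal complement of $\mathbb R a$ in ${\bf O}$, and recall $(a|a)={\rm n}(a)=1$ since $a\in S^7$. Then
\begin{align*}
\mathcal J^1&=\bigl(\mathbb R E_i\oplus\mathbb R(E_{i+1}+E_{i+2})\oplus F_i^1(a^{\perp})\bigr)\\
&\quad\oplus\bigl(\mathbb R(E_{i+1}-E_{i+2})\oplus\mathbb R F_i^1(a)\bigr)\oplus\bigl(F_{i+1}^1({\bf O})\oplus F_{i+2}^1({\bf O})\bigr),
\end{align*}
where the first summand is annihilated; on the second summand $\tilde A_i^1(a)$ has matrix $\left(\begin{smallmatrix}0&2\epsilon(i)\\-2&0\end{smallmatrix}\right)$ with respect to $\bigl(E_{i+1}-E_{i+2},\,F_i^1(a)\bigr)$ by (\ref{cce-09}.b)(ii)--(iv); and on the third (``spinor'') summand it acts by $F_{i+1}^1(x)\mapsto F_{i+2}^1(-\epsilon(i+2)\overline{ax})$ and $F_{i+2}^1(x)\mapsto F_{i+1}^1(\epsilon(i+1)\overline{xa})$ by (\ref{cce-09}.b)(v)--(vi), whose square is $-\epsilon(i)$ times the identity, since $\overline{(\overline{ax})a}=(\overline aa)x=x$ by the conjugation and alternative laws (Lemma~\ref{prl-01}.f,h) and $\epsilon(i+1)\epsilon(i+2)=\epsilon(i)$. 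As $\epsilon(1)=1$ while $\epsilon(2)=\epsilon(3)=-1$, for $i=1$ both non-trivial blocks are elliptic, so the exponential involves $\cos2t,\sin2t$ on the second block and $\cos t,\sin t$ on the spinor block, whereas for $i\in\{2,3\}$ both are hyperbolic, giving $\cosh2t,\sinh2t$ and $\cosh t,\sinh t$ respectively; the doubled frequency on the second block is the factor $2$ in its matrix.

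Given the block picture, the cleanest way to conclude is to verify directly that the curve $t\mapsto h^1(\eta(t);y(t))$ defined by (\ref{cce-10}.a)--(\ref{cce-10}.b) satisfies $h^1(\eta(0);y(0))=h^1(\xi;x)$ and $\frac{d}{dt}h^1(\eta(t);y(t))=\tilde A_i^1(a)\,h^1(\eta(t);y(t))$ using (\ref{cce-09}.b); uniqueness of solutions of this linear ODE then forces $h^1(\eta;y)=\exp(t\tilde A_i^1(a))h^1(\xi;x)$. Equivalently, one integrates $\dot\alpha=2(a|w)$, $\dot\beta=-2(a|w)$, $\dot w=(\beta-\alpha)a$ on the second block after the splitting $w=(a|w)a+w^{\perp}$ (using $\cosh2t-1=2\sinh^2t$, and the trigonometric analogue for $i=1$), which reproduces $\eta_{i+1},\eta_{i+2},y_i$, and solves a second-order ODE on the spinor block for $y_{i+1},y_{i+2}$. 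Finally, part (2) is immediate: if $a\in S^6$ then $(a|1)={\rm Re}(a)=0$, so (\ref{cce-09}.b)(iv) gives $\tilde A_i^1(a)F_i^1(1)=0$; hence $F_i^1(1)$ is fixed along the flow, the path $s\mapsto\exp(s\tilde A_i^1(a))$ lies in $(\mathrm{F}_{4(-20)})_{F_i^1(1)}$, and so $\exp(t\tilde A_i^1(a))\in(\mathrm{F}_{4(-20)})_{F_i^1(1)}^0$.

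The only genuine difficulty is bookkeeping: arranging the invariant decomposition so that the case $i=1$ and the cases $i\in\{2,3\}$ are treated uniformly, and keeping the signs $\epsilon(i),\epsilon(i+1),\epsilon(i+2)$ and the octonionic conjugation/alternativity identities (together with $(a|1)={\rm Re}(a)$) straight throughout the differentiation. Once the ansatz (\ref{cce-10}.a)--(\ref{cce-10}.b) is written down, checking that it solves the ODE is a mechanical computation.
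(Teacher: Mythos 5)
Your proposal is correct and its decisive step—checking that the curve $t\mapsto h^1(\eta(t);y(t))$ solves $\frac{d}{dt}F=\tilde A_i^1(a)F$ with $F(0)=h^1(\xi;x)$ and invoking uniqueness of solutions, together with the observation that $E_i$ (resp.\ $F_i^1(1)$, using $(a|1)=0$) is fixed along the flow so the whole path lies in the stabilizer—is exactly the argument given in the paper. The invariant block decomposition you sketch is a nice extra explanation of where $\cos/\sin$ versus $\cosh/\sinh$ come from, but it is not needed beyond the ODE verification and does not change the route.
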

\begin{proof} (1)
Fix $i \in \{1,2,3\}$.
Let 
$F:\mathbb{R}\times \mathcal{J}^1\to \mathcal{J}^1$
be the mapping defined by $F(t,h^1(\xi;x))=h^1(\eta;y).$
From direct calculations, we have
\[\frac{d}{dt} F(t,h^1(\xi;x))
=\tilde{A}_i^1(a) F(t,h^1(\xi;x))~\text{and}~
F(0,h^1(\xi;x))=h^1(\xi;x)\]
and it follows from the uniqueness of solutions
that
$F(t,h^1(\xi;x))=\exp(t\tilde{A}_i^1(a))
h^1(\xi;x).$
Because of $\eta_i=\xi_i$, we see
$\exp(t\tilde{A}_j^1(a))E_i=E_i$
and therefore $\exp(t\tilde{A}_i^1(a))
\in (\mathrm{F}_{4(-20)})_{E_i}^0$.
Hence (1) follow.

(2) 
Because of (1) and $(a|1)=0$, we see
$\exp(t\tilde{A}_i^1(a))F_i^1(1)=F_i^1(1)$. 
Hence (3) follows.
\end{proof}
\medskip

We give elementarily two lemmas which
implies the difference of orbits of the elements
in $\mathcal{J}^1$
under the action of $\mathrm{F}_{4(-20)}$.

Fix $Y\in\mathcal{J}^1$. 
The inner product $B_Y$ on $\mathcal{J}^1$ 
is defined by 
\[B_Y(X_1,X_2)=(Y|X_1|X_2)
\quad\text{for}~X_i\in\mathcal{J}^1.\]

\begin{lemma}\label{cce-11}
Let $Y_1,Y_2\in\mathcal{J}^1$. 
Assume that $B_{Y_1}$ and $B_{Y_2}$ 
have different signatures
from each other. Then 
$Orb_{\mathrm{F}_{4(-20)}}(Y_1)
\neq Orb_{\mathrm{F}_{4(-20)}}(Y_2)$. 
Furthermore,
\begin{gather*}
\tag{\ref{cce-11}.a}
Orb_{\mathrm{F}_{4(-20)}}(E_1)
\neq Orb_{\mathrm{F}_{4(-20)}}(E_2)
=Orb_{\mathrm{F}_{4(-20)}}(E_3),\\
\tag{\ref{cce-11}.b}
Orb_{\mathrm{F}_{4(-20)}}(E_1-E_2)
\neq Orb_{\mathrm{F}_{4(-20)}}(-E_1+E_2).
\end{gather*}
\end{lemma}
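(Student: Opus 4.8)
The plan is to exploit the $\mathrm{F}_{4(-20)}$-invariance of the trilinear form $(X|Y|Z)$ recorded in Proposition~\ref{prl-08}, together with the fact proved in Section~\ref{prl} that the signature of a (possibly degenerate) quadratic form is an isomorphism invariant. First I would note that if $Y_2=gY_1$ for some $g\in\mathrm{F}_{4(-20)}$, then $B_{Y_2}(gX_1,gX_2)=(gY_1|gX_1|gX_2)=(Y_1|X_1|X_2)=B_{Y_1}(X_1,X_2)$ for all $X_1,X_2\in\mathcal{J}^1$. Since the cross product is symmetric and bilinear, each $B_Y$ is a symmetric bilinear form, and since $g\in\mathrm{GL}_{\mathbb{R}}(\mathcal{J}^1)$, the displayed identity says precisely that $g$ carries the quadratic space attached to $B_{Y_1}$ isomorphically onto that of $B_{Y_2}$. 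Hence $B_{Y_1}$ and $B_{Y_2}$ have equal signatures, contradicting the hypothesis; therefore $Orb_{\mathrm{F}_{4(-20)}}(Y_1)\neq Orb_{\mathrm{F}_{4(-20)}}(Y_2)$.

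For the stated inequalities, the next step is to evaluate the relevant forms by Lemma~\ref{prl-06}. Writing $X=\sum_{i=1}^3(\xi_iE_i+F_i^1(x_i))$ and substituting (\ref{prl-06}.d) into (\ref{prl-06}.b) gives $B_{E_i}(X,X)=(E_i|X^{\times 2})=\xi_{i+1}\xi_{i+2}-\epsilon(i)(x_i|x_i)$, hence $B_{E_1}(X,X)=\xi_2\xi_3-(x_1|x_1)$ while $B_{E_2}(X,X)=\xi_3\xi_1+(x_2|x_2)$ and $B_{E_3}(X,X)=\xi_1\xi_2+(x_3|x_3)$. A term $\xi_a\xi_b$ is a hyperbolic plane of signature $(1,1)$, the norm form on ${\bf O}$ is positive definite of signature $(0,8)$, and in each case the remaining $17$ coordinates span the radical; so $B_{E_1}$ has signature $(9,1)$ whereas $B_{E_2}$ and $B_{E_3}$ have signature $(1,9)$, proving $Orb(E_1)\neq Orb(E_2)$. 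The same computation yields $B_{E_1-E_2}(X,X)=\xi_3(\xi_2-\xi_1)-(x_1|x_1)-(x_2|x_2)$ and $B_{-E_1+E_2}(X,X)=\xi_3(\xi_1-\xi_2)+(x_1|x_1)+(x_2|x_2)$, of signatures $(17,1)$ and $(1,17)$ respectively, which gives (\ref{cce-11}.b).

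It remains to establish the equality $Orb(E_2)=Orb(E_3)$ in (\ref{cce-11}.a), which no signature argument can detect; here I would produce an explicit element from Lemma~\ref{cce-10}. Taking $i=1$, $a=e_1$ and $t=\pi/2$ in (\ref{cce-10}.a), and substituting $\xi=(0,1,0)$, $x=(0,0,0)$, one finds $\eta_1=0$, $\eta_2=\cos^2(\pi/2)=0$, $\eta_3=\sin^2(\pi/2)=1$, $y_1=-2^{-1}e_1\sin\pi=0$ and $y_2=y_3=0$, i.e. $\exp((\pi/2)\tilde{A}_1^1(e_1))E_2=E_3$. Since $\exp((\pi/2)\tilde{A}_1^1(e_1))\in\mathrm{F}_{4(-20)}$ by Lemma~\ref{cce-10}(1), the orbits of $E_2$ and $E_3$ coincide.

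I expect no genuinely hard step: everything rests on the invariance in Proposition~\ref{prl-08} and the one-parameter subgroup of Lemma~\ref{cce-10}. The points that merely need care are keeping the sign factors $\epsilon(i)$ straight when reading $B_{E_i}$ off (\ref{prl-06}.d), and correctly isolating the radical of each form before reading its signature off the splitting into hyperbolic planes plus a definite octonionic norm form.
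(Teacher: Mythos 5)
Your proposal is correct and takes essentially the same route as the paper: invariance of the trilinear form plus Sylvester's law to separate orbits by the signature of $B_Y$, explicit evaluation of $B_{E_i}$, $B_{\pm(E_1-E_2)}$ via Lemma~\ref{prl-06}, and an element $\exp(2^{-1}\pi\tilde{A}_1^1(a))$ from Lemma~\ref{cce-10} interchanging $E_2$ and $E_3$. One remark: your signatures $(17,1)$ and $(1,17)$ for $B_{E_1-E_2}$ and $B_{-E_1+E_2}$ are in fact the correct ones (these forms have rank $18$, so the paper's tabulated $(18,2)$ and $(2,18)$ cannot be right), but since only the inequality of the two signatures is used, the argument is unaffected.
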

\begin{proof}
Suppose that there exists $g\in \mathrm{F}_{4(-20)}$ 
such that $g Y_1=Y_2$. 
Then
\[
B_{Y_1}(X_1,X_2)=(Y_1|X_1|X_2)
=(gY_1|gX_1|gX_2)
=B_{Y_2}(g X_1,g X_2)\]
for all $X_1,X_2\in\mathcal{J}^1$.
Using Sylvester's theorem, 
inner products $B_{Y_1}$ and $B_{Y_2}$ 
have the same signature.
It contradicts with the assumption.
Thus $Orb_{\mathrm{F}_{4(-20)}}(Y_1)
\neq Orb_{\mathrm{F}_{4(-20)}}(Y_2)$.
Let 
$Y\in\{E_1,~E_2,~E_1-E_2,~-E_1+E_2\}$.
For any
$X=\sum_{i=1}^3(\xi_i E_i+F^1_i(x_i))\in\mathcal{J}^1$,
we have the following table:
 \smallskip

 \begin{tabular}{lll}
  \hline
  $Y$ & $B_Y(X,X)$ & 
The signature of $B_Y$\\
  \hline\hline
  $E_1$ & $\xi_2\xi_3-(x_1|x_1)$ & $(9,1)$\\
  $E_3$ & $\xi_1\xi_2+(x_3|x_3)$ & $(1,9)$\\
  $E_1-E_2$ & $\xi_2\xi_3-\xi_3\xi_1
-(x_1|x_1)-(x_2|x_2)$ & $(18,2)$\\
  $-E_1+E_2$ & $-\xi_2\xi_3+\xi_3\xi_1
+(x_1|x_1)+(x_2|x_2)$ & $(2,18)$\\
  \hline
 \end{tabular}\\
 \smallskip

\noindent 
Then we see $Orb_{\mathrm{F}_{4(-20)}}(E_1)
\neq Orb_{\mathrm{F}_{4(-20)}}(E_3)$
and $Orb_{\mathrm{F}_{4(-20)}}(E_1-E_2)
\neq Orb_{\mathrm{F}_{4(-20)}}(-E_1+E_2)$.
Now from (\ref{cce-10}.a), 
$\exp(2^{-1}\pi\tilde{A}_1^1(1))E_3=E_2$ and  
$Orb_{\mathrm{F}_{4(-20)}}(E_2)
=Orb_{\mathrm{F}_{4(-20)}}(E_3)$.
Hence the result follows.
\end{proof}
\medskip

Denote the linear subspace $(\mathcal{J}^1)_0$
of $\mathcal{J}^1$ as
\[(\mathcal{J}^1)_0
:=\{X\in\mathcal{J}^1|~\mathrm{tr}(X)=0\},\]
and the subsets $\mathcal{R}^{\pm}$  
of $\mathcal{J}^1$ as
\[\mathcal{R}^+
:=\{X\in(\mathcal{J}^1)_0|~X^{\times 2}=P^+\},\quad
\mathcal{R}^-
:=\{X\in(\mathcal{J}^1)_0|~X^{\times 2}=P^-\}\]
respectively.

\begin{lemma}\label{cce-12}
{\rm (1)} For all $X\in\mathcal{R}^{\pm}$,
$\mathrm{tr}(X)=\mathrm{tr}(X^{\times 2})
=\mathrm{det}(X)=0$.
Furthermore, 
$(X^{\times 2})\times X=0$.
\smallskip

{\rm (2)} The following equations hold:
\begin{align*}
\mathcal{R}^+&=\emptyset, \tag{\ref{cce-12}.a}\\
\mathcal{R}^-
&=\{rP^-+Q^+(x)|
~r\in\mathbb{R},~x\in S^7\}.
\tag{\ref{cce-12}.b}
\end{align*}
Furthermore,
\[
\tag{\ref{cce-12}.c}
Orb_{\mathrm{F}_{4(-20)}}(P^+) 
\neq Orb_{\mathrm{F}_{4(-20)}}(P^-).\]
\end{lemma}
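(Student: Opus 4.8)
The plan is to prove the lemma in three stages: first the numerical identities in (1); then the two explicit descriptions in (2), with $\mathcal{R}^+=\emptyset$ coming out of the very same computation that determines $\mathcal{R}^-$; and finally the orbit inequality, which drops out formally from $\mathcal{R}^+=\emptyset$.

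For part (1): $\mathrm{tr}(X)=0$ holds by the definition of $\mathcal{R}^{\pm}$. A one-line check with (\ref{prl-06}.d) shows $(P^+)^{\times 2}=(P^-)^{\times 2}=0$; hence by (\ref{prl-03}.b)(iv) we get $\mathrm{det}(X)X=(X^{\times 2})^{\times 2}=(P^{\pm})^{\times 2}=0$, and since $X\ne 0$ (otherwise $X^{\times 2}=0\ne P^{\pm}$) this forces $\mathrm{det}(X)=0$. Also $\mathrm{tr}(X^{\times 2})=\mathrm{tr}(P^{\pm})=0$ by inspection. Feeding $\mathrm{tr}(X)=\mathrm{tr}(X^{\times 2})=\mathrm{det}(X)=0$ into (\ref{prl-03}.b)(v) gives $(X^{\times 2})\times X=0$.

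For part (2), the inclusion $\{rP^-+Q^+(x)\mid r\in\mathbb{R},\,x\in S^7\}\subseteq\mathcal{R}^-$ is a direct computation: (\ref{prl-06}.d) gives $(Q^+(x))^{\times 2}={\rm n}(x)P^-$, and (\ref{prl-06}.a) together with the bilinearity and symmetry of the cross product gives $P^-\times Q^+(x)=0$; hence $(rP^-+Q^+(x))^{\times 2}=r^2(P^-)^{\times 2}+2r(P^-\times Q^+(x))+(Q^+(x))^{\times 2}={\rm n}(x)P^-$, which equals $P^-$ exactly when ${\rm n}(x)=1$, while the trace is visibly $0$. The reverse inclusion and $\mathcal{R}^+=\emptyset$ I would handle in one sweep. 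Write $X=h^1(\xi_1,\xi_2,\xi_3;x_1,x_2,x_3)$ and put $\epsilon_0=+1$ in the case $X^{\times 2}=P^+$, $\epsilon_0=-1$ in the case $X^{\times 2}=P^-$. Matching components in $X^{\times 2}=P^{\pm}$ via (\ref{prl-06}.d), together with $\mathrm{tr}(X)=\xi_1+\xi_2+\xi_3=0$, produces the $E_1$-, $E_2$-, $E_3$-, $F_1^1$- and $F_3^1$-equations (the $F_2^1$-equation will play no role). The decisive step is this: conjugating the $F_1^1$-equation gives $\overline{x_3}\,\overline{x_2}=-\xi_1 x_1$; right-multiplying by $x_2$ and applying the octonion identities (\ref{prl-01}.f)--(\ref{prl-01}.h) in the form $(\overline{x_3}\,\overline{x_2})x_2={\rm n}(x_2)\overline{x_3}$, while substituting $x_1x_2=1+\xi_3\overline{x_3}$ (the conjugate of the $F_3^1$-equation), yields $({\rm n}(x_2)+\xi_1\xi_3)\overline{x_3}=-\xi_1$. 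The $E_2$-equation says precisely ${\rm n}(x_2)+\xi_1\xi_3=-\epsilon_0$, so $x_3=\epsilon_0\xi_1\in\mathbb{R}$. Then the $E_3$-equation $\xi_1\xi_2+{\rm n}(x_3)=0$ together with $\xi_1+\xi_2+\xi_3=0$ forces $\xi_1\xi_3=0$, whence the $E_2$-equation collapses to ${\rm n}(x_2)=-\epsilon_0$: for $P^+$ this is ${\rm n}(x_2)=-1$, impossible, so $\mathcal{R}^+=\emptyset$; for $P^-$ it gives ${\rm n}(x_2)=1$, hence $x_1x_2=1+\xi_3\overline{x_3}=1$, so ${\rm n}(x_1)=1$ and $x_2=\overline{x_1}$ by cancellation in ${\bf O}$, and then the $E_1$-equation gives $\xi_2\xi_3=0$, which with $\xi_1\xi_3=0$ and the trace condition forces $\xi_3=0$, $\xi_2=-\xi_1$, $x_3=-\xi_1$, i.e. $X=\xi_1E_1-\xi_1E_2+F_1^1(x_1)+F_2^1(\overline{x_1})+F_3^1(-\xi_1)=-\xi_1P^-+Q^+(x_1)$ with $x_1\in S^7$. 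This proves $\mathcal{R}^-\subseteq\{rP^-+Q^+(x)\mid r\in\mathbb{R},\,x\in S^7\}$ and finishes (2). For the orbit inequality: $Q^+(1)\in\mathcal{R}^-$ since $(Q^+(1))^{\times 2}=P^-$; if some $g\in\mathrm{F}_{4(-20)}$ had $gP^+=P^-$, then $g^{-1}$, which preserves both $\times$ and $\mathrm{tr}$, would carry $Q^+(1)$ into $\mathcal{R}^+=\emptyset$, a contradiction, so $Orb_{\mathrm{F}_{4(-20)}}(P^+)\ne Orb_{\mathrm{F}_{4(-20)}}(P^-)$.

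The main obstacle is the octonionic bookkeeping in the decisive step of (2): one has to arrange the $F_1^1$- and $F_3^1$-equations so that the alternative-law identities apply cleanly and pin $x_3$ down as a real scalar; once this is done the rest is elementary linear algebra in the $\xi_i$ plus the division-algebra property of ${\bf O}$. A small point worth recording in the write-up is that the identity $({\rm n}(x_2)+\xi_1\xi_3)\overline{x_3}=-\xi_1$ is still valid in the degenerate case $x_2=0$: both sides of the intermediate product identity then vanish, but the conjugated $F_3^1$-equation forces $\xi_3\overline{x_3}=-1$, so the relation holds anyway.
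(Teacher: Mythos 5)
Your proposal is correct, and in part (2) it takes a noticeably different computational route from the paper. The paper leans on the identity $(X^{\times 2})\times X=0$ established in part (1): writing $X=\sum_i(\xi_iE_i+F_i^1(x_i))$, the vanishing of $P^{\pm}\times X$ is linear in the entries and immediately gives $\xi_3=0$ (and, in the $P^-$ case, $x_2=\overline{x_1}$), after which a single component of $X^{\times 2}=P^{\pm}$ finishes each case ($1=-\mathrm{n}(x_1)\le 0$ for $P^+$; $\mathrm{n}(x_1)=1$ and $x_1(\overline{x_3}+\xi_1)=0$ for $P^-$). You bypass $P^{\pm}\times X=0$ altogether and work directly with the component equations of $X^{\times 2}=P^{\pm}$ together with $\mathrm{tr}(X)=0$, using the alternative laws of Lemma~\ref{prl-01} to get $(\mathrm{n}(x_2)+\xi_1\xi_3)\overline{x_3}=-\xi_1$ and hence $x_3=\epsilon_0\xi_1\in\mathbb{R}$; both signs are then handled uniformly, with $\mathcal{R}^+=\emptyset$ and the description of $\mathcal{R}^-$ both dropping out of $\mathrm{n}(x_2)=-\epsilon_0$. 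I verified the octonionic steps ($\overline{x_2x_3}=\overline{x_3}\,\overline{x_2}$, $(\overline{x_3}\,\overline{x_2})x_2=\mathrm{n}(x_2)\overline{x_3}$, the conjugated $F_3^1$-equation, and the cancellation $x_1x_2=1\Rightarrow \mathrm{n}(x_1)=1,\ x_2=\overline{x_1}$) and they are all legitimate instances of (\ref{prl-01}.f)--(\ref{prl-01}.i); your cautionary remark about $x_2=0$ is actually unnecessary, since the derivation involves no division. Part (1) and the orbit inequality (\ref{cce-12}.c) coincide with the paper's argument (the paper says $g(\mathcal{R}^+)=\mathcal{R}^-$, you transport $Q^+(1)$ by $g^{-1}$ — the same point). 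What the paper's route buys is lighter octonion bookkeeping, since the linear condition $P^-\times X=0$ yields $\xi_3=0$ and $x_2=\overline{x_1}$ at once; what yours buys is a single uniform computation for both signs and the fact that part (2) does not depend on the auxiliary identity $(X^{\times 2})\times X=0$.
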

\begin{proof}
(1) 
Because of 
$X\in\mathcal{R}^{\pm}$, we see
$\mathrm{tr}(X)=0=\mathrm{tr}(P^{\pm})
=\mathrm{tr}(X^{\times 2})$, 
and from (\ref{prl-03}.b)(iv), we see
$\mathrm{det}(X)X=(X^{\times 2})^{\times 2}
=(P^{\pm})^{\times 2}=0$.
Thus $\mathrm{tr}(X)
=\mathrm{tr}(X^{\times 2})=\mathrm{det}(X)=0$
and by
(\ref{prl-03}.b)(v),
$(X^{\times 2})\times X=0$. 

(2)
Suppose that there exists 
$X=\sum_{i=1}^3(\xi_i E_i+F^1_i(x_i))\in \mathcal{R}^+$.
Using (\ref{prl-06}.a),
from (1), we see
\begin{align*}
0&=(X^{\times 2})\times X= P^+\times X
=2^{-1}\left(-\xi_3E_1+\xi_3E_2\right.\\
&\left.
 +(\xi_2-\xi_1+2(1|x_3)
)E_3
+F^1_1(x_1-\overline{x_2})
+F^1_2(-x_2-\overline{x_1})
+F^1_3(-\xi_3)\right).
\end{align*}
Then $\xi_3=0$.
However, by (\ref{prl-06}.d),
\[1=(P^+)_{E_1}=(X^{\times 2})_{E_1}
=\xi_2\cdot 0-(x_1|x_1)=-(x_1|x_1)\leq 0.\]
It is a contradiction and (\ref{cce-12}.a) follows.

Next,
put $\mathcal{R}=\{
rP^-+Q^+(x)
|~r\in\mathbb{R},~x\in S^7\}$. 
Take $X=\sum_{i=1}^3(\xi_iE_i+F^1_i(x_i))
\in \mathcal{R}^-$. 
From (1), 
\begin{align*}
0&=(X^{\times 2})\times X=P^- \times X
=2^{-1}\left(\xi_3 E_1-\xi_3 E_2\right.\\
&\left.+(-\xi_2
+\xi_1+2(1|x_3))E_3
+F^1_1(x_1-\overline{x_2})
+F^1_2(x_2-\overline{x_1})
+F^1_3(-\xi_3)\right).
\end{align*}
Then $\xi_3=0$ and $x_2=\overline{x_1}$.
Because of $\xi_1+\xi_2=\mathrm{tr}(X)=0$, 
we see $\xi_2=-\xi_1$.
Next, by (\ref{prl-06}.d),
\begin{align*}
P^-=X^{\times 2}
=&-(x_1|x_1)E_1+(x_1|x_1)E_2+(-\xi_1^2+(x_3|x_3))E_3\\
&+F^1_1(-\overline{\overline{x_1}x_3}-\xi_1x_1)
+F^1_2(\overline{x_3x_1}+\xi_1\overline{x_1})
+F^1_3(\mathrm{n}(x_1)).
\end{align*}
Then $\mathrm{n}(x_1)=1$ and 
$0=x_1(\overline{x_3}+\xi_1)$. 
From $x_1\ne 0$, we see
$x_3=-\xi_1$
and $X=
-\xi_1P^-+F^1_1(x_1)+F^1_2(\overline{x_1})$
where $\mathrm{n}(x_1)=1$.
Thus $X\in \mathcal{R}$ 
and so $\mathcal{R}^-\subset\mathcal{R}$.
Conversely, let $X=
rP^-+Q^+(x)\in \mathcal{R}$ 
where $r\in\mathbb{R}$ and $x\in S^7$.
From direct calculations, 
we see $X\in \mathcal{R}^-$
and so $\mathcal{R}\subset\mathcal{R}^-$.
Hence $\mathcal{R}^-=\mathcal{R}$.

Last, 
we show (\ref{cce-12}.c).
Suppose that there exists $g \in \mathrm{F}_{4(-20)}$ 
such that $g P^+=P^-$.
From (\ref{cce-12}.a) and (\ref{cce-12}.b),
we see $\emptyset=g(\mathcal{R}^+)
=\mathcal{R}^-\neq \emptyset$. 
It is a contradiction as required.
\end{proof}

\section{The stabilizers of Spin group type.}
\label{spn}

In this section, we will explain
the construction of the spin groups
$\mathrm{Spin}(9)$, $\mathrm{Spin}^0(8,1)$
and $\mathrm{Spin}^0(7,1)$
as the stabilizers,
respectively.

For $X\in\mathcal{J}^1$, denote the element
$L^{\times}(X) \in \mathrm{End}_{\mathbb{R}}(\mathcal{J}^1)$
by
\[L^{\times}(X)Y:=X\times Y
\quad\text{for}~Y\in\mathcal{J}^1\] 
and for $r\in\mathbb{R}$,
consider the $r$-eigenspace 
of $L^{\times}(X)$ on $\mathcal{J}^1$:
\[
\mathcal{J}^1_{L^{\times}(X),r}=
\{Y\in\mathcal{J}^1|~L^{\times}(X) Y=rY\}.
\]
The quadratic form $Q$ on $\mathcal{J}^1$
is defined by
\[Q(X):=-\mathrm{tr}(X^{\times 2})
\quad\text{for}~X\in\mathcal{J}^1.\]

\begin{lemma}\label{spn-01}
Let $i\in\{1,2,3\}$ and
indexes $i,i+1,i+2$ be counted modulo $3$.
Let $X \in\mathcal{J}^1$ and $r\in\mathbb{R}$.
Then
\begin{gather*}
\tag{\ref{spn-01}.a}
Q(gX)=Q(X) \quad 
\text{for~all}~g\in \mathrm{F}_{4(-20)},\\
\tag{\ref{spn-01}.b}
g\mathcal{J}^1_{L^{\times}(X),r}
=\mathcal{J}^1_{L^{\times}(gX),r}
\quad 
\text{for~all}~g\in \mathrm{F}_{4(-20)},\\
\tag{\ref{spn-01}.c}
\mathcal{J}^1_{L^{\times}(2E_i),-1}
=\{\xi(E_{i+1}-E_{i+2})+F_i^1(x)|~
\xi\in\mathbb{R},~x\in{\bf O}\},\\
\tag{\ref{spn-01}.d}
Q(\xi(E_{i+1}-E_{i+2})+F_i^1(x))
=\xi^2+\epsilon(i)(x|x).
\end{gather*}
Especially, when $i=1$,
then
the quadratic space
$(\mathcal{J}^1_{L^{\times}(2E_1),-1},Q)$ 
is isomorphic to 
$(\mathbb{R}^{0,9},\mathrm{q}_{0,9})$,
and 
when $i\in\{2,3\}$, then
$(\mathcal{J}^1_{L^{\times}(2E_i),-1},Q)$ 
is isomorphic to 
$(\mathbb{R}^{8,1},\mathrm{q}_{8,1})$.
\end{lemma}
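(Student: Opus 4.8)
The plan is to dispatch the four displayed identities in turn, drawing only on the invariance statements of Proposition~\ref{prl-08} and on the explicit formulas of Lemmas~\ref{prl-03} and~\ref{prl-06}, and then to read off the signatures from the identification $({\bf O},\mathrm{n})\cong(\mathbb{R}^{0,8},\mathrm{q}_{0,8})$.

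For (\ref{spn-01}.a) I would first rewrite $Q$ by means of (\ref{prl-03}.c), namely $Q(X)=-\mathrm{tr}(X^{\times 2})=2^{-1}\bigl((X|X)-\mathrm{tr}(X)^2\bigr)$; since every $g\in\mathrm{F}_{4(-20)}$ preserves the trace and the inner product by Proposition~\ref{prl-08}, the invariance of $Q$ is immediate. For (\ref{spn-01}.b), if $X\times Y=rY$, then applying $g$ and using $g(X\times Y)=gX\times gY$ gives $gX\times gY=r\,gY$, so $g\,\mathcal{J}^1_{L^{\times}(X),r}\subseteq\mathcal{J}^1_{L^{\times}(gX),r}$; running the same argument with $g^{-1}$ in place of $g$ yields the reverse inclusion, hence equality.

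For (\ref{spn-01}.c) I would take a general $Z=\sum_{j=1}^3(\zeta_jE_j+F_j^1(z_j))$ and compute $2E_i\times Z$ term by term from (\ref{prl-06}.a): the only nonzero contributions are $2E_i\times E_{i+1}=E_{i+2}$, $2E_i\times E_{i+2}=E_{i+1}$ and $2E_i\times F_i^1(z_i)=-F_i^1(z_i)$, so $2E_i\times Z=\zeta_{i+1}E_{i+2}+\zeta_{i+2}E_{i+1}-F_i^1(z_i)$. Imposing $2E_i\times Z=-Z$ and comparing components in the direct sum (\ref{prl-05}.a) forces $\zeta_i=0$, $\zeta_{i+2}=-\zeta_{i+1}$ and $z_{i+1}=z_{i+2}=0$, i.e.\ $Z=\xi(E_{i+1}-E_{i+2})+F_i^1(x)$ with $\xi:=\zeta_{i+1}$ and $x:=z_i$; the converse inclusion is this computation read backwards. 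For (\ref{spn-01}.d), the element $Y:=\xi(E_{i+1}-E_{i+2})+F_i^1(x)$ is traceless, so $Q(Y)=2^{-1}(Y|Y)$ by (\ref{prl-03}.c), and (\ref{prl-06}.b) gives $(Y|Y)=\xi^2+\xi^2+2\epsilon(i)(x|x)$, whence $Q(Y)=\xi^2+\epsilon(i)(x|x)$.

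For the signature claim, the map $(\xi,x)\mapsto\xi(E_{i+1}-E_{i+2})+F_i^1(x)$ is a linear isomorphism of $\mathbb{R}\oplus{\bf O}$ onto $\mathcal{J}^1_{L^{\times}(2E_i),-1}$ carrying $Q$ to $\xi^2+\epsilon(i)\mathrm{n}(x)$; since $\epsilon(1)=1$ while $\epsilon(i)=-1$ for $i\in\{2,3\}$, and $\mathrm{n}$ is positive definite of rank $8$ on ${\bf O}$, this form is non-degenerate and isomorphic to $\mathrm{q}_{0,9}$ when $i=1$ and to $\mathrm{q}_{8,1}$ when $i\in\{2,3\}$. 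The only step that needs real care is (\ref{spn-01}.c), where one must keep the cyclic index shifts straight and check both inclusions; matching the signature likewise requires attention to the sign convention in $\mathrm{q}_{k,l}$. I expect no deeper obstacle.
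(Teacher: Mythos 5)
Your proposal is correct and is essentially the paper's argument: the paper disposes of this lemma with "Using Proposition~\ref{prl-08} and Lemma~\ref{prl-06}, it follows from direct calculations," and your write-up simply carries out those calculations (invariance of trace, inner product and cross product for (\ref{spn-01}.a)--(\ref{spn-01}.b), the componentwise evaluation of $2E_i\times Z=-Z$ via (\ref{prl-06}.a) and (\ref{prl-05}.a) for (\ref{spn-01}.c), and $Q=2^{-1}(\,\cdot\,|\,\cdot\,)$ on traceless elements for (\ref{spn-01}.d)), with the signatures read off correctly from $\epsilon(1)=1$, $\epsilon(2)=\epsilon(3)=-1$ under the paper's $\mathrm{q}_{k,l}$ convention. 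No gaps.
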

\begin{proof}
Using Proposition~\ref{prl-08}
and Lemma~\ref{prl-06},
it follows from
direct calculations.
\end{proof}

The quadratic space
$(\mathcal{J}^1_{L^{\times}(2E_1),-1},Q)$
has a sphere $S^8$  as 
\[S^8:=\{X\in \mathcal{J}^1_{L^{\times}(2E_1),-1}~|
~Q(X)=1\}\] 
and 
the quadratic space
$(\mathcal{J}^1_{L^{\times}(2E_3),-1},Q)$
has 
a positive sphere $\mathcal{S}^{8,1}$ ,
a negative sphere $\mathcal{S}^{8,1}(-1)$
and 
a null cone $\mathcal{N}^{8,1}$
as
\begin{align*}
\mathcal{S}^{8,1}&
:=\{X\in \mathcal{J}^1_{L^{\times}(2E_3),-1}~|
~Q(X)=1\},\\
\mathcal{S}^{8,1}(-1)&
:=\{X\in \mathcal{J}^1_{L^{\times}(2E_3),-1}~|
~Q(X)=-1\},\\
\mathcal{N}^{8,1}&
:=\{X\in \mathcal{J}^1_{L^{\times}(2E_3),-1}~|
~Q(X)=0,~X\ne 0\}
\end{align*}
respectively.
Denote
the subsets $\mathcal{S}^{8,1}_+,
~\mathcal{S}^{8,1}_-\subset\mathcal{S}^{8,1}$ by
\[
\mathcal{S}^{8,1}_+:=\{X\in \mathcal{S}^{8,1}~|
~(X|E_1)>0\},\quad
~~\mathcal{S}^{8,1}_-:=\{X\in \mathcal{S}^{8,1}~|
~(X|E_1)<0\}\]
respectively,
and the subsets $\mathcal{N}^{8,1}_+,
~\mathcal{N}^{8,1}_-\subset\mathcal{N}^{8,1}$ by
\[
\mathcal{N}^{8,1}_+:=\{X\in \mathcal{N}^{8,1}~|
~(X|E_1)>0\},\quad
~~\mathcal{N}^{8,1}_-:=\{X\in \mathcal{N}^{8,1}~|
~(X|E_1)<0\}
\]
respectively.

\begin{lemma}\label{spn-02}
The following equations holds.
\begin{align*}
\tag{\ref{spn-02}.a}
\mathcal{S}^{8,1}&=\mathcal{S}^{8,1}_+\coprod
\mathcal{S}^{8,1}_-,\\
\smallskip
\tag{\ref{spn-02}.b}
\mathcal{N}^{8,1}&=\mathcal{N}^{8,1}_+\coprod
\mathcal{N}^{8,1}_-.
\end{align*}
\end{lemma}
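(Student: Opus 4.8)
The plan is to pass to the explicit coordinate description of the ambient quadratic space furnished by Lemma~\ref{spn-01} and then to observe that the hyperplane $\{X\mid (X|E_1)=0\}$ carries a negative definite, hence anisotropic, form, so it can meet neither the positive sphere $\mathcal{S}^{8,1}$ nor the punctured null cone $\mathcal{N}^{8,1}$.

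Concretely, I would first record that, by (\ref{spn-01}.c) taken with $i=3$ (so that $i+1,i+2$ read as $1,2$ modulo $3$), every element of $\mathcal{J}^1_{L^{\times}(2E_3),-1}$ is uniquely of the form $X=\xi(E_1-E_2)+F_3^1(x)$ with $\xi\in\mathbb{R}$ and $x\in{\bf O}$, and that for such $X$ one has $(X|E_1)=\xi$, this being the $E_1$-coordinate $(X)_{E_1}$. Next, by (\ref{spn-01}.d) with $i=3$ and $\epsilon(3)=-1$, one gets $Q(X)=\xi^2-{\rm n}(x)$, where ${\rm n}$ is positive definite on ${\bf O}$ because $({\bf O},{\rm n})\cong(\mathbb{R}^{0,8},\mathrm{q}_{0,8})$.

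With these formulas in hand the two decompositions are immediate. Disjointness of $\mathcal{S}^{8,1}_+$ and $\mathcal{S}^{8,1}_-$ (resp. of $\mathcal{N}^{8,1}_+$ and $\mathcal{N}^{8,1}_-$) is trivial, since $(X|E_1)$ cannot be simultaneously positive and negative. For exhaustiveness: if $X=\xi(E_1-E_2)+F_3^1(x)\in\mathcal{S}^{8,1}$, then $\xi^2=Q(X)+{\rm n}(x)=1+{\rm n}(x)\ge 1>0$, so $(X|E_1)=\xi\ne 0$ and $X$ lies in $\mathcal{S}^{8,1}_+$ or $\mathcal{S}^{8,1}_-$, which is (\ref{spn-02}.a). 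If instead $X\in\mathcal{N}^{8,1}$, then $Q(X)=0$ and $X\ne 0$, i.e. $\xi^2={\rm n}(x)$ with $(\xi,x)\ne(0,0)$; were $\xi=0$ we would get ${\rm n}(x)=0$, hence $x=0$ by positive-definiteness, contradicting $X\ne 0$; so $(X|E_1)=\xi\ne 0$ and $X$ lies in $\mathcal{N}^{8,1}_+$ or $\mathcal{N}^{8,1}_-$, giving (\ref{spn-02}.b).

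I do not expect a genuine obstacle: the whole content is that the restriction of $Q$ to $\{(X|E_1)=0\}$ equals $-{\rm n}(\,\cdot\,)$ and is therefore anisotropic, so this hyperplane meets $\{Q=1\}$ in $\emptyset$ and meets $\{Q=0\}$ only at $0$. The one place to stay alert is the index bookkeeping when invoking Lemma~\ref{spn-01}: one must use $i=3$, read $E_{i+1}-E_{i+2}$ as $E_1-E_2$ modulo $3$, and correctly evaluate $\epsilon(3)=-1$, since a sign slip there would flip the signature and destroy the argument.
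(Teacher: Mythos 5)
Your argument is correct and is essentially the paper's own proof: both reduce via (\ref{spn-01}.c) and (\ref{spn-01}.d) to the observation that on the hyperplane $\{X\in\mathcal{J}^1_{L^{\times}(2E_3),-1}\mid (X|E_1)=0\}$ the form $Q$ equals $-\mathrm{n}(\,\cdot\,)$ and is thus anisotropic, so any element with $Q(X)=1$, or with $Q(X)=0$ and $X\ne 0$, must have $(X|E_1)\ne 0$. The paper states this contrapositively in two lines; your version merely spells out the same computation, with the sign $\epsilon(3)=-1$ handled correctly.
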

\begin{proof}
Suppose that $X\in \mathcal{J}^1_{L^{\times}(2E_3),-1}$ 
satisfies $X\ne 0$ and
$(E_1|X)=0$.
From (\ref{spn-01}.c),
we see
$X=F_3^1(x)$
for some $(0\ne)x\in{\bf O}$ and $Q(X)<0$.
Therefore, if 
$X\in \mathcal{S}^{8,1}$ (resp. $X\in \mathcal{N}^{8,1}$),
then $(E_1|X)\ne 0$.
\end{proof}
\medskip

The quadratic subspace $(\mathcal{J}^1_{7,1},Q)$  
is defined by
\begin{align*}
\mathcal{J}^1_{7,1}
&:=\{X\in \mathcal{J}^1_{L^{\times}(2E_3),-1}|
~(F_3^1(1)|X)=0\}\\
&=\{\xi(E_1-E_2)+F_3^1(x)~|
~\xi\in\mathbb{R},~x\in{\rm Im}{\bf O}\}
\end{align*}
and
$Q(\xi(E_1-E_2)+F_3^1(x))=\xi^2-(x|x)$.
So the quadratic space
$(\mathcal{J}^1_{7,1},Q)$ 
is isomorphic to $(\mathbb{R}^{7,1},\mathrm{q}_{7,1})$
and we denote a positive sphere $\mathcal{S}^{7,1}$
in the quadratic space $(\mathcal{J}^1_{7,1},Q)$
and its subset $\mathcal{S}^{7,1}_+$ as
\[
\mathcal{S}^{7,1}:=\{X\in \mathcal{J}^1_{7,1}~|
~Q(X)=1\},~~\mathcal{S}^{7,1}_+:=\{X\in \mathcal{S}^{7,1}~|
~(X|E_1)>0\}
\]
respectively.
Denote the homomorphisms $\tilde{p}_i,\tilde{q},p_i,q$
as
\begin{align*}
\tilde{p}_i&:(\mathrm{F}_{4(-20)})_{E_i} 
\to {\rm O}(\mathcal{J}^1_{L^{\times}(2E_i),-1},Q),
&\tilde{p}_i(g)
&:=g|\mathcal{J}^1_{L^{\times}(2E_i),-1},\\
\tilde{q}&:(\mathrm{F}_{4(-20)})_{F_3^1(1)} 
\to {\rm O}(\mathcal{J}^1_{7,1},Q),
&\tilde{q}(g)
&:=g|\mathcal{J}^1_{7,1},\\
p_i&:\mathrm{D}_4 \to {\rm O}(F_i^1({\bf O}),Q),
&p_i(g)
&:=g|F_i^1({\bf O}),\\
q&:\mathrm{B}_3 \to {\rm O}(F_3^1({\rm Im}{\bf O}),Q),
&q(g)
&:=g|F_3^1({\rm Im}{\bf O})
\end{align*}
respectively.
\medskip

\begin{lemma}\label{spn-03} 
The homomorphisms
$\tilde{p}_i,~\tilde{q},~p,~q$
are well-defined.
\end{lemma}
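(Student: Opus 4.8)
The plan is to check, for each of the four maps $\tilde p_i$, $\tilde q$, $p_i$, $q$, the two assertions hidden in the word ``well-defined'': first, that an element $g$ of the relevant stabilizer subgroup carries the target subspace $W\subset\mathcal{J}^1$ onto itself, so that the restriction $g|_W$ is a well-defined $\mathbb{R}$-linear automorphism of $W$; and second, that $g|_W$ preserves the quadratic form $Q$, so that $g|_W\in\mathrm{O}(W,Q)$. Once these are in hand, the homomorphism property is immediate from $(gh)|_W=(g|_W)(h|_W)$, and $g|_W$ is invertible because $g^{-1}$ lies in the same stabilizer and hence also preserves $W$, giving $(g|_W)^{-1}=g^{-1}|_W$.

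The $Q$-invariance is uniform and costs nothing: every $g\in\mathrm{F}_{4(-20)}$ satisfies $Q(gX)=Q(X)$ for all $X\in\mathcal{J}^1$ by (\ref{spn-01}.a), so in particular $Q(g|_WX)=Q(X)$ for $X\in W$ once $W$ is $g$-stable. Thus the real content is the subspace invariance, which I would argue case by case. For $\tilde p_i$: if $g\in(\mathrm{F}_{4(-20)})_{E_i}$ then $gE_i=E_i$, so $g(2E_i)=2E_i$, and (\ref{spn-01}.b) gives $g\,\mathcal{J}^1_{L^{\times}(2E_i),-1}=\mathcal{J}^1_{L^{\times}(g(2E_i)),-1}=\mathcal{J}^1_{L^{\times}(2E_i),-1}$. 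For $\tilde q$: by (\ref{cce-01}.a) we have $(\mathrm{F}_{4(-20)})_{F_3^1(1)}=(\mathrm{F}_{4(-20)})_{E_3,F_3^1(1)}$, so such a $g$ fixes both $E_3$ and $F_3^1(1)$; the former gives $g\,\mathcal{J}^1_{L^{\times}(2E_3),-1}=\mathcal{J}^1_{L^{\times}(2E_3),-1}$ as above, and the latter, together with the invariance of the inner product under $\mathrm{F}_{4(-20)}$ (Proposition~\ref{prl-08}), shows that the defining condition $(F_3^1(1)|X)=0$ is preserved, whence $g\,\mathcal{J}^1_{7,1}=\mathcal{J}^1_{7,1}$.

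For $p_i$: by Lemma~\ref{cce-02}, $g\in\mathrm{D}_4$ is $\varphi_0(g_1,g_2,g_3)$ for some $(g_1,g_2,g_3)\in\mathrm{Spin}(8)$, and formula (\ref{cce-02}) gives $gF_i^1(x)=F_i^1(g_ix)\in F_i^1({\bf O})$ for all $x\in{\bf O}$, so $g$ preserves $F_i^1({\bf O})$. For $q$: $g\in\mathrm{B}_3$ is $\varphi_0(g_1,g_2,g_3)$ with $(g_1,g_2,g_3)\in\mathrm{Spin}(7)$, hence $g_31=1$, i.e. $g_3\in\mathrm{SO}(7)$; then (\ref{cce-03})(iii) gives $gF_3^1(p)=F_3^1(g_3p)$, and (\ref{trl-03}.a)(iii) gives $g_3(\mathrm{Im}{\bf O})\subset\mathrm{Im}{\bf O}$, so $gF_3^1(\mathrm{Im}{\bf O})\subset F_3^1(\mathrm{Im}{\bf O})$ (and equality follows by applying the same to $g^{-1}$). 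The whole argument is routine; the only spots demanding a little care are the invariance of $\mathcal{J}^1_{7,1}$ and of $F_3^1(\mathrm{Im}{\bf O})$, where one must make sure the stabilizer conditions really propagate through the explicit description of $\varphi_0$ and through the triality constraint $g_3\in\mathrm{SO}(7)$; everything else reduces to the eigenspace-transport identity (\ref{spn-01}.b) and the $Q$-invariance (\ref{spn-01}.a).
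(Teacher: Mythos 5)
Your proposal is correct and follows essentially the same route as the paper: case-by-case invariance of the relevant subspaces under the corresponding stabilizers, followed by $Q$-invariance via (\ref{spn-01}.a) to land in the orthogonal groups. The only cosmetic difference is that for $p_i$ and $q$ the paper verifies invariance through the intrinsic descriptions $F_i^1({\bf O})=\{X\in\mathcal{J}^1\mid E_j\times X=0,\ j\ne i\}$ and $F_3^1({\rm Im}{\bf O})=\{X\in F_3^1({\bf O})\mid (F_3^1(1)|X)=0\}$ rather than your explicit $\varphi_0$/triality formulas, but the two verifications are equivalent and equally routine.
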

\begin{proof}
First, 
by
(\ref{spn-01}.b), $(\mathrm{F}_{4(-20)})_{E_i}$ invariants
$\mathcal{J}^1_{L^{\times}(2E_i),-1}$.
Second,
because of (\ref{cce-01}.a), 
the definition of $\mathcal{J}^1_{7,1}$
and (\ref{spn-01}.b),
$(\mathrm{F}_{4(-20)})_{F_3^1(1)}$
invariants $\mathcal{J}^1_{7,1}$.
Third,
because of $F_i^1({\bf O})=\{X \in \mathcal{J}^1|
~E_j \times X=0,~i\ne j\}$,
$\mathrm{D}_4=(\mathrm{F}_{4(-20)})_{E_1,E_2,E_3}$
invariants $F_i^1({\bf O})$.
Last,
because of 
$F_i^1({\rm Im}{\bf O})
=\{X\in F_i^1({\bf O})~|~(F_i^1(1)|X)=0\}$,
$\mathrm{B}_3=(\mathrm{F}_{4(-20)})_{E_1,E_2,E_3,F_3^1(1)}$ 
invariants $F_3^1(\mathrm{Im}{\bf O})$.
Therefore, from (\ref{spn-01}.a),
it follows that
the restrictions of suitable subspaces of $\mathcal{J}^1$
induce the homomorphisms into suitable orthogonal groups. 
\end{proof}
\medskip

We use trivial lemma to
determine the $G$-orbits of $X$.

\begin{lemma}\label{spn-04}
Let $X$ be a set and a group $G$ act on $X$.
Let $I$ be an index set and $i,j \in I$.
Assume that there exists 
a sequence $(X_i)_{i \in I}$ 
of subsets of $X$ and a sequence $(v_i)_{i \in I}$
of elements in $X$
such that the
following conditions {\rm (i)-(iv)} hold:

{\rm (i)} $X=\cup_{i\in I} X_i$, {\rm (ii)} $v_i\in X_i$, 
{\rm (iii)} $Orb_G(v_i)\neq 
Orb_G(v_j)\Leftrightarrow i\neq j$,\par
{\rm (iv)} $X_i\subset Orb_G(v_i)$.

\noindent
Then $X_i=Orb_G(v_i)$ for all $i\in I$.
\end{lemma}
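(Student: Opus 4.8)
\emph{Proof proposal.} The plan is to verify the single nontrivial inclusion $Orb_G(v_i)\subset X_i$; combined with hypothesis (iv), which already gives $X_i\subset Orb_G(v_i)$, this yields the asserted equality $X_i=Orb_G(v_i)$.

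First I would fix $i\in I$ and take an arbitrary element of the orbit, say $gv_i$ with $g\in G$. By (i) this element lies in some $X_j$, and then (iv) applied to the index $j$ gives $gv_i\in Orb_G(v_j)$. Since also $gv_i\in Orb_G(v_i)$, the two $G$-orbits $Orb_G(v_i)$ and $Orb_G(v_j)$ have a point in common; because the $G$-orbits on $X$ form a partition (two orbits are either equal or disjoint), this forces $Orb_G(v_i)=Orb_G(v_j)$. By the contrapositive of the $(\Leftarrow)$ direction of (iii)—namely $Orb_G(v_i)=Orb_G(v_j)\Rightarrow i=j$—we conclude $j=i$, hence $gv_i\in X_i$. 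As $g\in G$ was arbitrary, $Orb_G(v_i)\subset X_i$, and the lemma follows.

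There is no genuine obstacle here: the statement is purely formal, and the only ingredient used beyond the stated hypotheses is the elementary fact that distinct $G$-orbits are disjoint. Hypothesis (ii) is not needed for the argument above (it merely records that $v_i\in X_i$, which is used when the lemma is applied elsewhere), and (iii) is used only in the direction just indicated. Thus the proof is a one-paragraph set-theoretic verification.
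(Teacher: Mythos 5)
Your proof is correct and follows essentially the same route as the paper: take a point of $Orb_G(v_i)$, locate it in some $X_j$ via (i), push it into $Orb_G(v_j)$ via (iv), and use disjointness of distinct orbits together with (iii) to force $j=i$. Your observation that (ii) is not actually needed in the argument is also accurate.
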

\begin{proof}
Take $x\in Orb_G(v_i)$. 
Because of $x\in Orb_G(v_i)\subset X=\cup_{i\in I} X_i$, 
there exists $j \in I$ such that $x\in X_j$.
By (iv), $x\in X_j\subset Orb_G(v_j)$
so that $x \in Orb_G(v_i) \cap Orb_G(v_j)$.
By (iii), $i=j$ and 
$x\in X_i$. Then $Orb_G(v_i)\subset X_i$.
Thus, from (iv), we have
$X_i=Orb_G(v_i)$.
\end{proof}
\medskip

\begin{lemma}\label{spn-05}
Let $j\in\{1,2,3\}$.
For all $X=\sum_{i=1}^3(\xi_iE_i+F^1_i(x_i))
\in \mathcal{J}^1$,  
there exists $\varphi_0(g_1,g_2,g_3)\in \mathrm{D}_4$ 
such that
\[
\tag{\ref{spn-05}}
\varphi_0(g_1,g_2,g_3)X=\left(\sum{}_{i=1}^3\xi_iE_i\right)
+F^1_j\left(\sqrt{\mathrm{n}(x_j)}\right)
+\sum{}_{k=1}^2F^1_{j+k}(g_{j+k}x_{j+k})\] 
where the index $j+k$ is counted modulo $3$.
\end{lemma}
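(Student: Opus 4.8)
The plan is to realize the required transformation by an element of $\mathrm{D}_4=\varphi_0(\mathrm{Spin}(8))$ arising from triality, using that $\varphi_0(g_1,g_2,g_3)$ acts on the ``$F^1_i$''-slots of $\mathcal{J}^1$ by $x_i\mapsto g_ix_i$ and fixes each $\xi_iE_i$ (formula (\ref{cce-02})). Hence it suffices to produce $(g_1,g_2,g_3)\in\tilde D_4$ whose $j$-th component $g_j\in\mathrm{SO}(8)$ carries $x_j$ to the real octonion $\sqrt{\mathrm{n}(x_j)}\cdot 1$: applying $\varphi_0(g_1,g_2,g_3)$ then leaves $\sum_{i=1}^3\xi_iE_i$ unchanged, replaces $F^1_j(x_j)$ by $F^1_j(\sqrt{\mathrm{n}(x_j)})$, and turns the other two slots into $F^1_{j+k}(g_{j+k}x_{j+k})$ for $k=1,2$, which is exactly the asserted form since $\{1,2,3\}=\{j,j+1,j+2\}$ modulo $3$.

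First I would treat the degenerate case $x_j=0$: then $\sqrt{\mathrm{n}(x_j)}=0$, and $(1_{\bf O},1_{\bf O},1_{\bf O})\in\tilde D_4$ already works, $\varphi_0$ of it being the identity of $\mathrm{D}_4$. For $x_j\neq 0$ we have $\mathrm{n}(x_j)>0$, so $a:=\mathrm{n}(x_j)^{-1/2}x_j$ and $1$ both lie on $S^7$. Since $\mathrm{SO}(8)$ acts transitively on $S^7$ (this can be read off from Lemma~\ref{trl-05}(2) together with the surjectivity of $p_3$), there is $g\in\mathrm{SO}(8)$ with $ga=1$, i.e. $gx_j=\sqrt{\mathrm{n}(x_j)}$. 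By Proposition~\ref{trl-07}(1) the homomorphism $p_j:\tilde D_4\to\mathrm{SO}(8)$ is surjective (for $j=1,2$ this follows from the stated sequence for $p_3$ together with the cyclic symmetry (\ref{trl-02})), so I may choose $(g_1,g_2,g_3)\in\tilde D_4$ with $g_j=g$.

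Finally, since $\tilde D_4=\mathrm{Spin}(8)$ and $\varphi_0$ is an isomorphism onto $\mathrm{D}_4=(\mathrm{F}_{4(-20)})_{E_1,E_2,E_3}$ (Lemma~\ref{cce-02}(1)), the element $\varphi_0(g_1,g_2,g_3)$ lies in $\mathrm{D}_4$, and evaluating it on $X$ via (\ref{cce-02}) gives precisely $\bigl(\sum_{i=1}^3\xi_iE_i\bigr)+F^1_j(\sqrt{\mathrm{n}(x_j)})+\sum_{k=1}^2F^1_{j+k}(g_{j+k}x_{j+k})$, as required. There is no real obstacle here; the only point demanding care is the index bookkeeping, and invoking the surjectivity of $p_j$ (equivalently, transitivity of $\mathrm{SO}(8)$ on $S^7$) for the correct, $j$-th, slot of the triality triple rather than merely the third.
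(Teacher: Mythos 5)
Your proposal is correct and follows essentially the same route as the paper: choose $g_j\in\mathrm{SO}(8)$ sending $x_j$ to $\sqrt{\mathrm{n}(x_j)}$, lift it to a triple in $\tilde{D}_4$ via the surjectivity of $p_j$ in (\ref{trl-07}.a), and evaluate $\varphi_0(g_1,g_2,g_3)$ on $X$ using (\ref{cce-02}). Your extra remarks (the $x_j=0$ case and deducing surjectivity of $p_1,p_2$ from cyclic symmetry) are harmless elaborations of what the paper leaves implicit.
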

\begin{proof}
Given $x_j=(X)_{F_j^1}\in {\bf O}$,
we can take $g_j\in \mathrm{SO}(8)$ 
such that $g_j x_j=\sqrt{\mathrm{n}(x_j)}$. 
By (\ref{trl-07}.a), there exists  
$(g_1,g_2,g_3)\in \tilde{D}_4$ and
from (\ref{cce-02}), we have the result.
\end{proof}
\medskip

\begin{lemma}\label{spn-06}
The following equations hold.
\begin{align*}
\tag{\ref{spn-06}.a}
&S^8=Orb_{(\mathrm{F}_{4(-20)})_{E_1}}
(E_2-E_3).\\
\tag{\ref{spn-06}.b}
&\mathcal{S}^{8,1}(-1)=Orb_{(\mathrm{F}_{4(-20)})_{E_3}}
(F_3^1(1)).\\
\tag{\ref{spn-06}.c}
&\left\{
\begin{array}{crl}
{\rm (i)}&
\mathcal{S}^{8,1}_+
&=Orb_{(\mathrm{F}_{4(-20)})_{E_3}}(E_1-E_2),\\
{\rm (ii)}&
\mathcal{S}^{8,1}_-
&=Orb_{(\mathrm{F}_{4(-20)})_{E_3}}(-E_1+E_2).
\end{array}
\right.\\
\tag{\ref{spn-06}.d}
&\left\{\begin{array}{crl}
{\rm (i)}&
\mathcal{N}^{8,1}_+
&=Orb_{(\mathrm{F}_{4(-20)})_{E_3}}(P^+),\\
{\rm (ii)}&
\mathcal{N}^{8,1}_-
&=Orb_{(\mathrm{F}_{4(-20)})_{E_3}}(P^-).
\end{array}
\right.
\end{align*}
Furthermore, $\mathcal{S}^{8,1}_+$ is connected.
\end{lemma}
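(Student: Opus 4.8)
The plan is to treat each of the five identities as a double inclusion. The inclusion ``$Orb\subseteq$ set'' will be pure invariance, while ``set $\subseteq Orb$'' will be an explicit normal‑form computation inside the relevant eigenspace; in the two cases that split into two orbits ((\ref{spn-06}.c) and (\ref{spn-06}.d)) Lemma~\ref{spn-04} will glue the pieces. So first I would record the invariance side. By (\ref{spn-01}.a) and (\ref{spn-01}.b), each $(\mathrm{F}_{4(-20)})_{E_i}$ preserves $Q$ and the eigenspace $\mathcal{J}^1_{L^\times(2E_i),-1}$, hence each level set $\{Q=c\}$ inside it; in particular $(\mathrm{F}_{4(-20)})_{E_1}$ stabilizes $S^8$ and $(\mathrm{F}_{4(-20)})_{E_3}$ stabilizes $\mathcal{S}^{8,1}(-1)$, $\mathcal{S}^{8,1}$ and $\mathcal{N}^{8,1}$. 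Using (\ref{spn-01}.d) and (\ref{prl-06}.b) one checks the base points land in the claimed sets: $Q(E_2-E_3)=1$; $Q(F_3^1(1))=-1$; $Q(\pm(E_1-E_2))=1$ with $(\pm(E_1-E_2)\,|\,E_1)=\pm 1$; and $Q(P^\pm)=0$, $P^\pm\neq 0$, $(P^\pm|E_1)=\pm 1$. Thus each orbit is contained in the corresponding set.

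For the reverse inclusions I would normalize a general element $X=\xi(E_{i+1}-E_{i+2})+F_i^1(x)$ of the eigenspace (the form given by (\ref{spn-01}.c)) in two steps. First apply the element of $\mathrm{D}_4\subset(\mathrm{F}_{4(-20)})_{E_i}$ furnished by (\ref{spn-05}) (with $j=1$ for (\ref{spn-06}.a), $j=3$ otherwise), which replaces $x$ by $\sqrt{\mathrm{n}(x)}\cdot 1$ and leaves $\xi$ and the eigenspace fixed. Now $X$ lies in the $2$-dimensional subspace $\mathbb{R}(E_{i+1}-E_{i+2})\oplus\mathbb{R}F_i^1(1)$, on which (\ref{cce-09}.b) shows $\tilde{A}_i^1(1)$ acts by a matrix of the form $\begin{pmatrix}0&\pm 2\\-2&0\end{pmatrix}$; hence, by Lemma~\ref{cce-10}, $\exp(t\tilde{A}_i^1(1))\in(\mathrm{F}_{4(-20)})_{E_i}$ acts on this plane by an ordinary rotation when $i=1$ and by a hyperbolic rotation $\begin{pmatrix}\cosh 2t&-\sinh 2t\\-\sinh 2t&\cosh 2t\end{pmatrix}$ when $i\in\{2,3\}$. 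On the circle $\xi^2+r^2=1$ the rotation carries $(\xi,r)$ to $(1,0)$, i.e.\ $X$ to $E_2-E_3$, giving (\ref{spn-06}.a). On the branch $r^2-\xi^2=1,\ r>0$ the hyperbolic rotation carries $(\xi,r)$ to $(0,1)$, giving (\ref{spn-06}.b). On $\xi^2-r^2=1$ it carries $(\xi,r)$ with $\xi>0$ (resp.\ $\xi<0$) to $(1,0)$ (resp.\ $(-1,0)$); and on the null lines $\xi^2=r^2$ it rescales $(\xi,\xi)$ (resp.\ $(\xi,-\xi)$) by the arbitrary positive factor $e^{-2t}$, carrying it to $(1,1)$ (resp.\ $(-1,1)$), i.e.\ to $P^+$ (resp.\ $P^-$). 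This proves $\mathcal{S}^{8,1}_\pm\subseteq Orb_{(\mathrm{F}_{4(-20)})_{E_3}}(\pm(E_1-E_2))$ and $\mathcal{N}^{8,1}_\pm\subseteq Orb_{(\mathrm{F}_{4(-20)})_{E_3}}(P^\pm)$.

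Finally I would assemble the statements. Parts (\ref{spn-06}.a) and (\ref{spn-06}.b) follow at once by double inclusion. For (\ref{spn-06}.c) and (\ref{spn-06}.d) I apply Lemma~\ref{spn-04} with $G=(\mathrm{F}_{4(-20)})_{E_3}$, index set $\{+,-\}$, the partitions $\mathcal{S}^{8,1}=\mathcal{S}^{8,1}_+\coprod\mathcal{S}^{8,1}_-$ and $\mathcal{N}^{8,1}=\mathcal{N}^{8,1}_+\coprod\mathcal{N}^{8,1}_-$ of (\ref{spn-02}.a)--(\ref{spn-02}.b) as the $X_i$, the base points $\pm(E_1-E_2)$ resp.\ $P^\pm$ as the $v_i$, the inclusions just established as condition (iv), and the orbit‑distinctness $Orb_G(E_1-E_2)\neq Orb_G(-E_1+E_2)$ resp.\ $Orb_G(P^+)\neq Orb_G(P^-)$ as condition (iii); these last are inherited from (\ref{cce-11}.b) and (\ref{cce-12}.c) because $Orb_G\subseteq Orb_{\mathrm{F}_{4(-20)}}$. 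Connectedness of $\mathcal{S}^{8,1}_+$ is immediate, since it is the graph $\{\sqrt{1+(x|x)}\,(E_1-E_2)+F_3^1(x)\mid x\in{\bf O}\}$ of a continuous map on ${\bf O}\cong\mathbb{R}^8$.

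The routine parts are the eigenspace bookkeeping with (\ref{spn-05}) and (\ref{cce-09}.b). I expect the main obstacle to be the explicit transitivity step in the split cases: one must track signs carefully so that the hyperbolic rotation (together with the prior $\mathrm{D}_4$-normalization, which sends $x$ to the \emph{positive} real multiple of $1$) lands on the connected sheet selected by the sign of $(X|E_1)$, and hence in the correct member of $\{P^+,P^-\}$ or $\{\pm(E_1-E_2)\}$ — this is exactly what makes conditions (ii) and (iv) of Lemma~\ref{spn-04} hold simultaneously.
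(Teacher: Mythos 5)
Your proposal is correct and follows essentially the same route as the paper: normalize the octonionic component by a $\mathrm{D}_4$-element from (\ref{spn-05}), finish the transitivity with the one-parameter subgroups $\exp(t\tilde{A}_i^1(1))$ of Lemma~\ref{cce-10}, and assemble the two-orbit cases via Lemma~\ref{spn-04} using (\ref{cce-11}.b) and (\ref{cce-12}.c) for orbit distinctness. The only (harmless) deviation is your connectedness argument for $\mathcal{S}^{8,1}_+$ as a graph over ${\bf O}$, where the paper instead notes it is an orbit of the connected group $(\mathrm{F}_{4(-20)})^0_{E_3}$.
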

\begin{proof}
(a) 
By Lemma~\ref{spn-03}, $(\mathrm{F}_{4(-20)})_{E_1}$
acts on $S^8$.
Fix $X\in S^8$.
By (\ref{spn-01}.c) and (\ref{spn-01}.d),
$X$ can be expressed by $X=\xi(E_2-E_3)+F_1^1(x)$
where $\xi\in \mathbb{R},$ $x\in{\bf O}$ 
and $\xi^2+\mathrm{n}(x)=1.$
By (\ref{spn-05}),
there exists $g \in \mathrm{D}_4
\subset (\mathrm{F}_{4(-20)})_{E_1}$
such that
$g X=\xi(E_2-E_3)+F_1^1(\sqrt{{\rm n}(x)})$.
We can write 
\[g X
=\cos 2t(E_2-E_3)+F_1^1(\sin 2t)\]
for some $t\in\mathbb{R}$.
For this $t$,
using (\ref{cce-10}.a),
$\exp(t\tilde{A}_1^1(1))\in(\mathrm{F}_{4(-20)})_{E_1}$
and from direct calculations, we see that
\[\exp(t
\tilde{A}_1^1(1))
g X=E_2-E_3.\]
Hence (\ref{spn-06}.a) follows.

(b)
By Lemma~\ref{spn-03}, $(\mathrm{F}_{4(-20)})_{E_3}$
acts on $\mathcal{S}^{8,1}(-1)$.
Fix $X\in \mathcal{S}^{8,1}(-1)$.
By (\ref{spn-01}.c) and (\ref{spn-01}.d),
$X$ can be expressed by $X=\xi(E_1-E_2)+F_3^1(x)$
where $\xi\in \mathbb{R},$ $x\in{\bf O}$ 
and $\xi^2-\mathrm{n}(x)=-1.$
By (\ref{spn-05}),
there exists $g\in \mathrm{D}_4
\subset (\mathrm{F}_{4(-20)})_{E_3}$ such that
\[gX=\xi (E_1-E_2)+F^1_3(r_0)\quad\text{where}~r_0\geq 0,
~~\xi^2-r_0^2=-1.\]
By  (\ref{cce-10}.b),
$\exp\left(
4^{-1}\log\left((r_0+\xi)/(r_0-\xi)\right)
\tilde{A}_3^1(1)
\right)\in (\mathrm{F}_{4(-20)})_{E_3}$
and because of $\xi^2-r_0^2=-1$, 
we calculate
that
\[\exp\left(
4^{-1}\log\left((r_0+\xi)/(r_0-\xi)\right)
\tilde{A}_3^1(1)
\right)gX=
F_3^1(\pm1).\]
When $F_3^1(-1)$, multiplying 
$\varphi_0(1,-1,-1)\in \mathrm{D}_4$ 
from the left side,
\[\varphi_0(1,-1,-1)\exp\left(
4^{-1}\log\left((r_0+\xi)/(r_0-\xi)\right)
\tilde{A}_3^1(1)
\right)gX=
F_3^1(1).\]
Hence (\ref{spn-06}.b) follows.

(c) We show (\ref{spn-06}.c)
by using Lemma~\ref{spn-04}.
Denote $G=(\mathrm{F}_{4(-20)})_{E_3}$ or 
$(\mathrm{F}_{4(-20)})_{E_3}^0$. 
By Lemma~\ref{spn-03},
$G$ acts on $\mathcal{S}^{8,1}$. 
We consider that $X=\mathcal{S}^{8,1}$, 
$(X_1,X_2)=(\mathcal{S}^{8,1}_+,\mathcal{S}^{8,1}_-)$, 
$(v_1,v_2)=(E_1-E_,-E_1+E_2)$ in Lemma~\ref{spn-04}.
First, the condition (i)
follows from
(\ref{spn-02}.a).
Second, the condition (ii) 
follows from direct calculations.
Third, by (\ref{cce-11}.b),
the condition (iii) follows from
\begin{align*}
&Orb_{(\mathrm{F}_{4(-20)})_{E_3}}(E_1-E_2)\subset
Orb_{\mathrm{F}_{4(-20)}}(E_1-E_2)\\
\ne& Orb_{\mathrm{F}_{4(-20)}}(E_1-E_2)
\supset
Orb_{(\mathrm{F}_{4(-20)})_{E_3}}(E_1-E_2).
\end{align*}
Last, we show the condition (iv).
Take $X\in \mathcal{S}^{8,1}_+$. 
By (\ref{spn-05}),
there exists $g\in \mathrm{D}_4\subset G$ such that
\[gX=\xi (E_1-E_2)+F^1_3(r_0)\quad 
\text{where}~\xi>0,~r_0\geq 0,
~\xi^2-r_0^2=1.\]
By (\ref{cce-10}.b), 
$\exp\left(
4^{-1}\log\left((\xi+r_0)/(\xi-r_0)\right)
\tilde{A}_3^1(1)
\right)\in G$
and because of $\xi>0$ and $\xi^2-r_0^2=1$,
we calculate that
\[\exp\left(
4^{-1}\log\left((\xi+r_0)/(\xi-r_0)\right)
\tilde{A}_3^1(1)
\right)gX=
E_1-E_2.\]
Thus $X\in Orb_G(E_1-E_2)$ and so 
$\mathcal{S}^{8,1}_+\subset Orb_G(E_1-E_2)$.
Next, take $X\in \mathcal{S}^{8,1}_-$.
By (\ref{spn-05}),
there exists $g\in \mathrm{D}_4\subset G$ such that
\[gX=\xi (E_1-E_2)+F^1_3(r_0)\quad
\text{where}~\xi<0,~r_0\geq 0,
~\xi^2-r_0^2=1.\]  
By (\ref{cce-10}.b),
$\exp\left(
4^{-1}\log\left((\xi+r_0)/(\xi-r_0)\right)
\tilde{A}_3^1(1)
\right)\in G$
and because of $\xi<0$ and $\xi^2-r_0^2=1$,
we calculate that
\[\exp\left(
4^{-1}\log\left((\xi+r_0)/(\xi-r_0)\right)
\tilde{A}_3^1(1)
\right)gX=
-E_1+E_2.\]
Thus $X\in Orb_G(-E_1+E_2)$ and so 
$\mathcal{S}^{8,1}_-\subset Orb_G(-E_1+E_2)$.
Therefore, the condition (iv) follows.
Hence (\ref{spn-06}.c) follows from Lemma~\ref{spn-04}.
Furthermore, since $\mathcal{S}^{8,1}_+$ 
is a orbit of one element $E_1-E_2$ under the action
of a connected group $(\mathrm{F}_{4(-20)})^0_{E_3}$,
$\mathcal{S}^{8,1}_+$ is connected.

(d) We show (\ref{spn-06}.d)
by using Lemma~\ref{spn-04}.
Denote $G=(\mathrm{F}_{4(-20)})_{E_3}$. 
Since $G$ acts on $\mathcal{N}^{8,1}$, 
we consider $X=\mathcal{N}^{8,1}$, 
$(X_1,X_2)=(\mathcal{N}^{8,1}_+,\mathcal{N}^{8,1}_-)$, 
$(v_1,v_2)=(P^+,P^-)$ in Lemma~\ref{spn-04}.
First, the condition (i) follows from
(\ref{spn-02}.b).
Second, the condition (ii) follows 
from direct calculations.
Third, by (\ref{cce-12}.c),
the condition (iii) follows from 
\begin{align*}
&Orb_{(\mathrm{F}_{4(-20)})_{E_3}}(P^+)\subset
Orb_{\mathrm{F}_{4(-20)}}(P^+)\\
\ne& Orb_{\mathrm{F}_{4(-20)}}(P^-)
\supset
Orb_{(\mathrm{F}_{4(-20)})_{E_3}}(P^-).
\end{align*}
Last, we will show the condition (iv).
Take $X\in \mathcal{N}^{8,1}_+$. 
Because of (\ref{spn-05}),
there exists $g\in \mathrm{D}_4\subset G$ such that
\[gX=\xi (E_1-E_2)+F^1_3(\xi)\quad \text{where}~\xi>0.\]
By
(\ref{cce-10}.b),
$\exp\left(
2^{-1}(\log \xi)
\tilde{A}_3^1(1)
\right)\in G$
and because of $\xi>0$, we calculate that
\[\exp\left(
2^{-1}(\log \xi)
\tilde{A}_3^1(1)
\right)gX=
P^+.\]
Thus $X\in Orb_G(P^+)$, so that 
$\mathcal{N}^{8,1}_+\subset Orb_G(P^+)$.
Next, take $X\in \mathcal{N}^{8,1}_-$.
Because of (\ref{spn-05}),
there exists $g\in \mathrm{D}_4\subset G$ such that
\[gX=\xi (-E_1+E_2)+F^1_3(\xi)\quad\text{where}~\xi>0.\] 
By (\ref{cce-10}.b),
$\exp\left(
2^{-1}(\log \xi)
\tilde{A}_3^1(1)
\right)\in G$
and because of $\xi>0$, we calculate that
\[\exp\left(
2^{-1}(\log \xi)
\tilde{A}_3^1(1)
\right)gX=
P^-.\]
Thus $X\in Orb_G(P^-)$ and so 
$\mathcal{N}^{8,1}_-\subset Orb_G(P^-)$.
Therefore, the condition (iv) follows.
Hence (\ref{spn-06}.d) follows from Lemma~\ref{spn-04}.
\end{proof}
\medskip

\begin{lemma}\label{spn-07} 
{\rm (1)}
$(\mathrm{F}_{4(-20)})_{E_1}/\mathrm{D}_4\simeq S^8$.
Furthermore, $(\mathrm{F}_{4(-20)})_{E_1}$ 
is connected.
\smallskip

{\rm (2)}
$(\mathrm{F}_{4(-20)})_{E_3}/\mathrm{D}_4
\simeq \mathcal{S}^{8,1}_+$.
Furthermore, $(\mathrm{F}_{4(-20)})_{E_3}$ is connected.
\end{lemma}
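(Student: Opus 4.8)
The plan is to realize each of these isotropy subgroups as the total space of a transitive action on a quadric, with $\mathrm{D}_4$ occurring as the stabilizer of a distinguished point. For (1): by Lemma~\ref{spn-03} the group $(\mathrm{F}_{4(-20)})_{E_1}$ acts on $S^8$, and by (\ref{spn-06}.a) this action is transitive with $E_2-E_3\in S^8$ a base point; so the first task is to identify the isotropy subgroup at $E_2-E_3$. If $g\in(\mathrm{F}_{4(-20)})_{E_1}$ fixes $E_2-E_3$, then from $gE=E$ and $gE_1=E_1$ we get $g(E_2+E_3)=E-E_1=E_2+E_3$, and together with $g(E_2-E_3)=E_2-E_3$ this forces $gE_2=E_2$ and $gE_3=E_3$; hence $g\in(\mathrm{F}_{4(-20)})_{E_1,E_2,E_3}=\mathrm{D}_4$, the reverse inclusion being obvious since $\mathrm{D}_4$ fixes each $E_i$. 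Therefore the orbit map of the smooth transitive action of the Lie group $(\mathrm{F}_{4(-20)})_{E_1}$ on the manifold $S^8$ induces a diffeomorphism $(\mathrm{F}_{4(-20)})_{E_1}/\mathrm{D}_4\simeq S^8$.

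For the connectedness of $(\mathrm{F}_{4(-20)})_{E_1}$, note that $\mathrm{D}_4=\varphi_0(\mathrm{Spin}(8))$ is connected, since $\mathrm{Spin}(8)=\tilde{D}_4$ is connected by Lemma~\ref{trl-05}(2), and $S^8$ is connected. Inspecting the proof of (\ref{spn-06}.a), all group elements used there (elements of $\mathrm{D}_4$ and $\exp(t\tilde{A}_1^1(1))$) lie in $(\mathrm{F}_{4(-20)})_{E_1}^0$, so in fact $S^8=Orb_{(\mathrm{F}_{4(-20)})_{E_1}^0}(E_2-E_3)$. Then for any $g\in(\mathrm{F}_{4(-20)})_{E_1}$ choose $h\in(\mathrm{F}_{4(-20)})_{E_1}^0$ with $h(E_2-E_3)=g(E_2-E_3)$; then $h^{-1}g$ fixes $E_2-E_3$, so $h^{-1}g\in\mathrm{D}_4\subset(\mathrm{F}_{4(-20)})_{E_1}^0$ and hence $g\in(\mathrm{F}_{4(-20)})_{E_1}^0$. (Equivalently, one invokes the exactness of $\pi_0(\mathrm{D}_4)\to\pi_0((\mathrm{F}_{4(-20)})_{E_1})\to\pi_0(S^8)$ for the fibration $\mathrm{D}_4\hookrightarrow(\mathrm{F}_{4(-20)})_{E_1}\to S^8$, both ends being trivial.)

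Part (2) is entirely parallel, with $\mathcal{S}^{8,1}_+$ in place of $S^8$: by Lemma~\ref{spn-03} the group $(\mathrm{F}_{4(-20)})_{E_3}$ acts on $\mathcal{S}^{8,1}_+$, transitively by (\ref{spn-06}.c)(i) with base point $E_1-E_2$, and the same computation — $gE_3=E_3$, $gE=E$ and $g(E_1-E_2)=E_1-E_2$ force $gE_1=E_1$ and $gE_2=E_2$ — shows the isotropy subgroup at $E_1-E_2$ is $\mathrm{D}_4$; hence $(\mathrm{F}_{4(-20)})_{E_3}/\mathrm{D}_4\simeq\mathcal{S}^{8,1}_+$. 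Connectedness follows in the same way, since $\mathrm{D}_4$ is connected, $\mathcal{S}^{8,1}_+$ is connected by Lemma~\ref{spn-06}, and the proof of (\ref{spn-06}.c) in fact realizes $\mathcal{S}^{8,1}_+$ as a single $(\mathrm{F}_{4(-20)})_{E_3}^0$-orbit, so the product argument above again gives $(\mathrm{F}_{4(-20)})_{E_3}=(\mathrm{F}_{4(-20)})_{E_3}^0$.

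I expect no serious obstacle here: the only point requiring a little care is the passage from the bijective continuous orbit map to an actual diffeomorphism of homogeneous spaces, which is the standard fact that a smooth transitive action of a (second countable) Lie group on a manifold identifies the quotient by an isotropy subgroup with that manifold — applied to the closed subgroups $\mathrm{D}_4\subset(\mathrm{F}_{4(-20)})_{E_i}\subset\mathrm{F}_{4(-20)}$ and to the quadrics $S^8$ and $\mathcal{S}^{8,1}_+$, which are smooth $8$-manifolds (one checks $1$ is a regular value of $Q$ on the relevant $9$-dimensional eigenspace, cf. (\ref{spn-01}.c)--(\ref{spn-01}.d)). Everything else reduces to the bookkeeping above together with the transitivity statements of Lemma~\ref{spn-06} and the connectedness results already established.
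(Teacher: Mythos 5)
Your argument is correct and is essentially the paper's own proof: the paper likewise identifies the isotropy group via $(\mathrm{F}_{4(-20)})_{E_1,E_2-E_3}=(\mathrm{F}_{4(-20)})_{E,E_1,E_2-E_3}=\mathrm{D}_4$ (resp. $\mathrm{D}_4=(\mathrm{F}_{4(-20)})_{E_3,E_1-E_2}$), invokes the transitivity statements (\ref{spn-06}.a) and (\ref{spn-06}.c)(i), and deduces connectedness of $(\mathrm{F}_{4(-20)})_{E_i}$ from connectedness of $\mathrm{D}_4$ and of $S^8$, $\mathcal{S}^{8,1}_+$. Your extra checks (that the transitivity proofs only use elements of the identity component, and the smooth-homogeneous-space identification) are harmless elaborations of the same standard fiber-bundle argument the paper uses implicitly.
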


\begin{proof}
(1) We notice that $(\mathrm{F}_{4(-20)})_{E_1,E_2-E_3}
=(\mathrm{F}_{4(-20)})_{E,E_1,E_2-E_3}
=(\mathrm{F}_{4(-20)})_{E_1,E_2,E_3}=\mathrm{D}_4$.
From (\ref{spn-06}.a), we see
$(\mathrm{F}_{4(-20)})_{E_1}/\mathrm{D}_4\simeq S^8$.
By Lemma~\ref{cce-02}(1), 
$\mathrm{D}_4$ is connected.
Obviously $S^8$ is connected.
Hence 
$(\mathrm{F}_{4(-20)})_{E_1}$ is also connected.

(2) 
We note 
$\mathrm{D}_4=(\mathrm{F}_{4(-20)})_{E_3,E_1-E_2}$.
From (\ref{spn-06}.c),
$(\mathrm{F}_{4(-20)})_{E_3}/\mathrm{D}_4
\simeq \mathcal{S}^{8,1}_+$.
By Lemma~\ref{spn-06},
$\mathcal{S}^{8,1}_+$ is connected.
Because $\mathrm{D}_4$ is connected, we see that
$(\mathrm{F}_{4(-20)})_{E_3}$ is also connected.
\end{proof}
\medskip

For $i\in\{1,2,3\},$ the element
$\sigma_i\in \mathrm{F}_{4(-20)}$ is defined
by
\[
\sigma_i \left(\sum{}_{j=1}^3(\xi_jE_j
+F_j^1(x_j))\right)
:=\sum{}_{j=1}^3(\xi_jE_j
+\epsilon_i(j)F_j^1(x_j)).
\]
Indeed, because of
$\mathrm{det}(\sigma_i X)=\mathrm{det}(X)$ 
and $\sigma_i E=E$, 
applying (\ref{prl-08}.b),
we see
$\sigma_i\in \mathrm{F}_{4(-20)}$ and clearly $\sigma_i^2=1$.
We write the notation $\sigma$ instead of $\sigma_1$
for short.
\medskip

The following result is proved in \cite{Yi1990}.
\begin{proposition}\label{spn-08} 
{\rm (1)}
The following sequence is exact:
\[
\tag{\ref{spn-08}.a}
1 \to  \{1,\sigma_i\}  \to \mathrm{D}_4 
\stackrel{p_i}{\to} {\rm SO}(F_i^1({\bf O}),Q) \to 1.
\]

{\rm (2)}
The following sequence is exact:
\[
\tag{\ref{spn-08}.b}
1\to \{1,\sigma\} \to (\mathrm{F}_{4(-20)})_{E_1} 
\stackrel{\tilde{p}_1}{\to} 
{\rm SO}(\mathcal{J}^1_{L^{\times}(2E_1),-1},Q)\to 1.\]

{\rm (3)}
 The following sequence is exact:
\[
\tag{\ref{spn-08}.c}
1\to \{1,\sigma_3\} \to (\mathrm{F}_{4(-20)})_{E_3} 
\stackrel{\tilde{p}_3}{\to} 
{\rm O}^0(\mathcal{J}^1_{L^{\times}(2E_3),-1},Q)\to 1.\]
\end{proposition}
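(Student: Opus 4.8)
The plan is to derive the first sequence (\ref{spn-08}.a) directly from the triality exact sequence (\ref{trl-07}.a), transported to $\mathcal{J}^1$ through the isomorphism $\varphi_0$ of Lemma~\ref{cce-02}(1), and then to bootstrap (\ref{spn-08}.b) and (\ref{spn-08}.c) from (\ref{spn-08}.a) using the orbit identifications of Lemma~\ref{spn-06} together with the connectedness and fibration statements of Lemma~\ref{spn-07}. For (\ref{spn-08}.a), I would identify $F_i^1({\bf O})$ with ${\bf O}$ via $F_i^1(x)\leftrightarrow x$; by (\ref{spn-01}.d) (equivalently (\ref{prl-06}.d)) this carries $Q$ to the definite form $x\mapsto\epsilon(i)(x|x)$, whose special orthogonal group is $\mathrm{SO}(8)$. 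Under $\varphi_0$ and this identification, the formula (\ref{cce-02}) shows that $p_i\colon\mathrm{D}_4\to\mathrm{SO}(F_i^1({\bf O}),Q)$ becomes exactly the projection $(g_1,g_2,g_3)\mapsto g_i$ appearing in (\ref{trl-07}.a); hence it is surjective with kernel $\varphi_0(\{(1,1,1),(\epsilon_i(1)\mathrm{id},\epsilon_i(2)\mathrm{id},\epsilon_i(3)\mathrm{id})\})$, and comparing (\ref{cce-02}) with the definition of $\sigma_i$ gives $\varphi_0(\epsilon_i(1)\mathrm{id},\epsilon_i(2)\mathrm{id},\epsilon_i(3)\mathrm{id})=\sigma_i$. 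Thus (\ref{spn-08}.a) is exact.

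For (\ref{spn-08}.b) and (\ref{spn-08}.c) I would first compute the kernels. If $g\in\ker\tilde{p}_i$ (with $i=1$, resp.\ $i=3$), then $gE_i=E_i$ and $g$ fixes $\mathcal{J}^1_{L^{\times}(2E_i),-1}$ pointwise; since $E_{i+1}-E_{i+2}$ lies in that subspace by (\ref{spn-01}.c) and $gE=E$, one deduces $gE_j=E_j$ for all $j$, so $g\in\mathrm{D}_4$, and since $F_i^1({\bf O})\subset\mathcal{J}^1_{L^{\times}(2E_i),-1}$ this forces $p_i(g)=1$, whence $g\in\{1,\sigma_i\}$ by the first part. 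Conversely $\sigma_i$ fixes $E_i$ and acts trivially on $\mathcal{J}^1_{L^{\times}(2E_i),-1}$, since it negates only the $F_{i+1}^1$- and $F_{i+2}^1$-components. Hence $\ker\tilde{p}_1=\{1,\sigma\}$ and $\ker\tilde{p}_3=\{1,\sigma_3\}$, as required.

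For surjectivity, note that by (\ref{spn-01}.d) the form $Q$ on $\mathcal{J}^1_{L^{\times}(2E_1),-1}$ is positive definite, so $\mathrm{SO}(\mathcal{J}^1_{L^{\times}(2E_1),-1},Q)\cong\mathrm{SO}(9)$, while on $\mathcal{J}^1_{L^{\times}(2E_3),-1}$ it has signature $(8,1)$. Let $H=\tilde{p}_i((\mathrm{F}_{4(-20)})_{E_i})$. Since $(\mathrm{F}_{4(-20)})_{E_i}$ is connected by Lemma~\ref{spn-07}, $H$ is a connected subgroup of the orthogonal group, hence lies in $\mathrm{SO}(9)$, resp.\ in $\mathrm{O}^0(8,1)$. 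By the first part, $\tilde{p}_i(\mathrm{D}_4)$ equals the full stabilizer of $E_{i+1}-E_{i+2}$ in that group, acting as $\mathrm{SO}(8)$ on the $Q$-orthogonal complement $F_i^1({\bf O})$; and by (\ref{spn-06}.a), resp.\ (\ref{spn-06}.c)(i), the group $(\mathrm{F}_{4(-20)})_{E_i}$, hence $H$, acts transitively on $S^8$, resp.\ on the sheet $\mathcal{S}^{8,1}_+$, which is exactly the orbit of $E_{i+1}-E_{i+2}$ under $\mathrm{SO}(9)$, resp.\ under $\mathrm{O}^0(8,1)$. Consequently $\dim H\geq\dim\mathrm{SO}(8)+8=36=\dim\mathrm{SO}(9)=\dim\mathrm{O}^0(8,1)$, so $H$ is open in the connected ambient group and therefore equals it. This establishes surjectivity of $\tilde{p}_1$ and $\tilde{p}_3$ and hence the exactness of (\ref{spn-08}.b) and (\ref{spn-08}.c).

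The hard part is the surjectivity step for (2) and (3): one must correctly identify the signature of $Q$ on each $\mathcal{J}^1_{L^{\times}(2E_i),-1}$, verify that $\tilde{p}_i(\mathrm{D}_4)$ is the \emph{full} isotropy subgroup — which rests on the surjectivity of $p_i$ from the first part — and, in the $(8,1)$-case, check that the orbit of the identity component $\mathrm{O}^0(8,1)$ through $E_1-E_2$ is precisely one sheet $\mathcal{S}^{8,1}_+$ of the positive hyperboloid, so that the transitivity furnished by (\ref{spn-06}.c)(i) matches it. The connectedness inputs from Lemma~\ref{spn-07} are essential here: without them the image could a priori contain reflections, and one could not pin it down to $\mathrm{SO}(9)$ and $\mathrm{O}^0(8,1)$ rather than the larger orthogonal groups.
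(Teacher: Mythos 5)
Your proof is correct, and part (1) coincides with the paper's argument (the paper also just cites Lemma~\ref{cce-02}(1) and (\ref{trl-07}.a)). For parts (2) and (3), however, you take a genuinely different route. The paper restricts $\tilde{p}_i$ to the commutative diagram of fibrations
$\mathrm{D}_4\to(\mathrm{F}_{4(-20)})_{E_i}\to S^8$ (resp. $\mathcal{S}^{8,1}_+$) over
$\mathrm{SO}(F_i^1({\bf O}),Q)\to\mathrm{SO}(\mathcal{J}^1_{L^{\times}(2E_1),-1},Q)$ (resp. ${\rm O}^0(\mathcal{J}^1_{L^{\times}(2E_3),-1},Q)$) $\to S^8$ (resp. $\mathcal{S}^{8,1}_+$), and gets surjectivity and the kernel simultaneously from part (1) and the five lemma; you instead compute the kernel by hand from the eigenspace description (\ref{spn-01}.c) (which is clean and completely elementary, and your identification of $\sigma_i$ as acting trivially on $\mathcal{J}^1_{L^{\times}(2E_i),-1}$ is exactly right), and you obtain surjectivity by an orbit--stabilizer dimension count ($\dim\geq 28+8=36$) plus the fact that a connected subgroup of full dimension in a connected Lie group is the whole group. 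Both arguments consume the same essential inputs --- exactness of (\ref{spn-08}.a), transitivity from (\ref{spn-06}.a) and (\ref{spn-06}.c)(i), and connectedness from Lemma~\ref{spn-07} --- so neither is circular; the paper's diagram chase avoids any appeal to dimensions or to the fact that the image of a Lie homomorphism is an immersed Lie subgroup, while your version avoids setting up the (somewhat informal) exact rows of pointed sets needed for the five lemma. One small remark: you do not actually need the assertion that $\tilde{p}_i(\mathrm{D}_4)$ is the \emph{full} stabilizer of $E_{i+1}-E_{i+2}$, nor that $\mathcal{S}^{8,1}_+$ is precisely an ${\rm O}^0(8,1)$-orbit; containment of a $28$-dimensional subgroup in the stabilizer together with transitivity on the $8$-dimensional orbit already gives $\dim H\geq 36$, which is all the argument uses.
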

\begin{proof} 
(1) It follows from 
Lemma~\ref{cce-02}(1) 
and (\ref{trl-07}.a).

(2) 
(cf. \cite[Theorem 2.7.4]{Yi_arxiv}).
By Lemma~\ref{spn-07}(1), 
$(\mathrm{F}_{4(-20)})_{E_1}$ is connected.
Then we see that
$\tilde{p}_1((\mathrm{F}_{4(-20)})_{E_1})\subset
{\rm SO}(\mathcal{J}^1_{L^{\times}(2E_1),-1},Q)$
and the following commutative diagram:
\[\begin{array}{ccccccccc}
1&\to& \mathrm{D}_4 &\to& (\mathrm{F}_{4(-20)})_{E_1} 
&\to& S^8&\to& *\\
 &   &  \downarrow p_1           
&   &  \downarrow \tilde{p}_1 &    & \parallel  &    &\\
1&\to& {\rm SO}(F_1^1({\bf O}),Q) 
&\to&{\rm SO}(\mathcal{J}^1_{L^{\times}(2E_1),-1},Q)  
&\to& S^8&\to& *.
\end{array}
\]
It follows from (1) and the five lemma 
that $\tilde{p}_1$
is onto
and 
$
{\rm Ker}(\tilde{p}_1)
={\rm Ker}(p_1)
=\{1,\sigma\}.$
Hence  (2) follows.

(3)
By Lemma~\ref{spn-07}(2), 
$(\mathrm{F}_{4(-20)})_{E_3}$ is connected.
Then we see that
$\tilde{p}_3((\mathrm{F}_{4(-20)})_{E_3})\subset
{\rm O}^0(\mathcal{J}^1_{L^{\times}(2E_3),-1},Q)$
and
the following commutative diagram:
\[\begin{array}{ccccccccc}
1&\to& \mathrm{D}_4 &\to& (\mathrm{F}_{4(-20)})_{E_3} 
&\to& \mathcal{S}^{8,1}_+&\to& *\\
 &   &  \downarrow p_3           &   
&  \downarrow \tilde{p}_3 &    & \parallel  &    &\\
1&\to& {\rm SO}(F_3^1({\bf O}),Q) 
&\to&{\rm O}^0(\mathcal{J}^1_{L^{\times}(2E_3),-1},Q)  
&\to& \mathcal{S}^{8,1}_+&\to& *.
\end{array}
\]
Similarly, using (1) and the five lemma, (3) follows.
\end{proof}
\medskip

By Lemma~\ref{spn-07}(1) 
and (\ref{spn-08}.b),
we have
$(\mathrm{F}_{4(-20)})_{E_1}$ is connected 
and a two-hold covering group of 
${\rm SO}(\mathcal{J}^1_{L^{\times}(2E_1),-1},Q)$,
and by Lemma~\ref{spn-07}(2) 
and (\ref{spn-08}.c),
$(\mathrm{F}_{4(-20)})_{E_3}$ is connected 
and  
a two-hold covering group of 
${\rm O}^0(\mathcal{J}^1_{L^{\times}(2E_3),-1},Q).$
So denote
\[{\rm Spin}(9):=(\mathrm{F}_{4(-20)})_{E_1},\quad
{\rm Spin}^0(8,1):=(\mathrm{F}_{4(-20)})_{E_3}.\]

\begin{proposition}\label{spn-09} 
{\rm (1)} Let $Y=(r_1-r_2)E_1+r_2E\in\mathcal{J}^1$ 
where $r_1\ne r_2.$
Then
$(\mathrm{F}_{4(-20)})_Y={\rm Spin}(9)$.
\smallskip

{\rm (2)} Let $Y'=(r_1-r_2)E_3+r_2E\in\mathcal{J}^1$
where $r_1\ne r_2.$
Then
$(\mathrm{F}_{4(-20)})_{Y'}={\rm Spin}^0(8,1)$.
\end{proposition}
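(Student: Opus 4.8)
The plan is to reduce both statements to the single observation, recorded in Proposition~\ref{prl-08} (see also the remark immediately after it), that every element of $\mathrm{F}_{4(-20)}$ fixes the identity $E$. Granting this, the point is that $Y=(r_1-r_2)E_1+r_2E$ differs from $(r_1-r_2)E_1$ only by the $\mathrm{F}_{4(-20)}$-invariant summand $r_2E$, so fixing $Y$ is the same as fixing $E_1$; likewise for $Y'$ and $E_3$. Since $\mathrm{Spin}(9)$ and $\mathrm{Spin}^0(8,1)$ are \emph{defined} to be $(\mathrm{F}_{4(-20)})_{E_1}$ and $(\mathrm{F}_{4(-20)})_{E_3}$ respectively, it suffices to prove the two set equalities $(\mathrm{F}_{4(-20)})_Y=(\mathrm{F}_{4(-20)})_{E_1}$ and $(\mathrm{F}_{4(-20)})_{Y'}=(\mathrm{F}_{4(-20)})_{E_3}$.

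First I would establish the inclusion $(\mathrm{F}_{4(-20)})_{E_1}\subset(\mathrm{F}_{4(-20)})_Y$: if $g\in\mathrm{F}_{4(-20)}$ satisfies $gE_1=E_1$, then using $gE=E$ we get $gY=(r_1-r_2)gE_1+r_2gE=(r_1-r_2)E_1+r_2E=Y$. For the reverse inclusion $(\mathrm{F}_{4(-20)})_Y\subset(\mathrm{F}_{4(-20)})_{E_1}$: if $gY=Y$, then $(r_1-r_2)gE_1=gY-r_2gE=Y-r_2E=(r_1-r_2)E_1$, and since $r_1\ne r_2$ we may divide by $r_1-r_2$ to conclude $gE_1=E_1$. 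This proves (1). Part (2) is obtained by the identical argument with $E_1$ replaced throughout by $E_3$.

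There is essentially no obstacle here; the only two points requiring care are (i) invoking the correct invariance property, namely $gE=E$ for all $g\in\mathrm{F}_{4(-20)}$ from Proposition~\ref{prl-08}, rather than merely invariance of the trace or determinant, and (ii) using the hypothesis $r_1\ne r_2$, which is exactly what legitimizes the division in the reverse inclusion. Everything else is formal.
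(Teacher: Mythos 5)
Your proof is correct and is essentially the paper's own argument: the paper also notes that $E$ is $\mathrm{F}_{4(-20)}$-invariant and concludes $(\mathrm{F}_{4(-20)})_Y=(\mathrm{F}_{4(-20)})_{E_1}$ and $(\mathrm{F}_{4(-20)})_{Y'}=(\mathrm{F}_{4(-20)})_{E_3}$, which are $\mathrm{Spin}(9)$ and $\mathrm{Spin}^0(8,1)$ by definition. You merely spell out the two inclusions and the division by $r_1-r_2\ne 0$, which the paper leaves implicit.
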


\begin{proof}  
Since the element $E$ 
is invariant under the $\mathrm{F}_{4(-20)}$-action,
we have $(\mathrm{F}_{4(-20)})_Y=(\mathrm{F}_{4(-20)})_{E_1}$
and $(\mathrm{F}_{4(-20)})_{Y'}=(\mathrm{F}_{4(-20)})_{E_3}$.
\end{proof}
\medskip

\begin{lemma}\label{spn-10} 
{\rm (1)} The following equation holds.
\[
\tag{\ref{spn-10}}
\mathcal{S}^{7,1}_+
=Orb_{(\mathrm{F}_{4(-20)})_{F_3^1(1)}^0}(E_1-E_2)
=Orb_{(\mathrm{F}_{4(-20)})_{F_3^1(1)}}(E_1-E_2).
\]
Furthermore, $\mathcal{S}^{7,1}_+$ is connected.
\smallskip

{\rm (2)}
$(\mathrm{F}_{4(-20)})_{F_3^1(1)}/\mathrm{B}_3
\simeq \mathcal{S}^{7,1}_+.$
Furthermore, $(\mathrm{F}_{4(-20)})_{F_3^1(1)}$ 
is connected.
\end{lemma}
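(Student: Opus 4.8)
The plan is to run the same machinery used for $\mathcal{S}^{8,1}_+$ in Lemma~\ref{spn-06}(c) and Lemma~\ref{spn-07}(2), now inside $G:=(\mathrm{F}_{4(-20)})_{F_3^1(1)}$ and its identity component $G^0$. First I would observe that $G=(\mathrm{F}_{4(-20)})_{E_3,F_3^1(1)}$ by (\ref{cce-01}.a), so by Lemma~\ref{spn-03} together with (\ref{spn-01}.a) both $G$ and $G^0$ act on $\mathcal{J}^1_{7,1}$ preserving $Q$, hence on $\mathcal{S}^{7,1}$. Since for $X=\xi(E_1-E_2)+F_3^1(x)\in\mathcal{S}^{7,1}$ one has $(X|E_1)=\xi$ with $\xi^2-\mathrm{n}(x)=1$, the value $\xi$ is nonzero; writing $\mathcal{S}^{7,1}_-:=\{X\in\mathcal{S}^{7,1}\mid (X|E_1)<0\}$ this gives a disjoint decomposition $\mathcal{S}^{7,1}=\mathcal{S}^{7,1}_+\coprod\mathcal{S}^{7,1}_-$ into pieces which are clopen in $\mathcal{S}^{7,1}$.

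For part (1) I would apply Lemma~\ref{spn-04} with $X=\mathcal{S}^{7,1}$, $(X_1,X_2)=(\mathcal{S}^{7,1}_+,\mathcal{S}^{7,1}_-)$, $(v_1,v_2)=(E_1-E_2,\,-E_1+E_2)$, taking the acting group to be $G$ (resp.\ $G^0$). Conditions (i) and (ii) are immediate; condition (iii), $Orb(v_1)\ne Orb(v_2)$, follows in either case from (\ref{cce-11}.b) via $Orb_{G}(\pm(E_1-E_2))\subset Orb_{\mathrm{F}_{4(-20)}}(\pm(E_1-E_2))$. The substance is condition (iv): given $X=\xi(E_1-E_2)+F_3^1(x)\in\mathcal{S}^{7,1}_+$ with $x\in\mathrm{Im}{\bf O}$, I would first pick $g_3\in\mathrm{SO}(7)$ with $g_3x=\sqrt{\mathrm{n}(x)}\,e_1$, lift it by (\ref{trl-07}.b) to $(g_1,g_2,g_3)\in\mathrm{Spin}(7)$, and apply $\varphi_0(g_1,g_2,g_3)\in\mathrm{B}_3\subset G^0$, which by Lemma~\ref{cce-03} fixes $E_1-E_2$ and sends $F_3^1(x)\mapsto F_3^1(\sqrt{\mathrm{n}(x)}\,e_1)$; this reduces $X$ to $\xi(E_1-E_2)+F_3^1(r_0e_1)$ with $r_0\ge 0$, $\xi>0$, $\xi^2-r_0^2=1$. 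Then, since $e_1\in S^6$, Lemma~\ref{cce-10}(2) gives $\exp(t\tilde{A}_3^1(e_1))\in G^0$, and (\ref{cce-10}.b) with $i=3$, $a=e_1$ shows that for $t=4^{-1}\log\bigl((\xi+r_0)/(\xi-r_0)\bigr)$ one gets $\exp(t\tilde{A}_3^1(e_1))\bigl(\xi(E_1-E_2)+F_3^1(r_0e_1)\bigr)=E_1-E_2$, so $X\in Orb_{G^0}(E_1-E_2)$; the same calculation with $\xi<0$ gives $\mathcal{S}^{7,1}_-\subset Orb_{G^0}(-E_1+E_2)$. Lemma~\ref{spn-04} then yields $\mathcal{S}^{7,1}_+=Orb_{G^0}(E_1-E_2)=Orb_{G}(E_1-E_2)$, and as the orbit of a single point under the connected group $G^0$ it is connected.

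For part (2) I would use part (1): $G$ acts transitively on $\mathcal{S}^{7,1}_+$, so it remains to identify the isotropy subgroup at $E_1-E_2$ and invoke the orbit--stabilizer diffeomorphism. If $g\in G$ fixes $E_1-E_2$, then (\ref{cce-01}.a) forces $gE_3=E_3$, hence $g$ fixes $E_1+E_2=E-E_3$ and therefore $E_1$ and $E_2$ separately, so $g\in(\mathrm{F}_{4(-20)})_{E_1,E_2,E_3,F_3^1(1)}=\mathrm{B}_3$; the reverse inclusion is obvious, so $(\mathrm{F}_{4(-20)})_{F_3^1(1),\,E_1-E_2}=\mathrm{B}_3$. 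Since $\mathcal{S}^{7,1}_+$ is clopen in the closed set $\mathcal{S}^{7,1}$, hence closed in $\mathcal{J}^1$, the orbit is an embedded submanifold and the orbit map induces a diffeomorphism $(\mathrm{F}_{4(-20)})_{F_3^1(1)}/\mathrm{B}_3\simeq\mathcal{S}^{7,1}_+$. Finally $\mathrm{B}_3=\varphi_0(\mathrm{Spin}(7))$ is connected, being the continuous image of the connected group $\tilde{B}_3$ (Lemma~\ref{trl-05}(1), Lemma~\ref{cce-02}), and $\mathcal{S}^{7,1}_+$ is connected by (1); since both the fibre and the base of this fibration are connected, the total space $(\mathrm{F}_{4(-20)})_{F_3^1(1)}$ is connected.

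The main obstacle is the single genuinely computational point in condition (iv): checking via (\ref{cce-10}.b) that the one-parameter group $\exp(t\tilde{A}_3^1(e_1))$ ``straightens'' every normalized element $\xi(E_1-E_2)+F_3^1(r_0e_1)$ with $\xi^2-r_0^2=1$ onto $E_1-E_2$ or $-E_1+E_2$ according to $\mathrm{sgn}\,\xi$. The rest is bookkeeping, but one structural constraint must be respected: unlike in Lemma~\ref{spn-05}, the first normalization step must be carried out by an element of $\mathrm{Spin}(7)$ rather than the full $\mathrm{Spin}(8)$ so as to remain inside $(\mathrm{F}_{4(-20)})_{F_3^1(1)}$; this is exactly why $x$ is rotated to $\sqrt{\mathrm{n}(x)}\,e_1$ within $\mathrm{Im}{\bf O}$ and why the second step uses $e_1\in S^6$ instead of an arbitrary unit octonion.
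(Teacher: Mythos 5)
Your proposal is correct and follows essentially the same route as the paper's proof: normalize the $F_3^1$-component onto $\mathbb{R}e_1$ by an element of $\mathrm{B}_3$, straighten with $\exp\bigl(4^{-1}\log((\xi+r_0)/(\xi-r_0))\tilde{A}_3^1(e_1)\bigr)$ via (\ref{cce-10}.b), identify the stabilizer of $E_1-E_2$ in $(\mathrm{F}_{4(-20)})_{F_3^1(1)}$ with $\mathrm{B}_3$ through (\ref{cce-01}), and get connectedness from the connected fibre $\mathrm{B}_3$ and connected base. The only cosmetic differences are that the paper shows directly that $(\mathrm{F}_{4(-20)})_{F_3^1(1)}$ preserves $\mathcal{S}^{7,1}_+$ (using $\mathcal{S}^{7,1}_+=\mathcal{S}^{8,1}_+\cap\mathcal{S}^{7,1}$ and (\ref{spn-06}.c)) instead of invoking Lemma~\ref{spn-04} on $\mathcal{S}^{7,1}=\mathcal{S}^{7,1}_+\coprod\mathcal{S}^{7,1}_-$, and it performs the first normalization with a $\mathrm{G}_2$-element from (\ref{trl-04}.a) rather than your $\mathrm{Spin}(7)$-lift of an $\mathrm{SO}(7)$-rotation; both variants are sound.
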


\begin{proof} 
(1) Note $\mathcal{S}^{7,1}_+
=\mathcal{S}^{8,1}_+\cap \mathcal{S}^{7,1}.$
Let $X \in \mathcal{S}^{7,1}_+$ 
and $g\in (\mathrm{F}_{4(-20)})_{F_3^1(1)}$.
Because of $X \in \mathcal{S}^{8,1}_+$
and (\ref{spn-06}.c),
$g X\in\mathcal{S}^{8,1}_+$.
Next, 
$0=(F_3^1(1)|X)
=(g F_3^1(1)|g X)=(F_3^1(1)|g X).$
Thus $g X\in\mathcal{S}^{7,1}_+$
and so  $(\mathrm{F}_{4(-20)})_{F_3^1(1)}$ 
acts on $\mathcal{S}^{7,1}_+$.
Especially, $(\mathrm{F}_{4(-20)})_{F_3^1(1)}^0$ 
acts on $\mathcal{S}^{7,1}_+.$

Next, we will show transitivity.
Fix $X\in\mathcal{S}^{7,1}_+.$
$X$ is expressed by $X=\xi(E_1-E_2)+F_3^1(x)$
where $\xi>0,$ $x\in{\rm Im}{\bf O}$ 
and $\xi^2-{\rm n}(x)=1.$
Using (\ref{trl-04}.a),
$g x=\sqrt{{\rm n}(x)}e_1$
for some $g \in \mathrm{G}_2.$
Then $\varphi_0(g,g,g)
\in \mathrm{G}_2\subset \mathrm{B}_3
\subset (\mathrm{F}_{4(-20)})_{F_3^1(1)}^0$ 
and $\varphi_0(g,g,g)X
=\xi(E_1-E_2)+F_3^1(\sqrt{{\rm n}(x)}e_1)$.
Put $t_0=4^{-1}\log\left(
(\xi+\sqrt{{\rm n}(x)})/(\xi-\sqrt{{\rm n}(x)})\right)
\in\mathbb{R}$.
Because of $e_1 \in S^6$ and Lemma~\ref{cce-10}(2),
we see 
$\exp(t_0\tilde{A}_3^1(e_1))
\in(\mathrm{F}_{4(-20)})_{F_3^1(1)}^0$
and
because of $\xi^2-{\rm n}(x)=1$
and (\ref{cce-10}.b), we calculate
$\exp\left(t_0
\tilde{A}_3^1(e_1)\right)
\varphi_0(g,g,g) X=E_1-E_2$.
Thus  $\mathcal{S}^{7,1}_+
=Orb_{(\mathrm{F}_{4(-20)})_{E_3}^0}(E_1-E_2)
=Orb_{(\mathrm{F}_{4(-20)})_{E_3}}(E_1-E_2)$.
Because $\mathcal{S}^{7,1}_+$ is an orbit of one element 
$E_1-E_2$
under the action of a connected group 
$(\mathrm{F}_{4(-20)})_{E_3}^0,$
$\mathcal{S}^{7,1}_+$ is connected.
Hence (1) follows.

(2) Note $(\mathrm{F}_{4(-20)})_{E_1-E_2,E_3}
=(\mathrm{F}_{4(-20)})_{E,E_1-E_2,E_3}
=(\mathrm{F}_{4(-20)})_{E_1,E_2,E_3}$.
By
(\ref{cce-01}.a) and (\ref{cce-01}.b),
$(\mathrm{F}_{4(-20)})_{F_3^1(1),E_1-E_2}
=(\mathrm{F}_{4(-20)})_{F_3^1(1),E_1-E_2,E_3}
=(\mathrm{F}_{4(-20)})_{F_3^1(1),E_1,E_2,E_3}
=\mathrm{B}_3$.
From (1), we see 
$(\mathrm{F}_{4(-20)})_{F_3^1(1)}/\mathrm{B}_3
\simeq \mathcal{S}^{7,1}_+$.
By Lemma~\ref{cce-02}(2),
$\mathrm{B}_3$ is connected
and by (1),
$\mathcal{S}^{7,1}_+$ are connected.
Hence $(\mathrm{F}_{4(-20)})_{F_3^1(1)}$ is also connected.
\end{proof}
\medskip

\begin{proposition}\label{spn-11} 
{\rm (1)} The following sequence is exact:
\[
\tag{\ref{spn-11}.a}
1 \to \{1,\sigma_3\} \to \mathrm{B}_3 
\stackrel{q}{\to} {\rm SO}
(F_3^1({\rm Im}{\bf O}),Q) \to 1.
\]

{\rm (2)} The following sequence is exact:
\[
\tag{\ref{spn-11}.b}
1\to \{1,\sigma_3\} \to (\mathrm{F}_{4(-20)})_{F_3^1(1)} 
\stackrel{\tilde{q}}{\to} 
{\rm O}^0(\mathcal{J}^1_{7,1},Q)\to 1.
\]
\end{proposition}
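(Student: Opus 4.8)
The plan is to prove both parts by reducing to exact sequences already in hand, following the scheme of Proposition~\ref{spn-08}.

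For (1): Under the isomorphism $\varphi_0\colon\mathrm{Spin}(7)\xrightarrow{\sim}\mathrm{B}_3$ of Lemma~\ref{cce-02}(2), the homomorphism $q\colon\mathrm{B}_3\to\mathrm{SO}(F_3^1({\rm Im}{\bf O}),Q)$ corresponds to the triality homomorphism $q\colon\tilde{B}_3\to\mathrm{SO}(7)$, $(g_1,g_2,g_3)\mapsto g_3$, of Section~\ref{trl}: indeed by Lemma~\ref{cce-03}(iii) one has $\varphi_0(g_1,g_2,g_3)F_3^1(p)=F_3^1(g_3p)$ for $p\in{\rm Im}{\bf O}$, and since $Q(F_3^1(x))=-(x|x)$ by (\ref{spn-01}.d), the map $x\mapsto F_3^1(x)$ identifies the action of $\mathrm{SO}(7)$ on ${\rm Im}{\bf O}$ with that of $\mathrm{SO}(F_3^1({\rm Im}{\bf O}),Q)$. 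Hence (\ref{spn-11}.a) follows from (\ref{trl-07}.b) once the image of the kernel is computed: $\varphi_0(1,1,1)=1$, and by the defining formula (\ref{cce-02}) the element $\varphi_0(-1,-1,1)$ sends $\sum_{i=1}^3(\xi_iE_i+F_i^1(x_i))$ to $\sum_{i=1}^3(\xi_iE_i+\epsilon_3(i)F_i^1(x_i))$, which is precisely $\sigma_3$. Therefore $\mathrm{Ker}(q)=\{1,\sigma_3\}$ and $q$ is onto.

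For (2): I would mimic the proof of Proposition~\ref{spn-08}(3) verbatim. By Lemma~\ref{spn-10}(2), $(\mathrm{F}_{4(-20)})_{F_3^1(1)}$ is connected and $(\mathrm{F}_{4(-20)})_{F_3^1(1)}/\mathrm{B}_3\simeq\mathcal{S}^{7,1}_+$ via $g\mapsto g(E_1-E_2)$; connectedness forces $\tilde{q}((\mathrm{F}_{4(-20)})_{F_3^1(1)})\subset\mathrm{O}^0(\mathcal{J}^1_{7,1},Q)$. This yields a commutative diagram with exact rows
\[
\begin{array}{ccccccccc}
1&\to&\mathrm{B}_3&\to&(\mathrm{F}_{4(-20)})_{F_3^1(1)}&\to&\mathcal{S}^{7,1}_+&\to&*\\
&&\downarrow q&&\downarrow\tilde{q}&&\parallel&&\\
1&\to&\mathrm{SO}(F_3^1({\rm Im}{\bf O}),Q)&\to&\mathrm{O}^0(\mathcal{J}^1_{7,1},Q)&\to&\mathcal{S}^{7,1}_+&\to&*.
\end{array}
\]
The bottom row records that $\mathrm{O}^0(\mathcal{J}^1_{7,1},Q)\cong\mathrm{O}^0(7,1)$ acts transitively on $\mathcal{S}^{7,1}_+$ with stabilizer of $E_1-E_2$ equal to $\mathrm{SO}(F_3^1({\rm Im}{\bf O}),Q)$: an orthochronous determinant-one transformation fixing the unique positive line $\mathbb{R}(E_1-E_2)$ must act with determinant $1$ on its negative-definite orthogonal complement $F_3^1({\rm Im}{\bf O})$, and conversely $\mathrm{SO}(F_3^1({\rm Im}{\bf O}),Q)$ extended by the identity on $\mathbb{R}(E_1-E_2)$ is connected, hence lands in $\mathrm{O}^0$. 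The rightmost vertical map is the identity because $\tilde{q}(g)=g|\mathcal{J}^1_{7,1}$ and $E_1-E_2\in\mathcal{J}^1_{7,1}$, so both horizontal maps to $\mathcal{S}^{7,1}_+$ are $g\mapsto g(E_1-E_2)$. A diagram chase as in Proposition~\ref{spn-08}(3) then shows $\tilde{q}$ is onto and $\mathrm{Ker}(\tilde{q})=\mathrm{Ker}(q)=\{1,\sigma_3\}$; the inclusion $\{1,\sigma_3\}\subset\mathrm{Ker}(\tilde{q})$ also uses that $\sigma_3$ fixes $E_1-E_2$ and every $F_3^1(p)$, hence acts trivially on all of $\mathcal{J}^1_{7,1}$.

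The only step requiring real attention is the bottom row of the diagram: the transitivity of $\mathrm{O}^0(7,1)$ on the positive sphere $\mathcal{S}^{7,1}_+$ and the precise identification of the point stabilizer with $\mathrm{SO}(7)$, including the matching of the condition $(X|E_1)>0$ with the choice of sheet. This is the exact analogue of the $(8,1)$-fact used without comment in Proposition~\ref{spn-08}(3), so I would either invoke that pattern directly or record the short Lorentzian linear-algebra verification via Lemma~\ref{trl-01}(2). Everything else is either a citation (\ref{cce-02}, \ref{cce-03}, \ref{spn-01}, \ref{spn-10}, \ref{trl-07}) or a one-line bookkeeping check.
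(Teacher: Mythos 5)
Your proposal is correct and follows essentially the same route as the paper: part (1) is reduced to Lemma~\ref{cce-02}(2) and the exact sequence (\ref{trl-07}.b) (you merely make explicit the identification of $\varphi_0(-1,-1,1)$ with $\sigma_3$), and part (2) repeats the connectedness argument from Lemma~\ref{spn-10}(2) together with the same commutative diagram over $\mathcal{S}^{7,1}_+$ and a five-lemma/diagram chase, exactly as in the paper's proof modeled on Proposition~\ref{spn-08}(3). The only difference is that you spell out the exactness of the bottom row (the Lorentzian stabilizer computation), which the paper uses implicitly.
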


\begin{proof} 
(1) It follows 
from Lemma~\ref{cce-02}(2) 
and (\ref{trl-07}.b).

(2)
By Lemma~\ref{spn-10}(2),
$(\mathrm{F}_{4(-20)})_{F_3^1(1)}$ is connected.
Then we see that
$\tilde{q}((\mathrm{F}_{4(-20)})_{F_3^1(1)})\subset
{\rm O}^0(\mathcal{J}^1_{7,1},Q)$
and the following commutative diagram:
\[\begin{array}{ccccccccc}
1&\to& \mathrm{B}_3 &\to& (\mathrm{F}_{4(-20)})_{F_3^1(1)} 
&\to& \mathcal{S}^{7,1}_+&\to& *\\
 &   &  \downarrow q           
&   &  \downarrow \tilde{q} &    & \parallel  &    &\\
1&\to& {\rm SO}(F_3^1({\rm Im}{\bf O}),Q) 
&\to&{\rm O}^0(\mathcal{J}^1_{7,1},Q)  
&\to& \mathcal{S}^{7,1}_+&\to& *.
\end{array}
\]
It follows from  (1) and the five lemma  that
$\tilde{q}$
is onto  and 
${\rm Ker}(\tilde{q})={\rm Ker}(q)
=\{1,\sigma_3\}.$
Hence the assertion follows.
\end{proof}
\medskip

By Lemma~\ref{spn-10}(2) 
and (\ref{spn-11}.b), we have
$(\mathrm{F}_{4(-20)})_{F_3^1(1)}$ is connected and 
a two-hold covering group of ${\rm O}^0(\mathcal{J}^1_{7,1},Q).$
So denote
\[{\rm Spin}^0(7,1):=(\mathrm{F}_{4(-20)})_{F_3^1(1)}.
\]

\begin{proposition}\label{spn-12}
Let $Y=r E_3+p(E-E_3)+qF_3^1(1)$
$\in\mathcal{J}^1$ with $q\ne 0.$
Then  $(\mathrm{F}_{4(-20)})_Y={\rm Spin}^0(7,1)$.
\end{proposition}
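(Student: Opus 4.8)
The plan is to identify $(\mathrm{F}_{4(-20)})_Y$ with the stabilizer $(\mathrm{F}_{4(-20)})_{F_3^1(1)}$, which is by definition $\mathrm{Spin}^0(7,1)$; the argument runs parallel to Proposition~\ref{spn-09}, the point being to recover the ``coordinate'' elements $E_3$ and $F_3^1(1)$ from $Y$ by $\mathrm{F}_{4(-20)}$-equivariant operations. Write $Y = pE_1 + pE_2 + rE_3 + qF_3^1(1) = (r-p)E_3 + pE + qF_3^1(1)$. Since $gE = E$ for all $g \in \mathrm{F}_{4(-20)}$ and, by (\ref{cce-01}.a), $(\mathrm{F}_{4(-20)})_{F_3^1(1)} = (\mathrm{F}_{4(-20)})_{E_3,F_3^1(1)}$, any $g$ fixing $F_3^1(1)$ fixes $E_3$ as well and hence fixes $Y$; this gives the inclusion $\mathrm{Spin}^0(7,1) \subseteq (\mathrm{F}_{4(-20)})_Y$ at once.

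For the reverse inclusion, fix $g \in (\mathrm{F}_{4(-20)})_Y$. First I would compute, using (\ref{prl-06}.d) applied to $rE - Y = (r-p)E_1 + (r-p)E_2 - qF_3^1(1)$, that $(rE - Y)^{\times 2} = \bigl((r-p)^2 + q^2\bigr)E_3$, so $\mathrm{tr}\bigl((rE-Y)^{\times 2}\bigr) = (r-p)^2 + q^2$, which is strictly positive because $q \neq 0$. Hence $E_{Y,r}$ is well-defined (Lemma~\ref{prl-09}) and $E_{Y,r} = E_3$. By Lemma~\ref{prl-10}(iii), $gE_3 = gE_{Y,r} = E_{gY,r} = E_{Y,r} = E_3$, so $g$ fixes $E_3$. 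Since $g$ also fixes $Y$ and $E$, it fixes $qF_3^1(1) = Y - rE_3 - p(E - E_3)$, and dividing by $q \neq 0$ gives $gF_3^1(1) = F_3^1(1)$, i.e.\ $g \in (\mathrm{F}_{4(-20)})_{F_3^1(1)} = \mathrm{Spin}^0(7,1)$. Combining the two inclusions yields the proposition.

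The computation is entirely routine, and there is no real obstacle beyond bookkeeping; the one place where the hypothesis is indispensable is in ensuring $(r-p)^2 + q^2 \neq 0$. If $q = 0$ the element $rE - Y$ is $(r-p)(E_1 + E_2)$, whose cross-square vanishes and no longer isolates $E_3$, which is exactly why the case $q \neq 0$ is singled out and why the stabilizer type is different for $q=0$ (cf.\ cases $5$, $6$ of Main-Theorem~\ref{orb-decomposition}).
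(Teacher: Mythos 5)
Your proposal is correct and follows essentially the same route as the paper: both inclusions are established exactly as in the paper's proof, namely $\mathrm{Spin}^0(7,1)=(\mathrm{F}_{4(-20)})_{E_3,F_3^1(1)}\subset(\mathrm{F}_{4(-20)})_Y$ via (\ref{cce-01}.a), and conversely recovering $E_3=E_{Y,r}$ from $(rE-Y)^{\times 2}=((r-p)^2+q^2)E_3$ together with Lemma~\ref{prl-10}(iii), then solving for $F_3^1(1)$ using $q\ne 0$. No gaps; the remark about the role of $q\ne 0$ matches the paper's case distinction.
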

\begin{proof} From (\ref{cce-01}.a), we see  
${\rm Spin}^0(7,1)
=(\mathrm{F}_{4(-20)})_{E_3,F_3^1(1)}
\subset(\mathrm{F}_{4(-20)})_Y$.
Conversely, take $g\in (\mathrm{F}_{4(-20)})_Y.$
Because of $(rE-Y)^{\times 2}=((p-r)^2+q^2)E_3$ and
$\mathrm{tr}((rE-Y)^{\times 2})=(p-r)^2+q^2\ne 0$,
we see
$E_{Y,r}\in \mathcal{J}^1$ is well-defined
and $E_{Y,r}=E_3$.
By (\ref{prl-10})(iii),
$g E_3=g E_{Y,r}
=E_{g Y,r}= E_{Y,r}=E_3$.  
Then
$g F_3^1(1)=g(q^{-1}
(Y-(r-p)E_3-p(E-E_3)))
=F_3^1(1)$.
Thus  $g \in {\rm Spin}^0(7,1)$
and so $(\mathrm{F}_{4(-20)})_Y
\subset {\rm Spin}^0(7,1)$. 
Hence $(\mathrm{F}_{4(-20)})_Y={\rm Spin}^0(7,1)$.
\end{proof}
\medskip

For $i\in\{1,2,3\},$ the involutive 
automorphism $\tilde{\sigma}_i$
of $\mathrm{F}_{4(-20)}$ is defined by
$\tilde{\sigma}_i(g):=\sigma_ig\sigma_i$
for $g\in \mathrm{F}_{4(-20)}$
and the subgroup $K$ of $\mathrm{F}_{4(-20)}$ by
\[
K:=(\mathrm{F}_{4(-20)})^{\tilde{\sigma}}
=\{g\in \mathrm{F}_{4(-20)}|~\sigma g=g\sigma\}.
\]

\begin{lemma}\label{spn-13}
Let 
$i,j \in \{1,2,3\}$ and
indexes $i,i+1,i+2,i+j$ be counted modulo $3$.

{\rm (1)} 
The following expressions hold.
\[\tag{\ref{spn-13}.a}
\left\{
\begin{array}{rrcl}
{\rm (i)}&
\mathcal{J}^1_{\sigma_i}
&=&\{(\sum{}_{j=1}^3 \xi_j E_j)+F_i^1(x)|
\xi_j\in\mathbb{R},x\in{\bf O}\}\\
\smallskip
&&=&
\{X\in\mathcal{J}^1~|~4E_i\times (E_i\times X)=X\}
\oplus \mathbb{R}E_i,\\
\smallskip
{\rm (ii)}&
\mathcal{J}^1_{\sigma_i,-1}
&=&\{\sum{}_{j=1}^2F_{i+j}^1(x_{i+j})|
x_{i+j}\in{\bf O}\}\\
\smallskip
&&=&
\{X\in\mathcal{J}^1~|~E_i\times X=0,~(E_i|X)=0\}.
\end{array}\right.
\]

{\rm (2)} $(\mathrm{F}_{4(-20)})^{\tilde{\sigma}_i}$
invariants the linear subspaces
$\mathcal{J}^1_{\sigma_i}$
and 
$\mathcal{J}^1_{\sigma_i,-1}$
of $\mathcal{J}^1$.

{\rm (3)}
Let
$g\in(\mathrm{F}_{4(-20)})^{\tilde{\sigma}_i}.$
Then \[
\tag{\ref{spn-13}.b}
gE_i=E_i+\xi_{i+1}E_{i+1}+\xi_{i+2}E_{i+2}+F_i^1(x)
\]
for~some $\xi_{i+1},\xi_{i+2}\in\mathbb{R}$
and $x\in{\bf O}.$
\end{lemma}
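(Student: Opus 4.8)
The plan is to dispatch (1) and (2) by bare-hands computation and then reduce (3) to a single cross-product identity that is forced by the commutation $g\sigma_i=\sigma_ig$. For (\ref{spn-13}.a), I would read the eigenspaces of $\sigma_i$ straight off its definition: $\sigma_i$ fixes every $E_j$, fixes every $F_i^1(x)$, and negates every $F_j^1(x)$ with $j\neq i$, which is exactly the left-hand description of $\mathcal{J}^1_{\sigma_i}$ and $\mathcal{J}^1_{\sigma_i,-1}$. For the second description of each, I would compute, for a general $X=\sum_{j=1}^3(\xi_jE_j+F_j^1(x_j))$ and using Lemma~\ref{prl-06}(a)(i)--(iv), that $E_i\times X=2^{-1}(\xi_{i+1}E_{i+2}+\xi_{i+2}E_{i+1})+F_i^1(-2^{-1}x_i)$ and hence $4E_i\times(E_i\times X)=\xi_{i+1}E_{i+1}+\xi_{i+2}E_{i+2}+F_i^1(x_i)$. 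Imposing $E_i\times X=0$ together with $(E_i|X)=\xi_i=0$ (Lemma~\ref{prl-06}(b)) forces $\xi_{i+1}=\xi_{i+2}=\xi_i=0$ and $x_i=0$, which is precisely $\mathcal{J}^1_{\sigma_i,-1}$; while $4E_i\times(E_i\times X)=X$ forces $\xi_i=0$ and $x_{i+1}=x_{i+2}=0$, so that subspace is the part of $\mathcal{J}^1_{\sigma_i}$ spanned by $E_{i+1},E_{i+2}$ and $F_i^1({\bf O})$. Since $4E_i\times(E_i\times E_i)=0\neq E_i$, adding $\mathbb{R}E_i$ is a direct sum and recovers all of $\mathcal{J}^1_{\sigma_i}$.

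Part (2) is purely formal: $g\in(\mathrm{F}_{4(-20)})^{\tilde\sigma_i}$ means $\sigma_ig\sigma_i=g$, i.e.\ $\sigma_ig=g\sigma_i$ (as $\sigma_i^2=1$), so $\sigma_iX=\pm X$ gives $\sigma_i(gX)=g(\sigma_iX)=\pm gX$; thus $g$ preserves each of the two (linear) subspaces $\mathcal{J}^1_{\sigma_i}$ and $\mathcal{J}^1_{\sigma_i,-1}$.

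For (\ref{spn-13}.b), note first $\sigma_iE_i=E_i$, so part (2) gives $gE_i\in\mathcal{J}^1_{\sigma_i}$ and (\ref{spn-13}.a)(i) lets us write $gE_i=\sum_{j=1}^3\eta_jE_j+F_i^1(w)$ with $\eta_j\in\mathbb{R}$, $w\in{\bf O}$; what remains is to pin down $\eta_i$. Here I would use that $g$ also preserves $\mathcal{J}^1_{\sigma_i,-1}$: for every $Z\in\mathcal{J}^1_{\sigma_i,-1}$ one has $E_i\times Z=0$ by (\ref{spn-13}.a)(ii), hence $gE_i\times gZ=g(E_i\times Z)=0$ by Proposition~\ref{prl-08}(2), and since $Z\mapsto gZ$ is a bijection of $\mathcal{J}^1_{\sigma_i,-1}$ this yields $gE_i\times F_{i+1}^1(x)=0$ and $gE_i\times F_{i+2}^1(x)=0$ for all $x\in{\bf O}$. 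Expanding the first with Lemma~\ref{prl-06}(a) (only the terms $E_{i+1}\times F_{i+1}^1(x)$ and $F_i^1(w)\times F_{i+1}^1(x)$ survive) gives $\eta_{i+1}F_{i+1}^1(-2^{-1}x)+F_{i+2}^1(-\epsilon(i+2)2^{-1}\overline{wx})=0$ for all $x$, forcing $\eta_{i+1}=0$ and $w=0$; the second expansion symmetrically forces $\eta_{i+2}=0$ (and again $w=0$). Finally $\mathrm{tr}(gE_i)=\mathrm{tr}(E_i)=1$ by Proposition~\ref{prl-08}(1) gives $\eta_i=1$, so in fact $gE_i=E_i$, which is stronger than (\ref{spn-13}.b).

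The work in (1) and (2) is routine; the only care needed is bookkeeping the cyclic indices $i,i+1,i+2$ and the signs $\epsilon(\cdot)$ in the octonionic cross-product rules of Lemma~\ref{prl-06}(a). The one genuine point is in (3): trace invariance alone does not determine the $E_i$-coefficient, since $\mathcal{J}^1_{\sigma_i}$ contains rank-one idempotents of trace $1$ with vanishing $E_i$-component (projections supported on the $2\times2$ block complementary to the $i$-th slot); it is exactly the extra vanishing $gE_i\times\mathcal{J}^1_{\sigma_i,-1}=\{0\}$, available because $g$ commutes with $\sigma_i$, that excludes those. I expect the main (though entirely mechanical) hurdle to be carrying out the two cross-product expansions with the correct $\epsilon$-signs and index shifts, so that they really do annihilate $\eta_{i+1},\eta_{i+2}$ and $w$.
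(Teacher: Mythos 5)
Your parts (1) and (2) are correct and coincide with the paper's treatment (direct computation from the definition of $\sigma_i$ and Lemma~\ref{prl-06}, plus the formal commutation argument). For part (3), however, you take a genuinely different route from the paper, and it works. The paper applies $g$ to $F_{i+1}^1(1),F_{i+2}^1(1)\in\mathcal{J}^1_{\sigma_i,-1}$, uses $E_{i+j}=-\epsilon(i+j)(F_{i+j}^1(1))^{\times 2}$ together with (\ref{prl-06}.d) to see that $gE_{i+1}$ and $gE_{i+2}$ have vanishing $E_i$-component, and then reads off (\ref{spn-13}.b) from $gE_i=E-gE_{i+1}-gE_{i+2}$; it deliberately leaves $\xi_{i+1},\xi_{i+2},x$ undetermined, killing them only later in Proposition~\ref{spn-14} via $(gE_i)^{\times 2}=0$. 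You instead observe that $E_i$ cross-annihilates all of $\mathcal{J}^1_{\sigma_i,-1}$, transport this by $g$ (legitimate: $g$ preserves $\times$ by Proposition~\ref{prl-08} and restricts to a bijection of $\mathcal{J}^1_{\sigma_i,-1}$ by part (2) and dimension), expand $gE_i\times F_{i+1}^1(x)$ and $gE_i\times F_{i+2}^1(x)$ with the correct $\epsilon$-signs, and finish with trace invariance; your index and sign bookkeeping checks out, and you obtain the stronger conclusion $gE_i=E_i$ directly. This is consistent with, and in fact subsumes, the reverse inclusion in Proposition~\ref{spn-14}, so your argument buys a shortcut there at the cost of a slightly longer computation here, while the paper's version is the more economical step needed for the weaker statement (\ref{spn-13}.b) as written.
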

\begin{proof}
(1) 
Using the definition of $\sigma_i$ and Lemma~\ref{prl-06},
it follows from direct calculations.

(2) It follows from $\sigma_ig=g\sigma_i$ 
for all $g\in(\mathrm{F}_{4(-20)})^{\tilde{\sigma}_i}$.

(3) (cf. \cite[Theorem 2.9.1]{Yi_arxiv}).
Now $F_{i+1}^1(1),~F_{i+2}^1(1)
\in \mathcal{J}^1_{\sigma_i,-1}$. By
(2),  $gF_{i+1}^1(1),~gF_{i+2}^1(1)
\in \mathcal{J}^1_{\sigma_i,-1}$
and from (\ref{spn-13}.a), we can write
$gF_{i+1}^1(1)=\sum_{j=1}^2F_{i+j}^1(x_{i+j})$ and
$gF_{i+2}^1(1)=\sum_{j=1}^2F_{i+j}^1(y_{i+j})$
for~some $x_{i+j},y_{i+j}\in{\bf O}$.
By (\ref{prl-06}.a),
$
E_{i+1}
=-\epsilon(i+1)(F_{i+1}^1(1))^{\times 2}$ and
$
E_{i+2}
=-\epsilon(i+2)(F_{i+2}^1(1))^{\times 2}$,
so that
$gE_{i+1}
=-\epsilon(i+1)(gF_{i+1}^1(1))^{\times 2}$
and 
$gE_{i+2}
=-\epsilon(i+2)(gF_{i+2}^1(1))^{\times 2}$.
From (\ref{prl-06}.d),
we see that
$gE_{i+1}
=-\epsilon(i+1)(\sum_{j=1}^2F_{i+j}^1(x_{i+j}))^{\times 2}
=(\sum_{j=1}^2\xi_{i+j}E_{i+j})+F_i^1(u)$ and
$gE_{i+2}
=-\epsilon(i+2)(\sum_{j=1}^2F_{i+j}^1(y_{i+j}))^{\times 2}
=(\sum_{j=1}^2\eta_{i+j}E_{i+j})+F_i^1(v)$
for some $\xi_{i+j},\eta_{i+j}\in\mathbb{R}$ and
$u,v\in{\bf O}$.
Thus  $gE_i=g(E-E_{i+1}-E_{i+2})
=E-\sum_{j=1}^2(\xi_{i+j}+\eta_{i+j})E_{i+j}
-F_i^1(u+v)$.
Hence (3) follows.
\end{proof}
\medskip

The following result is shown in \cite{Yi1990}.
\begin{proposition}\label{spn-14}
Let $i\in\{1,2,3\}$.
Then the following equation hold.
\[
\tag{\ref{spn-14}.a}
(\mathrm{F}_{4(-20)})^{\tilde{\sigma}_i}
=(\mathrm{F}_{4(-20)})_{E_i}.
\]
Especially,
\begin{gather*}
\tag{\ref{spn-14}.b}
K=(\mathrm{F}_{4(-20)})_{E_1}={\rm Spin}(9),\\
\tag{\ref{spn-14}.c}
(\mathrm{F}_{4(-20)})^{\tilde{\sigma}_2}
=(\mathrm{F}_{4(-20)})_{E_2}
\cong{\rm Spin}^0(8,1).
\end{gather*}
\end{proposition}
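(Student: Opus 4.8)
The plan is to prove the equality $(\mathrm{F}_{4(-20)})^{\tilde{\sigma}_i}=(\mathrm{F}_{4(-20)})_{E_i}$ first, and then to obtain (\ref{spn-14}.b) and (\ref{spn-14}.c) from the definitions of $\mathrm{Spin}(9)$ and $\mathrm{Spin}^0(8,1)$ together with the fact that $E_2$ and $E_3$ lie in one $\mathrm{F}_{4(-20)}$-orbit.

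First I would show $(\mathrm{F}_{4(-20)})_{E_i}\subset(\mathrm{F}_{4(-20)})^{\tilde{\sigma}_i}$. Because $\sigma_i$ acts as $+1$ on $\mathcal{J}^1_{\sigma_i}$, as $-1$ on $\mathcal{J}^1_{\sigma_i,-1}$, and $\mathcal{J}^1=\mathcal{J}^1_{\sigma_i}\oplus\mathcal{J}^1_{\sigma_i,-1}$, any $g\in\mathrm{F}_{4(-20)}$ carrying both subspaces into themselves commutes with $\sigma_i$. By Lemma~\ref{spn-13}(1) these two subspaces are described purely in terms of $E_i$, the cross product and the inner product (namely $\mathcal{J}^1_{\sigma_i}=\{X:4E_i\times(E_i\times X)=X\}\oplus\mathbb{R}E_i$ and $\mathcal{J}^1_{\sigma_i,-1}=\{X:E_i\times X=0,\ (E_i|X)=0\}$). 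An element $g\in(\mathrm{F}_{4(-20)})_{E_i}$ fixes $E_i$ and preserves $\times$ and $(\cdot|\cdot)$ by Proposition~\ref{prl-08}, so it carries each of these subspaces into itself; hence $\sigma_ig=g\sigma_i$, i.e. $g\in(\mathrm{F}_{4(-20)})^{\tilde{\sigma}_i}$.

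Next I would show the reverse inclusion. Let $g\in(\mathrm{F}_{4(-20)})^{\tilde{\sigma}_i}$. By Lemma~\ref{spn-13}(3) we may write $gE_i=E_i+\xi_{i+1}E_{i+1}+\xi_{i+2}E_{i+2}+F_i^1(x)$ for some $\xi_{i+1},\xi_{i+2}\in\mathbb{R}$ and $x\in{\bf O}$. Since $E_i\times E_i=0$ by (\ref{prl-06}.a) and $g$ preserves the cross product, we get $(gE_i)^{\times 2}=0$; expanding by (\ref{prl-06}.d), the $E_{i+1}$-, $E_{i+2}$- and $F_i^1$-components of $(gE_i)^{\times 2}$ equal $\xi_{i+2}$, $\xi_{i+1}$ and $-x$ respectively, so they must all vanish. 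Thus $gE_i=E_i$, i.e. $g\in(\mathrm{F}_{4(-20)})_{E_i}$. This computation is short; since Lemma~\ref{spn-13} has already carried out the structural work (identifying the $\sigma_i$-eigenspaces and the shape of $gE_i$), I do not expect any genuine obstacle in this proposition --- the only point deserving care is checking that the intrinsic descriptions of the $\sigma_i$-eigenspaces in Lemma~\ref{spn-13}(1) transfer under $g$, which is immediate from Proposition~\ref{prl-08}.

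Finally I would read off the special cases. Combining the two inclusions gives (\ref{spn-14}.a). Since $\sigma=\sigma_1$ we have $\tilde{\sigma}=\tilde{\sigma}_1$, so $K=(\mathrm{F}_{4(-20)})^{\tilde{\sigma}_1}=(\mathrm{F}_{4(-20)})_{E_1}=\mathrm{Spin}(9)$ by the definition $\mathrm{Spin}(9):=(\mathrm{F}_{4(-20)})_{E_1}$, which is (\ref{spn-14}.b). For $i=2$, (\ref{spn-14}.a) gives $(\mathrm{F}_{4(-20)})^{\tilde{\sigma}_2}=(\mathrm{F}_{4(-20)})_{E_2}$; since $\exp(2^{-1}\pi\tilde{A}_1^1(1))\in\mathrm{F}_{4(-20)}$ sends $E_3$ to $E_2$ (by (\ref{cce-10}.a), exactly as in the proof of Lemma~\ref{cce-11}), the stabilizers $(\mathrm{F}_{4(-20)})_{E_2}$ and $(\mathrm{F}_{4(-20)})_{E_3}$ are conjugate in $\mathrm{F}_{4(-20)}$, hence isomorphic as Lie groups, and $(\mathrm{F}_{4(-20)})_{E_3}=\mathrm{Spin}^0(8,1)$ by definition; this gives (\ref{spn-14}.c).
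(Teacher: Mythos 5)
Your proposal is correct and follows essentially the same route as the paper: the inclusion $(\mathrm{F}_{4(-20)})_{E_i}\subset(\mathrm{F}_{4(-20)})^{\tilde{\sigma}_i}$ via the intrinsic descriptions of the $\sigma_i$-eigenspaces in (\ref{spn-13}.a) and the decomposition $X=X_{\sigma_i}+X_{-\sigma_i}$, and the converse via (\ref{spn-13}.b) together with $(gE_i)^{\times 2}=0$ and (\ref{prl-06}.d). Your handling of (\ref{spn-14}.b) and (\ref{spn-14}.c) (definition of ${\rm Spin}(9)$, and conjugacy of the stabilizers of $E_2$ and $E_3$ via $\exp(2^{-1}\pi\tilde{A}_1^1(1))$) matches what the paper leaves implicit.
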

\begin{proof}
(cf. \cite[Theorem 2.9.1]{Yi_arxiv}).
Fix $g \in (\mathrm{F}_{4(-20)})_{E_i}$.
For all $X\in\mathcal{J}^1,$ $X$ can be expressed by
$X=X_{\sigma_i}+X_{-\sigma_i}$ for some
$X_{\sigma_i} \in \mathcal{J}^1_{\sigma_i}$ and
$X_{-\sigma_i} \in \mathcal{J}^1_{\sigma_i,-1}$.
From (\ref{spn-13}.a), we see
$gX_{\sigma_i}\in\mathcal{J}^1_{\sigma_i}$
and $gX_{-\sigma_i}\in\mathcal{J}^1_{\sigma_i,-1}$.
Then
$g\sigma_iX=gX_{\sigma_i}-gX_{-\sigma_i}
=\sigma_igX$.
Hence $g\in(\mathrm{F}_{4(-20)})^{\tilde{\sigma}_i}$ 
and so $(\mathrm{F}_{4(-20)})_{E_i} 
\subset (\mathrm{F}_{4(-20)})^{\tilde{\sigma}_i}$.

Conversely, take $g\in(\mathrm{F}_{4(-20)})^{\tilde{\sigma}_i}$.
Let index $i+j$ be counted modulo $3$.
By (\ref{spn-13}.b),
$gE_i=E_i+\xi_{i+1}E_{i+1}+\xi_{i+2}E_{i+2}+F_i^1(x)$
for~some $\xi_{i+1},\xi_{i+2}\in\mathbb{R}$
and $x\in{\bf O}.$
Because of
$(gE_i)^{\times 2}=g(E_i^{\times 2})=0$
and (\ref{prl-06}.d),
we see
$0=((gE_i)^{\times 2})_{E_{i+j}}=\xi_{i+j}$ and
$0=((gE_i)^{\times 2})_{F_i^1}=-x$.
Then  $gE_i=E_i.$
Thus  $g\in(\mathrm{F}_{4(-20)})_{E_i}$ 
and so
 $(\mathrm{F}_{4(-20)})^{\tilde{\sigma}_i}
\subset (\mathrm{F}_{4(-20)})_{E_i}.$
Hence (\ref{spn-14}.a) follows.
\end{proof}

\section{The exceptional hyperbolic planes and 
the exceptional null cones.}\label{hp}
In this section, we define 
the exceptional hyperbolic planes and 
the exceptional null cones,
and we will show Proposition~\ref{hp-np}.
Denote
\begin{align*}
\mathcal{H}&:=\{X\in\mathcal{J}^1|~
X^{\times 2}=0,~\mathrm{tr}(X)=1\},\\
\mathcal{H}({\bf O})&:=\{X\in\mathcal{H}|
~(X|E_1)\geq 1\},\\
\mathcal{H}'({\bf O})&:=\{X\in\mathcal{H}|
~(X|E_1)\leq 0\}.
\end{align*}
Then $\mathcal{H}({\bf O})$ and $\mathcal{H}'({\bf O})$
are called the {\it hyperbolic planes} of ${\bf O}$
or the {\it exceptional hyperbolic planes}.
The cone $\mathcal{N}$ in $\mathcal{J}^1$ is defined by
\[\mathcal{N}:=\{X\in\mathcal{J}^1~|~
\mathrm{tr}(X)=\mathrm{tr}(X^{\times 2})=\mathrm{det}(X)=0\}.\]
We recall $(X^{\times 2})^{\times 2}=\mathrm{det}(X)X$
and observe that the cone
$\mathcal{N}$ contains the following cones:
\begin{align*}
\mathcal{N}_1({\bf O})&:=\{X\in\mathcal{J}^1|~X^{\times 2}=0,
~\mathrm{tr}(X)=0,~X\ne 0\},\\
\mathcal{N}_1^+({\bf O})&:=\{X\in\mathcal{J}^1|~
X^{\times 2}=0,~\mathrm{tr}(X)=0,~(X|E_1)>0\},\\
\mathcal{N}_1^-({\bf O})&:=\{X\in\mathcal{J}^1|~
X^{\times 2}=0,~\mathrm{tr}(X)=0,~(X|E_1)<0\},
\end{align*} 
and
\[
\mathcal{N}_2({\bf O}):=\{X\in\mathcal{J}^1|~
\mathrm{tr}(X)=\mathrm{tr}(X^{\times 2})=\mathrm{det}(X)=0,
X^{\times 2}\ne 0\}.\]
Then $\mathcal{N}_1^{\pm}({\bf O})$ 
and $\mathcal{N}_2({\bf O})$
are called the {\it exceptional null cones}.
We write $\mathcal{N}_0({\bf O})$  for the trivial space $\{0\}$
in $\mathcal{J}^1$.

\begin{lemma}\label{hp-01}
The group $\mathrm{F}_{4(-20)}$ acts on $\mathcal{H}$, 
$\mathcal{N}_1({\bf O})$ and $\mathcal{N}_2({\bf O})$
and
\begin{align*} 
\tag{\ref{hp-01}.a}
\mathcal{H}&=\mathcal{H}({\bf O})
\coprod\mathcal{H}'({\bf O}),\\
\tag{\ref{hp-01}.b}
\mathcal{N}_1({\bf O})
&=\mathcal{N}_1^+({\bf O})\coprod\mathcal{N}_1^-({\bf O}).
\end{align*}
\end{lemma}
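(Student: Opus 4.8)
The statement splits into two independent parts: that $\mathrm{F}_{4(-20)}$ preserves each of the three sets, and the two disjoint-union decompositions. I would first dispose of the invariance using Proposition~\ref{prl-08}: every $g\in\mathrm{F}_{4(-20)}$ fixes $E$, preserves $\mathrm{tr}$, preserves the cross product (so $g(X^{\times 2})=(gX)^{\times 2}$, whence $\mathrm{tr}((gX)^{\times 2})=\mathrm{tr}(X^{\times 2})$), and preserves $\mathrm{det}$. Since $\mathcal{H}$, $\mathcal{N}_1({\bf O})$ and $\mathcal{N}_2({\bf O})$ are each cut out of $\mathcal{J}^1$ by conditions on $\mathrm{tr}(X)$, $X^{\times 2}$, $\mathrm{tr}(X^{\times 2})$ and $\mathrm{det}(X)$, supplemented by a nonvanishing clause ($X\neq 0$, resp. $X^{\times 2}\neq 0$) that is visibly preserved by the invertible map $g$, each of the three sets is $g$-stable, so $\mathrm{F}_{4(-20)}$ acts on it.

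For (\ref{hp-01}.a): the intersection $\mathcal{H}({\bf O})\cap\mathcal{H}'({\bf O})$ is empty, since $(X|E_1)\geq 1$ and $(X|E_1)\leq 0$ cannot both hold, so the whole content is the inclusion $\mathcal{H}\subset\mathcal{H}({\bf O})\cup\mathcal{H}'({\bf O})$, i.e. that no $X\in\mathcal{H}$ satisfies $0<(X|E_1)<1$. I would write $X=\sum_{i=1}^3(\xi_iE_i+F_i^1(x_i))$, so that $(X|E_1)=\xi_1$ and $\xi_1+\xi_2+\xi_3=\mathrm{tr}(X)=1$, and then expand $X^{\times 2}$ via (\ref{prl-06}.d). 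Reading off the $E_i$-components of $X^{\times 2}=0$ and using $\epsilon(1)=1$, $\epsilon(2)=\epsilon(3)=-1$ gives $\xi_2\xi_3=(x_1|x_1)$, $\xi_3\xi_1=-(x_2|x_2)$ and $\xi_1\xi_2=-(x_3|x_3)$. Because $({\bf O},{\rm n})$ is positive definite, the last two relations give $\xi_1\xi_2\leq 0$ and $\xi_1\xi_3\leq 0$, hence $\xi_1(1-\xi_1)=\xi_1(\xi_2+\xi_3)\leq 0$, which forces $\xi_1\leq 0$ or $\xi_1\geq 1$; this is exactly (\ref{hp-01}.a).

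For (\ref{hp-01}.b): again $\mathcal{N}_1^+({\bf O})\cap\mathcal{N}_1^-({\bf O})=\emptyset$, so it suffices to rule out $(X|E_1)=\xi_1=0$ for $X\in\mathcal{N}_1({\bf O})$. With the same notation, now $\xi_1+\xi_2+\xi_3=0$ together with the same three scalar relations. If $\xi_1=0$, then $\xi_3=-\xi_2$; the relations $\xi_3\xi_1=-(x_2|x_2)$ and $\xi_1\xi_2=-(x_3|x_3)$ give $x_2=x_3=0$, while $\xi_2\xi_3=-\xi_2^2=(x_1|x_1)\geq 0$ forces $\xi_2=\xi_3=0$, and then $(x_1|x_1)=0$, so $x_1=0$; thus $X=0$, contradicting $X\neq 0$. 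Hence $\xi_1\neq 0$, proving (\ref{hp-01}.b).

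The argument is essentially a bookkeeping exercise, and I do not expect a serious obstacle; the only step that requires attention is keeping the signs $\epsilon(i)$ straight in (\ref{prl-06}.d) and observing that the positive-definiteness of the octonion norm turns the vanishing of the diagonal part of $X^{\times 2}$ into the sign constraints that pin down $(X|E_1)$.
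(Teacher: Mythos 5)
Your proposal is correct and follows essentially the same route as the paper: invariance from Proposition~\ref{prl-08}, then reading off the diagonal components of $X^{\times 2}=0$ via (\ref{prl-06}.d) and using positive definiteness of the octonion norm to force $(X|E_1)\leq 0$ or $\geq 1$ in $\mathcal{H}$, and $(X|E_1)\neq 0$ in $\mathcal{N}_1({\bf O})$. The only cosmetic difference is that you conclude part (a) from the quadratic inequality $\xi_1(1-\xi_1)\leq 0$ where the paper splits into the cases $\xi_1\leq 0$ or $\xi_2+\xi_3\leq 0$, which is the same computation.
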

\begin{proof}
From the definitions of $\mathcal{H}$, 
$\mathcal{N}_1({\bf O})$ and $\mathcal{N}_2({\bf O})$,
we see that
$\mathrm{F}_{4(-20)}$ acts on these spaces.
We show (\ref{hp-01}.a).
$\mathcal{H}({\bf O})
\cap\mathcal{H}'({\bf O})=\emptyset$ is obvious. 
Fix $X=\sum_{i=1}^3(\xi_iE_i+F^1_i(x_i)) \in \mathcal{H}$
where $\xi_i \in \mathbb{R}$ and $x_i \in {\bf O}$.
By (\ref{prl-06}.d), $0=(X^{\times 2})_{E_2}
=\xi_3\xi_1+(x_2|x_2)$ and
$0=(X^{\times 2})_{E_3}=\xi_1\xi_2+(x_3|x_3)$.
Then $\xi_1(\xi_2+\xi_3)=-(x_2|x_2)-(x_3|x_3)\leq 0$,  
so that $(X)_{E_1}=\xi_1\leq 0$ or $\xi_2+\xi_3\leq 0$. 
If $\xi_2+\xi_3\leq 0$, 
then $(X|E_1)=\xi_1
=\mathrm{tr}(X)-(\xi_2+\xi_3)=1-(\xi_2+\xi_3)\geq 1$. 
Hence (\ref{hp-01}.a) follows.

Next, we show (\ref{hp-01}.b).
$\mathcal{N}_1^+({\bf O})\cap\mathcal{N}_1^-({\bf O})
=\emptyset$ is obvious.
Suppose that  
$X\in \mathcal{N}_1({\bf O})$ and $\xi_1=(X|E_1)=0$. 
We can write 
$X=\xi E_2-\xi E_3+\sum_{i=1}^3F^1_i(x_i)$
where $\xi \in \mathbb{R}$ and $x_i \in {\bf O}$.
Then
$0=(X^{\times 2})_{E_1}=-\xi^2-(x_1|x_1)$ and
$0=(X^{\times 2})_{E_i}=(x_i|x_i)~~(i=2,3)$,
and therefore $\xi=0$ and  $x_i=0$ for all $i \in \{1,2,3\}$ 
iff $X=0$.
It contradicts with $X\neq 0$. 
Hence (\ref{hp-01}.b) follows.
\end{proof}
\medskip

Denote 
$\mathcal{J}^1(2;{\bf O})
:=\{\xi_1E_1+\xi_2E_2+F_3^1(x)|~
\xi_i\in\mathbb{R},~x\in{\bf O}\}$.

\begin{lemma}\label{hp-02}
{\rm (1)} For any $X\in \mathcal{J}^1$,
 there exists $g\in K$ such that 
$(g X)_{F_1}=0$ and $(g X|E_1)=(X|E_1)$.
\smallskip

{\rm (2)} Assume $X\in\mathcal{J}^1$ 
satisfies $X^{\times 2}=0$.
Then there exists $g\in K$ such that 
$g X\in\mathcal{J}^1(2;{\bf O})$ and $(g X|E_1)=(X|E_1)$.
\smallskip

{\rm (3)} For any $X\in\mathcal{H}$, 
there exists $g\in K$ such that 
$g X=2^{-1}(E-E_3)+2^{-1}W$ where 
$W\in\mathcal{S}^{8,1}$ 
and $(g X|E_1)=(X|E_1)$.
\smallskip

{\rm (4)} For any $X\in\mathcal{N}_1({\bf O})$, 
there exists $g\in K$ such that 
$g X\in\mathcal{N}^{8,1}$ and $(g X|E_1)=(X|E_1)$.
\end{lemma}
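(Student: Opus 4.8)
The plan is to establish the four assertions in the stated order, using that $K=(\mathrm{F}_{4(-20)})_{E_1}={\rm Spin}(9)$ by (\ref{spn-14}.b): every $g\in K$ fixes $E_1$, so $(gX|E_1)=(gX|gE_1)=(X|E_1)$ by Proposition~\ref{prl-08}, and the inner-product equality is automatic in all four parts. Only two kinds of elements of $K$ are needed: the subgroup $\mathrm{D}_4=(\mathrm{F}_{4(-20)})_{E_1,E_2,E_3}\subset K$, acting as in (\ref{cce-02}), and the one-parameter subgroup $\exp(t\tilde{A}_1^1(1))$, which lies in $(\mathrm{F}_{4(-20)})_{E_1}^0=K$ because $1\in S^7$ (Lemma~\ref{cce-10}(1)) and whose action is given explicitly by (\ref{cce-10}.a). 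For part (1), write $X=\sum_{i=1}^3(\xi_iE_i+F_i^1(x_i))$; by (\ref{spn-05}) with $j=1$ there is $\varphi_0(g_1,g_2,g_3)\in\mathrm{D}_4$ replacing $x_1$ by the nonnegative real $r_0:=\sqrt{\mathrm{n}(x_1)}$ and leaving the $\xi_i$ unchanged, and then (\ref{cce-10}.a) with $a=1$ gives that $\exp(t\tilde{A}_1^1(1))$ sends this $F_1^1$-component to $r_0\cos2t-2^{-1}(\xi_2-\xi_3)\sin2t$; the homogeneous linear equation $r_0\cos2t=2^{-1}(\xi_2-\xi_3)\sin2t$ always has a solution $t\in\mathbb{R}$ (take $t=0$ if $r_0=0$, $t=\pi/4$ if $\xi_2=\xi_3$, otherwise $\tan2t=2r_0/(\xi_2-\xi_3)$), and the composite is the required $g\in K$.

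For part (2), applying part (1) first we may assume $x_1=0$; since $K$ preserves the cross product (Proposition~\ref{prl-08}), $X^{\times2}=0$ still holds. Expanding $X^{\times2}$ by (\ref{prl-06}.d) and equating it to $0$ gives, among others, $\xi_2\xi_3=0$, $\xi_1\xi_3=-\mathrm{n}(x_2)$, $\xi_1\xi_2=-\mathrm{n}(x_3)$ and $\xi_3x_3=0$. If $\xi_3=0$ then $\mathrm{n}(x_2)=0$, so $x_2=0$ and already $X=\xi_1E_1+\xi_2E_2+F_3^1(x_3)\in\mathcal{J}^1(2;{\bf O})$. If $\xi_3\neq0$ then $\xi_2=0$ and $x_3=0$, so $X=\xi_1E_1+\xi_3E_3+F_2^1(x_2)$; the element $\exp(2^{-1}\pi\tilde{A}_1^1(1))\in K$, which by (\ref{cce-10}.a) with $t=\pi/2$ and $a=1$ sends $h^1(\xi_1,\xi_2,\xi_3;0,x_2,x_3)$ to $h^1(\xi_1,\xi_3,\xi_2;0,-\overline{x_3},\overline{x_2})$, then carries $X$ to $\xi_1E_1+\xi_3E_2+F_3^1(\overline{x_2})\in\mathcal{J}^1(2;{\bf O})$.

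For parts (3) and (4), by part (2) we may assume $X=\eta_1E_1+\eta_2E_2+F_3^1(z)$, and $X^{\times2}=0$ then reduces via (\ref{prl-06}.d) to the single relation $\eta_1\eta_2+\mathrm{n}(z)=0$. In part (3), $\mathrm{tr}(X)=1$ gives $\eta_1+\eta_2=1$; put $W:=(\eta_1-\eta_2)(E_1-E_2)+F_3^1(2z)\in\mathcal{J}^1_{L^{\times}(2E_3),-1}$, so that by (\ref{spn-01}.d) one has $Q(W)=(\eta_1-\eta_2)^2-\mathrm{n}(2z)=(\eta_1+\eta_2)^2-4(\eta_1\eta_2+\mathrm{n}(z))=1$, hence $W\in\mathcal{S}^{8,1}$; comparing entries gives $X=2^{-1}(E-E_3)+2^{-1}W$. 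In part (4), $\mathrm{tr}(X)=0$ gives $\eta_2=-\eta_1$, so the relation becomes $\mathrm{n}(z)=\eta_1^2$ and $X=\eta_1(E_1-E_2)+F_3^1(z)\in\mathcal{J}^1_{L^{\times}(2E_3),-1}$ with $Q(X)=\eta_1^2-\mathrm{n}(z)=0$ by (\ref{spn-01}.d); since $g$ is invertible $X\neq0$, so $X\in\mathcal{N}^{8,1}$.

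The only genuinely computational point is step (1), where one needs an explicit one-parameter subgroup of $K$ together with the (elementary) solvability of the trigonometric equation for $t$; everything downstream is linear algebra once the $\mathcal{J}^1(2;{\bf O})$ normal form of part (2) is available. The one place demanding care is the bookkeeping of the signs $\epsilon(i)$ when expanding $X^{\times2}$ through (\ref{prl-06}.d) in parts (2)--(4).
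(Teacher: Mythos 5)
Your proof is correct, and for parts (2)--(4) it follows the paper's own argument almost verbatim: the same reduction via part (1), the same case analysis from $(X^{\times 2})_{E_1}=\xi_2\xi_3=0$ forcing either $x_2=0$ or ($\xi_2=0$ and $x_3=0$), the same use of $\exp(2^{-1}\pi\tilde{A}_1^1(1))$ to move $F_2^1(x_2)$ into the $F_3^1$-slot, and the same computations $Q(W)=(\eta_1+\eta_2)^2-4(\eta_1\eta_2+\mathrm{n}(z))=1$ for $\mathcal{S}^{8,1}$, respectively $Q=0$ with nonvanishing for $\mathcal{N}^{8,1}$.

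The only genuine divergence is part (1). The paper decomposes $X=X_0+Y+X_{-\sigma}$ with $X_0\in\mathbb{R}E_1\oplus\mathbb{R}E$, $Y=2^{-1}(\xi_2-\xi_3)(E_2-E_3)+F_1^1(x_1)\in\mathcal{J}^1_{L^{\times}(2E_1),-1}$ and $X_{-\sigma}=F_2^1(x_2)+F_3^1(x_3)\in\mathcal{J}^1_{\sigma,-1}$, and then invokes the transitivity statement (\ref{spn-06}.a) on $S^8$ together with the fact that $K=(\mathrm{F}_{4(-20)})^{\tilde{\sigma}}$ preserves $\mathcal{J}^1_{\sigma,-1}$ (Lemma~\ref{spn-13}, Proposition~\ref{spn-14}), so that the image has vanishing $F_1^1$-component. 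You instead normalize $x_1$ to $r_0=\sqrt{\mathrm{n}(x_1)}$ by $\mathrm{D}_4$ via (\ref{spn-05}) and then kill it with the explicit flow $\exp(t\tilde{A}_1^1(1))$; your formula $y_1=r_0-2^{-1}(\xi_2-\xi_3)\sin 2t-2r_0\sin^2 t=r_0\cos 2t-2^{-1}(\xi_2-\xi_3)\sin 2t$ is a correct reading of (\ref{cce-10}.a), and the trigonometric equation is always solvable as you say. This is exactly the mechanism inside the paper's proof of (\ref{spn-06}.a), so your argument amounts to inlining that lemma: you pay with an explicit trigonometric computation but avoid the $\sigma$-eigenspace bookkeeping, since (\ref{cce-10}.a) already shows that $y_1$ does not involve $x_2,x_3$. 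In both routes the equality $(gX|E_1)=(X|E_1)$ is automatic because every transformation used lies in $K=(\mathrm{F}_{4(-20)})_{E_1}$.
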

\begin{proof}
From (\ref{spn-14}.b),
we note 
$(k X|E_1)=(kX|kE_1)=(X|E_1)$
for all $k\in K$.
Thus it is enough to show
the conditions
in addition to the condition
$(g X|E_1)=(X|E_1)$.

(1) Set $X=\sum_{i=1}^3(\xi_iE_i+F_i^1(x_i))$.
We consider the decomposition
\begin{align*}
X=&\xi_1E_1+2^{-1}(\xi_2+\xi_3)(E-E_1)
+\left(2^{-1}(\xi_2-\xi_3)(E_2-E_3)+F_1^1(x_1)\right)\\
&+(F_2^1(x_2)+F_3^1(x_3)).
\end{align*}
Put 
$X_0=\xi_1E_1+2^{-1}(\xi_2+\xi_3)(E-E_1)$,
$Y=2^{-1}(\xi_2-\xi_3)(E_2-E_3)+F_1^1(x_1)$
and $X_{-\sigma}=F_2^1(x_2)+F_3^1(x_3)$.
By (\ref{spn-01}.c),
$Y\in \mathcal{J}^1_{L^{\times}(2E_1),-1}$
and
by (\ref{spn-13}.a),
$X_{-\sigma}\in\mathcal{J}^1_{\sigma,-1}$.
Fix $k_0 \in K$.
From (\ref{spn-14}.b) and
Lemma~\ref{spn-13}(2),
we see
$kX_0=X_0$ and
$k_0 X_{-\sigma} \in \mathcal{J}^1_{\sigma,-1}$
so that $(k_0 X_0)_{F_1^1}=0$ and
$(k_0 X_{-\sigma})_{F_1^1}=0$
(see (\ref{spn-13}.a)).
Now, we can write $Y=rW$ for some $r\in \mathbb{R}$ and
$W \in S^8$.
By (\ref{spn-06}.a), 
there exists $g \in K$ such that
$g W=r(E_2-E_3)$.
Thus $(g W)_{F_1^1}=0$
and so $(g X)_{F_1^1}=(g X_0)_{F_1^1}
+(g X_{-\sigma})_{F_1^1}+r(g W)_{F_1^1}=0$.

(2) By (1), there exists $g\in K$ such that 
$g X=(\sum_{i=1}^3r_i E_i)+
(\sum_{i=2}^3F^1_i(y_i))$
where $r_i\in\mathbb{R},$ $y_i\in{\bf O}$ 
and $(X|E_1)=(g X|E_1)=r_1$.
From the assumption, we see
$0=g(X^{\times 2})=(g X)^{\times 2}$.
By (\ref{prl-06}.d), 
$0=((g X)^{\times 2})_{E_1}=r_2r_3$. 
Then we have the following two cases: 
(i) $r_3=0$ or (ii) $r_2=0$.

Case (i) $r_3=0$. Then 
$0=((g X)^{\times 2})_{E_2}=(y_2|y_2)$
and therefore $y_2=0$.
Thus $g X=r_1E_1+r_2E_2+F^1_3(y_3)
\in \mathcal{J}^1(2;{\bf O})$.

Case (ii) $r_2=0$. Then $0=((g X)^{\times 2})_{E_3}=(y_3|y_3)$
and therefore $y_3=0$.
From  Lemma~\ref{cce-10}, 
we see
$\exp(2^{-1}\pi\tilde{A}_1^1(1))
\in (\mathrm{F}_{4(-20)})_{E_1}=K$
and
$\exp((2^{-1}\pi)\tilde{A}_1^1(1))
g  X=
r_1E_1+r_3E_2+F^1_3(\overline{y_2})
\in \mathcal{J}^1(2;{\bf O})$.
Hence (2) follows.
 
(3) By (2), there exists 
$g \in (\mathrm{F}_{4(-20)})_{E_1}$ such that 
$g X=\xi_1E_1+\xi_2E_2+F^1_3(x)
\in\mathcal{J}^1(2;{\bf O})$
with $1=\mathrm{tr}(X)=\mathrm{tr}(g X)=\xi_1+\xi_2$. 
Put $Y=2^{-1}(\xi_1-\xi_2)(E_1-E_2)+F^1_3(x)$.
Then
$g X=2^{-1}(E-E_3)+Y$
and $Y\in \mathcal{J}^1_{L^{\times}(2E_3),-1}$.
Because of $(gX)^{\times 2}=g(X^{\times 2})=0$,
using (\ref{prl-06}.d), we see
$0=(g(X^{\times 2}))_{E_3}
=((g X)^{\times 2})_{E_3}=\xi_1\xi_2+(x|x)$.
Then by (\ref{spn-01}.d),
$Q(Y)=4^{-1}(\xi_1-\xi_2)^2-(x|x)
=4^{-1}(\xi_1+\xi_2)^2-(\xi_1\xi_2+(x|x))
=4^{-1}$.
Thus $Y\in 2^{-1}\mathcal{S}^{8,1}$
and so (3) follows.

(4) By (2), there exists $g\in K$ 
such that $(0\ne )g X=\xi_1E_1+\xi_2E_2+F^1_3(x)$.
Because of  
(\ref{spn-01}.a), (\ref{spn-01}.c) and
$\xi_1+\xi_2=\mathrm{tr}(g X)=\mathrm{tr}(X)=0$,
we see that
$Q(gX)=0$ and
$g X=\xi_1(E_1-E_2)+F^1_3(x)
\in \mathcal{J}^1_{L^{\times}(2E_3),-1}$.
Hence $g X\in\mathcal{N}^{8,1}$.
\end{proof}
\medskip

\noindent
{\bf Proof of (\ref{hp-np}.a) and (\ref{hp-np}.b).}

We show these
by using Lemma~\ref{spn-04}.
By Lemma~\ref{hp-01},
$\mathrm{F}_{4(-20)}$ acts on $\mathcal{H}$. 
We consider that $X=\mathcal{H}$, 
$G=\mathrm{F}_{4(-20)}$, $(X_1,X_2)=(\mathcal{H}({\bf O}), 
\mathcal{H}'({\bf O}))$, $(v_1,v_2)=(E_1, 
E_2)$ in Lemma~\ref{spn-04}.
At first, the condition (i) follows 
from (\ref{hp-01}.a).
At second, the condition (ii) follows 
from direct calculations.
At third, the condition (iii) and 
$Orb_{\mathrm{F}_{4(-20)}}(E_2)=Orb_{\mathrm{F}_{4(-20)}}(E_3)$ 
follow from (\ref{cce-11}.a).
At last, we show the condition (iv).
Take $X\in \mathcal{H}=\mathcal{H}({\bf O})
\coprod \mathcal{H}'({\bf O})$.
By Lemma~\ref{hp-02}(3), 
there exists $g_0\in K$ such that 
\[g_0 X=2^{-1}(E-E_3)+2^{-1}W\]
where $W \in \mathcal{S}^{8,1}$
and $(g_0X|E_1)=(X|E_1)$.

Case $X\in\mathcal{H}({\bf O})$. 
Because of $(g_0 X|E_1)=(X|E_1)\geq 1$,
we see
$(W|E_1)=2(g_0 X|E_1)-(E-E_3|E_1)
=2(g_0 X|E_1)-1>0$ 
so that $W\in \mathcal{S}^{8,1}_+$.
By (\ref{spn-06}.c)(i), 
there exists $g_1\in (\mathrm{F}_{4(-20)})_{E_3}$ 
such that $g_1 W=E_1-E_2$
and it is clear that
\[g_1g_0 X=2^{-1}(E_1+E_2)
+2^{-1}(E_1-E_2)=E_1.\]
Thus $X\in Orb_{\mathrm{F}_{4(-20)}}(E_1)$ and so 
$\mathcal{H}({\bf O})\subset Orb_{\mathrm{F}_{4(-20)}}(E_1)$.

Case $X\in\mathcal{H}'({\bf O})$.
Because of $(g_0 X|E_1)=(X|E_1)\leq 0$,
$(W|E_1)=2(g_0 X|E_1)-1<0$. 
Then $W\in \mathcal{S}^{8,1}_-$.
By (\ref{spn-06}.c)(ii), 
there exists $g_1\in (\mathrm{F}_{4(-20)})_{E_3}$ 
such that $g_1' W=-E_1+E_2$ and it is clear that
\[g_1'g_0 X=2^{-1}(E_1+E_2)
+2^{-1}(-E_1+E_2)=E_2.\]
Thus $X\in Orb_{\mathrm{F}_{4(-20)}}(E_2)$ and so 
$\mathcal{H}({\bf O})\subset Orb_{\mathrm{F}_{4(-20)}}(E_2)$.

Therefore the condition (iv) follows.
Hence (\ref{hp-np}.a) and (\ref{hp-np}.b)
follow from Lemma~\ref{spn-04}.
\qed
\medskip

\begin{lemma}\label{hp-03}
\[\tag{\ref{hp-03}}
\mathcal{N}=\{0\}\coprod\mathcal{N}_1^+({\bf O})
\coprod\mathcal{N}_1^-({\bf O})
\coprod\mathcal{N}_2({\bf O}).\]
\end{lemma}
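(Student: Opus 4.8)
The plan is to prove the decomposition by a direct case analysis on whether $X^{\times 2}$ vanishes, followed by a routine verification that the four sets on the right are pairwise disjoint and each contained in $\mathcal{N}$. The one point that deserves attention is that membership in $\mathcal{N}_1({\bf O})$ is defined using only the conditions $X^{\times 2}=0$, $\mathrm{tr}(X)=0$, $X\ne 0$, so to place $\mathcal{N}_1({\bf O})$ inside $\mathcal{N}$ one must recover the remaining two defining conditions of $\mathcal{N}$; this is where $(\ref{prl-03}.b)$(iv) enters.

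First I would prove $\mathcal{N}\subseteq\{0\}\coprod\mathcal{N}_1^+({\bf O})\coprod\mathcal{N}_1^-({\bf O})\coprod\mathcal{N}_2({\bf O})$. Fix $X\in\mathcal{N}$, so $\mathrm{tr}(X)=\mathrm{tr}(X^{\times 2})=\mathrm{det}(X)=0$. If $X^{\times 2}\ne 0$, then $X\in\mathcal{N}_2({\bf O})$ immediately from the definition. If $X^{\times 2}=0$ and $X=0$, then $X\in\{0\}$. If $X^{\times 2}=0$ and $X\ne 0$, then $X\in\mathcal{N}_1({\bf O})$ by definition, and $(\ref{hp-01}.b)$ sorts $X$ into $\mathcal{N}_1^+({\bf O})$ or $\mathcal{N}_1^-({\bf O})$ according to the sign of $(X|E_1)$. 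Conversely, for the reverse inclusion the cases $\{0\}\subseteq\mathcal{N}$ and $\mathcal{N}_2({\bf O})\subseteq\mathcal{N}$ are immediate from the definitions, while for $\mathcal{N}_1^{\pm}({\bf O})\subseteq\mathcal{N}_1({\bf O})$ one argues: if $X^{\times 2}=0$ then $\mathrm{tr}(X^{\times 2})=0$ trivially, and $\mathrm{det}(X)=3^{-1}(X|X|X)=3^{-1}(X|X^{\times 2})=0$ (equivalently, $(\ref{prl-03}.b)$(iv) gives $\mathrm{det}(X)X=(X^{\times 2})^{\times 2}=0$, and $X\ne 0$ forces $\mathrm{det}(X)=0$); together with $\mathrm{tr}(X)=0$ this shows $X\in\mathcal{N}$.

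Finally I would record the disjointness: $\{0\}$ is disjoint from $\mathcal{N}_1^{\pm}({\bf O})$ and from $\mathcal{N}_2({\bf O})$ since those sets consist of nonzero elements (for $\mathcal{N}_2({\bf O})$ because $X^{\times 2}\ne 0$ forces $X\ne 0$); $\mathcal{N}_1^+({\bf O})$ and $\mathcal{N}_1^-({\bf O})$ are separated by the sign of $(X|E_1)$, as already noted in Lemma~\ref{hp-01}; and $\mathcal{N}_1^{\pm}({\bf O})$ is disjoint from $\mathcal{N}_2({\bf O})$ because the former requires $X^{\times 2}=0$ and the latter $X^{\times 2}\ne 0$. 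I do not expect any genuine obstacle here: the statement is essentially a bookkeeping exercise once $(\ref{hp-01}.b)$ is available, and the only computation worth spelling out is the identity $\mathrm{det}(X)=3^{-1}(X|X^{\times 2})$, which collapses the determinant condition onto the cross-square condition.
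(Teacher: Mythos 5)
Your proposal is correct and follows essentially the same route as the paper: a case split on $X=0$, $X\ne 0$ with $X^{\times 2}=0$, or $X^{\times 2}\ne 0$, using $(\ref{prl-03}.b)$(iv) (equivalently $\mathrm{det}(X)=3^{-1}(X|X^{\times 2})$) to place $\mathcal{N}_1^{\pm}({\bf O})$ inside $\mathcal{N}$, and then $(\ref{hp-01}.b)$ to split $\mathcal{N}_1({\bf O})$ into $\mathcal{N}_1^+({\bf O})\coprod\mathcal{N}_1^-({\bf O})$. The only difference is that you spell out the reverse inclusion and the pairwise disjointness explicitly, which the paper disposes of in the remarks preceding the lemma.
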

\begin{proof}
Let $X\in \mathcal{N}.$
By (\ref{prl-03}.b)(iv),
$(X^{\times 2})^{\times 2}=\mathrm{det}(X)X=0$.
Then we have the following three cases:
(i) $X=0$, (ii) $X\ne 0,~X^{\times 2}=0$
or  (iii) $X^{\times 2}\ne 0$.
It implies 
$X\in \{0\}\coprod\mathcal{N}_1({\bf O})
\coprod\mathcal{N}_2({\bf O})$.
Hence (\ref{hp-03}) follows from (\ref{hp-01}.b).
\end{proof}
\medskip

\noindent
{\bf Proof of (\ref{hp-np}.c) and (\ref{hp-np}.d).}

We show these
by using Lemma~\ref{spn-04}.
By Lemma~\ref{hp-01}, 
$\mathrm{F}_{4(-20)}$ acts on $\mathcal{N}_1({\bf O})$.
We consider that 
$X=\mathcal{N}_1({\bf O})$, $G=\mathrm{F}_{4(-20)}$, 
$(X_1,X_2)=
(\mathcal{N}_1^+({\bf O}),\mathcal{N}_1^-({\bf O}))$, 
$(v_1,v_2)=(P^+,P^-)$ in Lemma~\ref{spn-04}.
At first, 
the condition (i) follows from (\ref{hp-01}.b).
At second, 
the condition (ii) follows from direct calculations.
At third, 
the condition (iii) follows from (\ref{cce-12}.c).
At last, 
we show the condition (iv).
Take $X\in \mathcal{N}_1({\bf O})
=\mathcal{N}_1^+({\bf O})\coprod \mathcal{N}_1^-({\bf O})$.
By Lemma~\ref{hp-02}(4), 
there exists $g_0\in K$ such that 
$g_0 X\in \mathcal{N}^{8,1}$ and
$(g_0X|E_1)=(X|E_1)$.

Case $X\in\mathcal{N}_1^+({\bf O})$. 
Because of $(g_0 X|E_1)=(X|E_1)>0$, 
we see $g_0X\in \mathcal{N}^{8,1}_+$.
By (\ref{spn-06}.d)(i), 
there exists $g_1\in (\mathrm{F}_{4(-20)})_{E_3}$ 
such that $g_1 g_0 X=P^+$.
Thus $X\in Orb_{\mathrm{F}_{4(-20)}}(P^+)$ and so 
$\mathcal{N}_1^+({\bf O})\subset Orb_{\mathrm{F}_{4(-20)}}(P^+)$.

Case $X\in\mathcal{N}_1^-({\bf O})$. 
Because of $(g_0 X|E_1)=(X|E_1)<0$, 
we see $g_0X\in \mathcal{N}^{8,1}_-$.
By (\ref{spn-06}.d)(ii), 
there exists $g_1\in (\mathrm{F}_{4(-20)})_{E_3}$ 
such that $g_1 g_0 X=P^-$.
Thus $X\in Orb_{\mathrm{F}_{4(-20)}}(P^+)$ and so 
$\mathcal{N}_1^-({\bf O})\subset Orb_{\mathrm{F}_{4(-20)}}(P^+)$.

Therefore the condition (iv) follows.
Hence (\ref{hp-np}.c) and (\ref{hp-np}.d)
follows from Lemma~\ref{spn-04}.
\qed
\medskip

For $t\in\mathbb{R}$, denote 
\[\alpha_{1,2}(t):=\exp(t(\tilde{A}_1^1(-1)
+\tilde{A}_2^1(1)))\in \mathrm{F}_{4(-20)}.\]
\begin{lemma}\label{hp-04}
Let $X_0=rP^-+Q^+(x)$ 
where $r\in\mathbb{R}$ and $x\in{\bf O}$.
Then
\[
\tag{\ref{hp-04}}
\alpha_{1,2}(t)X_0=(r-2t{\rm Re}(x))P^-+Q^+(x).
\]
Especially, $\alpha_{1,2}(t)\in (\mathrm{F}_{4(-20)})_{P^-}$.
\end{lemma}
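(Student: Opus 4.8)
The plan is to reuse the differential-equation argument from the proof of Lemma~\ref{cce-10}: put $\delta := \tilde A_1^1(-1)+\tilde A_2^1(1)\in\mathfrak f_{4(-20)}$, exhibit an explicit curve $F(t)$ in $\mathcal J^1$ with $F(0)=X_0$ and $\frac{d}{dt}F(t)=\delta F(t)$, and then conclude $F(t)=\exp(t\delta)X_0=\alpha_{1,2}(t)X_0$ from the uniqueness of solutions of the linear ODE $\frac{d}{dt}Z(t)=\delta Z(t)$.

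First I would expand $X_0$ in the standard basis. From the definitions $P^-=-E_1+E_2+F_3^1(1)$ and $Q^+(x)=F_1^1(x)+F_2^1(\overline x)$, so $X_0=-rE_1+rE_2+F_1^1(x)+F_2^1(\overline x)+rF_3^1(1)$. Then I would apply Lemma~\ref{cce-09}(2) to compute $\tilde A_1^1(-1)X_0$ and $\tilde A_2^1(1)X_0$ term by term, being careful with the signs $\epsilon(1)=1$, $\epsilon(2)=\epsilon(3)=-1$ and with the octonion conjugations (e.g.\ $\overline{(-1)\overline x}=-x$). Adding the two results, the $E_3$-, $F_1^1$- and $F_2^1$-components cancel, leaving $\delta X_0=2{\rm Re}(x)E_1-2{\rm Re}(x)E_2+F_3^1(-2{\rm Re}(x))=-2{\rm Re}(x)P^-$. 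The crucial point is that this expression is independent of the coefficient $r$ of $P^-$. Now set $F(t):=(r-2t{\rm Re}(x))P^-+Q^+(x)$. Then $F(0)=X_0$, $\frac{d}{dt}F(t)=-2{\rm Re}(x)P^-$, and, by the previous computation applied with $r$ replaced by $r-2t{\rm Re}(x)$, also $\delta F(t)=-2{\rm Re}(x)P^-$; hence $\frac{d}{dt}F(t)=\delta F(t)$. By uniqueness of solutions we get $\alpha_{1,2}(t)X_0=\exp(t\delta)X_0=F(t)$, which is (\ref{hp-04}). Finally, taking $r=1$ and $x=0$ yields $\alpha_{1,2}(t)P^-=P^-$, so $\alpha_{1,2}(t)\in(\mathrm F_{4(-20)})_{P^-}$; that $\alpha_{1,2}(t)\in\mathrm F_{4(-20)}$ is already recorded in its definition since $\delta\in\mathfrak f_{4(-20)}$.

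The main obstacle is purely bookkeeping: the term-by-term evaluation of $\tilde A_i^1(a)$ through Lemma~\ref{cce-09}(2), where a single sign slip in an $\epsilon$-factor or a conjugation would destroy the cancellation that produces $\delta X_0=-2{\rm Re}(x)P^-$. Once that identity is verified, the ODE step and the deduction of the stabilizer claim are immediate.
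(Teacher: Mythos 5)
Your proposal is correct and takes essentially the same route as the paper: the heart of the matter is the computation $(\tilde{A}_1^1(-1)+\tilde{A}_2^1(1))P^-=0$ and $(\tilde{A}_1^1(-1)+\tilde{A}_2^1(1))Q^+(x)=-2\mathrm{Re}(x)P^-$, which you carry out correctly, and the stabilizer claim then follows exactly as you say. The only (cosmetic) difference is that the paper evaluates $\exp(t\delta)$ directly because $\delta^2$ annihilates $X_0$ (the series terminates after the linear term), whereas you package the same fact as an ODE-uniqueness argument in the style of Lemma~\ref{cce-10}; both are valid.
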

\begin{proof}
Using (\ref{cce-06}), 
we see that
$(\tilde{A}_1^1(-1)+\tilde{A}_2^1(1))P^-=0$
and 
$(\tilde{A}_1^1(-1)+\tilde{A}_2^1(1))Q^+(x)
=-2{\rm Re}(x)P^-$.
Thus $\exp(t(\tilde{A}_1^1(-1)
+\tilde{A}_2^1(1))P^-=P^-$
and $\exp(t(\tilde{A}_1^1(-1)
+\tilde{A}_2^1(1))Q^+(x)=Q^+(x)-2t{\rm Re}(x)P^-$.
Hence
the result follows.
\end{proof}
\medskip

\begin{lemma}\label{hp-05}
Let $X\in\mathcal{R}^-$.
Then there exists $g\in (\mathrm{F}_{4(-20)})_{P^-}$ 
such that \[g X=Q^+(1).\]
\end{lemma}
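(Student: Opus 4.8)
The plan is to reduce $X$ to the normal form $Q^+(1)$ in two normalization steps, each carried out by an element of $\mathrm{F}_{4(-20)}$ that already fixes $P^-$. First I would use the structural description of $\mathcal{R}^-$ from (\ref{cce-12}.b): since $X\in\mathcal{R}^-$, write $X=rP^-+Q^+(x)$ with $r\in\mathbb{R}$ and $x\in S^7$ (so $\mathrm{n}(x)=1$). The goal then splits into (a) moving $x$ to the unit $1$, and (b) clearing the coefficient $r$ of $P^-$.

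For step (a), recall from the proof of Lemma~\ref{trl-05}(1) that $\mathrm{Spin}(7)=\tilde{B}_3$ acts transitively on $S^7$ via $p_1$, so there is $(g_1,g_2,g_3)\in\mathrm{Spin}(7)$ with $g_1x=1$. By Lemma~\ref{cce-03} the element $\varphi_0(g_1,g_2,g_3)$ fixes $P^-$ (equation (ii) there) and sends $Q^+(x)$ to $Q^+(g_1x)=Q^+(1)$ (equation (vi)); hence $\varphi_0(g_1,g_2,g_3)\in(\mathrm{F}_{4(-20)})_{P^-}$ and $\varphi_0(g_1,g_2,g_3)X=rP^-+Q^+(1)$. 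For step (b), I would apply Lemma~\ref{hp-04} with $x$ replaced by $1$: since $\mathrm{Re}(1)=1$, that lemma gives $\alpha_{1,2}(t)\bigl(rP^-+Q^+(1)\bigr)=(r-2t)P^-+Q^+(1)$ together with $\alpha_{1,2}(t)\in(\mathrm{F}_{4(-20)})_{P^-}$. Taking $t=r/2$ removes the $P^-$ term, and the composite $g:=\alpha_{1,2}(r/2)\,\varphi_0(g_1,g_2,g_3)$ satisfies $gX=Q^+(1)$; moreover $g\in(\mathrm{F}_{4(-20)})_{P^-}$ as a product of two elements of that subgroup.

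I do not expect a genuine obstacle here: the argument is an assembly of Lemmas~\ref{cce-12}, \ref{cce-03}, \ref{trl-05} and \ref{hp-04}. The only point requiring care is that both auxiliary transformations must literally stabilize $P^-$, not merely preserve some larger orbit; this is exactly why the $\mathrm{Spin}(7)$-action (via $\varphi_0$) and the one-parameter group $\alpha_{1,2}(t)$ are the appropriate tools rather than a general element of $\mathrm{F}_{4(-20)}$ or a stabilizer of $E_3$.
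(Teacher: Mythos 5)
Your proof is correct, and it is a genuine streamlining of the paper's argument rather than a copy of it. The paper proceeds in three steps: it first arranges that the octonion part has nonzero real part (treating the purely imaginary case separately with explicit $\mathrm{G}_2$- and $\mathrm{B}_3$-elements), then clears the coefficient of $P^-$ with $\alpha_{1,2}\left(r/(2\mathrm{Re}(x))\right)$ via Lemma~\ref{hp-04}, and finally reduces $Q^+(x)$ to $Q^+(1)$ by an explicit four-fold composition of $\varphi_0$-images of $\mathrm{G}_2$- and $\mathrm{B}_3$-elements, in effect repeating inside the $Q^+$-slot the transitivity computation from the proof of Lemma~\ref{trl-05}(1). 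You instead invoke that transitivity once, as a black box: a single $\varphi_0(g_1,g_2,g_3)$ with $(g_1,g_2,g_3)\in\mathrm{Spin}(7)$ and $g_1x=1$ normalizes the octonion part first, which is legitimate by (\ref{cce-03})(ii) and (vi); after that $\mathrm{Re}(1)=1\neq 0$, so the paper's case distinction of Step 1 becomes unnecessary and $\alpha_{1,2}(r/2)$ finishes the job by Lemma~\ref{hp-04}. The toolkit is the same in both proofs --- elements of $\mathrm{B}_3\subset(\mathrm{F}_{4(-20)})_{P^-}$ acting through the $g_1$-component on $Q^+({\bf O})$, plus the one-parameter group $\alpha_{1,2}(t)$ --- but your reordering of the two normalizations, together with citing Lemma~\ref{trl-05}(1) instead of redoing it, gives a shorter argument with no case analysis; the only thing the paper's version buys in exchange is an explicit word in $L$, $R$, $T$ operators and $\mathrm{G}_2$-elements realizing the transformation.
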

\par
\begin{proof}
By (\ref{cce-12}.b), 
$X$ can be expressed by 
\[X=rP^-+Q^+(x)\quad
\text{for~some}~r\in\mathbb{R}~\text{and}~x\in S^7.\]

(Step 1) We show the following assertion: 
if $x\in{\rm Im}{\bf O}$, 
then there exists $g_0\in(\mathrm{F}_{4(-20)})_{P^-}$ 
such that
$g_0 X=rP^-+Q^+(x')$ where $x'\in S^7$ 
and $\mathrm{Re}(x')\neq 0$.
Suppose that $x \in S^6$.
By (\ref{trl-04}.a), 
there exists $g_1\in \mathrm{G}_2$ such that $g_1x=e_1$.
Put $\alpha_1=\varphi_0(g_1,g_1,g_1)
\in \mathrm{G}_2\subset(\mathrm{F}_{4(-20)})_{P^-}$.
By (\ref{cce-03}),
$\alpha_1 X=rP^-+Q^+(g_0x)=rP^-+Q^+(e_1)$.
Because of $e_2,e_3\in S^6$ 
and Lemma~\ref{trl-04}(3), we can set
$\alpha_1=(L_{e_3,e_2},R_{e_3,e_2},T_{e_3,e_2})
\in \mathrm{B}_3\subset(\mathrm{F}_{4(-20)})_{P^-}$.
By (\ref{cce-03}),
$\alpha_2\alpha_1X
=rP^-+Q^+(e_3(e_2e_1))=rP^-+Q^+(1)$.
Hence the assertion of (Step 1) follows.

(Step 2) We may assume $X=rP^-+Q^+(x)$ 
where $r\in\mathbb{R}$, $x\in S^7$ 
and $\mathrm{Re}(x)\neq 0$ by (Step 1).
From (\ref{hp-04}), we see that
$\alpha_{1,2}(t)\in (\mathrm{F}_{4(-20)})_{P^-}$ and 
$\alpha_{1,2}\left(r/(2{\rm Re}(x))\right)X=Q^+(x)$.
 
(Step 3) We may assume $X=Q^+(x)$
where $x\in S^7$ by (Step 2).
Then $x$ can be expressed by $x=\cos \theta +a\sin \theta$
for some $\theta\in\mathbb{R}$ and $a\in S^6$.
By (\ref{trl-04}.a), 
there exists $g_1\in \mathrm{G}_2$ 
such that $g_1a=e_1$.
Letting $\alpha_1=\varphi_0(g_1,g_1,g_1)
\in \mathrm{G}_2\subset(\mathrm{F}_{4(-20)})_{P^-}$,
\[\alpha_1 X=Q^+(g_1x)
=Q^+(\cos \theta+e_1\sin \theta).\]
Letting
$\alpha_2
=(L_{e_1,e_2},R_{e_1,e_2},T_{e_1,e_2}) 
\in \mathrm{B}_3\subset(\mathrm{F}_{4(-20)})_{P^-}$,
\[\alpha_2\alpha_1X
=Q^+\left(e_1(e_2(\cos \theta+e_1\sin \theta))\right)
=Q^+(e_3\cos \theta+e_2\sin \theta).\]
Again,
there exists $g_2\in \mathrm{G}_2$ such that
$g_2 (e_3\cos \theta+e_2\sin \theta)=e_1$.
Letting $\alpha_3=\varphi_0(g_2,g_2,g_2)
\in \mathrm{G}_2\subset(\mathrm{F}_{4(-20)})_{P^-}$,
\[\alpha_3\alpha_2\alpha_1X=Q^+(e_1).\]
Last, letting $\alpha_4
=\varphi_0(L_{e_3,e_2},R_{e_3,e_2},T_{e_3,e_2})
\in \mathrm{B}_3\subset(\mathrm{F}_{4(-20)})_{P^-}$,
\[\alpha_4\alpha_3\alpha_2\alpha_1X=Q^+(e_3(e_2e_1))
=Q^+(1).\]
Hence the result follows.
\end{proof}
\medskip

\begin{lemma}\label{hp-06}
Let $X\in\mathcal{N}_2({\bf O})$.
Then $X^{\times 2}\in\mathcal{N}_1^-({\bf O})$.
\end{lemma}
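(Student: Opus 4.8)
The plan is to verify directly the three defining conditions of $\mathcal{N}_1^-({\bf O})$ for the element $Y:=X^{\times 2}$, namely $Y^{\times 2}=0$, $\mathrm{tr}(Y)=0$ and $(Y|E_1)<0$. The first two are forced by the hypotheses on $X$: since $X\in\mathcal{N}_2({\bf O})$ we have $\mathrm{det}(X)=0$, hence $Y^{\times 2}=(X^{\times 2})^{\times 2}=\mathrm{det}(X)X=0$ by (\ref{prl-03}.b)(iv), while $\mathrm{tr}(Y)=\mathrm{tr}(X^{\times 2})=0$ again by definition of $\mathcal{N}_2({\bf O})$; moreover $Y=X^{\times 2}\neq 0$. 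Thus $Y\in\mathcal{N}_1({\bf O})$, and by (\ref{hp-01}.b) it belongs to exactly one of $\mathcal{N}_1^+({\bf O})$ and $\mathcal{N}_1^-({\bf O})$.

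All the real content is therefore in excluding the possibility $Y\in\mathcal{N}_1^+({\bf O})$. I would do this by contradiction using the orbit description already obtained in (\ref{hp-np}.c): if $Y\in\mathcal{N}_1^+({\bf O})=Orb_{\mathrm{F}_{4(-20)}}(P^+)$, choose $g\in\mathrm{F}_{4(-20)}$ with $gY=P^+$. Since $g$ preserves the trace and commutes with the cross product (Proposition~\ref{prl-08}), we get $\mathrm{tr}(gX)=\mathrm{tr}(X)=0$ and $(gX)^{\times 2}=g(X^{\times 2})=gY=P^+$, so that $gX\in\mathcal{R}^+$. This contradicts $\mathcal{R}^+=\emptyset$ (see (\ref{cce-12}.a)), and hence $Y=X^{\times 2}\in\mathcal{N}_1^-({\bf O})$.

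I do not expect a serious obstacle here: once one observes that the three vanishing conditions $\mathrm{tr}(X)=\mathrm{tr}(X^{\times 2})=\mathrm{det}(X)=0$, together with $(X^{\times 2})^{\times 2}=\mathrm{det}(X)X=0$, are exactly what places $X^{\times 2}$ inside $\mathcal{N}_1({\bf O})$, the only subtle point is pinning down the sign of $(X^{\times 2}|E_1)$, and this is disposed of cleanly by transporting to the model null element $P^+$ and invoking the emptiness of $\mathcal{R}^+$. An alternative, more hands-on route would be to apply Lemma~\ref{hp-02}(4) to move $X^{\times 2}$ into $\mathcal{N}^{8,1}$ by an element of $K=\mathrm{Spin}(9)$, which fixes $E_1$ and hence preserves the scalar $(X^{\times 2}|E_1)$, and then read off the sign inside $\mathcal{J}^1_{L^{\times}(2E_3),-1}$ from (\ref{prl-06}.d); but the argument via $\mathcal{R}^+$ is shorter and avoids any case analysis.
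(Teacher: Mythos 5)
Your proof is correct and follows essentially the same route as the paper: place $X^{\times 2}$ in $\mathcal{N}_1({\bf O})$ via $(X^{\times 2})^{\times 2}=\mathrm{det}(X)X=0$ and the vanishing traces, split with (\ref{hp-01}.b), and rule out $\mathcal{N}_1^+({\bf O})$ by transporting to $P^+$ with (\ref{hp-np}.c) and contradicting $\mathcal{R}^+=\emptyset$ from (\ref{cce-12}.a). No gaps; the alternative route you sketch is unnecessary.
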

\begin{proof}
Obviously $\mathrm{tr}(X^{\times 2})=0$
and from (\ref{prl-03}.b)(iv),
we see
$(X^{\times 2})^{\times 2}=\mathrm{det}(X)X=0$
so that 
$X^{\times 2}\in\mathcal{N}_1({\bf O})$.
By (\ref{hp-01}.b),
$X^{\times 2}\in\mathcal{N}_1^+({\bf O})$ 
or $X^{\times 2}\in\mathcal{N}_1^-({\bf O})$.
Suppose $X^{\times 2}\in\mathcal{N}_1^+({\bf O})$.
By (\ref{hp-np}.c), 
there exists $g \in \mathrm{F}_{4(-20)}$ 
such that 
$P^+=g (X^{\times 2})=(g X)^{\times 2}$ 
and $\mathrm{tr}(g X)=\mathrm{tr}(X)=0$.
Then $gX\in\mathcal{R}^+$. 
Thus it contradicts with (\ref{cce-12}.a).
Hence the result follows.
\end{proof}
\medskip

\noindent
{\bf Proof of (\ref{hp-np}.e).}

By Lemma~\ref{hp-01}, 
$\mathrm{F}_{4(-20)}$ acts on ${N}_2({\bf O})$.
We show transitivity.
Fix $X\in \mathcal{N}_2({\bf O})$.
By Lemma~\ref{hp-05},
$X^{\times 2}\in\mathcal{N}_1^-({\bf O})$
and therefore
by (\ref{hp-np}.d), 
there exists $g_1 \in \mathrm{F}_{4(-20)}$ such that 
$(g_1 X)^{\times 2}
=g_1 (X^{\times 2})=P^-$ and
$\mathrm{tr}(g_1 X)=\mathrm{tr}(X)=0$.
Then $g_1 X\in\mathcal{R}^-$.
Thus, applying Lemma~\ref{hp-05},
there exists $g_2\in (\mathrm{F}_{4(-20)})_{P^-}$ 
such that 
$g_2g_1X=Q^+(1)$.
\qed
\medskip

\begin{remark}{\rm
From (\ref{hp-01}.a), (\ref{hp-np}.a) and (\ref{hp-np}.b),
we see that
$\mathcal{H}=\mathcal{H}({\bf O})\coprod
\mathcal{H}'({\bf O})=
Orb_{\mathrm{F}_{4(-20)}}(E_1)\coprod
Orb_{\mathrm{F}_{4(-20)}}(E_3)$
and that $\mathcal{H}$ consists of
two $\mathrm{F}_{4(-20)}$-orbits.
Then
we can write
$\mathcal{H}=\{X\in\mathcal{J}^1|
~X\circ X=X,~\mathrm{tr}(X)=1\}$.

For $\xi=(\xi_1,\xi_2,\xi_3)\in \mathbb{R}^3$ and
$x=(x_1,x_2,x_3)\in {\bf O}^3$,
put $h'(\xi;x):=\begin{pmatrix}
r_1 &-\sqrt{-1}\overline{x_1} & -\sqrt{-1}\overline{x_2}\\
\sqrt{-1}x_1 & r_2 & x_3\\
\sqrt{-1}x_2 &\overline{x_3} & r_3\\
\end{pmatrix}$.
Denote the 
exceptional $\mathbb{R}$-Jordan algebra
${\rm Herm}'(3,{\bf O}):=\{h'(\xi;x)|~\xi\in\mathbb{R}^3,
x\in{\bf O}^3\}$
with the Jordan product $X\circ Y=2^{-1}(XY+YX)$
$(X,Y\in {\rm Herm}'(3,{\bf O}))$.
Put the linear Lie group $F_4':=
\{g\in{\rm GL}_{\mathbb{R}}({\rm Herm}'(3,{\bf O}))
|~g(X\circ Y)=g X\circ g Y\}$
and the subset $\mathcal{H}'
:=\{A\in{\rm Herm}'(3,{\bf O})|
~A\circ A=A,\mathrm{tr}(A)=1\}$
in ${\rm Herm}'(3,{\bf O})$,
respectively.
F.R.~Harvey \cite[page 296--297]{Hfr1990} 
mentions that $F_4'$ is considered 
to be a simple Lie group of the type of
$\mathbf{F}_{4(-20)}$
and
$F_4'/{\rm Spin}(9)\simeq \mathcal{H}'
=Orb_{F_4'}(E_1)$.
Then $\mathcal{H}'$ consists of
one $F_4'$-orbit. 
We notice that the linear Lie group $\mathrm{F}_4
:=\{g\in{\rm GL}_{\mathbb{R}}(\mathcal{J})|
~g(X\circ Y)=g X\circ g Y\}$ is a compact type of 
$\mathbf{F}_{4(-52)}$
with 
$\mathcal{J}=\{h(\xi,x)|~\xi\in\mathbb{R}^3,
x\in \mathbb{O}^3\}$
and that there exists an isomorphism
$\Phi:\mathcal{J}\rightarrow {\rm Herm}'(3,{\bf O})$
as $\mathbb{R}$-Jordan algebra given as follows
$\Phi(A)
={\rm diag}(-\sqrt{-1},1,1)
~ A~ {\rm diag}(-\sqrt{-1},1,1)^{-1}$.
Therefore, $F_4'$ is a compact type of $\mathbf{F}_{4(-52)}$.
}\end{remark}

\section{The orbit decomposition of $\mathcal{J}^1$
under $\mathrm{F}_{4(-20)}$.}
\label{odp}
In this section, we determine the orbit decomposition
of $\mathcal{J}^1$ under the action of 
$\mathrm{F}_{4(-20)}$.

\begin{lemma}\label{odp-01}
Let $\lambda_1\in\mathbb{R}$.
Then the following equations hold.
\begin{align*}
\tag{\ref{odp-01}.a}
((\lambda_1E-X)^{\times 2})^{\times 2}
&=\Phi_X(\lambda_1)(\lambda_1E-X),\\
\tag{\ref{odp-01}.b}
\mathrm{tr}((\lambda_1E-X)^{\times 2})
&=\left(\frac{d}{d\lambda}\Phi_X\right)(\lambda_1).
\end{align*}
\end{lemma}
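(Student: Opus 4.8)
The plan is to obtain both identities as formal consequences of Lemma~\ref{prl-03}, without any fresh computation inside $\mathcal{J}^1$.

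For (\ref{odp-01}.a) I will apply (\ref{prl-03}.b)(iv) to the element $Y:=\lambda_1E-X\in\mathcal{J}^1$, which reads $(Y^{\times2})^{\times2}=\mathrm{det}(Y)\,Y$. Since $\Phi_X(\lambda):=\mathrm{det}(\lambda E-X)$ by definition, we have $\mathrm{det}(\lambda_1E-X)=\Phi_X(\lambda_1)$, and substituting $Y=\lambda_1E-X$ gives $((\lambda_1E-X)^{\times2})^{\times2}=\Phi_X(\lambda_1)(\lambda_1E-X)$ at once.

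For (\ref{odp-01}.b) I will first expand $(\lambda_1E-X)^{\times2}$ using bilinearity and symmetry of the cross product together with (\ref{prl-03}.b)(i) and (ii):
\[
(\lambda_1E-X)^{\times2}=\lambda_1^{2}(E\times E)-2\lambda_1(E\times X)+X^{\times2}=\lambda_1^{2}E-\lambda_1\mathrm{tr}(X)E+\lambda_1X+X^{\times2}.
\]
Applying $\mathrm{tr}$ (linearity of the trace together with $\mathrm{tr}(E)=3$) yields
\[
\mathrm{tr}((\lambda_1E-X)^{\times2})=3\lambda_1^{2}-2\lambda_1\mathrm{tr}(X)+\mathrm{tr}(X^{\times2}).
\]
On the other hand, differentiating the formula $\Phi_X(\lambda)=\lambda^{3}-\mathrm{tr}(X)\lambda^{2}+\mathrm{tr}(X^{\times2})\lambda-\mathrm{det}(X)$ from (\ref{prl-03}.c) gives $\bigl(\frac{d}{d\lambda}\Phi_X\bigr)(\lambda)=3\lambda^{2}-2\mathrm{tr}(X)\lambda+\mathrm{tr}(X^{\times2})$, and evaluating at $\lambda=\lambda_1$ reproduces exactly the trace just computed; this establishes (\ref{odp-01}.b).

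I do not expect a genuine obstacle: everything reduces to the identities already recorded in Lemma~\ref{prl-03}, and in particular the $\mathrm{F}_{4(-20)}$-invariance statements are not needed. The only place demanding care is the coefficient bookkeeping in the expansion of $(\lambda_1E-X)^{\times2}$: the term $+\lambda_1X$ arising from $-2\lambda_1(E\times X)=-\lambda_1\mathrm{tr}(X)E+\lambda_1X$ contributes $+\lambda_1\mathrm{tr}(X)$ to the trace and so partially cancels the $-3\lambda_1\mathrm{tr}(X)$ coming from $-\lambda_1\mathrm{tr}(X)E$, leaving the coefficient $-2\lambda_1\mathrm{tr}(X)$, which is precisely the linear coefficient of $\frac{d}{d\lambda}\Phi_X$ evaluated at $\lambda_1$.
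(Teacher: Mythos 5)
Your proof is correct and follows essentially the same route as the paper: (\ref{odp-01}.a) is the direct substitution of $\lambda_1E-X$ into (\ref{prl-03}.b)(iv), and (\ref{odp-01}.b) comes from expanding $(\lambda_1E-X)^{\times 2}$ bilinearly, taking the trace to get $3\lambda_1^2-2\mathrm{tr}(X)\lambda_1+\mathrm{tr}(X^{\times 2})$, and matching it with the derivative of (\ref{prl-03}.c). The coefficient bookkeeping you flag is handled correctly, so nothing is missing.
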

\begin{proof}
By (\ref{prl-03}.b)(iv), 
\[((\lambda_1E-X)^{\times 2})^{\times 2}
=\mathrm{det}(\lambda_1E-X)
(\lambda_1E-X)
=\Phi_X(\lambda_1)(\lambda_1E-X).\]
Using
(\ref{prl-03}.a) and (\ref{prl-03}.b),
the left side hand of (\ref{odp-01}.b) is
\[\mathrm{tr}(\lambda_1^2 E^{\times 2}
-2\lambda_1(E\times X)+X^{\times 2})\\
=3\lambda_1^2-2\mathrm{tr}(X)\lambda_1
+\mathrm{tr}(X^{\times 2})
\]
and by (\ref{prl-03}.c), the right side hand 
of (\ref{odp-01}.b) is
$3\lambda_1^2-2\mathrm{tr}(X)\lambda_1
+\mathrm{tr}(X^{\times 2})$.
Thus (\ref{odp-01}.b) follows.
\end{proof}
\medskip

\begin{lemma}\label{odp-02}
Assume that $X\in\mathcal{J}^1$ has 
a characteristic root $\lambda_1\in \mathbb{R}$ 
of multiplicity $1$.
Let $Z=\lambda_1E-X$.

{\rm (1)} The following assertions hold.
\[
\tag{\ref{odp-02}.a}
\left\{
\begin{array}{rl}
{\rm (i)}&
\mathrm{det}(Z)=0,\quad {\rm (ii)}~~
\mathrm{tr}(Z^{\times 2}) \ne 0,\\
\smallskip
{\rm (iii)}&
(Z^{\times 2})^{\times 2}=0,\\
\smallskip
{\rm (iv)}&
(Z^{\times 2})\times Z
=2^{-1}(-\mathrm{tr}(Z)Z^{\times 2}
-\mathrm{tr}(Z^{\times 2})Z
+\mathrm{tr}(Z^{\times 2})\mathrm{tr}(Z)E).
\end{array}
\right.
\]

{\rm (2)} The following equations hold.
\begin{gather*}
\tag{\ref{odp-02}.b}
\left\{
\begin{array}{rl}
{\rm (i)}&
E_{X,\lambda_1}=(\mathrm{tr}(Z^{\times 2}))^{-1}
Z^{\times 2},\\
\smallskip
{\rm (ii)}&
W_{X,\lambda_1}=(\mathrm{tr}(Z)/2)E-Z
-\left(\mathrm{tr}(Z)/(2\mathrm{tr}(Z^{\times 2}))\right)
Z^{\times 2},\\
\smallskip
{\rm (iii)}&
W_{X,\lambda_1}^{\times 2}
=\left((4\mathrm{tr}(Z^{\times 2})-\mathrm{tr}(Z)^2)
/(4\mathrm{tr}(Z^{\times 2}))\right)Z^{\times 2}.
\end{array}
\right.\\
\tag{\ref{odp-02}.c}
Q(W_{X,\lambda_1})
=-4^{-1}(4\mathrm{tr}(Z^{\times 2})-\mathrm{tr}(Z)^2).
\end{gather*}
\end{lemma}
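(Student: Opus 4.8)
The plan is to treat $Z=\lambda_1 E-X$ as the single working variable and to derive everything from the structural identities of Lemma~\ref{prl-03} together with the polynomial identities of Lemma~\ref{odp-01}; there is no conceptual difficulty, only some bookkeeping in the cross-square computation of part (2)(iii).

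For part (1): since $\lambda_1$ is a characteristic root of $X$, we have $\mathrm{det}(Z)=\Phi_X(\lambda_1)=0$, which is (i). By (\ref{odp-01}.b), $\mathrm{tr}(Z^{\times 2})=(\frac{d}{d\lambda}\Phi_X)(\lambda_1)$, and this is nonzero because $\lambda_1$ has multiplicity $1$; this gives (ii). For (iii), apply (\ref{prl-03}.b)(iv) to $Z$, obtaining $(Z^{\times 2})^{\times 2}=\mathrm{det}(Z)Z=0$ (equivalently, specialize (\ref{odp-01}.a) at $\lambda_1$ and use $\Phi_X(\lambda_1)=0$). For (iv), specialize (\ref{prl-03}.b)(v) to $Z$ and drop the $\mathrm{det}(Z)E$ term, which vanishes by (i).

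For part (2): equation (\ref{odp-02}.b)(i) is nothing but the definition of $E_{X,\lambda_1}$, which is well-defined since part (1)(ii) gives $\mathrm{tr}((\lambda_1 E-X)^{\times 2})\neq 0$, so Lemma~\ref{prl-09} applies. For (ii), substitute $X=\lambda_1 E-Z$ and $\mathrm{tr}(X)=3\lambda_1-\mathrm{tr}(Z)$ into the definition of $W_{X,\lambda_1}$, replace $E_{X,\lambda_1}$ by $\mathrm{tr}(Z^{\times 2})^{-1}Z^{\times 2}$, and collect terms; one checks that all dependence on $\lambda_1$ cancels and the coefficient of $E$ reduces to $\mathrm{tr}(Z)/2$. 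For (iii), write $W:=W_{X,\lambda_1}=aE+bZ+cZ^{\times 2}$ with $a=\mathrm{tr}(Z)/2$, $b=-1$, $c=-\mathrm{tr}(Z)/(2\,\mathrm{tr}(Z^{\times 2}))$, and expand $W\times W$ bilinearly into the six terms $a^2(E\times E)$, $2ab(E\times Z)$, $2ac(E\times Z^{\times 2})$, $b^2 Z^{\times 2}$, $2bc(Z\times Z^{\times 2})$, $c^2(Z^{\times 2})^{\times 2}$; then reduce each using (\ref{prl-03}.b)(i)--(iii), part (1)(iii) (so the last term vanishes) and part (1)(iv). A short computation shows the $E$-component and the $Z$-component both cancel, leaving $W^{\times 2}=\bigl((4\,\mathrm{tr}(Z^{\times 2})-\mathrm{tr}(Z)^2)/(4\,\mathrm{tr}(Z^{\times 2}))\bigr)Z^{\times 2}$. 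Finally, (\ref{odp-02}.c) follows by applying $\mathrm{tr}$ to (iii), using $\mathrm{tr}(Z^{\times 2})$ to cancel the denominator, and recalling that $Q(Y)=-\mathrm{tr}(Y^{\times 2})$.

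The one place requiring attention is the bilinear expansion in (iii): several substitutions must be carried out and one must verify that the $E$- and $Z$-components really do vanish. I expect this to be routine rather than a genuine obstacle, since each needed reduction is already an established identity; everything else is an immediate appeal to the lemmas above.
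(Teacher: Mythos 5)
Your proposal is correct and follows essentially the same route as the paper: part (1) via Lemma~\ref{odp-01} and (\ref{prl-03}.b), and part (2) by direct substitution into the definitions of $E_{X,\lambda_1}$, $W_{X,\lambda_1}$ followed by the bilinear expansion of $W_{X,\lambda_1}^{\times 2}$ reduced with (\ref{prl-03}.b) and (\ref{odp-02}.a), then taking the trace for (\ref{odp-02}.c). The bookkeeping you flag in (iii) indeed works out: the $E$- and $Z$-components cancel exactly as you expect.
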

\begin{proof}
(1) Since $\lambda_1$
is a characteristic root of $X$
with multiplicity $1$,
we have
$\mathrm{det}(Z)=\Phi_X(\lambda_1)=0$ and
$\left(\frac{d}{d\lambda}\Phi_X\right)(\lambda_1)
\ne 0$.
Thus (i) follows and 
(ii) follows from (\ref{odp-01}.b).
By (\ref{odp-01}.a), 
$(Z^{\times 2})^{\times 2}=\Phi_X(\lambda_1)Z=0$
so that
(iii) follows.
Moreover, (iv) follows 
from (i) and (\ref{prl-03}.b)(v).

(2) From (\ref{odp-02}.a)(ii),
$E_{X,\lambda_1}$ and 
$W_{X,\lambda_1}$ are well-defined
and (i) and (ii) follow
from direct calculations.
Thus, using Lemma~\ref{prl-03} and
(\ref{odp-02}.a),
we calculate that
\begin{align*}
W_{X,\lambda_1}^{\times 2}&=\left((\mathrm{tr}(Z)/2)E-Z
-\left(\mathrm{tr}(Z)/(2\mathrm{tr}(Z^{\times 2}))\right)
Z^{\times 2}
\right)^{\times 2}\\
&=\left((4\mathrm{tr}(Z^{\times 2})-\mathrm{tr}(Z)^2)
/(4\mathrm{tr}(Z^{\times 2}))\right)Z^{\times 2}
\end{align*}
and so
$Q(W_{X,\lambda_1})=-\mathrm{tr}(W_{X,\lambda_1}^{\times 2})
=-4^{-1}(4\mathrm{tr}(Z^{\times 2})-\mathrm{tr}(Z)^2)$.
\end{proof}
\medskip

\begin{lemma}\label{odp-03}
Assume that $X\in\mathcal{J}^1$ has 
a characteristic root $\lambda_1\in \mathbb{R}$ 
of multiplicity $1$.
Then
\[
\tag{\ref{odp-03}.a}
X=\lambda_1E_{X,\lambda_1}+2^{-1}(\mathrm{tr}(X)-\lambda_1)
(E-E_{X,\lambda_1})+W_{X,\lambda_1}
\]
where
\begin{gather*}
\tag{\ref{odp-03}.b}
E_{X,\lambda_1} \in 
\mathcal{H}({\bf O}) \coprod \mathcal{H}'({\bf O}),\\
\tag{\ref{odp-03}.c}
E-E_{X,\lambda_1} \in 
\mathcal{J}^1_{L^{\times}(2E_{X,\lambda_1}),1},\\
\tag{\ref{odp-03}.d}
W_{X,\lambda_1} \in
\mathcal{J}^1_{L^{\times}(2E_{X,\lambda_1}),-1}.
\end{gather*}
Furthermore
\[
\tag{\ref{odp-03}.e}
Q(W_{X,\lambda_1})
=-4^{-1}(3\lambda_1^2-2\lambda_1\mathrm{tr}(X)
+4\mathrm{tr}(X^{\times 2})-\mathrm{tr}(X)^2).
\]
\end{lemma}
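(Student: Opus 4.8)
The plan is to obtain essentially everything from Lemma~\ref{odp-02}, where $E_{X,\lambda_1}$ and $W_{X,\lambda_1}$ have already been computed in terms of $Z:=\lambda_1 E-X$, and then to translate the results into the present language using the cross-product identities of Lemma~\ref{prl-03} and the definitions of $\mathcal{H}$, $\mathcal{H}(\mathbf{O})$, $\mathcal{H}'(\mathbf{O})$ and of the eigenspaces $\mathcal{J}^1_{L^{\times}(\cdot),r}$. Since $\lambda_1$ has multiplicity $1$, (\ref{odp-02}.a)(ii) gives $\mathrm{tr}(Z^{\times 2})\ne 0$; hence by Lemma~\ref{prl-09} both $E_{X,\lambda_1}$ and $W_{X,\lambda_1}$ are well-defined, and (\ref{odp-03}.a) is just (\ref{prl-09}).

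Next I would check (\ref{odp-03}.b)--(\ref{odp-03}.d). For (\ref{odp-03}.b): from $E_{X,\lambda_1}=(\mathrm{tr}(Z^{\times 2}))^{-1}Z^{\times 2}$ (by (\ref{odp-02}.b)(i)) one reads off $\mathrm{tr}(E_{X,\lambda_1})=1$ immediately, and $E_{X,\lambda_1}^{\times 2}=(\mathrm{tr}(Z^{\times 2}))^{-2}(Z^{\times 2})^{\times 2}=0$ by (\ref{odp-02}.a)(iii); so $E_{X,\lambda_1}\in\mathcal{H}$, which by (\ref{hp-01}.a) equals $\mathcal{H}(\mathbf{O})\coprod\mathcal{H}'(\mathbf{O})$. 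For the two eigenspace claims I would abbreviate $s:=\mathrm{tr}(Z)$, $t:=\mathrm{tr}(Z^{\times 2})$, $P:=Z^{\times 2}$, so that $E_{X,\lambda_1}=t^{-1}P$ and, by (\ref{odp-02}.b)(ii), $W_{X,\lambda_1}=(s/2)E-Z-(s/(2t))P$. Then (\ref{odp-03}.c) follows by computing $2E_{X,\lambda_1}\times(E-E_{X,\lambda_1})$ using $2E_{X,\lambda_1}\times E=E-E_{X,\lambda_1}$ (from (\ref{prl-03}.b)(ii) and $\mathrm{tr}(E_{X,\lambda_1})=1$) together with $E_{X,\lambda_1}^{\times 2}=0$; and (\ref{odp-03}.d) follows by expanding $2E_{X,\lambda_1}\times W_{X,\lambda_1}=2t^{-1}P\times\bigl((s/2)E-Z-(s/(2t))P\bigr)$ and evaluating the three summands via $P\times E=2^{-1}(tE-P)$ (by (\ref{prl-03}.b)(iii)), $P\times P=(Z^{\times 2})^{\times 2}=0$ (by (\ref{odp-02}.a)(iii)), and $P\times Z=(Z^{\times 2})\times Z=2^{-1}(-sP-tZ+stE)$ (by (\ref{odp-02}.a)(iv)); collecting terms gives $-W_{X,\lambda_1}$.

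For (\ref{odp-03}.e) I would start from $Q(W_{X,\lambda_1})=-4^{-1}(4\mathrm{tr}(Z^{\times 2})-\mathrm{tr}(Z)^2)$, which is (\ref{odp-02}.c), and substitute $\mathrm{tr}(Z)=3\lambda_1-\mathrm{tr}(X)$ together with $\mathrm{tr}(Z^{\times 2})=3\lambda_1^2-2\lambda_1\mathrm{tr}(X)+\mathrm{tr}(X^{\times 2})$; the latter is the identity computed inside the proof of Lemma~\ref{odp-01} (and is re-derivable from (\ref{prl-03}.a) and (\ref{prl-03}.b)). Expanding and simplifying the resulting quadratic in $\lambda_1$ and $\mathrm{tr}(X)$ then gives the claimed formula. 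I do not expect a genuine obstacle here: all the real computation is already packaged in Lemma~\ref{odp-02}. The one place needing care is the $P\times Z$ term in (\ref{odp-03}.d), where the $\mathrm{tr}(Z)\mathrm{tr}(Z^{\times 2})E$ cross-term coming from (\ref{odp-02}.a)(iv) must be kept and shown to cancel exactly against the contribution of $(s/2)\,P\times E$, so that no spurious multiple of $E$ survives in $2E_{X,\lambda_1}\times W_{X,\lambda_1}$.
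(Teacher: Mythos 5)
Your proposal is correct and follows essentially the same route as the paper's proof: (\ref{odp-03}.a) from Lemma~\ref{prl-09}, (\ref{odp-03}.b) from $\mathrm{tr}(E_{X,\lambda_1})=1$, $E_{X,\lambda_1}^{\times 2}=0$ and (\ref{hp-01}.a), (\ref{odp-03}.c)--(\ref{odp-03}.d) by expanding the cross products with (\ref{prl-03}.b) and (\ref{odp-02}.a), and (\ref{odp-03}.e) from (\ref{odp-02}.c) together with (\ref{odp-01}.b). One small point of wording: in (\ref{odp-03}.d) the two $E$-contributions (from $(s/2)P\times E$ and from $-P\times Z$) do not cancel outright but combine to $-(s/2)E$, which is exactly the $E$-component of $-W_{X,\lambda_1}$, so your collected result is right.
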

\begin{proof}
Let $Z=\lambda_1E-X$.
First,  
(\ref{odp-03}.a) follows
from (\ref{odp-02}.a)(ii) and (\ref{prl-09}).
From (\ref{odp-02}.b)(i) and (\ref{odp-02}.a)(iii),
we see that
\[E_{X,\lambda_1}^{\times 2}= 
\left(\mathrm{tr}(Z^{\times 2})\right)^{-2}
(Z^{\times 2})^{\times 2}=0,~~
\mathrm{tr}(E_{X,\lambda_1})
=(\mathrm{tr}(Z^{\times 2}))^{-1}
\mathrm{tr}(Z^{\times 2})=1\]
so that $E_{X,\lambda_1} \in \mathcal{H}$,
and from (\ref{hp-01}.a),  (\ref{odp-03}.b) follows.
Second, because of  $E_{X,\lambda_1}\in\mathcal{H}$
and (\ref{prl-03}.b)(ii),
we see that
\[2E_{X,\lambda_1}\times (E-E_{X,\lambda_1})=
2E_{X,\lambda_1}\times E
=\mathrm{tr}(E_{X,\lambda_1})E-E_{X,\lambda_1}
=E-E_{X,\lambda_1}\]
and $E-E_{X,\lambda_1}
\in \mathcal{J}^1_{L^{\times}(2E_{X,\lambda_1}),1}$.
Third, 
by (\ref{odp-02}.b)(i)(ii),
$2E_{X,\lambda_1}\times W_{X,\lambda_1}
=(2/\mathrm{tr}(Z^{\times 2}))Z^{\times 2}
\times 
\left((\mathrm{tr}(Z)/2)E-Z
-(\mathrm{tr}(Z)/(2\mathrm{tr}(Z^{\times 2})))Z^{\times 2}
\right)$.
Then by (\ref{prl-03}.b),
(\ref{odp-02}.a)(iii) and (\ref{odp-02}.a)(iv),
the right hand side is
\[-(\mathrm{tr}(Z)/2)E+Z
+(\mathrm{tr}(Z)/(2\mathrm{tr}(Z^{\times 2})))Z^{\times 2}
=-W_{X,\lambda_1}.\]
Thus $W_{X,\lambda_1}
\in \mathcal{J}^1_{L^{\times}(2E_{X,\lambda_1}),-1}$.
Last,
using (\ref{odp-02}.c) and (\ref{odp-01}.b),
\begin{align*}
Q(W_{X,\lambda_1})
&=-4^{-1}(4(3\lambda_1^2-2\mathrm{tr}(X)\lambda_1
+\mathrm{tr}(X^{\times 2}))
-(3\lambda_1-\mathrm{tr}(X))^2)\\
&=-4^{-1}(3\lambda_1^2-2\lambda_1\mathrm{tr}(X)
+4\mathrm{tr}(X^{\times 2})-\mathrm{tr}(X)^2).
\end{align*}
\end{proof}

\begin{lemma}\label{odp-04}
Assume that $X \in \mathcal{J}^1$ has 
a characteristic root $\lambda_1 \in \mathbb{R}$ 
of multiplicity $1$.
Then $E_{X,\lambda_1}\in
\mathcal{H}({\bf O})$ or  
$E_{X,\lambda_1}\in\mathcal{H}'({\bf O})$
and the following {\rm (i)} or {\rm (ii)} hold.

{\rm (i)} When $E_{X,\lambda_1}\in\mathcal{H}({\bf O})$, then
there exists $g\in \mathrm{F}_{4(-20)}$ such that
\[gX=\lambda_1 E_1+2^{-1}(\mathrm{tr}(X)-\lambda_1)(E-E_1)
+g W_{X,\lambda_1}\]
where 
$g W_{ X,\lambda_1}\in\mathcal{J}^1_{L^{\times}(2E_1),-1}$
and the quadratic space
$(\mathcal{J}^1_{L^{\times}(2E_{X,\lambda_1}),-1},Q)$
is isomorphic to $(\mathbb{R}^{0,9},\mathrm{q}_{0,9})$.
Especially,
$Q(W_{X,\lambda_1})\geq 0$ and
\[Q(W_{X,\lambda_1})=0\quad\text{iff}\quad
W_{X,\lambda_1}=0.\]

{\rm (ii)} When $E_{X,\lambda_1}\in\mathcal{H}'({\bf O})$, then
there exists $g\in \mathrm{F}_{4(-20)}$ such that
\[gX=\lambda_1 E_3+2^{-1}(\mathrm{tr}(X)-\lambda_1)(E-E_3)
+g W_{X,\lambda_1}\]
where
$g W_{X,\lambda_1}\in\mathcal{J}^1_{L^{\times}(2E_3),-1}$
and the quadratic space
$(\mathcal{J}^1_{L^{\times}(2E_{X,\lambda_1}),-1},Q)$
isomorphic to $(\mathbb{R}^{8,1},\mathrm{q}_{8,1})$.
\end{lemma}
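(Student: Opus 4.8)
The plan is to transport the decomposition (\ref{odp-03}.a) by a single element of $\mathrm{F}_{4(-20)}$ chosen so as to carry the idempotent $E_{X,\lambda_1}$ into standard position, and then read off all the quadratic-space assertions from Lemma~\ref{spn-01} via the equivariance properties (\ref{spn-01}.a) and (\ref{spn-01}.b).

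First I would invoke (\ref{odp-03}.b), which gives $E_{X,\lambda_1}\in\mathcal{H}({\bf O})\coprod\mathcal{H}'({\bf O})$, so the dichotomy holds. For case (i), since $E_{X,\lambda_1}\in\mathcal{H}({\bf O})=Orb_{\mathrm{F}_{4(-20)}}(E_1)$ by (\ref{hp-np}.a), choose $g\in\mathrm{F}_{4(-20)}$ with $gE_{X,\lambda_1}=E_1$. Applying $g$ to (\ref{odp-03}.a), using $gE=E$ (Proposition~\ref{prl-08}) and linearity, gives $gX=\lambda_1E_1+2^{-1}(\mathrm{tr}(X)-\lambda_1)(E-E_1)+gW_{X,\lambda_1}$. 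By (\ref{odp-03}.d) we have $W_{X,\lambda_1}\in\mathcal{J}^1_{L^{\times}(2E_{X,\lambda_1}),-1}$, so (\ref{spn-01}.b) yields $gW_{X,\lambda_1}\in\mathcal{J}^1_{L^{\times}(2gE_{X,\lambda_1}),-1}=\mathcal{J}^1_{L^{\times}(2E_1),-1}$, as required.

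For the quadratic-space claim I would argue as follows. Lemma~\ref{spn-01} (the case $i=1$) says $(\mathcal{J}^1_{L^{\times}(2E_1),-1},Q)\cong(\mathbb{R}^{0,9},\mathrm{q}_{0,9})$. Since $g$ restricts to an $\mathbb{R}$-linear isomorphism $\mathcal{J}^1_{L^{\times}(2E_{X,\lambda_1}),-1}\to\mathcal{J}^1_{L^{\times}(2E_1),-1}$ and preserves $Q$ by (\ref{spn-01}.a), the source quadratic space is likewise isomorphic to $(\mathbb{R}^{0,9},\mathrm{q}_{0,9})$. Because $\mathrm{q}_{0,9}$ is positive definite, $Q(W_{X,\lambda_1})=Q(gW_{X,\lambda_1})\geq 0$, with equality iff $gW_{X,\lambda_1}=0$, i.e. iff $W_{X,\lambda_1}=0$. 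Case (ii) is entirely parallel: using $\mathcal{H}'({\bf O})=Orb_{\mathrm{F}_{4(-20)}}(E_3)$ from (\ref{hp-np}.b), pick $g$ with $gE_{X,\lambda_1}=E_3$, apply it to (\ref{odp-03}.a), and use (\ref{spn-01}.b) to place $gW_{X,\lambda_1}$ in $\mathcal{J}^1_{L^{\times}(2E_3),-1}$; the isomorphism $(\mathcal{J}^1_{L^{\times}(2E_{X,\lambda_1}),-1},Q)\cong(\mathbb{R}^{8,1},\mathrm{q}_{8,1})$ follows from the $i=3$ case of Lemma~\ref{spn-01} transported back by $g$. There is no genuine obstacle here—the substantive content is already packaged in Lemma~\ref{odp-03}, Proposition~\ref{hp-np}, and Lemma~\ref{spn-01}—so the only care needed is in correctly applying the equivariance $g\mathcal{J}^1_{L^{\times}(Y),r}=\mathcal{J}^1_{L^{\times}(gY),r}$ and the $Q$-invariance when passing between $E_{X,\lambda_1}$ and $E_1$ (resp. $E_3$).
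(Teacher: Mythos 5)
Your proposal is correct and follows essentially the same route as the paper: the paper likewise applies the $g$ carrying $E_{X,\lambda_1}$ to $E_1$ (resp. $E_3$) from Proposition~\ref{hp-np} to the decomposition of Lemma~\ref{odp-03}, and transports the quadratic-space statements via (\ref{spn-01}.a) and (\ref{spn-01}.b). Your explicit derivation of the "Especially" clause from positive definiteness of $(\mathcal{J}^1_{L^{\times}(2E_1),-1},Q)$ is exactly what the paper leaves implicit.
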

\begin{proof}
By Lemma~\ref{odp-03},
\[
X=\lambda_1E_{X,\lambda_1}+2^{-1}(\mathrm{tr}(X)-\lambda_1)
(E-E_{X,\lambda_1})+W_{X,\lambda_1}
\]
where $E_{X,\lambda_1}\in
\mathcal{H}({\bf O})\coprod \mathcal{H}'({\bf O})$
and $W_{X,\lambda_1}
\in\mathcal{J}^1_{L^{\times}(2E_{X,\lambda_1}),-1}$.

Case $E_{X,\lambda_1}\in
\mathcal{H}({\bf O})$.
By (\ref{hp-np}.a),
there exists $g\in \mathrm{F}_{4(-20)}$ 
such that $gE_{X,\lambda_1}=E_1$
and it is clear that
\[g X=
\lambda_1 E_1+2^{-1}(\mathrm{tr}(X)-\lambda_1)(E-E_1)
+g W_{X,\lambda_1}.\]
By (\ref{spn-01}.b), 
$g W_{X,\lambda_1}
\in g\mathcal{J}^1_{L^{\times}(2E_{X,\lambda_1}),-1}
=\mathcal{J}^1_{L^{\times}(2E_1),-1}$.
From Lemma~\ref{spn-01},  we see that
$g$ gives the quadratic isomorphism
from $(\mathcal{J}^1_{L^{\times}(2E_{X,\lambda_1}),-1},Q)$
onto $(\mathcal{J}^1_{L^{\times}(2E_1),-1},Q)$
and 
that the quadratic space
$(\mathcal{J}^1_{L^{\times}(2E_{X,\lambda_1}),-1},Q)$
is isomorphic to $(\mathbb{R}^{0,9},\mathrm{q}_{0,9})$.

Case $E_{X,\lambda_1}\in
\mathcal{H}'({\bf O})$.
By (\ref{hp-np}.b),
there exists $g \in \mathrm{F}_{4(-20)}$ 
such that $g E_{X,\lambda_1}=E_3$
and 
as similar to (i),
we see
\[g X
=\lambda_1 E_3+2^{-1}(\mathrm{tr}(X)-\lambda_1)(E-E_3)
+gW_{X,\lambda_1}\]
with
$g W_{X,\lambda_1}
\in \mathcal{J}^1_{L^{\times}(2E_3),-1}$.
From Lemma~\ref{spn-01}, we see that
$g$ gives the quadratic isomorphism
from $(\mathcal{J}^1_{L^{\times}(2E_{X,\lambda_1}),-1},Q)$
onto $(\mathcal{J}^1_{L^{\times}(2E_3),-1},Q)$
and that
$(\mathcal{J}^1_{L^{\times}(2E_{X,\lambda_1}),-1},Q)$
is isomorphic to $(\mathbb{R}^{8,1},\mathrm{q}_{8,1})$.
\end{proof}
\medskip

\begin{lemma}\label{odp-05}
Assume that $X\in\mathcal{J}^1$ has 
a characteristic root $\lambda_1
\in \mathbb{R}$ of multiplicity $1$
and $Q(W_{X,\lambda_1})>0$.
Then $X$ is diagonalizable under the action
of $\mathrm{F}_{4(-20)}$
on $\mathcal{J}^1$.
\end{lemma}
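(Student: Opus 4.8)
The plan is to combine the normal form from Lemma~\ref{odp-04} with the transitivity statements of Lemma~\ref{spn-06}. First I would invoke Lemma~\ref{odp-04}: because $\lambda_1$ has multiplicity $1$, either $E_{X,\lambda_1}\in\mathcal{H}({\bf O})$ or $E_{X,\lambda_1}\in\mathcal{H}'({\bf O})$, and there is $g\in\mathrm{F}_{4(-20)}$ such that $gX$ equals $\lambda_1E_1+2^{-1}(\mathrm{tr}(X)-\lambda_1)(E-E_1)+gW_{X,\lambda_1}$ with $gW_{X,\lambda_1}\in\mathcal{J}^1_{L^{\times}(2E_1),-1}$ in the first case, and $\lambda_1E_3+2^{-1}(\mathrm{tr}(X)-\lambda_1)(E-E_3)+gW_{X,\lambda_1}$ with $gW_{X,\lambda_1}\in\mathcal{J}^1_{L^{\times}(2E_3),-1}$ in the second. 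Next I would observe that $Q(gW_{X,\lambda_1})=Q(W_{X,\lambda_1})>0$ by (\ref{spn-01}.a), and that, $Q$ being a quadratic form, $W':=Q(W_{X,\lambda_1})^{-1/2}gW_{X,\lambda_1}$ lies on the relevant unit quadric inside the same linear subspace.

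Then in the case $E_{X,\lambda_1}\in\mathcal{H}({\bf O})$ I would use $W'\in S^8$ and (\ref{spn-06}.a) to find $h\in(\mathrm{F}_{4(-20)})_{E_1}$ with $hW'=E_2-E_3$; since $h$ fixes $E_1$ (definition of the stabilizer) and $E$ (Proposition~\ref{prl-08}), it fixes the scalar part, and with $E-E_1=E_2+E_3$ this gives
\[hgX={\rm diag}\bigl(\lambda_1,\ 2^{-1}(\mathrm{tr}(X)-\lambda_1)+Q(W_{X,\lambda_1})^{1/2},\ 2^{-1}(\mathrm{tr}(X)-\lambda_1)-Q(W_{X,\lambda_1})^{1/2}\bigr).\]
In the case $E_{X,\lambda_1}\in\mathcal{H}'({\bf O})$ I would use $W'\in\mathcal{S}^{8,1}=\mathcal{S}^{8,1}_+\coprod\mathcal{S}^{8,1}_-$ (by (\ref{spn-02}.a)) and (\ref{spn-06}.c) to find $h\in(\mathrm{F}_{4(-20)})_{E_3}$ with $hW'=\pm(E_1-E_2)$; as $h$ fixes $E_3$ and $E$, the element $hgX=\lambda_1E_3+2^{-1}(\mathrm{tr}(X)-\lambda_1)(E-E_3)\pm Q(W_{X,\lambda_1})^{1/2}(E_1-E_2)$ is again diagonal. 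Either way $X$ is diagonalizable under $\mathrm{F}_{4(-20)}$.

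I do not expect a genuine obstacle: the work is essentially bookkeeping on top of Lemmas~\ref{odp-03}, \ref{odp-04} and \ref{spn-06}. The points that will need an explicit line of justification are that $Q$ is a bona fide quadratic form --- so that normalizing $W_{X,\lambda_1}$ onto $S^8$ or $\mathcal{S}^{8,1}$ is legitimate --- that $(\mathrm{F}_{4(-20)})_{E_i}$ preserves $\mathcal{J}^1_{L^{\times}(2E_i),-1}$ (which is (\ref{spn-01}.b)), and that this group fixes the scalar part $\lambda_1E_i+2^{-1}(\mathrm{tr}(X)-\lambda_1)(E-E_i)$, which reduces to $gE_i=E_i$ together with $gE=E$.
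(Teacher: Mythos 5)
Your proposal is correct and follows essentially the same route as the paper's own proof: reduce to the normal form of Lemma~\ref{odp-04}, use the $\mathrm{F}_{4(-20)}$-invariance of $Q$ to rescale $gW_{X,\lambda_1}$ onto $S^8$ or $\mathcal{S}^{8,1}=\mathcal{S}^{8,1}_+\coprod\mathcal{S}^{8,1}_-$, and then apply the transitivity statements (\ref{spn-06}.a) and (\ref{spn-06}.c) with an element of $(\mathrm{F}_{4(-20)})_{E_1}$ or $(\mathrm{F}_{4(-20)})_{E_3}$ to land on a diagonal matrix. The only difference is cosmetic (you merge the two $\mathcal{S}^{8,1}_\pm$ cases with a $\pm$ sign), and your citation of (\ref{spn-01}.a) for $Q(gW_{X,\lambda_1})=Q(W_{X,\lambda_1})$ is in fact the intended reference.
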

\begin{proof}
By Lemma~\ref{odp-04},
there exists $g_1\in \mathrm{F}_{4(-20)}$ such that
\[g_1X=\lambda_1 E_i+2^{-1}(\mathrm{tr}(X)-\lambda_1)(E-E_i)
+g_1 W_{X,\lambda_1}\]
where
$g_1 W_{X,\lambda_1}\in\mathcal{J}^1_{L^{\times}(2E_i),-1}$
and $i\in\{1,3\}$.
From (\ref{hp-01}.a), we see
$Q(g_1 W_{X,\lambda_1})=Q(W_{X,\lambda_1})>0$
and therefore $g_1 W_{X,\lambda_1}$ 
can be expressed by
$g_1 W_{X,\lambda_1}=\sqrt{Q(W_{X,\lambda_1})}Y$
where
$Y\in\mathcal{J}^1_{L^{\times}(2E_i),-1}$
with $Q(Y)=1$.
If $i=1$ then $Y\in S^8$,
and if $i=3$ then $Y\in \mathcal{S}^{8,1}
=\mathcal{S}^{8,1}_+\coprod \mathcal{S}^{8,1}_-$.
Then we have
the following three cases:
(i) $i=1$,
(ii) $i=3$ and $Y\in\mathcal{S}^{8,1}_+$,
(iii)
$i=3$ and $Y\in\mathcal{S}^{8,1}_-$.

Case (i): $i=1$.
By (\ref{spn-06}.a), 
there exists
$g_2\in (\mathrm{F}_{4(-20)})_{E_1}$
such that $g_2 Y=E_2-E_3$
and it is clear that
\[g_2g_1X=\lambda_1 E_1
+\sum{}_{j=2}^3\left(2^{-1}(\mathrm{tr}(X)-\lambda_1)
+(-1)^j\sqrt{Q(W_{X,\lambda_1})}\right)E_j.\]

Case (ii): $i=3$ and $Y\in\mathcal{S}^{8,1}_+$.
By (\ref{spn-06}.c)(i), there exists
$g_2'\in (\mathrm{F}_{4(-20)})_{E_3}$
such that $g_2' Y=E_1-E_2$
and it is clear that
\[g_2'g_1X=\lambda_1 E_3+\sum{}_{j=1}^2
\left(2^{-1}(\mathrm{tr}(X)-\lambda_1)
+(-1)^{j+1}\sqrt{Q(W_{X,\lambda_1})}\right)E_j.\]

Case (iii): $i=3$ and $Y\in\mathcal{S}^{8,1}_-$.
By (\ref{spn-06}.c)(ii), there exists
$g_2''\in (\mathrm{F}_{4(-20)})_{E_3}$
such that $g_2'' Y=-E_1+E_2$
and it is clear that
\[g_2''g_1X=\lambda_1 E_3+\sum{}_{j=1}^2
\left(2^{-1}(\mathrm{tr}(X)-\lambda_1)
+(-1)^j\sqrt{Q(W_{X,\lambda_1})}\right)E_j.\]

Thus these cases imply that $X$ can be transformed to
a diagonal matrix under the action of $\mathrm{F}_{4(-20)}$.
\end{proof}
\medskip

\begin{lemma}\label{odp-06}
Assume that $X\in\mathcal{J}^1$ has a characteristic
polynomial
$\Phi_X(\lambda)
=(\lambda-\lambda_1)(\lambda-\lambda_2)(\lambda-\lambda_3)$
where $\lambda_i\in \mathbb{C}$.
Then
\[
\tag{\ref{odp-06}}
\left\{\begin{array}{rlrl}
{\rm (i)}&\mathrm{tr}(X)=\lambda_1+\lambda_2+\lambda_3,&
{\rm (ii)}&\mathrm{tr}(X^{\times 2})
=\lambda_1\lambda_2+\lambda_2\lambda_3+\lambda_3\lambda_1,\\
\smallskip
{\rm (iii)}&
\mathrm{det}(X)=\lambda_1\lambda_2\lambda_3.
\end{array}
\right.
\]
\end{lemma}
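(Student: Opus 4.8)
The plan is simply to compare two expressions for the monic cubic $\Phi_X(\lambda)\in\mathbb{C}[\lambda]$: the factorized form assumed in the hypothesis and the intrinsic expansion recorded in (\ref{prl-03}.c).

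First I would expand the product in the hypothesis by Vi\`ete's formulas:
\[
(\lambda-\lambda_1)(\lambda-\lambda_2)(\lambda-\lambda_3)
=\lambda^3-(\lambda_1+\lambda_2+\lambda_3)\lambda^2
+(\lambda_1\lambda_2+\lambda_2\lambda_3+\lambda_3\lambda_1)\lambda
-\lambda_1\lambda_2\lambda_3,
\]
which is valid for arbitrary $\lambda_i\in\mathbb{C}$. Next I would invoke (\ref{prl-03}.c), which gives
\[
\Phi_X(\lambda)=\lambda^3-\mathrm{tr}(X)\lambda^2+\mathrm{tr}(X^{\times 2})\lambda-\mathrm{det}(X).
\]
Since the two polynomials are equal by assumption, equating the coefficients of $\lambda^2$, of $\lambda$, and of $\lambda^0$ yields (i), (ii), and (iii) respectively.

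There is no genuine obstacle here: the lemma is a coefficient comparison between two explicit degree-three polynomials, so the ``hard part'' is nil. The only point worth noting is that the argument relies solely on the already-established formula (\ref{prl-03}.c)---itself a formal consequence of the definitions of $\mathrm{tr}$, the cross product, and $\mathrm{det}$---so that the $\lambda_i$ need not be real and no appeal to the $\mathrm{F}_{4(-20)}$-action, or to any further structure of $\mathcal{J}^1$, is required.
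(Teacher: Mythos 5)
Your proposal is correct and coincides with the paper's own argument: the paper likewise equates the expansion $\lambda^3-\mathrm{tr}(X)\lambda^2+\mathrm{tr}(X^{\times 2})\lambda-\mathrm{det}(X)$ from (\ref{prl-03}.c) with the Vi\`ete expansion of $(\lambda-\lambda_1)(\lambda-\lambda_2)(\lambda-\lambda_3)$ and reads off the coefficients. Nothing is missing.
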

\begin{proof}
By (\ref{prl-03}.c), 
$\lambda^3-\mathrm{tr}(X)\lambda^2
+\mathrm{tr}(X^{\times 2})\lambda
-\mathrm{det}(X)=\Phi_X(\lambda)=
\lambda^3-(\lambda_1+\lambda_2+\lambda_3)\lambda^2
+(\lambda_1\lambda_2+\lambda_2\lambda_3
+\lambda_3\lambda_1)\lambda
-\lambda_1\lambda_2\lambda_3$.
Thus the result follows.
\end{proof}
\medskip

\begin{lemma}\label{odp-07}
Assume that $X\in\mathcal{J}^1$ has a characteristic root 
$\lambda_1\in\mathbb{R}$ of multiplicity $1$
and
$\Phi_X(\lambda)
=(\lambda-\lambda_1)(\lambda-\lambda_2)(\lambda-\lambda_3)$
where $\lambda_2,\lambda_3\in \mathbb{C}$.
Then \[
\tag{\ref{odp-07}}
Q(W_{X,\lambda_1})
=4^{-1}(\lambda_2-\lambda_3)^2.
\]
\end{lemma}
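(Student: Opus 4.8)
The plan is to reduce everything to the formula for $Q(W_{X,\lambda_1})$ already obtained in Lemma~\ref{odp-03} and then substitute the symmetric-function expressions for the invariants of $X$ coming from Lemma~\ref{odp-06}. Since $X$ has $\lambda_1\in\mathbb{R}$ as a characteristic root of multiplicity $1$, Lemma~\ref{odp-03} applies, and in particular (\ref{odp-03}.e) gives
\[
Q(W_{X,\lambda_1})
=-4^{-1}\bigl(3\lambda_1^2-2\lambda_1\mathrm{tr}(X)
+4\mathrm{tr}(X^{\times 2})-\mathrm{tr}(X)^2\bigr).
\]
First I would invoke the hypothesis $\Phi_X(\lambda)=(\lambda-\lambda_1)(\lambda-\lambda_2)(\lambda-\lambda_3)$ together with Lemma~\ref{odp-06} to replace $\mathrm{tr}(X)$ by $\lambda_1+\lambda_2+\lambda_3$ and $\mathrm{tr}(X^{\times 2})$ by $\lambda_1\lambda_2+\lambda_2\lambda_3+\lambda_3\lambda_1$ in the displayed expression.

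The remaining step is a purely algebraic simplification of the resulting polynomial in $\lambda_1,\lambda_2,\lambda_3$. Collecting terms, the coefficient of $\lambda_1^2$ becomes $3-2-1=0$, the coefficients of $\lambda_1\lambda_2$ and $\lambda_1\lambda_3$ each become $-2+4-2=0$, while the coefficients of $\lambda_2^2$ and $\lambda_3^2$ are $-1$ and the coefficient of $\lambda_2\lambda_3$ is $4-2=2$. Hence the bracket equals $-(\lambda_2-\lambda_3)^2$, and therefore
\[
Q(W_{X,\lambda_1})=-4^{-1}\bigl(-(\lambda_2-\lambda_3)^2\bigr)
=4^{-1}(\lambda_2-\lambda_3)^2,
\]
which is (\ref{odp-07}).

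There is essentially no obstacle here: the content of the lemma is entirely carried by Lemma~\ref{odp-03}(e) and Lemma~\ref{odp-06}, both already available, and what is left is a short bookkeeping computation. The only point deserving a word of care is that $\lambda_2,\lambda_3$ may be a complex-conjugate pair rather than real, but since all the intermediate quantities $\mathrm{tr}(X)$, $\mathrm{tr}(X^{\times 2})$ are real and the identity is a polynomial identity in the $\lambda_i$, the computation is valid verbatim; in the complex case $(\lambda_2-\lambda_3)^2$ is automatically real and negative, consistent with the later use of this lemma in distinguishing the orbit types.
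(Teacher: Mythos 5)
Your proposal is correct and follows essentially the same route as the paper: substitute the symmetric-function expressions of Lemma~\ref{odp-06} into the formula (\ref{odp-03}.e) for $Q(W_{X,\lambda_1})$ and simplify, the bracket collapsing to $-(\lambda_2-\lambda_3)^2$ exactly as you compute (the paper cites (\ref{odp-03}.c), evidently a slip for (\ref{odp-03}.e), which you cite correctly). Your closing remark that the identity holds verbatim when $\lambda_2,\lambda_3$ are a complex-conjugate pair is a sensible extra precaution and consistent with how the lemma is used later.
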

\begin{proof}
By (\ref{odp-03}.c) and (\ref{odp-06}),
$
Q(W_{X,\lambda_1})
=-4^{-1}(3\lambda_1^2
-2(\lambda_1+\lambda_2+\lambda_3)\lambda_1\\
+4(\lambda_1\lambda_2+\lambda_2\lambda_3
+\lambda_3\lambda_1)
-(\lambda_1+\lambda_2+\lambda_3)^2)
=4^{-1}(\lambda_2-\lambda_3)^2$.
\end{proof}
\medskip

\begin{lemma}\label{odp-08}  
Let real numbers $r_1$, $r_2$, $r_3$ 
be different from each other and 
$Y={\rm diag}(r_1,r_2,r_3)\in\mathcal{J}^1$.

{\rm (1)} All of 
characteristic roots of $Y$ are $r_1,~r_2,~r_3$.
\smallskip

{\rm (2)} $E_i=E_{Y,r_i}$ for all $i\in\{1,2,3\}$.
\smallskip

{\rm (3)} $V_Y=\{aE_1+bE_2+cE_3|~a,b,c\in\mathbb{R}\}$.
\smallskip

{\rm (4)} $\mathcal{H}({\bf O})\cap V_Y=\{E_{Y,r_1}\}$ and 
$\mathcal{H}'({\bf O})\cap V_Y=\{E_{Y,r_2},~E_{Y,r_3}\}$
with $E_{Y,r_2}\ne E_{Y,r_3}$.
\smallskip

{\rm (5)} For any $g\in \mathrm{F}_{4(-20)}$,
$\mathcal{H}({\bf O})\cap V_{g Y}=\{E_{g Y,r_1}\}$ 
and $\mathcal{H}'({\bf O})\cap V_{g Y}
=\{E_{g Y,r_2},~E_{g Y,r_3}\}$
with $E_{gY,r_2}\ne E_{gY,r_3}$. 
\end{lemma}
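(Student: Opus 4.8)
The plan is to reduce the whole statement to the cross-product formula (\ref{prl-06}.d) for diagonal elements, and then to deduce parts (3)--(5) formally from (1)--(2) and from results of \S\ref{hp}.

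I would first prove (1) and (2). Writing $Y=r_1E_1+r_2E_2+r_3E_3$, formula (\ref{prl-06}.c) gives $\mathrm{det}(\lambda E-Y)=(\lambda-r_1)(\lambda-r_2)(\lambda-r_3)$ because all octonion components of $\lambda E-Y$ vanish; hence the characteristic polynomial of $Y$ is $\Phi_Y(\lambda)=(\lambda-r_1)(\lambda-r_2)(\lambda-r_3)$ and its characteristic roots are exactly $r_1,r_2,r_3$, which is (1). For (2), fix $i$ and put $Z=r_iE-Y$, so that $(Z)_{E_i}=0$, $(Z)_{E_{i+1}}=r_i-r_{i+1}$, $(Z)_{E_{i+2}}=r_i-r_{i+2}$ and all octonion parts of $Z$ are zero. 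Then (\ref{prl-06}.d) collapses to $Z^{\times 2}=(r_i-r_{i+1})(r_i-r_{i+2})E_i$, so $\mathrm{tr}(Z^{\times 2})=(r_i-r_{i+1})(r_i-r_{i+2})\neq 0$ by the distinctness of the $r_j$, and therefore $E_{Y,r_i}=\mathrm{tr}(Z^{\times 2})^{-1}Z^{\times 2}=E_i$ directly from the definition; this is essentially the computation already carried out in the proof of Proposition~\ref{cce-04}.

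Part (3) is then immediate: on the one hand $E=E_1+E_2+E_3$, $Y=\sum_i r_iE_i$, and by (\ref{prl-06}.d) $Y^{\times 2}=r_2r_3E_1+r_3r_1E_2+r_1r_2E_3$, so $V_Y=\{aY^{\times 2}+bY+cE\mid a,b,c\in\mathbb{R}\}\subseteq\mathbb{R}E_1\oplus\mathbb{R}E_2\oplus\mathbb{R}E_3$; on the other hand, $V_Y$ is closed under the cross product and contains $E$ and $Y$ (Lemma~\ref{prl-07}), hence contains each $(r_iE-Y)^{\times 2}$ and so each $E_i=E_{Y,r_i}$ by (2), giving the reverse inclusion. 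For (4), take $X=aE_1+bE_2+cE_3\in V_Y$; by (\ref{prl-06}.d), $X^{\times 2}=bcE_1+caE_2+abE_3$, so $X\in\mathcal{H}$ forces $bc=ca=ab=0$ and $a+b+c=1$, i.e.\ $X\in\{E_1,E_2,E_3\}$, and conversely each $E_i$ lies in $\mathcal{H}$. Since $(E_1|E_1)=1\geq 1$ while $(E_2|E_1)=(E_3|E_1)=0\leq 0$, the definitions of $\mathcal{H}({\bf O})$ and $\mathcal{H}'({\bf O})$ together with their disjointness (\ref{hp-01}.a) yield $\mathcal{H}({\bf O})\cap V_Y=\{E_1\}=\{E_{Y,r_1}\}$ and $\mathcal{H}'({\bf O})\cap V_Y=\{E_2,E_3\}=\{E_{Y,r_2},E_{Y,r_3}\}$, with $E_{Y,r_2}=E_2\neq E_3=E_{Y,r_3}$.

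Finally (5) follows from (4) by transport under $g$: by (\ref{prl-10})(i),(iii) one has $g(V_Y)=V_{gY}$ and $gE_{Y,r_i}=E_{gY,r_i}$, and $\mathcal{H}({\bf O})$, $\mathcal{H}'({\bf O})$ are $\mathrm{F}_{4(-20)}$-stable because by Proposition~\ref{hp-np} they are the orbits $Orb_{\mathrm{F}_{4(-20)}}(E_1)$ and $Orb_{\mathrm{F}_{4(-20)}}(E_2)$; applying $g$ to the equalities in (4) then gives (5), the two points staying distinct since $g$ is injective. The computations are wholly elementary; the only places that need care are the index/sign bookkeeping when applying (\ref{prl-06}.d) to $(r_iE-Y)^{\times 2}$ in step (2), and, for (5), invoking the \emph{orbit} description of $\mathcal{H}({\bf O})$ rather than merely the invariance of $\mathcal{H}$ under $\mathrm{F}_{4(-20)}$.
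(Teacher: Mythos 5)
Your proof is correct and follows essentially the same route as the paper: compute $(r_iE-Y)^{\times 2}=(r_i-r_{i+1})(r_i-r_{i+2})E_i$ via (\ref{prl-06}.d) to get (1)--(2), solve $bc=ca=ab=0$, $a+b+c=1$ for (4), and transport (4) by $g$ using (\ref{prl-10}) together with the orbit descriptions of Proposition~\ref{hp-np} for (5). The only cosmetic difference is in (3), where you prove both inclusions directly (computing that $Y^{\times 2}$ is diagonal) instead of the paper's dimension count $3=\dim V\leq\dim V_Y\leq 3$; both are fine.
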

\par
\begin{proof}
(1) By (\ref{prl-06}.c), 
$\Phi_Y(\lambda)=(\lambda-r_1)(\lambda-r_2)(\lambda-r_3)$.
Hence (1) follows.

(2) Because of $(r_iE-Y)^{\times 2}
=(r_i-r_{i+1})(r_i-r_{i+2})E_i$, we see 
$E_{Y,r_i}=E_i$.

(3) 
Put the linear space 
$V=\{aE_1+bE_2+cE_3|~a,b,c\in\mathbb{R}\}$.
Because of (2) and
$E_{Y,r_i} \in V_Y$, we see
$E_1,E_2,E_3\in V_Y$
so that $V\subset V_Y$.
Thus it follows from
$3=\mathrm{dim}~V\leq \mathrm{dim}~V_Y\leq 3$.

(4)
Let $Z \in V_Y\cap\mathcal{H}$.
By (3), $Z$ can be expressed by
$Z=aE_1+bE_2+cE_3$
for some $a,b,c \in \mathbb{R}$.
Because of $aE_1+bE_2+cE_3 \in \mathcal{H}$,
we see that
$0=(aE_1+bE_2+cE_3)^{\times 2}
=bcE_1+caE_2+abE_3$ and
$1=\mathrm{tr}(aE_1+bE_2+cE_3)=a+b+c$.
Then $bc=ca=ab=0$ and $a+b+c=1$.
Solving these equations,
$(a,b,c)=(1,0,0)$, $(0,1,0)$, $(0,0,1)$
and it is clear that
$\mathcal{H}\cap V_Y=\{E_1,E_2,E_3\}$.
Thus $\mathcal{H}({\bf O})\cap V_Y=\{E_1\}
=\{E_{Y,r_1}\}$ 
because $(E_i|E_1)\geq 1$ iff $i=1$,
and $\mathcal{H}'({\bf O})\cap V_Y=\{E_2,E_3\}
=\{E_{Y,r_2},E_{Y,r_3}\}$ 
because $(E_i|E_1)\leq 0$ iff $i=2,3$.
Moreover $E_{Y,r_2}=E_2\ne E_3=E_{Y,r_3}$.
Hence (4) follows.

(5) 
From (\ref{hp-np}.a), (4) and (\ref{prl-10})(iii),
it follows that
$\mathcal{H}({\bf O})\cap V_{g Y}
=(g\mathcal{H}({\bf O}))\cap g(V_X)
=g(\mathcal{H}({\bf O})\cap V_Y)
=\{E_{g Y,\lambda_i}\}$ 
and $\mathcal{H}'({\bf O})\cap V_{g Y}
=(g\mathcal{H}'({\bf O}))\cap g(V_X)
=g(\mathcal{H}'({\bf O})\cap V_Y)
=\{E_{g Y,\lambda_{i+1}},~E_{g Y,\lambda_{i+2}}\}$
with $E_{gY,r_2}\ne E_{gY,r_3}$.
\end{proof}
\medskip

\begin{lemma}\label{odp-09}
Let $i \in\{1,2,3\}$ and indexes $i,i+1,i+2$ be
counted modulo $3$.
Let real numbers $\lambda_1$, $\lambda_2$, $\lambda_3$ 
be different from each other
and
$Y_i={\rm diag}(\lambda_i,\lambda_{i+1},\lambda_{i+2})
\in\mathcal{J}^1$.
Then orbits $Orb_{\mathrm{F}_{4(-20)}}(Y_1)$, 
$Orb_{\mathrm{F}_{4(-20)}}(Y_2)$,
$Orb_{\mathrm{F}_{4(-20)}}(Y_3)$
are different orbits from each other.
\end{lemma}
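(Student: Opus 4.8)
The plan is to distinguish the three orbits by means of the invariant $\mathcal{H}({\bf O})\cap V_X$, using the equivariance recorded in Lemma~\ref{prl-10} and the explicit description in Lemma~\ref{odp-08}. The guiding idea is that among the three characteristic roots of $Y_i$ exactly one — namely $\lambda_i$, the entry in the $(1,1)$ position — is \emph{distinguished} by the property that its associated idempotent $E_{Y_i,\lambda_i}$ lies in $\mathcal{H}({\bf O})$ rather than in $\mathcal{H}'({\bf O})$, and this distinguished label cannot be altered by the action of $\mathrm{F}_{4(-20)}$.

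First I would note that $Y_i=\mathrm{diag}(\lambda_i,\lambda_{i+1},\lambda_{i+2})$ has the three pairwise distinct real characteristic roots $\lambda_1,\lambda_2,\lambda_3$, so Lemma~\ref{odp-08} applies with the substitution $(r_1,r_2,r_3)=(\lambda_i,\lambda_{i+1},\lambda_{i+2})$. Parts (1) and (2) then give that each $E_{Y_i,\lambda_k}$ is well defined and equals one of the matrix units $E_1,E_2,E_3$; part (4) gives $\mathcal{H}({\bf O})\cap V_{Y_i}=\{E_{Y_i,\lambda_i}\}$ and $\mathcal{H}'({\bf O})\cap V_{Y_i}=\{E_{Y_i,\lambda_{i+1}},E_{Y_i,\lambda_{i+2}}\}$.

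Next, suppose towards a contradiction that $gY_i=Y_j$ for some $g\in\mathrm{F}_{4(-20)}$ and some pair $i\neq j$. Applying Lemma~\ref{odp-08}(5) to $Y_i$ (with $r_1=\lambda_i$) yields $\mathcal{H}({\bf O})\cap V_{Y_j}=\mathcal{H}({\bf O})\cap V_{gY_i}=\{E_{gY_i,\lambda_i}\}=\{E_{Y_j,\lambda_i}\}$, the last equality using $gY_i=Y_j$. On the other hand Lemma~\ref{odp-08}(4) applied directly to $Y_j$ gives $\mathcal{H}({\bf O})\cap V_{Y_j}=\{E_{Y_j,\lambda_j}\}$. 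Comparing, $E_{Y_j,\lambda_i}=E_{Y_j,\lambda_j}$. But for $Y_j$ the three idempotents $E_{Y_j,\lambda_j},E_{Y_j,\lambda_{j+1}},E_{Y_j,\lambda_{j+2}}$ are the pairwise distinct matrix units $E_1,E_2,E_3$ by Lemma~\ref{odp-08}(2), while $\lambda_i$ is one of $\lambda_j,\lambda_{j+1},\lambda_{j+2}$; hence $E_{Y_j,\lambda_i}=E_{Y_j,\lambda_j}$ forces $\lambda_i=\lambda_j$, and therefore $i=j$ since $\lambda_1,\lambda_2,\lambda_3$ are distinct — a contradiction. Thus $Orb_{\mathrm{F}_{4(-20)}}(Y_1),\ Orb_{\mathrm{F}_{4(-20)}}(Y_2),\ Orb_{\mathrm{F}_{4(-20)}}(Y_3)$ are pairwise different.

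The argument requires no computation beyond what is already in Lemma~\ref{odp-08}; the only point that demands care — and the nearest thing to an obstacle — is the index bookkeeping, namely making sure that when Lemma~\ref{odp-08} is specialized to $Y_i$ the slot ``$r_1$'' is occupied by $\lambda_i$, so that the unique element of $\mathcal{H}({\bf O})\cap V_{Y_i}$ is genuinely indexed by $\lambda_i$ and the three orbits are separated precisely according to which of $\lambda_1,\lambda_2,\lambda_3$ sits in that distinguished slot.
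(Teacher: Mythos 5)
Your proof is correct and follows essentially the same route as the paper: both arguments distinguish the three diagonal arrangements by which characteristic root's idempotent lands in $\mathcal{H}({\bf O})$, using Lemma~\ref{odp-08} together with the $\mathrm{F}_{4(-20)}$-equivariance of $E_{X,\lambda}$ and the invariance of the exceptional hyperbolic planes. The only cosmetic difference is that you invoke Lemma~\ref{odp-08}(5) to compare $\mathcal{H}({\bf O})\cap V_{Y_j}$ computed in two ways, whereas the paper applies (\ref{prl-10})(iii) directly to $E_{Y_i,\lambda_{i+1}}$ and contradicts $\mathcal{H}({\bf O})\cap\mathcal{H}'({\bf O})=\emptyset$.
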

\begin{proof}
It is enough to show 
$Orb_{\mathrm{F}_{4(-20)}}(Y_i)
\ne Orb_{\mathrm{F}_{4(-20)}}(Y_{i+1})$.
Suppose that there exists 
$g \in \mathrm{F}_{4(-20)}$ such that $g Y_i=Y_{i+1}$.
By Lemma~\ref{odp-08}(4),
\begin{align*} 
\mathcal{H}({\bf O})\cap V_{Y_i}&=\{E_{Y_i,\lambda_i}\},& 
\mathcal{H}'({\bf O})\cap V_{Y_i}
&=\{E_{Y_i,\lambda_{i+1}},~E_{Y_i,\lambda_{i+2}}\},\\
\mathcal{H}({\bf O})\cap V_{Y_{i+1}}
&=\{E_{Y_{i+1},\lambda_{i+1}}\},& 
\mathcal{H}'({\bf O})\cap V_{Y_{i+1}}
&=\{E_{Y_{i+1},\lambda_{i+2}},~E_{Y_{i+1},\lambda_{i}}\}.
\end{align*}
In particular,
$E_{Y_i,\lambda_{i+1}} \in \mathcal{H}'({\bf O})$ and
$E_{Y_{i+1},\lambda_{i+1}} \in \mathcal{H}({\bf O})$.
From (\ref{hp-np}.a)
and (\ref{prl-10})(iii), we see
$E_{Y_{i+1},\lambda_{i+1}}=E_{gY_i,\lambda_{i+1}}
=gE_{Y_i,\lambda_{i+1}}\in g \mathcal{H}'({\bf O})
=\mathcal{H}'({\bf O})$.
Then from(\ref{hp-01}.a), it follows that
$E_{Y_{i+1},\lambda_{i+1}}\in \mathcal{H}({\bf O})\cap
\mathcal{H}'({\bf O})=\emptyset$.
It is a contradiction as required.
\end{proof}
\medskip

\begin{proposition}\label{odp-10}
Let $i \in \{1,2,3\}$ and
indexes $i$, $i+1$, $i+2$ be counted modulo $3$.
Assume that $X\in\mathcal{J}^1$ admits 
characteristic roots $\lambda_1>\lambda_2>\lambda_3.$

{\rm (1)}
There exist the unique $i\in \{1,2,3\}$ such that 
\[X\in Orb_{\mathrm{F}_{4(-20)}}
({\rm diag}(\lambda_i,\lambda_{i+1},\lambda_{i+2})).\]

{\rm (2)} The following assertions are equivalent.
\begin{align*}
\quad{\rm (i)}&\quad
X\in Orb_{\mathrm{F}_{4(-20)}}
({\rm diag}(\lambda_i,\lambda_{i+1},\lambda_{i+2})).\\
{\rm (ii)}&\quad
\left\{
\begin{array}{l}
\mathcal{H}({\bf O})\cap V_X=\{E_{X,\lambda_i}\},\\
\mathcal{H}'({\bf O})
\cap V_X=\{E_{X,\lambda_{i+1}},~E_{X,\lambda_{i+2}}\}
~~\text{with}~E_{X,\lambda_{i+1}}\ne E_{X,\lambda_{i+2}}.
\end{array}
\right.
\end{align*}
\end{proposition}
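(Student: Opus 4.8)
The plan is to obtain (1) from the diagonalizability already established in Lemma~\ref{odp-05}, refined by one short combinatorial step, and then to deduce (2) almost formally from Lemma~\ref{odp-08}(5) together with (1).

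For the existence half of (1): since $\lambda_1>\lambda_2>\lambda_3$ are three distinct real numbers, $\lambda_1$ is a characteristic root of $X$ of multiplicity $1$, and Lemma~\ref{odp-07} gives $Q(W_{X,\lambda_1})=4^{-1}(\lambda_2-\lambda_3)^2>0$. Hence Lemma~\ref{odp-05} applies and $X\in Orb_{\mathrm{F}_{4(-20)}}({\rm diag}(\mu_1,\mu_2,\mu_3))$ for some $\mu_1,\mu_2,\mu_3\in\mathbb{R}$. Because the characteristic polynomial is $\mathrm{F}_{4(-20)}$-invariant and, by Lemma~\ref{odp-08}(1), ${\rm diag}(\mu_1,\mu_2,\mu_3)$ has characteristic roots $\mu_1,\mu_2,\mu_3$, the tuple $(\mu_1,\mu_2,\mu_3)$ is a permutation of $(\lambda_1,\lambda_2,\lambda_3)$. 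Let $i\in\{1,2,3\}$ be the unique index with $\mu_1=\lambda_i$; then $(\mu_1,\mu_2,\mu_3)$ is either $(\lambda_i,\lambda_{i+1},\lambda_{i+2})$ or $(\lambda_i,\lambda_{i+2},\lambda_{i+1})$. In the first case nothing is left to do. In the second, I would apply $\exp(2^{-1}\pi\tilde{A}_1^1(1))\in\mathrm{F}_{4(-20)}$ (Lemma~\ref{cce-10}(1)); by (\ref{cce-10}.a) its action on a diagonal matrix is ${\rm diag}(a,b,c)\mapsto{\rm diag}(a,c,b)$, so it carries ${\rm diag}(\lambda_i,\lambda_{i+2},\lambda_{i+1})$ to ${\rm diag}(\lambda_i,\lambda_{i+1},\lambda_{i+2})$. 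Either way $X\in Orb_{\mathrm{F}_{4(-20)}}({\rm diag}(\lambda_i,\lambda_{i+1},\lambda_{i+2}))$. Uniqueness of $i$ follows at once from Lemma~\ref{odp-09}: were $X$ in the orbits of two of the three canonical forms, those two forms would share an orbit, contradicting Lemma~\ref{odp-09}.

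For (2), the implication (i)$\Rightarrow$(ii) is immediate: write $X=gY_i$ with $Y_i={\rm diag}(\lambda_i,\lambda_{i+1},\lambda_{i+2})$ and $g\in\mathrm{F}_{4(-20)}$, and apply Lemma~\ref{odp-08}(5) to $Y_i$ with $(r_1,r_2,r_3)=(\lambda_i,\lambda_{i+1},\lambda_{i+2})$; this gives exactly $\mathcal{H}({\bf O})\cap V_X=\{E_{X,\lambda_i}\}$ and $\mathcal{H}'({\bf O})\cap V_X=\{E_{X,\lambda_{i+1}},E_{X,\lambda_{i+2}}\}$ with $E_{X,\lambda_{i+1}}\neq E_{X,\lambda_{i+2}}$. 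For (ii)$\Rightarrow$(i), use part (1) to choose the unique $j$ with $X\in Orb_{\mathrm{F}_{4(-20)}}(Y_j)$; applying the implication just proved with $j$ yields $\mathcal{H}({\bf O})\cap V_X=\{E_{X,\lambda_j}\}$, so comparing with the hypothesis (ii) forces $E_{X,\lambda_i}=E_{X,\lambda_j}$. Finally, writing $X=gY_j$ one has $E_{X,\lambda_k}=gE_{Y_j,\lambda_k}$ by (\ref{prl-10})(iii), while by Lemma~\ref{odp-08}(2) the elements $E_{Y_j,\lambda_j}=E_1$, $E_{Y_j,\lambda_{j+1}}=E_2$, $E_{Y_j,\lambda_{j+2}}=E_3$ are pairwise distinct; since $g$ is injective, $E_{X,\lambda_i}=E_{X,\lambda_j}$ forces $i=j$, which is (i).

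The only delicate point I foresee is the bookkeeping in the existence step of (1): one must verify that, among the six permuted diagonal matrices, the three cyclic tuples $(\lambda_i,\lambda_{i+1},\lambda_{i+2})$ are exactly the required canonical forms, and that the single explicit element $\exp(2^{-1}\pi\tilde{A}_1^1(1))$ -- which transposes the second and third diagonal entries while fixing $E_1$ -- suffices to collapse the remaining three permutations onto them. Everything else reduces to direct appeals to Lemmas~\ref{odp-05}, \ref{odp-07}, \ref{odp-08}, \ref{odp-09} and to Lemma~\ref{prl-10}.
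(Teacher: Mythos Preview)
Your argument is correct and follows essentially the same route as the paper: part~(1) via Lemma~\ref{odp-07}, Lemma~\ref{odp-05}, the permutation bookkeeping, the swap $\exp(2^{-1}\pi\tilde{A}_1^1(1))$, and uniqueness from Lemma~\ref{odp-09}; part~(2) via Lemma~\ref{odp-08}(5). The only cosmetic difference is that the paper disposes of (ii)$\Rightarrow$(i) in one line (``a proof by transposition''), relying implicitly on the fact that exactly one (i)$_i$ and exactly one (ii)$_i$ hold for each $X$, whereas you spell out the injectivity argument $E_{X,\lambda_i}=E_{X,\lambda_j}\Rightarrow i=j$ explicitly; both are fine.
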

\begin{proof}
(1)
Fix the characteristic root $\lambda_1\in\mathbb{R}$.
Because of $0\neq \lambda_2-\lambda_3\in\mathbb{R}$
and (\ref{odp-07}),
we see
$Q(W_{X,\lambda_1})=4^{-1}(\lambda_2-\lambda_3)^2>0$.
By Lemma~\ref{odp-05},
we see that
$X$ is diagonalizable under 
the action of $\mathrm{F}_{4(-20)}$.
Then from Lemma~\ref{odp-08}(1),
there exists 
$i\in\{1,2,3\}$ and
$g_0 \in \mathrm{F}_{4(-20)}$ such that  
$g_0 X=
{\rm diag}(\lambda_i,\lambda_{i+1},\lambda_{i+2})$
or
$g_0 X=
{\rm diag}(\lambda_i,\lambda_{i+2},\lambda_{i+1})$.
When
$g_0X={\rm diag}(\lambda_i,\lambda_{i+2},\lambda_{i+1})$,
from (\ref{cce-10}.a), we see
$\exp(2^{-1}\pi\tilde{A}_1^1(1))g_0X
={\rm diag}(\lambda_i,\lambda_{i+1},\lambda_{i+2})$.
Thus
$X\in Orb_{F_{4(-20}}({\rm diag}
(\lambda_i,\lambda_{i+1},\lambda_{i+2}))$
and by Lemma~\ref{odp-09}, such $i$ is unique.
Hence (1) follows.

(2) We show (i) $\Rightarrow$ (ii).
Then it implies that (i) and (ii) are equivalent
(a proof by transposition).
Suppose
$gX={\rm diag}(\lambda_i,\lambda_{i+1},\lambda_{i+2})$ 
for some $g\in \mathrm{F}_{4(-20)}$.
Since $gX$ is diagonal matrix, 
from Lemma~\ref{odp-08}(4),
we see
$\mathcal{H}({\bf O})\cap V_{gX}
=\{E_{gX,\lambda_i}\}$ and
$\mathcal{H}'({\bf O})\cap V_{gX}
=\{E_{gX,\lambda_{i+1}},~E_{gX,\lambda_{i+2}}\}$
with $E_{gX,\lambda_{i+1}}\ne E_{gX,\lambda_{i+2}}$.
Then from Lemma~\ref{odp-08}(5), it follows that
$\mathcal{H}({\bf O})\cap V_{X}
=\mathcal{H}({\bf O})\cap V_{g^{-1}gX}
=\{E_{g^{-1}gX,\lambda_i}\}=\{E_{X,\lambda_i}\}$
and 
$\mathcal{H}'({\bf O})\cap V_{X}
=\mathcal{H}'({\bf O})\cap V_{g^{-1}gX}
=\{E_{X,\lambda_{i+1}},~E_{X,\lambda_{i+2}}\}$
with $E_{X,\lambda_{i+1}}\ne E_{X,\lambda_{i+2}}$.
\end{proof}
\medskip

\begin{proposition}\label{odp-11}
Assume that $X\in\mathcal{J}^1$ admits 
characteristic roots $\lambda_1\in\mathbb{R},$ 
$p\pm \sqrt{-1}q$ with $p\in\mathbb{R}$ and $q>0$.
Then 
\[X\in Orb_{\mathrm{F}_{4(-20)}}
({\rm diag}(p,p,\lambda_1)+F^1_3(q)).\]
\end{proposition}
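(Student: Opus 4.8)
The plan is to mimic the proof of Proposition~\ref{odp-10} (the three-distinct-real-roots case), except that here the argument must land on the \emph{negative} sphere $\mathcal{S}^{8,1}(-1)$ rather than on a positive sphere; this is exactly what distinguishes the complex-conjugate-pair case from a genuine diagonalization.

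First I would record that, since $q>0$, the roots $p\pm\sqrt{-1}q$ are non-real, so the real root $\lambda_1$ is a simple root of $\Phi_X$; thus Lemmas~\ref{odp-03},~\ref{odp-04},~\ref{odp-06} and~\ref{odp-07} all apply to $\lambda_1$. Writing $\lambda_2=p+\sqrt{-1}q$ and $\lambda_3=p-\sqrt{-1}q$, formula (\ref{odp-07}) gives
\[
Q(W_{X,\lambda_1})=4^{-1}(\lambda_2-\lambda_3)^2=4^{-1}(2\sqrt{-1}q)^2=-q^2<0 .
\]
By Lemma~\ref{odp-04}(i), the case $E_{X,\lambda_1}\in\mathcal{H}({\bf O})$ would force $Q(W_{X,\lambda_1})\ge 0$, a contradiction; hence, in view of (\ref{odp-03}.b), we are necessarily in the case $E_{X,\lambda_1}\in\mathcal{H}'({\bf O})$.

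Next I would invoke Lemma~\ref{odp-04}(ii): there is $g\in\mathrm{F}_{4(-20)}$ with
\[
gX=\lambda_1E_3+2^{-1}(\mathrm{tr}(X)-\lambda_1)(E-E_3)+gW_{X,\lambda_1},\qquad
gW_{X,\lambda_1}\in\mathcal{J}^1_{L^{\times}(2E_3),-1}.
\]
By (\ref{odp-06})(i), $\mathrm{tr}(X)=\lambda_1+2p$, so $2^{-1}(\mathrm{tr}(X)-\lambda_1)=p$ and the first two summands equal ${\rm diag}(p,p,\lambda_1)=pE_1+pE_2+\lambda_1E_3$. By (\ref{spn-01}.a), $Q(gW_{X,\lambda_1})=Q(W_{X,\lambda_1})=-q^2$, hence $q^{-1}gW_{X,\lambda_1}\in\mathcal{S}^{8,1}(-1)$. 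Using (\ref{spn-06}.b) I would choose $g_2\in(\mathrm{F}_{4(-20)})_{E_3}$ with $g_2(q^{-1}gW_{X,\lambda_1})=F_3^1(1)$, so that $g_2gW_{X,\lambda_1}=F_3^1(q)$. Since $g_2$ fixes both $E$ and $E_3$ it fixes ${\rm diag}(p,p,\lambda_1)$, and therefore
\[
g_2gX={\rm diag}(p,p,\lambda_1)+F_3^1(q),
\]
which is the claim.

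There is no genuine obstacle: the whole argument is an assembly of results from Sections~\ref{spn} and~\ref{odp}, and the one substantive point is the sign computation $Q(W_{X,\lambda_1})=-q^2<0$, which is precisely what selects the $\mathcal{H}'({\bf O})$ branch of Lemma~\ref{odp-04} and forces the use of the negative sphere $\mathcal{S}^{8,1}(-1)$ rather than the spheres $\mathcal{S}^{8,1}_{\pm}$ used for diagonalization. The verifications I have suppressed are the elementary facts that $q^{-1}gW_{X,\lambda_1}$ has $Q$-value $-1$ (so lies in $\mathcal{S}^{8,1}(-1)$) and that $F_3^1(q)=qF_3^1(1)$.
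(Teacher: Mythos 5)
Your argument is correct and follows essentially the same route as the paper: compute $Q(W_{X,\lambda_1})=-q^2<0$ via (\ref{odp-07}), use Lemma~\ref{odp-04} to land in the $\mathcal{H}'({\bf O})$ case with $gW_{X,\lambda_1}\in\mathcal{J}^1_{L^{\times}(2E_3),-1}$, identify the diagonal part via (\ref{odp-06})(i), rescale to an element of $\mathcal{S}^{8,1}(-1)$, and apply (\ref{spn-06}.b) to reach $F_3^1(q)$. The only cosmetic difference is that you spell out the contradiction with Lemma~\ref{odp-04}(i) to exclude $\mathcal{H}({\bf O})$, which the paper leaves implicit.
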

\begin{proof}
Since $\lambda_1$
is a characteristic root in $\mathbb{R}$ of multiplicity $1$,
from (\ref{odp-07}), we see
$Q(W_{X,\lambda_1})
=4^{-1}((p+\sqrt{-1}q)-(p-\sqrt{-1}q))^2
=-q^2<0$.
From Lemma~\ref{odp-04}, we see that
$E_{X,\lambda_1}$ must be in $\mathcal{H}'({\bf O})$
and
there exists $g_1\in \mathrm{F}_{4(-20)}$ such that
\[g_1X=\lambda_1 E_3+2^{-1}(\mathrm{tr}(X)-\lambda_1)(E-E_3)
+g_1 W_{X,\lambda_1}\]
where
$g_1 W_{X,\lambda_1}\in\mathcal{J}^1_{L^{\times}(2E_3),-1}$
and $Q(g_1 W_{X,\lambda_1})=Q(W_{X,\lambda_1})=-q^2<0$.
By (\ref{odp-06})(i),
$\mathrm{tr}(X)=\lambda_1+(p+\sqrt{-1}q)+(p-\sqrt{-1}q)
=\lambda_1+2p$ and therefore
$g_1X$ can be expressed by
\[g_1X=\lambda_1 E_3+p(E-E_3)
+qY\quad
\text{for~some}~Y\in\mathcal{S}^{8,1}(-1).\]
Now by (\ref{spn-06}.b),
there exists $g_2\in (\mathrm{F}_{4(-20)})_{E_3}$
such that $g_2Y=F_3^1(1)$
and it is clear that
\[g_2g_1X=\lambda_1 E_3+p(E-E_3)
+F_3^1(q)
=\mathrm{diag}(p,p,\lambda_1)+F_3^1(q).\]
\end{proof}

\begin{lemma}\label{odp-12}
\[\mathcal{N}_1^+({\bf O})\cap
\mathcal{J}^1_{L^{\times}(2E_3),-1}=\mathcal{N}^{8,1}_+,
\quad
\mathcal{N}_1^-({\bf O})\cap
\mathcal{J}^1_{L^{\times}(2E_3),-1}=\mathcal{N}^{8,1}_-.\]
\end{lemma}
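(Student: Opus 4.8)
The plan is to parametrize $\mathcal{J}^1_{L^{\times}(2E_3),-1}$ explicitly and compare the defining conditions of the two sides term by term. By (\ref{spn-01}.c) with $i=3$, every $X\in\mathcal{J}^1_{L^{\times}(2E_3),-1}$ has the form $X=\xi(E_1-E_2)+F_3^1(x)$ for unique $\xi\in\mathbb{R}$ and $x\in{\bf O}$. I would first record two consequences of this shape: $\mathrm{tr}(X)=\xi+(-\xi)+0=0$, so the trace condition in the definition of $\mathcal{N}_1^{\pm}({\bf O})$ is automatically satisfied on $\mathcal{J}^1_{L^{\times}(2E_3),-1}$; and $(X|E_1)=\xi$ by (\ref{prl-06}.b).

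Next I would compute $X^{\times 2}$ from (\ref{prl-06}.d), applied with $\xi_1=\xi$, $\xi_2=-\xi$, $\xi_3=0$ and $x_1=x_2=0$, $x_3=x$. All $E_1$- and $E_2$-components and all $F_i^1$-components vanish, and using $\epsilon(3)=-1$ one is left with $X^{\times 2}=\bigl((x|x)-\xi^2\bigr)E_3=-Q(X)E_3$, where $Q(X)=\xi^2-(x|x)$ as in (\ref{spn-01}.d). In particular $X^{\times 2}=0$ if and only if $Q(X)=0$.

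Given these two observations the two equalities reduce to bookkeeping. For the $+$ case, $X\in\mathcal{N}_1^+({\bf O})\cap\mathcal{J}^1_{L^{\times}(2E_3),-1}$ holds iff $X\in\mathcal{J}^1_{L^{\times}(2E_3),-1}$, $Q(X)=0$ and $\xi>0$, while $X\in\mathcal{N}^{8,1}_+$ holds iff $X\in\mathcal{J}^1_{L^{\times}(2E_3),-1}$, $Q(X)=0$, $X\neq0$ and $\xi>0$; the extra clause $X\neq0$ is vacuous because $\xi=(X|E_1)>0$ already forces $X\neq0$, so the two sets coincide. The $-$ case is identical with $>0$ replaced by $<0$ throughout, again using that $\xi<0$ forces $X\neq0$. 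There is no genuine obstacle here; the only point requiring attention is the sign in the computation of $X^{\times 2}$ via (\ref{prl-06}.d) and (\ref{spn-01}.d), which must be tracked so that $X^{\times 2}$ comes out as exactly $-Q(X)E_3$.
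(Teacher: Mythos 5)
Your proposal is correct and follows essentially the same route as the paper: the paper's proof also parametrizes elements of $\mathcal{J}^1_{L^{\times}(2E_3),-1}$ as $\xi(E_1-E_2)+F_3^1(x)$ and uses the computation $X^{\times 2}=-(\xi^2-(x|x))E_3$ together with $\mathrm{tr}(X)=0$ and $(X|E_1)=\xi$ to match the defining conditions, merely organized as two inclusions rather than a single equivalence. Your streamlined bookkeeping (including the remark that $X\neq 0$ is automatic once $\xi\neq 0$) is fine.
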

\begin{proof}
Take $X\in\mathcal{N}_1^+({\bf O})\cap
\mathcal{J}^1_{L^{\times}(2E_3),-1}$.
Because of $X\in\mathcal{N}_1^+({\bf O})$,
$Q(X^{\times 2})=0$ and $(X|E_1)>0$.
Thus,
$X\in\mathcal{N}^{8,1}_+$
and so
$\mathcal{N}_1^+({\bf O})\cap
\mathcal{J}^1_{L^{\times}(2E_3),-1}\subset
\mathcal{N}^{8,1}_+$.
Conversely, take $X\in\mathcal{N}^{8,1}_+$.
Then
$X$ can be expressed by
$X=\xi(E_1-E_2)+F_3^1(x)$
where $\xi=(X|E_1)>0$, $x\in{\bf O}$
and $0=Q(X)=\xi^2-(x|x)$.
Now $X^{\times 2}=-(\xi^2-(x|x))E_3=0$,
$\mathrm{tr}(X)=0$ and $(X|E_1)>0$.
Thus $X\in\mathcal{N}_1^+({\bf O})\cap
\mathcal{J}^1_{L^{\times}(2E_3),-1}$
and so $\mathcal{N}^{8,1}_+\subset 
\mathcal{N}_1^+({\bf O})\cap
\mathcal{J}^1_{L^{\times}(2E_3),-1}$.
Hence $\mathcal{N}_1^+({\bf O})\cap
\mathcal{J}^1_{L^{\times}(2E_3),-1}=\mathcal{N}^{8,1}_+$.
Similarly, we obtain $\mathcal{N}_1^-({\bf O})\cap
\mathcal{J}^1_{L^{\times}(2E_3),-1}=\mathcal{N}^{8,1}_-$.
\end{proof}
\medskip

\begin{proposition}\label{odp-13}
Assume that $X\in\mathcal{J}^1$ 
admits characteristic roots $\lambda_1$ 
of multiplicity $1$ and $\lambda_2$ of multiplicity $2.$
Then we have
\begin{align*}
{\rm (i)}&~E_{X,\lambda_1}\in\mathcal{H}({\bf O})
\coprod \mathcal{H}'({\bf O}),~~
{\rm (ii)}~
W_{X,\lambda_1}\in \{0\}\coprod \mathcal{N}_1^+({\bf O})
\coprod \mathcal{N}_1^-({\bf O}),\\
{\rm (iii)}&~E_{X,\lambda_1}\in\mathcal{H}({\bf O})
\Rightarrow W_{X,\lambda_1}=0
\quad
{\rm (iv)}~
W_{X,\lambda_1}\ne 0 
\Rightarrow E_{X,\lambda_1}\in\mathcal{H}'({\bf O})
\end{align*}
and the following assertions hold:

{\rm (1)} $E_{X,\lambda_1}\in\mathcal{H}({\bf O})\Rightarrow 
X\in Orb_{\mathrm{F}_{4(-20)}}
({\rm diag}(\lambda_1,\lambda_2,\lambda_2))$,
\smallskip

{\rm (2)} $E_{X,\lambda_1}
\in\mathcal{H}'({\bf O}) 
,~W_{X,\lambda_1}=0\Rightarrow 
X\in Orb_{\mathrm{F}_{4(-20)}}
({\rm diag}(\lambda_2,\lambda_2,\lambda_1))$,
\smallskip

{\rm (3)} $W_{X,\lambda_1}
\in\mathcal{N}_1^+({\bf O})\Rightarrow
X\in Orb_{\mathrm{F}_{4(-20)}}
({\rm diag}(\lambda_2, \lambda_2, \lambda_1)+P^+)$,
\smallskip

{\rm (4)} $W_{X,\lambda_1}\in\mathcal{N}_1^-({\bf O})\Rightarrow
X\in Orb_{\mathrm{F}_{4(-20)}}
({\rm diag}(\lambda_2, \lambda_2, \lambda_1)+P^-)$.
\end{proposition}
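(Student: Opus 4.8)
The plan is to deduce everything from the structure theorem for $X$ near its simple characteristic root $\lambda_1$, namely Lemma~\ref{odp-03} and Lemma~\ref{odp-04}, and then to locate the ``null part'' $W_{X,\lambda_1}$ among the exceptional null cones using the orbit computations of Section~\ref{spn} and Lemma~\ref{odp-12}.

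First I would establish (i)--(iv). Since $\lambda_1$ has multiplicity $1$, Lemma~\ref{odp-03} gives $E_{X,\lambda_1}\in\mathcal{H}({\bf O})\coprod\mathcal{H}'({\bf O})$ (this is (i)) and $W_{X,\lambda_1}\in\mathcal{J}^1_{L^{\times}(2E_{X,\lambda_1}),-1}$. Writing the characteristic polynomial as $(\lambda-\lambda_1)(\lambda-\lambda_2)^2$ and applying Lemma~\ref{odp-07} with the two non-real roots both equal to $\lambda_2$ yields $Q(W_{X,\lambda_1})=4^{-1}(\lambda_2-\lambda_2)^2=0$. Now I split on (i). If $E_{X,\lambda_1}\in\mathcal{H}({\bf O})$, then by Lemma~\ref{odp-04}(i) the quadratic space $(\mathcal{J}^1_{L^{\times}(2E_{X,\lambda_1}),-1},Q)$ is isomorphic to the definite space $(\mathbb{R}^{0,9},\mathrm{q}_{0,9})$, so $Q(W_{X,\lambda_1})=0$ forces $W_{X,\lambda_1}=0$; this is (iii), and (iv) is its contrapositive. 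If $E_{X,\lambda_1}\in\mathcal{H}'({\bf O})$, choose $g\in\mathrm{F}_{4(-20)}$ with $gE_{X,\lambda_1}=E_3$ as in Lemma~\ref{odp-04}(ii); then $gW_{X,\lambda_1}=W_{gX,\lambda_1}\in\mathcal{J}^1_{L^{\times}(2E_3),-1}$ by Lemma~\ref{prl-10}(iv), with $Q(gW_{X,\lambda_1})=0$, so either $gW_{X,\lambda_1}=0$, hence $W_{X,\lambda_1}=0$, or $gW_{X,\lambda_1}\in\mathcal{N}^{8,1}=\mathcal{N}^{8,1}_+\coprod\mathcal{N}^{8,1}_-$ by (\ref{spn-02}.b). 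By Lemma~\ref{odp-12}, $\mathcal{N}^{8,1}_{\pm}=\mathcal{N}_1^{\pm}({\bf O})\cap\mathcal{J}^1_{L^{\times}(2E_3),-1}$; since $\mathcal{N}_1^{+}({\bf O})$ and $\mathcal{N}_1^{-}({\bf O})$ are $\mathrm{F}_{4(-20)}$-orbits by Proposition~\ref{hp-np}, applying $g^{-1}$ gives $W_{X,\lambda_1}\in\mathcal{N}_1^{+}({\bf O})$ or $W_{X,\lambda_1}\in\mathcal{N}_1^{-}({\bf O})$ accordingly. Together with the previous case this proves (ii).

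Next I would prove the four canonical-form assertions, in each case starting from the decomposition of Lemma~\ref{odp-04} and using $\mathrm{tr}(X)=\lambda_1+2\lambda_2$ (Lemma~\ref{odp-06}(i)), so that $2^{-1}(\mathrm{tr}(X)-\lambda_1)=\lambda_2$. For (1), (iii) gives $W_{X,\lambda_1}=0$, and Lemma~\ref{odp-04}(i) produces $g$ with $gX=\lambda_1E_1+\lambda_2(E-E_1)=\mathrm{diag}(\lambda_1,\lambda_2,\lambda_2)$. For (2), $W_{X,\lambda_1}=0$ by hypothesis and Lemma~\ref{odp-04}(ii) gives $gX=\lambda_1E_3+\lambda_2(E-E_3)=\mathrm{diag}(\lambda_2,\lambda_2,\lambda_1)$. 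For (3) and (4), (iv) puts us in the $\mathcal{H}'({\bf O})$ case, so Lemma~\ref{odp-04}(ii) gives $g_1$ with $g_1X=\lambda_1E_3+\lambda_2(E-E_3)+g_1W_{X,\lambda_1}$ and $g_1W_{X,\lambda_1}\in\mathcal{J}^1_{L^{\times}(2E_3),-1}$; by $\mathrm{F}_{4(-20)}$-stability of $\mathcal{N}_1^{\pm}({\bf O})$ and Lemma~\ref{odp-12}, $g_1W_{X,\lambda_1}\in\mathcal{N}^{8,1}_{\pm}$, and by (\ref{spn-06}.d) there is $g_2\in(\mathrm{F}_{4(-20)})_{E_3}$ with $g_2g_1W_{X,\lambda_1}=P^{\pm}$. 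Since $g_2$ fixes $E_3$ and $E$ it fixes $\lambda_1E_3+\lambda_2(E-E_3)$, whence $g_2g_1X=\mathrm{diag}(\lambda_2,\lambda_2,\lambda_1)+P^{\pm}$, as required.

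All the analytic and combinatorial content is already packaged in the cited lemmas, so I expect no genuine computational obstacle; the only thing requiring care is the bookkeeping of the two successive normalizations — one must perform the second reduction inside the stabilizer $(\mathrm{F}_{4(-20)})_{E_3}$ so as not to disturb the part $\lambda_1E_3+\lambda_2(E-E_3)$ already put in diagonal form — together with the organization of the case split on $E_{X,\lambda_1}\in\mathcal{H}({\bf O})$ versus $\mathcal{H}'({\bf O})$ and on $W_{X,\lambda_1}=0$ versus $W_{X,\lambda_1}\in\mathcal{N}_1^{\pm}({\bf O})$, so that the four implications emerge in exactly the form stated.
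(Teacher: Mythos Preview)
Your proposal is correct and follows essentially the same route as the paper: establish (i)--(iv) via Lemmas~\ref{odp-03}, \ref{odp-04}, \ref{odp-07}, then reduce to the canonical forms using Lemma~\ref{odp-12} and (\ref{spn-06}.d) inside $(\mathrm{F}_{4(-20)})_{E_3}$.

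The one noteworthy difference is in your argument for (ii). You obtain $W_{X,\lambda_1}\in\{0\}\coprod\mathcal{N}_1^+({\bf O})\coprod\mathcal{N}_1^-({\bf O})$ by first normalizing via $g$ so that $gW_{X,\lambda_1}\in\mathcal{J}^1_{L^\times(2E_3),-1}$, then invoking the $\mathcal{N}^{8,1}$ picture and Lemma~\ref{odp-12}, and finally pulling back with $g^{-1}$. The paper instead argues purely algebraically, without any case split or group action: from (\ref{odp-02}.b)(iii) and (\ref{odp-02}.c) one reads off directly that $W_{X,\lambda_1}^{\times 2}=-(Q(W_{X,\lambda_1})/\mathrm{tr}(Z^{\times 2}))\,Z^{\times 2}=0$, and a short trace computation gives $\mathrm{tr}(W_{X,\lambda_1})=0$, so $W_{X,\lambda_1}\in\{0\}\coprod\mathcal{N}_1({\bf O})$, whence (ii) by (\ref{hp-01}.b). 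The paper's version is a bit cleaner since it avoids doing the $E_3$-normalization twice (once for (ii) and again for (3)--(4)), but your argument is equally valid.
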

\begin{proof}
(i) follows from (\ref{odp-03}.b).
Since $\lambda_1$
is a characteristic root in $\mathbb{R}$ of multiplicity $1$,
form (\ref{odp-07}), we see
$Q(W_{X,\lambda_1})=4^{-1}(\lambda_2-\lambda_2)^2
=0$.
Put $Z=\lambda_1E-X$.
From (\ref{odp-02}.b)(iii) and (\ref{odp-02}.c),
we see
\[W_{X,\lambda_1}^{\times 2}
=-(Q(W_{X,\lambda_1})/\mathrm{tr}(Z^{\times 2}))
Z^{\times 2}=0.\]
Now, because of 
$\mathrm{tr}(E_{X,\lambda_1})=1$ 
and $\mathrm{tr}(E)=3$, we see 
\[\mathrm{tr}(W_{X,\lambda_1})
=\mathrm{tr}
\left(X-\left(\lambda_1E_{X,\lambda_1}
+2^{-1}(\mathrm{tr}(X)-\lambda_1)
(E-E_{X,\lambda_1})\right)\right)=0.\]
Thus $W_{X,\lambda_1}\in\{0\}
\coprod\mathcal{N}_1({\bf O})$.
Hence (ii) follows from (\ref{hp-01}.b),
and (iii) follows from
$Q(W_{X,\lambda_1})=0$ and Lemma~\ref{odp-04}(i).
Moreover, 
it follows from (i) that 
(iv) is the contrapositive proposition
of (iii).

(1) 
By (iii), $W_{X,\lambda_1}=0$.
Hence
it follows from Lemma~\ref{odp-04}(i). 

(2) It follows from Lemma~\ref{odp-04}(ii).

(3) From (iv), we see
$E_{X,\lambda_1}\in\mathcal{H}'({\bf O})$
and from Lemma~\ref{odp-04},
there exists $g_1\in \mathrm{F}_{4(-20)}$ such that
\[g_1X=\lambda_1 E_3+2^{-1}(\mathrm{tr}(X)-\lambda_1)(E-E_3)
+g_1 W_{X,\lambda_1}\]
where
$g_1 W_{X,\lambda_1}\in\mathcal{J}^1_{L^{\times}(2E_3),-1}$.
By (\ref{hp-np}.d),
$g_1 W_{X,\lambda_1}
\in g_1\mathcal{N}_1^+({\bf O})
=\mathcal{N}_1^+({\bf O})$
and by Lemma~\ref{odp-12},
$g_1 W_{X,\lambda_1}\in\mathcal{N}_1^+({\bf O})
\cap\mathcal{J}^1_{L^{\times}(2E_3),-1}=\mathcal{N}^{8,1}_+$.
From (\ref{spn-06}.d)(i),
there exists $g_2\in (\mathrm{F}_{4(-20)})_{E_3}$
such that $g_2g_1 W_{X,\lambda_1}=P^+$.
Now by (\ref{odp-06})(i),
$\mathrm{tr}(X)=\lambda_1+2\lambda_2$.
Thus
\[g_2g_1X=\lambda_1 E_3
+2^{-1}(\lambda_1+2\lambda_2-\lambda_1)(E-E_3)
+P^+=\mathrm{diag}(\lambda_2,\lambda_2,\lambda_1)+P^+.\] 
Hence (3) follows and
similarly, (4) follows.
\end{proof}
\medskip

\begin{proposition}\label{odp-14}
Assume that $X\in\mathcal{J}^1$ 
admits a characteristic root of multiplicity $3$.
Then
\[p(X)\in\{0\}\coprod
\mathcal{N}_1^+({\bf O})\coprod\mathcal{N}_1^-({\bf O})
\coprod\mathcal{N}_2({\bf O})\]
and the following assertions hold:

{\rm (1)} $p(X)=0\Rightarrow 
X\in Orb_{\mathrm{F}_{4(-20)}}(3^{-1}\mathrm{tr}(X)E)$,
\smallskip

{\rm (2)} $p(X)\in\mathcal{N}_1^+({\bf O}) 
\Rightarrow 
X\in Orb_{\mathrm{F}_{4(-20)}}(3^{-1}\mathrm{tr}(X)E+P^+)$,
\smallskip

{\rm (3)} $p(X)\in\mathcal{N}_1^-({\bf O}) 
\Rightarrow 
X\in Orb_{\mathrm{F}_{4(-20)}}
(3^{-1}\mathrm{tr}(X)E+P^-)$,
\smallskip

{\rm (4)} $p(X)\in\mathcal{N}_2({\bf O}) 
\Rightarrow 
X\in Orb_{\mathrm{F}_{4(-20)}}(3^{-1}\mathrm{tr}(X)E+Q^+(1))$.
\end{proposition}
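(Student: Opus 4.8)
The plan is to transport the null-cone decomposition of Lemma~\ref{hp-03} and the transitivity statements of Proposition~\ref{hp-np} from the traceless slice $(\mathcal{J}^1)_0$ to all of $\mathcal{J}^1$ via the translation $X\mapsto p(X)=X-3^{-1}\mathrm{tr}(X)E$. First I would note that since $\Phi_X$ has real coefficients, a characteristic root of multiplicity $3$ is a real number $\lambda_1$, and comparing with (\ref{prl-03}.c) gives $\lambda_1=3^{-1}\mathrm{tr}(X)$, so that $p(X)=X-\lambda_1E$. Computing the characteristic polynomial of $p(X)$ straight from its definition, $\Phi_{p(X)}(\mu)=\mathrm{det}(\mu E-p(X))=\mathrm{det}((\mu+\lambda_1)E-X)=\Phi_X(\mu+\lambda_1)=\mu^3$; hence by (\ref{prl-03}.c) the three invariants $\mathrm{tr}(p(X))$, $\mathrm{tr}(p(X)^{\times2})$, $\mathrm{det}(p(X))$ all vanish, i.e.\ $p(X)\in\mathcal{N}$. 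The first assertion of the proposition is then exactly Lemma~\ref{hp-03}.

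For the four implications I would use Proposition~\ref{hp-np} together with the $\mathrm{F}_{4(-20)}$-equivariance $g\,p(X)=p(gX)$ of (\ref{prl-10})(ii), the invariance $gE=E$ of (\ref{prl-08}.b), and $\mathrm{tr}(gX)=\mathrm{tr}(X)$ of (\ref{prl-08}.a). Case (1) is immediate: $p(X)=0$ forces $X=\lambda_1E=3^{-1}\mathrm{tr}(X)E$. For case (2), if $p(X)\in\mathcal{N}_1^+({\bf O})$ then by (\ref{hp-np}.c) there exists $g\in\mathrm{F}_{4(-20)}$ with $g\,p(X)=P^+$, whence $gX=3^{-1}\mathrm{tr}(gX)E+p(gX)=3^{-1}\mathrm{tr}(X)E+g\,p(X)=3^{-1}\mathrm{tr}(X)E+P^+$, the desired canonical form. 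Cases (3) and (4) are word for word the same, invoking (\ref{hp-np}.d) to bring $p(X)$ to $P^-$ and (\ref{hp-np}.e) to bring $p(X)$ to $Q^+(1)$.

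I do not expect a genuine obstacle here: once one observes that $p$ is an $\mathrm{F}_{4(-20)}$-equivariant affine map landing in $\mathcal{N}$, the proposition is a direct corollary of the already-proved orbit decompositions of the exceptional null cones. The only computational point is the identity $\Phi_{p(X)}(\mu)=\Phi_X(\mu+\lambda_1)$ and the resulting vanishing of the invariants of $p(X)$, which is short; everything else is bookkeeping with $gE=E$ and the invariance of the trace.
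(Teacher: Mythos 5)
Your proposal is correct and follows essentially the same route as the paper: compute $\Phi_{p(X)}(\mu)=\Phi_X(\mu+3^{-1}\mathrm{tr}(X))=\mu^3$, deduce via (\ref{prl-03}.c) that the invariants of $p(X)$ vanish so $p(X)\in\mathcal{N}$, apply Lemma~\ref{hp-03} for the decomposition, and then use the invariance of $E$ (together with the equivariance of $p$ and the transitivity statements (\ref{hp-np}.c)--(\ref{hp-np}.e)) to reduce $X=3^{-1}\mathrm{tr}(X)E+p(X)$ to the four canonical forms. No gaps; the argument matches the paper's proof step for step.
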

\begin{proof}
Put $Z=p(X)$.
Because of $\Phi_X(\lambda)=(\lambda-3^{-1}\mathrm{tr}(X))^3$,
we see 
$\Phi_Z(\mu)
=\mathrm{det}((\mu+3^{-1}\mathrm{tr}(X))E-X)
=\Phi_X(\mu+3^{-1}\mathrm{tr}(X))
=\mu^3$.
From (\ref{prl-03}.c), we see $\mathrm{tr}(Z)
=\mathrm{tr}(Z^{\times 2})=\mathrm{det}(Z)=0$.
Then by (\ref{hp-03}),
$Z\in\mathcal{N}=
\{0\}\coprod
\mathcal{N}_1^+({\bf O})\coprod\mathcal{N}_1^-({\bf O})
\coprod\mathcal{N}_2({\bf O})$. 
Now 
$X=3^{-1}\mathrm{tr}(X)E+p(X)$. 
Since $E$ 
is invariant under the action of $\mathrm{F}_{4(-20)}$,
(1),(2),(3) and (4) follows from
(\ref{hp-np}.c), (\ref{hp-np}.d)
and (\ref{hp-np}.e).
\end{proof}
\bigskip

\begin{proof}[{\bf Proof 
of Main Theorem~\ref{orb-decomposition}}]
Fix $X\in\mathcal{J}^1$.
$\Phi_X(\lambda)$ is 
a $\mathbb{R}$-coefficient polynomial of 
$\lambda$ with degree $3$, 
so that we obtain the following cases 
(1)-(4) 
by means of the set of all characteristic 
roots with multiplicities.

(1) $X\in\mathcal{J}^1$ 
admits characteristic roots 
$\lambda_1>\lambda_2>\lambda_3$.

(2) $X\in\mathcal{J}^1$ 
admits characteristic roots $\lambda_1\in\mathbb{R}$
and $p\pm \sqrt{-1}q$ with $p\in\mathbb{R}$  and $q>0$.

(3) $X\in\mathcal{J}^1$ 
admits characteristic roots  $\lambda_1$ 
of multiplicity $1$ and $\lambda_2$ of multiplicity $2$.

(4) $X\in\mathcal{J}^1$ 
admits a characteristic root of multiplicity $3$.

Because the set of all characteristic roots with multiplicities 
are invariant under the action of $\mathrm{F}_{4(-20)}$, 
the difference of set of all characteristic roots 
with multiplicities induces 
the difference of $\mathrm{F}_{4(-20)}$-orbits 
in $\mathcal{J}^1$.
Therefore cases (1)-(4) 
are different cases of orbits from each other.

In case (1), by Proposition~\ref{odp-10},
we obtain the propositions (1)(i)-(ii) 
and the canonical forms.
By Lemma~\ref{odp-09}, 
the three cases are different orbits from each other.

In case (2), by Proposition~\ref{odp-11}, 
we obtain  the canonical form of $X$.

In case (3), from Proposition~\ref{odp-13}, 
we obtain the propositions (3)(i)-(iv)
and
the canonical forms.
Put 
$\mathcal{O}=\mathcal{N}_1^+({\bf O})$, 
$\mathcal{N}_1^+({\bf O})$ or $\{0\}$.
From (\ref{prl-10}) and Proposition~\ref{hp-np},
it follows that
if $W_{X,\lambda_1}\in \mathcal{O}$ then
\[W_{gX,\lambda_1}=gW_{X,\lambda_1}\in 
g\mathcal{O}=\mathcal{O}\quad
\text{for~all}~g\in \mathrm{F}_{4(-20)}.\]
Thus the condition $W_{X,\lambda_1}\in \mathcal{O}$ 
is invariant
under the action of $\mathrm{F}_{4(-20)}$.
Similarly, 
from (\ref{prl-10}) and Proposition~\ref{hp-np},
the condition 
$E_{X,\lambda_1}\in \mathcal{H}({\bf O})$ 
or $E_{X,\lambda_1}\in \mathcal{H}'({\bf O})$
is invariant
under the action of $\mathrm{F}_{4(-20)}$.
It implies that these four cases are 
different orbits from each other.

In case (4), by Proposition~\ref{odp-14},
we obtain the proposition
and the canonical forms.
Put 
$\mathcal{O}'=\mathcal{N}_1^+({\bf O})$, 
$\mathcal{N}_1^+({\bf O})$,
$\mathcal{N}_2({\bf O})$
or $\{0\}$.
From (\ref{prl-10}) and Proposition~ \ref{hp-np},
it follows that
if $p(X)\in \mathcal{O}'$ then
\[p(gX)=gp(X)\in g\mathcal{O}'=\mathcal{O}'\quad
\text{for~all}~g\in \mathrm{F}_{4(-20)}.\]
Thus the condition $p(X)\in \mathcal{O}'$ is invariant
under the action of $\mathrm{F}_{4(-20)}$. 
It implies that 
these four cases are different orbits 
from each other.

Hence we obtain 
a concrete orbit decomposition of $\mathcal{J}^1$
under the action of $\mathrm{F}_{4(-20)}$.
\end{proof}

\section{The construction 
of nilpotent subgroup.}\label{nlp}
In this section, we explain the construction of nilpotent
groups $N^+$ and $N^-$.
The differential 
$d\tilde{\sigma}_i
\in{\rm Aut}_{\mathbb{R}}(\mathfrak{f}_{4(-20)})$
of the involutive automorphism $\tilde{\sigma}_i$
is written by same letter $\tilde{\sigma}_i:$
$\tilde{\sigma}_i\phi=\sigma_i\phi\sigma_i$
for $\phi\in\mathfrak{f}_{4(-20)}.$

\begin{lemma}\label{nlp-01} 
Let $i \in \{1,2,3\}$ and
indexes $i,i+1,i+2$ be counted modulo $3$.
Let
$D \in\mathfrak{d}_4$ 
and $a,b \in {\bf O}$.

{\rm (1)} The following equations hold.
\[
\tag{\ref{nlp-01}.a}
\left
\{\begin{array}{rlll}
{\rm (i)}&\tilde{\sigma}_iD=D,&
{\rm (ii)}&\tilde{\sigma}_i\tilde{A}_i^1(a)
=\tilde{A}_i^1(a),\\
\smallskip
{\rm (iii)}&
\tilde{\sigma}_i\tilde{A}_j^1(a)=-\tilde{A}_j^1(a)
&\multicolumn{2}{l}{{\rm for}~j=i+1,i+2.}
\end{array}\right.\]
{\rm (2)} The following equations hold.
\[\tag{\ref{nlp-01}.b}
\left\{
\begin{array}{rlll}{\rm (i)}&
[ D, \tilde{A}_i^1(a) ] = \tilde{A}_i^1(D_ia),&
{\rm (ii)}&
[ \tilde{A}_i^1(a), \tilde{A}_i^1(b) ] 
\in \mathfrak{d}_4,\\
\smallskip
{\rm (iii)}&\multicolumn{3}{l}{
[ \tilde{A}_i^1(a), \tilde{A}_{i+1}^1(b) ]
= \epsilon(i+2)\tilde{A}_{i+2}^1(\overline{ab})}
\end{array}\right.
\]
where $D = d\varphi_0(D_1,D_2,D_3)\in\mathfrak{d}_4$
{\rm (see Lemma~\ref{cce-05}(3))}.
\end{lemma}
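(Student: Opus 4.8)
The plan is to reduce every assertion to evaluation on the spanning set $\{E_j,\ F_j^1(x)\mid j\in\{1,2,3\},\ x\in{\bf O}\}$ of $\mathcal{J}^1$, using the explicit operations in Lemma~\ref{cce-09}(2), the formula (\ref{cce}.5) for $d\varphi_0$, the definition of $\sigma_i$, and the decomposition (\ref{cce-09}.a). First I would dispose of part (1). Since $\sigma_i$ fixes each $E_j$ and fixes $F_i^1(x)$ while negating $F_{i+1}^1(x)$ and $F_{i+2}^1(x)$, statement (i) is immediate from (\ref{cce}.5): when $\sigma_iD\sigma_i$ is applied to $\sum_j(\xi_jE_j+F_j^1(x_j))$, each summand $F_j^1(D_jx_j)$ picks up only the harmless factor $\epsilon_i(j)^2=1$. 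For (ii) and (iii) one compares $\sigma_i\tilde A_j^1(a)\sigma_i$ with $\pm\tilde A_j^1(a)$ on the spanning set; in each case the sign comes out right from the pattern by which $\tilde A_j^1(a)$ shuffles the summands $\mathbb{R}E_k$ and $F_k^1({\bf O})$ in Lemma~\ref{cce-09}(2). As a byproduct that makes part (2) cheap: by (1) and (\ref{cce-09}.a), the $(+1)$-eigenspace of $\tilde\sigma_i$ on $\mathfrak f_{4(-20)}$ is $\mathfrak d_4\oplus\tilde{\mathfrak u}_i^1$ and the $(-1)$-eigenspace is $\tilde{\mathfrak u}_{i+1}^1\oplus\tilde{\mathfrak u}_{i+2}^1$.

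For (2)(ii), a short computation from Lemma~\ref{cce-09}(2) shows that $[\tilde A_i^1(a),\tilde A_i^1(b)]$ annihilates each $E_j$, hence lies in $\mathfrak d_4$ by the definition of $\mathfrak d_4$. For (2)(iii), the bracket is equivariant for each automorphism $\tilde\sigma_k$, and by part (1) both $\tilde A_i^1(a)$ and $\tilde A_{i+1}^1(b)$ are $\tilde\sigma_k$-eigenvectors; tracking signs shows $[\tilde A_i^1(a),\tilde A_{i+1}^1(b)]$ lies in the $(-1)$-eigenspace of both $\tilde\sigma_i$ and $\tilde\sigma_{i+1}$, hence in $(\tilde{\mathfrak u}_{i+1}^1\oplus\tilde{\mathfrak u}_{i+2}^1)\cap(\tilde{\mathfrak u}_i^1\oplus\tilde{\mathfrak u}_{i+2}^1)=\tilde{\mathfrak u}_{i+2}^1$. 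Writing it as $\tilde A_{i+2}^1(c)$ and evaluating both sides on $E_i$ with Lemma~\ref{cce-09}(2) forces $F_{i+2}^1(-c)=F_{i+2}^1(-\epsilon(i+2)\overline{ab})$, i.e.\ $c=\epsilon(i+2)\overline{ab}$.

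Finally (2)(i). Again $[D,\tilde A_i^1(a)]$ lies in $\mathfrak d_4\oplus\tilde{\mathfrak u}_i^1$, so I write it as $D'+\tilde A_i^1(c)$ with $D'\in\mathfrak d_4$; evaluating on $E_{i+1}$ and using $DE_{i+1}=0$, (\ref{cce}.5) and Lemma~\ref{cce-09}(2) gives $F_i^1(-D_ia)=F_i^1(-c)$, hence $c=D_ia$. It remains to check $D'=0$, i.e.\ that $[D,\tilde A_i^1(a)]$ and $\tilde A_i^1(D_ia)$ agree on every $F_j^1(x)$: for $j=i$ this is the skew-symmetry $(D_ia\mid x)+(a\mid D_ix)=0$ of $D_i\in\mathfrak D_4$ combined with Lemma~\ref{cce-09}(2), and for $j=i+1$ and $j=i+2$ the identity to be verified is precisely the infinitesimal triality relation $(D_1x)y+x(D_2y)=\epsilon D_3\epsilon(xy)$ of Lemma~\ref{cce-05}(3) --- or, according to which index is shifted, one of its cyclic permutations from Lemma~\ref{cce-05}(2) --- applied together with (\ref{prl-01}.f). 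This last step is the main obstacle: one has to match the conjugations and the signs $\epsilon(i)$ that distinguish $F_i^1$ from $F_i$ before the triality relation can be invoked, after which everything reduces to routine evaluation.
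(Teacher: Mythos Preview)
Your proposal is correct and follows essentially the same approach as the paper: both reduce each assertion to evaluation on the spanning set $\{E_j,\,F_j^1(x)\}$ via the explicit formulas of Lemma~\ref{cce-09}(2), (\ref{cce}.5), and the infinitesimal triality relations of Lemma~\ref{cce-05}. The one organizational difference is in (2)(iii): you first invoke the $\tilde\sigma_k$-eigenspace decomposition established in part~(1) to force $[\tilde A_i^1(a),\tilde A_{i+1}^1(b)]\in\tilde{\mathfrak u}_{i+2}^1$, so that a single evaluation on $E_i$ determines the coefficient, whereas the paper simply verifies the full identity $[\tilde A_i^1(a),\tilde A_{i+1}^1(b)]X=\epsilon(i+2)\tilde A_{i+2}^1(\overline{ab})X$ directly on every basis element. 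Your route is a little more conceptual and saves some bookkeeping; the paper's direct check is more uniform. For (2)(i) your eigenspace reduction to $\mathfrak d_4\oplus\tilde{\mathfrak u}_i^1$ is sound, but since you must still verify $D'=0$ on each $F_j^1(x)$ using triality, the net computation is the same as the paper's.
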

\begin{proof}
Fix $X \in \{E_i,F_i^1(x)~|~
x\in{\bf O},~i=1,2,3\}$.

(1) From Lemma~\ref{cce-05}(3),
$D$ can be expressed by
$D = d\varphi_0(D_1,D_2,D_3)$.
Using (\ref{cce-05}), 
we can show 
$\sigma_i D \sigma_i X=D X$
on $\mathcal{J}^1$.
Then from (\ref{prl-05}.a),
(\ref{nlp-01}.a)(i) follows.
From (\ref{cce-09}.b), showing
$\sigma_i \tilde{A}_i(a)\sigma_i X=\tilde{A}_i(a) X$
on $\mathcal{J}^1$,
(\ref{nlp-01}.a)(ii) follows.
Similarly, (\ref{nlp-01}.a)(iii) follows.

(2)
From
(\ref{cce-09}.b) and (\ref{cce-05}),
showing
$[D, \tilde{A}_i^1(a)] X=\tilde{A}_i^1(D_ia)X$
on $\mathcal{J}^1$,
(\ref{nlp-01}.b)(i) follows.
From
(\ref{cce-09}.b),  showing
$[\tilde{A}_i^1(a), \tilde{A}_{i+1}^1(b)] X
=\epsilon(i+2)\tilde{A}_{i+2}^1(\overline{ab})X$ 
on $\mathcal{J}^1$
and
$[\tilde{A}_i^1(a), \tilde{A}_i^1(b)]E_k=0$
with $k\in\{1,2,3\}$,
we obtain
(\ref{nlp-01}.b)(ii)(iii).
\end{proof}
\medskip

\begin{lemma}\label{nlp-02}
The following assertions hold.

{\rm (1) (\cite[Theorem 2.5.3]{Yi_arxiv})}.
The Killing form $B$ of $\mathfrak{f}_4^{\mathbb{C}}$
is given by
\[
\tag{\ref{nlp-02}.a}
B(\phi_1,\phi_2)=3\mathrm{tr}(\phi_1\phi_2)
\quad\text{for}~
\phi_i\in\mathfrak{f}_4^{\mathbb{C}}.\]
Especially, the Killing form 
$B$ of $\mathfrak{f}_{4(-20)}=
(\mathfrak{f}_4^{\mathbb{C}})_{\widetilde{\tau\sigma}}$
is given by
the restriction 
$B|(\mathfrak{f}_{4(-20)}\times \mathfrak{f}_{4(-20)}
)$.
\smallskip

{\rm (2)} Let $\phi=d\varphi_0(D_1,D_2,D_3)
+\sum_{i=1}^3\tilde{A}_i^1(a_i)$
where $d\varphi_0(D_1,D_2,D_3)\in\mathfrak{d}_4$
and $a_i\in{\bf O}.$ Then
\[
\tag{\ref{nlp-02}.b}
B(\phi,\tilde{\sigma}\phi)
=-3\left(\sum{}_{i=1}^3\left((\sum{}_{j=0}^7
(D_ie_j|D_ie_j))+24(a_{i}|a_{i})\right)\right).\]
Furthermore, $\tilde{\sigma}$ is a Cartan involution 
of $\mathfrak{f}_{4(-20)}.$
\end{lemma}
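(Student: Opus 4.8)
Part (1). The first equality is \cite[Theorem 2.5.3]{Yi_arxiv}. For the ``Especially'' statement I would invoke the standard fact that the Killing form of a real form of a complex semisimple Lie algebra is the restriction of the Killing form of the complexification; applied to $\mathfrak{f}_{4(-20)}=(\mathfrak{f}_4^{\mathbb{C}})_{\widetilde{\tau\sigma}}$ this is exactly the claim. I would also record that for $\phi\in\mathfrak{f}_{4(-20)}$ the complex trace of $\phi$ on $\mathcal{J}^{\mathbb{C}}$ equals the real trace of $\phi$ on $\mathcal{J}^1$ (since $\mathcal{J}^{\mathbb{C}}=\mathcal{J}^1\otimes_{\mathbb{R}}\mathbb{C}$), so that $B(\phi_1,\phi_2)=3\,\mathrm{tr}(\phi_1\phi_2)$ may be computed on $\mathcal{J}^1$; this is the form in which I will use (1).

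Part (2), the formula. Put $D=d\varphi_0(D_1,D_2,D_3)$ and $A_i=\tilde{A}_i^1(a_i)$, so $\phi=D+A_1+A_2+A_3$ and, by (\ref{nlp-01}.a), $\tilde{\sigma}\phi=D+A_1-A_2-A_3$. By (1), $B(\phi,\tilde{\sigma}\phi)=3\,\mathrm{tr}(\phi\,\tilde{\sigma}\phi)$, and I would compute this trace on the basis adapted to the decomposition $\mathcal{J}^1=\mathbb{R}E_1\oplus\mathbb{R}E_2\oplus\mathbb{R}E_3\oplus F_1^1({\bf O})\oplus F_2^1({\bf O})\oplus F_3^1({\bf O})$ of (\ref{prl-05}.a). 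Expanding $\phi\,\tilde{\sigma}\phi$, one checks from the formula for $d\varphi_0$ (Lemma~\ref{cce-05}(3)) and from (\ref{cce-09}.b) that every mixed term --- $DA_i$, $A_iD$, and $A_iA_j$ with $i\neq j$ --- carries each of the six summands into the sum of the other five, hence contributes $0$ to the trace; so $\mathrm{tr}(\phi\,\tilde{\sigma}\phi)=\mathrm{tr}(D^2)+\mathrm{tr}(A_1^2)-\mathrm{tr}(A_2^2)-\mathrm{tr}(A_3^2)$.

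Evaluating the surviving traces: $D$ kills the $E_i$ and acts on $F_i^1({\bf O})\cong{\bf O}$ by $D_i$, which is skew for the positive-definite form $(\cdot|\cdot)$ (the $e_0,\dots,e_7$ being orthonormal), so $\mathrm{tr}(D^2)=\sum_{i}\mathrm{tr}(D_i^2)=-\sum_{i=1}^{3}\sum_{j=0}^{7}(D_ie_j|D_ie_j)$. For $\mathrm{tr}(A_i^2)$ I would compute $A_i^2$ on each summand from (\ref{cce-09}.b), reducing products such as $\overline{a}(ax)$ and $(xa)\overline{a}$ to $(a|a)x$ by the octonion identities of Lemma~\ref{prl-01} and using $\epsilon(i)\epsilon(i+1)\epsilon(i+2)=1$; summing the six diagonal contributions gives $\mathrm{tr}(A_i^2)=-24\,\epsilon(i)(a_i|a_i)$. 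Since $\epsilon(1)=1$ and $\epsilon(2)=\epsilon(3)=-1$, this yields $\mathrm{tr}(A_1^2)-\mathrm{tr}(A_2^2)-\mathrm{tr}(A_3^2)=-24\sum_{i=1}^{3}(a_i|a_i)$, whence
\[
B(\phi,\tilde{\sigma}\phi)=-3\sum_{i=1}^{3}\left(\sum_{j=0}^{7}(D_ie_j|D_ie_j)+24(a_i|a_i)\right),
\]
which is (\ref{nlp-02}.b). I expect the only delicate point to be the bookkeeping in the computation of $\mathrm{tr}(A_i^2)$, namely tracking the $\epsilon$-signs correctly across all six summands.

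Cartan involution. Here $\tilde{\sigma}=d\tilde{\sigma}_1$ is an involutive automorphism of $\mathfrak{f}_{4(-20)}$ (it is the differential of the involutive automorphism $\tilde{\sigma}_1$ of $\mathrm{F}_{4(-20)}$), and $\mathfrak{f}_{4(-20)}$ is semisimple, being a real form of the simple $\mathfrak{f}_4^{\mathbb{C}}$. By (\ref{cce-09}.a) and Lemma~\ref{cce-05}(3), every $\phi\in\mathfrak{f}_{4(-20)}$ is of the form $D+A_1+A_2+A_3$ treated above, so (\ref{nlp-02}.b) gives $-B(\phi,\tilde{\sigma}\phi)=3\sum_{i=1}^{3}\left(\sum_{j=0}^{7}(D_ie_j|D_ie_j)+24(a_i|a_i)\right)\geq 0$, with equality exactly when all $D_i=0$ and all $a_i=0$, i.e. $\phi=0$, since $(\cdot|\cdot)$ is positive definite on ${\bf O}$. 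Because $B$ is symmetric and $\tilde{\sigma}$-invariant, $(\phi_1,\phi_2)\mapsto -B(\phi_1,\tilde{\sigma}\phi_2)$ is a symmetric bilinear form whose associated quadratic form is positive definite, hence it is positive definite; therefore $\tilde{\sigma}$ is a Cartan involution of $\mathfrak{f}_{4(-20)}$.
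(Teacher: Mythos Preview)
Your argument is correct and follows essentially the same route as the paper: compute $B(\phi,\tilde{\sigma}\phi)=3\,\mathrm{tr}(\phi\,\tilde{\sigma}\phi)$ on the basis $\{E_i,\,F_i^1(e_j)\}$ of $\mathcal{J}^1$ using (\ref{cce-05}) and (\ref{cce-09}.b), and then read off that $-B(\cdot,\tilde{\sigma}\cdot)$ is positive definite. The only organisational difference is that you first separate off the cross terms $DA_i$, $A_iD$, $A_iA_j$ ($i\neq j$) by the block-off-diagonal observation and then evaluate $\mathrm{tr}(D^2)$ and each $\mathrm{tr}(A_i^2)$ in closed form, whereas the paper computes the composite $\phi\,\tilde{\sigma}\phi$ directly on each basis vector, obtaining $(\phi\,\tilde{\sigma}\phi\,E_i|E_i)=-2((a_{i+1}|a_{i+1})+(a_{i+2}|a_{i+2}))$ and $((\phi\,\tilde{\sigma}\phi\,F_i^1(e_j))_{F_i^1}|e_j)=-(D_ie_j|D_ie_j)-4(a_i|e_j)^2-\sum_{k=i+1}^{i+2}(a_k|a_k)$; these sum to the same total.
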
 
\begin{proof} (2)  By (\ref{nlp-01}.a),
$\tilde{\sigma}\phi=d\varphi_0(D_1,D_2,D_3)+\sum_{i=1}^3
\epsilon(i)\tilde{A}_i^1(a_i)$.
Since $\{E_i,F_i^1(e_j)|~i=1,2,3,~j=0,\cdots, 7\}$
is a basis of $\mathcal{J}^1$,
using (\ref{nlp-02}.a), 
$B(\phi,\tilde{\sigma}\phi)
=3\left(\sum{}_{i=1}^3(\phi(\tilde{\sigma}\phi)E_i|E_i) 
+\sum{}_{i=1}^3\sum{}_{j=0}^7
((\phi(\tilde{\sigma}\phi)F_i^1(e_j))_{F_i^1}|e_j)\right)$
and from 
(\ref{cce-05}) and (\ref{cce-09}.b), 
we see that 
$(\phi(\tilde{\sigma}\phi)E_i|E_i)
=-2((a_{i+1}|a_{i+1})+(a_{i+2}|a_{i+2}))$ and
$((\phi(\tilde{\sigma}\phi)F_i^1(e_j))_{F_i^1}|e_j)
=-(D_ie_j|D_ie_j)-4(a_i|e_j)^2
-\sum_{j=i+1}^{i+2}(a_j|a_j)$
where indexes $i,i+1,i+2$ are counted modulo $3$.
Thus (\ref{nlp-02}.b) follows.
Moreover,
the bilinear form $B(\phi_1,\tilde{\sigma}\phi_2)$
$(\phi_1,\phi_2\in \mathfrak{f}_{4(-20)})$
is negative definite.
Hence the result follows.
\end{proof}
\medskip

Denote
$\mathfrak{k}:=\{\phi\in\mathfrak{f}_{4(-20)}
|~\tilde{\sigma}\phi=\phi\}=Lie(K)$ and
$\mathfrak{p}:=\{\phi\in\mathfrak{f}_{4(-20)}
|~\tilde{\sigma}\phi=-\phi\}$.
By Lemma~\ref{nlp-02}(2),
\[\mathfrak{f}_{4(-20)}=\mathfrak{k}\oplus \mathfrak{p}
\quad
 \text{(Cartan~decomposition)}.\]
From (\ref{nlp-01}.a)(iii), we see
$\tilde{A}_3^1(1)\in\mathfrak{p}$.
The abelian subspace $\mathfrak{a}$ of $\mathfrak{p}$ 
and the linear functional $\alpha$ on $\mathfrak{a}$ 
are defined as
\[\mathfrak{a}:=\{t\tilde{A}_3^1(1)
|~t\in\mathbb{R}\},\quad \alpha(\tilde{A}_3^1(1)):=1\]
respectively.
Let $\mathfrak{a}_{\mathfrak{p}}$ be
a maximal 
abelian subspace of $\mathfrak{p}$
such that $\mathfrak{a}
\subset\mathfrak{a}_{\mathfrak{p}},$
and denote 
the dual space of $\mathfrak{a}$ 
({\it resp}. $\mathfrak{a}_{\mathfrak{p}}$)
as $\mathfrak{a}^*$ 
({\it resp}. $\mathfrak{a}_{\mathfrak{p}}^*$).
For $\lambda\in \mathfrak{a}^*$
({\it resp}. $\mathfrak{a}_{\mathfrak{p}}^*$),
denote
\begin{gather*}
\mathfrak{g}_{\lambda}:=\{\phi\in \mathfrak{f}_{4(-20)}|~
[H,\phi]=\lambda(H)\phi
~~{\rm for~all}~H\in\mathfrak{a}\}\\
(resp.\quad
\mathfrak{g}_{\lambda}(\mathfrak{a}_{\mathfrak{p}})
:=\{\phi\in \mathfrak{f}_{4(-20)}|~
[H,\phi]=\lambda(H)\phi
~~{\rm for~all}~H\in\mathfrak{a}_{\mathfrak{p}}\})
\end{gather*}
and denote
\begin{gather*}
\Sigma:=\{\lambda\in \mathfrak{a}^*|~
\lambda\ne 0,~\mathfrak{g}_{\lambda}\ne \{0\}\}\\
(resp.\quad
\Sigma(\mathfrak{a}_{\mathfrak{p}})
:=\{\lambda\in \mathfrak{a}_{\mathfrak{p}}^*|~
\lambda\ne 0,
~\mathfrak{g}_{\lambda}(\mathfrak{a}_{\mathfrak{p}})
\ne \{0\}\}).
\end{gather*}
Moreover, the centralizer 
of $\mathfrak{a}$
of the group $K$ and its Lie algebra as
\begin{align*}
M&:=Z_K(\mathfrak{a})
=\{k\in K|~k \tilde{A}_3^1(1)k^{-1}=\tilde{A}_3^1(1)\},\\
\mathfrak{m}&:=Z_{\mathfrak{k}}(\mathfrak{a})=
\{\phi\in\mathfrak{k}~|~
[\phi,\tilde{A}_3^1(1)]=0\}
\end{align*}
respectively.
For all $p\in{\rm Im}{\bf O},$
the elements $l_p,r_p,t_p
\in{\rm End}_{\mathbb{R}}({\bf O})$ are defined by
\[l_px:=px,\quad r_px:=xp,\quad t_px:=(l_p+r_p)x=px+xp
\quad\quad\text{for}~x\in{\bf O}\]
respectively.
Because of $\overline{p}=-p$ and 
(\ref{prl-01}.e), we see
$(l_px|y)=-(x|l_py)$
so that $D_1\in\mathfrak{D}_4$.
Similarly $l_p,t_p\in\mathfrak{D}_4.$
By (\ref{prl-01}.j), 
\[l_p(x)y+xr_p(y)=(px)y+x(yp)=p(xy)+(xy)p
=\epsilon t_{-p}\epsilon(xy).\]
From Lemma~\ref{cce-05}(3), the element
$\delta(p)\in\mathfrak{d}_4$ is defined by
\[\delta(p)
:=d\varphi_0(l_p,r_p,t_{-p}).\]
For $p\in{\rm Im}{\bf O}$ and $x\in{\bf O},$ 
denote
\begin{align*}
\mathcal{G}_1(x)&:=
\tilde{A}_1^1(x)+\tilde{A}_2^1(-\overline{x}),
&\mathcal{G}_2(p)&:=\tilde{A}_3^1(-p)-\delta(p),\\
\mathcal{G}_{-1}(x)&:=
\tilde{A}_1^1(x)+\tilde{A}_2^1(\overline{x}),
&\mathcal{G}_{-2}(p)&:=\tilde{A}_3^1(p)-\delta(p)
\end{align*}
and
the subspaces $\mathfrak{g}_{\pm 1}$,
$\mathfrak{g}_{\pm 2}$ of  
$\mathfrak{f}_{4(-20)}$ as
\begin{align*}
\mathfrak{g}_1
&:=\{\mathcal{G}_1(x)|~x\in{\bf O}\},
&\mathfrak{g}_2
&:=\{\mathcal{G}_2(p)|~p\in{\rm Im}{\bf O}\},\\
\mathfrak{g}_{-1}
&:=\{\mathcal{G}_{-1}(x)|~x\in{\bf O}\},
&\mathfrak{g}_{-2}
&:=\{\mathcal{G}_{-2}(p)|~p\in{\rm Im}{\bf O}\}
\end{align*}
respectively.
\begin{lemma}\label{nlp-03} 
Let $p\in{\rm Im}{\bf O}$
and $x\in{\bf O}$.

{\rm (1)} 
$\mathfrak{g}_i\subset \mathfrak{g}_{i\alpha}$
for $i\in\{\pm 1,\pm 2\}.$
Especially, $\{\pm \alpha,\pm 2\alpha\}\subset
\Sigma.$
\smallskip

{\rm (2)} 
$[\mathfrak{g}_{i\alpha},
\mathfrak{g}_{j\alpha}]
=\mathfrak{g}_{(i+j)\alpha}.$
\end{lemma}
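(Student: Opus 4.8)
The plan is to read everything off from the commutator identities of Lemma~\ref{nlp-01}(2) together with the explicit operators in (\ref{cce-09}.b), after recording one genuinely non-formal identity relating the ``diagonal'' bracket $[\tilde{A}_3^1(1),\tilde{A}_3^1(z)]$ to $\delta$. For (1): since $\alpha(\tilde{A}_3^1(1))=1$ we have $\mathfrak{g}_{i\alpha}=\{\phi\in\mathfrak{f}_{4(-20)}\mid[\tilde{A}_3^1(1),\phi]=i\phi\}$. From (\ref{nlp-01}.b)(iii), used with the relevant cyclic index, I get $[\tilde{A}_3^1(1),\tilde{A}_1^1(x)]=\epsilon(2)\tilde{A}_2^1(\overline{x})=-\tilde{A}_2^1(\overline{x})$ and, transposing, $[\tilde{A}_3^1(1),\tilde{A}_2^1(y)]=-\epsilon(1)\tilde{A}_1^1(\overline{y})=-\tilde{A}_1^1(\overline{y})$; from (\ref{nlp-01}.b)(i), since $\delta(p)=d\varphi_0(l_p,r_p,t_{-p})$ and $t_{-p}(1)=-2p$, I get $[\tilde{A}_3^1(1),\delta(p)]=-\tilde{A}_3^1(t_{-p}(1))=\tilde{A}_3^1(2p)$. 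Substituting into $\mathcal{G}_{\pm1}(x)=\tilde{A}_1^1(x)+\tilde{A}_2^1(\mp\overline{x})$ yields $[\tilde{A}_3^1(1),\mathcal{G}_{\pm1}(x)]=\pm\mathcal{G}_{\pm1}(x)$ at once, and, granting the identity below, $[\tilde{A}_3^1(1),\mathcal{G}_{\pm2}(p)]=\pm2\,\mathcal{G}_{\pm2}(p)$; this gives the four inclusions $\mathfrak{g}_i\subset\mathfrak{g}_{i\alpha}$. Since $\mathcal{G}_1(1),\mathcal{G}_{-1}(1),\mathcal{G}_2(e_1),\mathcal{G}_{-2}(e_1)$ are nonzero (directness of (\ref{cce-09}.a) and injectivity of $d\varphi_0$), each of $\mathfrak{g}_{\pm\alpha},\mathfrak{g}_{\pm2\alpha}$ is nonzero, so $\{\pm\alpha,\pm2\alpha\}\subset\Sigma$.

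The one identity I would write out in full is $[\tilde{A}_3^1(1),\tilde{A}_3^1(-p)]=-2\delta(p)$ for $p\in{\rm Im}{\bf O}$. By (\ref{nlp-01}.b)(ii) the left side lies in $\mathfrak{d}_4$, and an element of $\mathfrak{d}_4$ is determined by its restriction to $F_3^1({\bf O})$ (Lemma~\ref{cce-05}, triality). Using (\ref{cce-09}.b) one computes $[\tilde{A}_3^1(1),\tilde{A}_3^1(-p)]F_3^1(x)=-4(p|x)F_3^1(1)+4{\rm Re}(x)F_3^1(p)$, while $-2\delta(p)F_3^1(x)=F_3^1(2(px+xp))$; these agree because for $p\in{\rm Im}{\bf O}$ the octonion identity $px+xp=2{\rm Re}(x)p-2(p|x)$ holds, which is (\ref{prl-01}.g) rewritten with $\overline{p}=-p$ and $\overline{x}=2{\rm Re}(x)-x$. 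Replacing $p$ by $-p$ gives $[\tilde{A}_3^1(1),\tilde{A}_3^1(z)]=2\delta({\rm Im}\,z)$ for all $z\in{\bf O}$, which is what the $\mathcal{G}_{\pm2}$ computation in (1) uses.

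For (2), the inclusion $[\mathfrak{g}_{i\alpha},\mathfrak{g}_{j\alpha}]\subseteq\mathfrak{g}_{(i+j)\alpha}$ is immediate from the Jacobi identity applied to $\mathrm{ad}(\tilde{A}_3^1(1))$. For the reverse inclusion I would first pin down $\mathfrak{g}_{k\alpha}$ exactly by diagonalizing $\mathrm{ad}(\tilde{A}_3^1(1))$ on $\mathfrak{f}_{4(-20)}=\mathfrak{d}_4\oplus\tilde{\mathfrak{u}}_1^1\oplus\tilde{\mathfrak{u}}_2^1\oplus\tilde{\mathfrak{u}}_3^1$: the formulas above show it preserves $\tilde{\mathfrak{u}}_1^1\oplus\tilde{\mathfrak{u}}_2^1$, acting there as the involution $(\tilde{A}_1^1(x),\tilde{A}_2^1(y))\mapsto(-\tilde{A}_1^1(\overline{y}),-\tilde{A}_2^1(\overline{x}))$ with $(\pm1)$-eigenspaces exactly $\mathfrak{g}_{\pm1}$; and it preserves $\mathfrak{d}_4\oplus\tilde{\mathfrak{u}}_3^1$, where (using $[\tilde{A}_3^1(1),\tilde{A}_3^1(z)]=2\delta({\rm Im}\,z)$, $[\tilde{A}_3^1(1),D]=-\tilde{A}_3^1(D_3(1))$, and the fact that $D\mapsto D_3(1)$ surjects onto ${\rm Im}{\bf O}$) the only nonzero eigenvalues are $\pm2$, with eigenspaces $\mathfrak{g}_{\pm2}$, the kernel being $\mathfrak{g}_0$. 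Hence $\mathfrak{g}_{k\alpha}=\mathfrak{g}_k$ for $k\in\{\pm1,\pm2\}$ and $\mathfrak{g}_{k\alpha}=\{0\}$ for $|k|\geq3$. It then remains, for each pair $(i,j)$ with $i+j\in\{\pm1,\pm2\}$, to exhibit enough brackets of generators: for instance $[\mathcal{G}_1(x),\mathcal{G}_1(y)]$ has vanishing $(\tilde{\mathfrak{u}}_1^1\oplus\tilde{\mathfrak{u}}_2^1)$-component and $\tilde{\mathfrak{u}}_3^1$-component $-2\tilde{A}_3^1({\rm Im}(x\overline{y}))$ by (\ref{nlp-01}.b)(ii)--(iii), which (since $\mathfrak{g}_2$ is the graph $\{-\delta(p)+\tilde{A}_3^1(-p)\mid p\in{\rm Im}{\bf O}\}$) forces $[\mathcal{G}_1(x),\mathcal{G}_1(y)]=\mathcal{G}_2(2\,{\rm Im}(x\overline{y}))$, and ${\rm Im}(x\overline{y})$ sweeps out ${\rm Im}{\bf O}$; similarly $[\mathcal{G}_{\pm2}(p),\mathcal{G}_{\mp1}(x)]$ are nonzero multiples of elements of $\mathfrak{g}_{\pm1}$ exhausting those families. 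The vanishing relations $[\mathfrak{g}_{2\alpha},\mathfrak{g}_{\alpha}]=[\mathfrak{g}_{2\alpha},\mathfrak{g}_{2\alpha}]=\{0\}$ (and conjugates) follow from the eigenvalue count, and $[\mathfrak{g}_{\alpha},\mathfrak{g}_{-\alpha}]$, $[\mathfrak{g}_{2\alpha},\mathfrak{g}_{-2\alpha}]$ land in $\mathfrak{g}_0$.

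The main obstacle will be part (2): beyond the octonion sign-chasing in each bracket $[\mathcal{G}_i(\cdot),\mathcal{G}_j(\cdot)]$, the real work is checking that $\mathrm{ad}(\tilde{A}_3^1(1))$ carries no eigenvalue $\pm1,\pm2$ inside $\mathfrak{d}_4\oplus\tilde{\mathfrak{u}}_3^1$ other than $\mathfrak{g}_{\pm2}$, which requires tracking how the triality correspondence $D\leftrightarrow D_3$ interacts with the evaluation $D\mapsto D_3(1)$ and with $\delta$; the identity $[\tilde{A}_3^1(1),\tilde{A}_3^1(-p)]=-2\delta(p)$ from part (1) is the only genuinely non-routine computation, and it is the one I would present in detail.
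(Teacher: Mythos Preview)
Your treatment of part~(1) matches the paper's. The paper verifies the key identity $[\tilde{A}_3^1(1),\tilde{A}_3^1(p)]=2\delta(p)$ by evaluating on $E_k$ and on all three families $F_i^1(x)$; your shortcut of checking only on $F_3^1({\bf O})$ and invoking the triality of Lemma~\ref{cce-05} to determine the element of $\mathfrak{d}_4$ is a legitimate and slightly cleaner variant of the same computation.

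For part~(2) you do far more than the paper. The paper's entire proof of (2) is the single sentence ``It follows from the Jacobi identity,'' i.e.\ only the inclusion $[\mathfrak{g}_{i\alpha},\mathfrak{g}_{j\alpha}]\subset\mathfrak{g}_{(i+j)\alpha}$; the equality sign in the statement should be read loosely. Every subsequent use of (2) in the paper (in Lemmas~\ref{nlp-05}, \ref{nlp-06}, \ref{nlp-08}) needs only this inclusion, for instance to get $[\mathfrak{g}_\alpha,\mathfrak{g}_{2\alpha}]=0$ once $\mathfrak{g}_{3\alpha}=\{0\}$ is known from~(\ref{nlp-05}.b). Your program---diagonalize $\mathrm{ad}(\tilde{A}_3^1(1))$ on all of $\mathfrak{f}_{4(-20)}$, identify each $\mathfrak{g}_{k\alpha}$ exactly, then exhibit surjectivity of the brackets case by case---essentially proves Lemma~\ref{nlp-05} ahead of schedule (the paper does it afterwards by a dimension count based on part~(1) and~(\ref{nlp-04})), and your bracket computation $[\mathcal{G}_1(x),\mathcal{G}_1(y)]=\mathcal{G}_2(2\,{\rm Im}(x\overline{y}))$ duplicates Lemma~\ref{nlp-08}. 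You yourself observe that literal equality fails when $i+j=0$, which confirms that the paper does not intend it. So your argument is correct but over-engineered relative to what the paper asserts and needs at this point; the intended logical order is: prove the trivial inclusion here, obtain $\mathfrak{g}_i=\mathfrak{g}_{i\alpha}$ afterwards by dimension count, and compute individual brackets only where they are actually used.
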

\begin{proof}
(1) Using (\ref{nlp-01}.b)(iii),
we calculate that
\[[\tilde{A}_3^1(1),\tilde{A}_1^1(x)+
\tilde{A}_2^1(\mp\overline{x})]=
\pm(\tilde{A}_1^1(x)+
\tilde{A}_2^1(\mp \overline{x}))\quad
(resp)\]
with $x\in{\bf O}$.
Thus  $\mathfrak{g}_{\pm 1}
\subset \mathfrak{g}_{\pm \alpha}~(resp)$.
Fix $p\in{\rm Im}{\bf O}$.
By (\ref{nlp-01}.b)(ii),
$[\tilde{A}_3^1(1),\tilde{A}_3^1(p)]E_k
=0$
with
$k \in \{1,2,3\}$,
and because of (\ref{cce-09}.b),
$\overline{p}=-p$
and $4(p|x)-4(1|x)p=2(x\overline{p}+p\overline{x})
-2p(x+\overline{x})=-2(px+xp)$,
we see 
\begin{gather*}
[\tilde{A}_3^1(1),\tilde{A}_3^1(p)]F_1^1(x)
=F_1^1(2px),\quad
[\tilde{A}_3^1(1),\tilde{A}_3^1(p)]F_2^1(x)
=F_2^1(2xp),\\
[\tilde{A}_3^1(1),\tilde{A}_3^1(p)]F_3^1(x)
=F_3^1(4(p|x)-4(1|x)p)
=F_3^1(-2(px+xp)).
\end{gather*}
Then from (\ref{prl-05}.a),
we see
$[\tilde{A}_3^1(1),\tilde{A}_3^1(p)]
=2\delta(p)$ on $\mathcal{J}^1$.
Thus, from (\ref{nlp-01}.b)(i), we have
\begin{gather*}
[\tilde{A}_3^1(1),\tilde{A}_3^1(-p)
-\delta(p)]=
-2\delta(p)+\tilde{A}_3^1(t_{-p}1)=
2(\tilde{A}_3^1(-p)
-\delta(p)),\\
[\tilde{A}_3^1(1),\tilde{A}_3^1(p)
-\delta(p)]=
2\delta(p)+\tilde{A}_3^1(t_{-p}1)=
-2(\tilde{A}_3^1(p)
-\delta(p))
\end{gather*}
and so 
$\mathfrak{g}_{\pm 2}
\subset \mathfrak{g}_{\pm 2\alpha}~(resp)$.
Hence (1) follows.

(2) It follows from the Jacobi identity.
\end{proof}
\medskip

\begin{proposition}\label{nlp-04}
The following equation hold.
\[
\tag{\ref{nlp-04}}
M=\mathrm{B}_3\cong{\rm Spin}(7).\]
\end{proposition}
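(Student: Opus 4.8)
The plan is to establish the two inclusions $\mathrm{B}_3\subseteq M$ and $M\subseteq \mathrm{B}_3$, and then read off $\mathrm{B}_3\cong{\rm Spin}(7)$ directly from Lemma~\ref{cce-02}(2). Recall that by (\ref{spn-14}.b) one has $K=(\mathrm{F}_{4(-20)})_{E_1}={\rm Spin}(9)$, so every $k\in M\subseteq K$ fixes $E_1$ and $E$, and that $M$ is precisely the set of $k\in K$ with $k\tilde{A}_3^1(1)=\tilde{A}_3^1(1)k$. The first step is to write down the operator $\tilde{A}_3^1(1)$ on $\mathcal{J}^1$ from (\ref{cce-09}.b): it kills $E_3$, sends $E_1\mapsto F_3^1(-1)$, $E_2\mapsto F_3^1(1)$, $F_3^1(x)\mapsto -2\,{\rm Re}(x)(E_1-E_2)$, $F_1^1(x)\mapsto F_2^1(\overline{x})$ and $F_2^1(x)\mapsto F_1^1(\overline{x})$. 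From this a short computation shows that $\tilde{A}_3^1(1)$ restricts to an invertible involution on $F_1^1({\bf O})\oplus F_2^1({\bf O})$ (interchanging $F_1^1(x)$ and $F_2^1(\overline{x})$), while on the complement $\mathbb{R}E_1\oplus\mathbb{R}E_2\oplus\mathbb{R}E_3\oplus F_3^1({\bf O})$ its kernel is $V_0:=\mathbb{R}(E_1+E_2)\oplus\mathbb{R}E_3\oplus F_3^1({\rm Im}{\bf O})$ and its image is the $2$-dimensional space $V_2:=\mathbb{R}(E_1-E_2)\oplus\mathbb{R}F_3^1(1)$, on which it acts with eigenvalues $\pm 2$ and eigenvectors $E_1-E_2\mp F_3^1(1)$.

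For $\mathrm{B}_3\subseteq M$: since $\mathrm{B}_3=(\mathrm{F}_{4(-20)})_{E_1,E_2,E_3,F_3^1(1)}\subseteq K$, it is enough to verify that every $g=\varphi_0(g_1,g_2,g_3)\in\mathrm{B}_3$ commutes with $\tilde{A}_3^1(1)$. This is a routine check on the basis $\{E_i,F_i^1(x)\}$ of $\mathcal{J}^1$ using the explicit action of $g$ from Lemma~\ref{cce-03} together with the formulas above; the only inputs are $g_3 1=1$, the fact that $g_3$ preserves the inner product (hence ${\rm Re}(g_3 x)={\rm Re}(x)$), and the triality relation $g_2=\epsilon g_1\epsilon$ from Lemma~\ref{trl-03}(2), which yields $g_2\overline{x}=\overline{g_1 x}$.

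For $M\subseteq\mathrm{B}_3$: take $k\in M$. Since $k$ commutes with $\tilde{A}_3^1(1)$ it preserves every eigenspace of $\tilde{A}_3^1(1)$; in particular it preserves $V_0=\ker\tilde{A}_3^1(1)$ and each of the two $1$-dimensional eigenlines $\mathbb{R}(E_1-E_2\mp F_3^1(1))$ spanning $V_2$. Now $E_3\in V_0$, and $E_3\in\mathcal{H}$ because $E_3^{\times 2}=0$ and ${\rm tr}(E_3)=1$; since $\mathrm{F}_{4(-20)}$ preserves $\mathcal{H}$ (Lemma~\ref{hp-01}), we get $kE_3\in V_0\cap\mathcal{H}$. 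An elementary computation with (\ref{prl-06}.d) shows $V_0\cap\mathcal{H}=\{E_3\}$, so $kE_3=E_3$, whence $kE_2=k(E-E_1-E_3)=E_2$. Finally $k$ fixes $E_1-E_2\in V_2$; writing $E_1-E_2$ as the average of the eigenvectors $E_1-E_2\mp F_3^1(1)$ and using that $k$ scales each eigenline, the equation $k(E_1-E_2)=E_1-E_2$ forces both scaling factors to equal $1$, so $kF_3^1(1)=F_3^1(1)$. Hence $k$ fixes $E_1,E_2,E_3,F_3^1(1)$, i.e. $k\in\mathrm{B}_3$. Combining the two inclusions with Lemma~\ref{cce-02}(2) gives $M=\mathrm{B}_3\cong{\rm Spin}(7)$. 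There is no deep obstacle here; the substance is the eigenspace picture of $\tilde{A}_3^1(1)$ and the identity $V_0\cap\mathcal{H}=\{E_3\}$, and the step most prone to error is the bookkeeping of the signs $\epsilon(i)$ and the octonion conjugations when checking $\mathrm{B}_3\subseteq M$, together with correctly pinning down which of $E_1-E_2\pm F_3^1(1)$ belongs to which eigenvalue.
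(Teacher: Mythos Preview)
Your argument is correct. The inclusion $\mathrm{B}_3\subseteq M$ is handled essentially as in the paper (a direct check that $\varphi_0(g)$ commutes with $\tilde{A}_3^1(1)$ on a spanning set; the paper uses the basis of (\ref{prl-05}.b) rather than $\{E_i,F_i^1(x)\}$, but the content is the same).

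For $M\subseteq\mathrm{B}_3$, however, your route is considerably longer than the paper's. You pass through the full eigenspace picture of $\tilde{A}_3^1(1)$, the computation $V_0\cap\mathcal{H}=\{E_3\}$, and then a two--parameter linear-algebra argument on the eigenlines of $V_2$ to pin down $kF_3^1(1)$. The paper bypasses all of this with a single line: since $k\in K$ fixes $E_1$ and commutes with $\tilde{A}_3^1(1)$, one has
\[
kF_3^1(1)=-k\tilde{A}_3^1(1)E_1=-\tilde{A}_3^1(1)kE_1=-\tilde{A}_3^1(1)E_1=F_3^1(1),
\]
and then (\ref{cce-01}.b) immediately gives $k\in(\mathrm{F}_{4(-20)})_{E_1,F_3^1(1)}=\mathrm{B}_3$, without ever needing $kE_3=E_3$ separately. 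Your eigenspace analysis is a nice structural picture and would generalize well (e.g.\ to centralizers of other ${\rm ad}(H)$), but here the paper's trick of applying the commuting operator to the already-fixed vector $E_1$ is much sharper.
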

\begin{proof}
From 
(\ref{cce-01}.b),
the subgroup $\mathrm{B}_3
\cong {\rm Spin}(7)$
in $K$
is the stabilizer
$(\mathrm{F}_{4(-20)})_{E_1,F_3^1(1)}$.
Fix $g\in{\rm Spin}(7)$.
Then $g$ can be expressed by 
$g=(g_1,\epsilon g_1\epsilon,g_3)$
such that
$g=(g_1,\epsilon g_1\epsilon,g_3)\in 
{\rm Spin}(8)$ 
and
$g_3 1=1$.
Put $\phi=\varphi_0(g)\tilde{A}_3^1(1)
\varphi_0(g)^{-1}.$ 
Using (\ref{cce-03}) and
(\ref{cce-09}.b), we calculate that
 \begin{gather*}
\phi (-E_1+E_2)=2F_3^1(1)
=\tilde{A}_3^1(1)(-E_1+E_2),\\
\phi P^-=2P^-=\tilde{A}_3^1(1)P^-,
~\phi E=0=\tilde{A}_3^1(1)E,
~\phi E_3=0=\tilde{A}_3^1(1)E_3,\\
\phi F_3^1(p)=2(g_3^{-1} p|1)(-E_1+E_2)
=2(p|1)(-E_1+E_2)
=\tilde{A}_3^1(1)F_3^1(p),\\
\phi Q^+(x)
=Q^+(\overline{x})
=\tilde{A}_3^1(1)Q^+(x),
~~\phi Q^-(x)
=-Q^-(\overline{x})
=\tilde{A}_3^1(1)Q^-(x)
\end{gather*}
where $x\in{\bf O}$ 
and $p\in{\rm Im}{\bf O}$.
From (\ref{prl-05}.b), we see
$\phi=\tilde{A}_3^1(1)$ on $\mathcal{J}^1$.
Thus  $\varphi_0(g)\in M$
and so $\mathrm{B}_3\subset M$.

Conversely, take $k\in M\subset K$.
Then 
$k,k^{-1} \in (\mathrm{F}_{4(-20)})_{E_1}$
by (\ref{spn-14}.b)
and
$k \tilde{A}_3^1(1)k^{-1}
=\tilde{A}_3^1(1)$. 
Now
from (\ref{cce-09}.b), 
we see that 
$kF_3^1(1)=-k \tilde{A}_3^1(1)E_1
=-k \tilde{A}_3^1(1)k^{-1}E_1=-\tilde{A}_3^1(1)E_1
=F_3^1(1)$.
Thus we obtain
$k\in (\mathrm{F}_{4(-20)})_{E_1,F_3^1(1)}=\mathrm{B}_3$
and so
$M\subset \mathrm{B}_3$.
Hence $M=\mathrm{B}_3$.
\end{proof}
\medskip

\begin{lemma}\label{nlp-05}
Let $i\in\{\pm 1,\pm 2\}$.
The following equations hold.
\begin{gather*}
\tag{\ref{nlp-05}.a}
\begin{array}{rlrl}
{\rm (i)} &\mathfrak{g}_i=\mathfrak{g}_{i\alpha},&
{\rm (ii)}& \mathfrak{a}_{\mathfrak{p}}=\mathfrak{a}. 
\end{array}
\\
\tag{\ref{nlp-05}.b}
\Sigma(\mathfrak{a}_{\mathfrak{p}})
=\Sigma=\{\pm \alpha,\pm 2\alpha\}.\\
\tag{\ref{nlp-05}.c}
\mathfrak{f}_{4(-20)}
=
\mathfrak{g}_{-2\alpha}\oplus
\mathfrak{g}_{-\alpha}
\oplus\mathfrak{a}\oplus \mathfrak{m}
\oplus \mathfrak{g}_{\alpha}\oplus
\mathfrak{g}_{2\alpha}.
\end{gather*}
\end{lemma}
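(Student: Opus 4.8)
The plan is to reduce everything to a single dimension count, the one substantive input being $\dim\mathfrak{m}=21$, which is exactly Proposition~\ref{nlp-04}.

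First I would record the dimensions of all the pieces. By (\ref{cce-09}.a) we have $\mathfrak{f}_{4(-20)}=\mathfrak{d}_4\oplus\tilde{\mathfrak{u}}_1^1\oplus\tilde{\mathfrak{u}}_2^1\oplus\tilde{\mathfrak{u}}_3^1$; since $\mathfrak{d}_4\cong\mathfrak{D}_4$ has dimension $\binom{8}{2}=28$ (Lemma~\ref{cce-05}(3)) and each $\tilde{\mathfrak{u}}_i^1$ has dimension $\dim{\bf O}=8$ (the map $a\mapsto\tilde{A}_i^1(a)$ being injective by (\ref{cce-09}.b)(ii)), we get $\dim\mathfrak{f}_{4(-20)}=52$. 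The maps $x\mapsto\mathcal{G}_{\pm1}(x)$ and $p\mapsto\mathcal{G}_{\pm2}(p)$ are injective, as one sees by reading off the $\tilde{\mathfrak{u}}$-components and using that the sum in (\ref{cce-09}.a) is direct, so $\dim\mathfrak{g}_{\pm1}=\dim{\bf O}=8$ and $\dim\mathfrak{g}_{\pm2}=\dim{\rm Im}{\bf O}=7$. Clearly $\dim\mathfrak{a}=1$, and by Proposition~\ref{nlp-04} we have $M=\mathrm{B}_3\cong{\rm Spin}(7)$, whence $\dim\mathfrak{m}=\dim Lie(M)=21$ (as $\mathfrak{m}=Lie(M)$). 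Finally $\mathfrak{a}\cap\mathfrak{m}=\{0\}$, because $\mathfrak{a}\subset\mathfrak{p}$ (by (\ref{nlp-01}.a)(iii)), $\mathfrak{m}\subset\mathfrak{k}$, and $\mathfrak{k}\cap\mathfrak{p}=\{0\}$ in the Cartan decomposition $\mathfrak{f}_{4(-20)}=\mathfrak{k}\oplus\mathfrak{p}$ of Lemma~\ref{nlp-02}.

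Next I would assemble these spaces inside $\mathfrak{f}_{4(-20)}$ via the eigenspaces of ${\rm ad}(\tilde{A}_3^1(1))$. By Lemma~\ref{nlp-03}(1), $\mathfrak{g}_{\pm1}$ lies in the $(\pm1)$-eigenspace and $\mathfrak{g}_{\pm2}$ in the $(\pm2)$-eigenspace of ${\rm ad}(\tilde{A}_3^1(1))$, while $\mathfrak{a}\oplus\mathfrak{m}$ lies in the $0$-eigenspace $Z_{\mathfrak{f}_{4(-20)}}(\mathfrak{a})$ (which contains $\mathfrak{a}$ trivially and $\mathfrak{m}$ by definition). Since the eigenspaces of a linear map for distinct eigenvalues are independent, the sum $\mathfrak{g}_{-2}+\mathfrak{g}_{-1}+(\mathfrak{a}\oplus\mathfrak{m})+\mathfrak{g}_1+\mathfrak{g}_2$ is direct; it is a subspace of $\mathfrak{f}_{4(-20)}$ of dimension $7+8+22+8+7=52$, hence equals $\mathfrak{f}_{4(-20)}$. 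Now for $j\in\{\pm1,\pm2\}$ take $\phi\in\mathfrak{g}_{j\alpha}$, decompose it along this direct sum, and apply ${\rm ad}(\tilde{A}_3^1(1))$: comparing components in distinct eigenvalues forces every summand but the one in $\mathfrak{g}_j$ to vanish, so $\mathfrak{g}_{j\alpha}=\mathfrak{g}_j$, which is (\ref{nlp-05}.a)(i); the same argument applied to the $0$-eigenspace gives $Z_{\mathfrak{f}_{4(-20)}}(\mathfrak{a})=\mathfrak{a}\oplus\mathfrak{m}$, and substituting $\mathfrak{g}_{j\alpha}=\mathfrak{g}_j$ into the direct sum yields (\ref{nlp-05}.c).

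For (\ref{nlp-05}.a)(ii), note that $\mathfrak{a}_{\mathfrak{p}}$ is abelian and contains $\mathfrak{a}$, so $\mathfrak{a}_{\mathfrak{p}}\subset Z_{\mathfrak{f}_{4(-20)}}(\mathfrak{a})=\mathfrak{a}\oplus\mathfrak{m}$; being also contained in $\mathfrak{p}$ while $(\mathfrak{a}\oplus\mathfrak{m})\cap\mathfrak{p}=\mathfrak{a}$ (since $\mathfrak{m}\subset\mathfrak{k}$, $\mathfrak{k}\cap\mathfrak{p}=\{0\}$), we get $\mathfrak{a}_{\mathfrak{p}}=\mathfrak{a}$. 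Then $\Sigma(\mathfrak{a}_{\mathfrak{p}})=\Sigma$ is immediate, and (\ref{nlp-05}.c) exhibits the only nonzero eigenvalues of ${\rm ad}(\tilde{A}_3^1(1))$ as $\pm1,\pm2$, so $\Sigma=\{\pm\alpha,\pm2\alpha\}$ (the inclusion $\supseteq$ already being Lemma~\ref{nlp-03}(1)); this is (\ref{nlp-05}.b). There is no deep obstacle here: the bookkeeping is routine once $\dim\mathfrak{m}=21$ is in hand, and the only point needing a little care is that the argument never requires ${\rm ad}(\tilde{A}_3^1(1))$ to be semisimple — it suffices that a subspace of $\mathfrak{f}_{4(-20)}$ of dimension $\dim\mathfrak{f}_{4(-20)}$ is everything.
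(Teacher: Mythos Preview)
Your proof is correct and follows essentially the same dimension-counting strategy as the paper: use Lemma~\ref{nlp-03}(1) to place each $\mathfrak{g}_i$ and $\mathfrak{a}\oplus\mathfrak{m}$ into distinct eigenspaces of ${\rm ad}(\tilde{A}_3^1(1))$, invoke Proposition~\ref{nlp-04} for $\dim\mathfrak{m}=21$, and conclude by matching the total dimension $52$. Your assignment $\dim\mathfrak{g}_{\pm1}=8$, $\dim\mathfrak{g}_{\pm2}=7$ is in fact the correct one (the paper's printed proof swaps these two numbers, though the sum is unaffected).
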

\begin{proof}
From the definitions of
$\mathfrak{m}$ and $\mathfrak{a}$,
we see
$\mathfrak{m}\subset \mathfrak{g}_0\cap \mathfrak{k}$
and
$\mathfrak{a}\subset \mathfrak{g}_0\cap \mathfrak{p}$.
Then from Lemma~\ref{nlp-03}(1), we see that
$\mathfrak{g}_{-2}+
\mathfrak{g}_{-1}
+\mathfrak{a}+ \mathfrak{m}
+ \mathfrak{g}_{1}+
\mathfrak{g}_{2}$ is 
a direct sum 
$\mathfrak{g}_{-2}\oplus
\mathfrak{g}_{-1}
\oplus\mathfrak{a}\oplus \mathfrak{m}
\oplus \mathfrak{g}_{1}\oplus
\mathfrak{g}_{2}$ and that
\[\mathfrak{g}_{-2}\oplus
\mathfrak{g}_{-1}
\oplus\mathfrak{a}\oplus \mathfrak{m}
\oplus \mathfrak{g}_{1}\oplus
\mathfrak{g}_{2}
\subset \mathfrak{g}_{-2\alpha}\oplus
\mathfrak{g}_{-\alpha}
\oplus\mathfrak{a}\oplus \mathfrak{m}
\oplus \mathfrak{g}_{\alpha}\oplus
\mathfrak{g}_{2\alpha}
\subset \mathfrak{f}_{4(-20)}.\]
Now
${\rm dim}~\mathfrak{a}=1$,
${\rm dim}~\mathfrak{g}_{2}
={\rm dim}~\mathfrak{g}_{-2}
={\rm dim}~{\bf O}=8$,
${\rm dim}~\mathfrak{g}_{1}
={\rm dim}~\mathfrak{g}_{-1}
={\rm dim}~({\rm Im}{\bf O})=7$
and by (\ref{nlp-04}),
${\rm dim}~\mathfrak{m}
={\rm dim}~(\mathfrak{so}(7))=21$
where $\mathfrak{so}(7)=Lie({\rm SO}(7))$.
By (\ref{cce-09}.a),
${\rm dim}~
\mathfrak{f}_{4(-20)}=52$.
Thus
\[{\rm dim}~(
\mathfrak{g}_{-2}\oplus
\mathfrak{g}_{-1}
\oplus\mathfrak{a}\oplus \mathfrak{m}
\oplus \mathfrak{g}_{1}\oplus
\mathfrak{g}_{2})=52={\rm dim}~
\mathfrak{f}_{4(-20)}\]
and it is clear that
\[\mathfrak{g}_{-2}\oplus
\mathfrak{g}_{-1}
\oplus\mathfrak{a}\oplus \mathfrak{m}
\oplus \mathfrak{g}_{1}\oplus
\mathfrak{g}_{2}
= \mathfrak{g}_{-2\alpha}\oplus
\mathfrak{g}_{-\alpha}
\oplus\mathfrak{a}\oplus \mathfrak{m}
\oplus \mathfrak{g}_{\alpha}\oplus
\mathfrak{g}_{2\alpha}
= \mathfrak{f}_{4(-20)}\]
and that
$\mathfrak{g}_i=\mathfrak{g}_{i\alpha}$
for $i\in\{\pm 1,\pm 2\}$.
Because of $\mathfrak{a}\oplus \mathfrak{m}
\subset\mathfrak{g}_0$, the decomposition
$\mathfrak{f}_{4(-20)}=\mathfrak{g}_{-2\alpha}\oplus
\mathfrak{g}_{-\alpha}
\oplus(\mathfrak{a}\oplus \mathfrak{m})
\oplus \mathfrak{g}_{\alpha}\oplus
\mathfrak{g}_{2\alpha}$ implies
the eigendecomposition of ${\rm ad}(\tilde{A}_3^1(1))$
and the root space decomposition 
of $(\mathfrak{f}_{4(-20)},\mathfrak{a})$
(cf. \cite[Ch~V]{Knp2001}). 
Thus 
$\Sigma=\{\pm \alpha,\pm 2\alpha\}$ 
and $\mathfrak{g}_0=\mathfrak{a}\oplus \mathfrak{m}$.
Because of $\mathfrak{a}
\subset\mathfrak{a}_{\mathfrak{p}}
\subset\mathfrak{g}_0\cap \mathfrak{p}=\mathfrak{a}$,
we have
$\mathfrak{a}_{\mathfrak{p}}=\mathfrak{a}$
and $\Sigma(\mathfrak{a}_{\mathfrak{p}})=
\Sigma=\{\pm \alpha,\pm 2\alpha\}$.
\end{proof}
\medskip

The nilpotent subalgebras $\mathfrak{n}^{\pm}$ 
are defined by
\begin{align*}
\mathfrak{n}^+&:=\mathfrak{g}_{2\alpha}
\oplus \mathfrak{g}_{\alpha}
=\{\mathcal{G}_2(p)+\mathcal{G}_1(x)|
~p\in{\rm Im}{\bf O},x\in{\bf O}\},\\
\mathfrak{n}^-&:=\mathfrak{g}_{-2\alpha}
\oplus \mathfrak{g}_{-\alpha}
=\{\mathcal{G}_{-2}(p)+\mathcal{G}_{-1}(x)|
~p\in{\rm Im}{\bf O},x\in{\bf O}\}
\end{align*}
respectively.
In fact, 
by Lemma~\ref{nlp-03}(2) 
and (\ref{nlp-05}.b),
\[[\mathfrak{n}^+,[
\mathfrak{n}^+,\mathfrak{n}^+]]=0,\quad\quad
[\mathfrak{n}^-,[
\mathfrak{n}^-,\mathfrak{n}^-]]=0
\]
and
the nilpotent subgroups $N^{\pm}$ of $\mathrm{F}_{4(-20)}$ 
are defined as
\begin{align*}
N^+&:=\exp \mathfrak{n}^+=
\{
\exp(\mathcal{G}_2(p)+\mathcal{G}_1(x))~|
~p\in{\rm Im}{\bf O},x\in{\bf O}\},\\
N^-&:=\exp \mathfrak{n}^-=\{
\exp(\mathcal{G}_{-2}(p)+\mathcal{G}_{-1}(x))~|
~p\in{\rm Im}{\bf O},x\in{\bf O}\}
\end{align*}
respectively.

\begin{lemma}\label{nlp-06}
Let $x\in {\bf O}$ and $p\in{\rm Im}{\bf O}.$
\[
\tag{\ref{nlp-06}}
\exp\mathcal{G}_2(p)\exp\mathcal{G}_1(x)
=\exp(\mathcal{G}_2(p)+\mathcal{G}_1(x))
=\exp\mathcal{G}_1(x)\exp\mathcal{G}_2(p).\\
\]
\end{lemma}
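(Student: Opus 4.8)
The plan is to deduce the identity from the elementary fact that two commuting endomorphisms $A,B$ of a finite-dimensional real vector space satisfy $\exp(A)\exp(B)=\exp(A+B)=\exp(B)\exp(A)$; thus the whole argument reduces to checking that $\mathcal{G}_1(x)$ and $\mathcal{G}_2(p)$ commute as elements of $\mathrm{End}_{\mathbb{R}}(\mathcal{J}^1)$.

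First I would record, using Lemma~\ref{nlp-03}(1), that $\mathcal{G}_1(x)\in\mathfrak{g}_1\subset\mathfrak{g}_{\alpha}$ and $\mathcal{G}_2(p)\in\mathfrak{g}_2\subset\mathfrak{g}_{2\alpha}$. Since $\mathfrak{g}_{\alpha}$ and $\mathfrak{g}_{2\alpha}$ are the eigenspaces of $\mathrm{ad}(\tilde{A}_3^1(1))$ for the eigenvalues $1$ and $2$ respectively, the Jacobi identity gives $[\mathcal{G}_1(x),\mathcal{G}_2(p)]\in\mathfrak{g}_{3\alpha}$ (this is also the inclusion $[\mathfrak{g}_{\alpha},\mathfrak{g}_{2\alpha}]\subset\mathfrak{g}_{3\alpha}$ contained in Lemma~\ref{nlp-03}(2)). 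Now $3\alpha\neq 0$ and, by $(\ref{nlp-05}.b)$, $3\alpha\notin\Sigma$; equivalently, by the decomposition $(\ref{nlp-05}.c)$ the operator $\mathrm{ad}(\tilde{A}_3^1(1))$ on $\mathfrak{f}_{4(-20)}$ has eigenvalues only $0,\pm 1,\pm 2$, so its $3$-eigenspace $\mathfrak{g}_{3\alpha}$ is $\{0\}$. Hence $[\mathcal{G}_1(x),\mathcal{G}_2(p)]=0$, i.e.\ $\mathcal{G}_1(x)\mathcal{G}_2(p)=\mathcal{G}_2(p)\mathcal{G}_1(x)$.

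Finally, because $\mathcal{J}^1$ is finite-dimensional the series $\exp f=\sum_{k\ge 0}f^{k}/k!$ converges absolutely for every $f\in\mathrm{End}_{\mathbb{R}}(\mathcal{J}^1)$, and for the commuting pair $A=\mathcal{G}_2(p)$, $B=\mathcal{G}_1(x)$ the usual Cauchy-product rearrangement yields $\exp(A)\exp(B)=\exp(A+B)=\exp(B)\exp(A)$, which is exactly $(\ref{nlp-06})$. I do not anticipate any genuine obstacle: the only step needing attention is the vanishing of $\mathfrak{g}_{3\alpha}$, which is immediate from the restricted root system computed in Lemma~\ref{nlp-05}. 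As an alternative to this step one could instead verify $[\mathcal{G}_1(x),\mathcal{G}_2(p)]=0$ by a direct computation from $(\ref{nlp-01}.b)$, the definitions of $\mathcal{G}_1$, $\mathcal{G}_2$ and $\delta(p)$, and the octonion identities of Lemma~\ref{prl-01}, but the root-space argument avoids that calculation.
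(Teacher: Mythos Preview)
Your proof is correct and follows essentially the same route as the paper: the paper also argues that $[\mathfrak{g}_{\alpha},\mathfrak{g}_{2\alpha}]\subset\mathfrak{g}_{3\alpha}=0$ from Lemma~\ref{nlp-03}(2) and $(\ref{nlp-05}.b)$, and then deduces the exponential identity from commutativity. You have simply spelled out in more detail the steps the paper leaves implicit.
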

\begin{proof}
By Lemma~\ref{nlp-03}(2) and (\ref{nlp-05}.b), 
$[\mathfrak{g}_{\alpha},\mathfrak{g}_{2\alpha}]=
[\mathfrak{g}_{2\alpha},\mathfrak{g}_{\alpha}]=0.$
Hence (\ref{nlp-06}) follows.
\end{proof}
\medskip

Denote
$T(x,y,z):=(x\overline{y})z-(z\overline{y})x$
for $x,y,z\in{\bf O}$ (cf. \cite[(6.55)]{Hfr1990}).

\begin{lemma}\label{nlp-07} 
{\rm (\cite[Lemma 6.56]{Hfr1990}).}
$T(x,y,z)$ 
is alternating on ${\bf O}:$
$T(x,x,z)=T(z,x,x)=T(x,z,x)=0$
for~all $x,z \in {\bf O}$.
Especially, for all $x_1,x_2,x_3 \in {\bf O}$,
\[
\tag{\ref{nlp-07}}
T(x_1,x_2,x_3)=T(x_i,x_{i+1},x_{i+2})
=-T(x_i,x_{i+2},x_{i+1})\]
where $i \in \{1,2,3\}$ and the indexes $i,i+1,i+2$ are counted
modulo $3$.
\end{lemma}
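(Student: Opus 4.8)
The plan is to reduce everything to the single algebraic identity
\[
(z\overline{x})x=(x|x)z\qquad\text{for all }x,z\in{\bf O},
\]
and then to exploit the elementary fact that an $\mathbb{R}$-trilinear map which vanishes whenever two of its arguments coincide is totally antisymmetric. Note first that $T(x,y,z)=(x\overline{y})z-(z\overline{y})x$ is trilinear over $\mathbb{R}$: linearity in $x$ and in $z$ is visible, and linearity in $y$ follows since $y\mapsto\overline{y}$ is $\mathbb{R}$-linear by (\ref{prl-01}.f).

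To prove the displayed identity I would apply (\ref{prl-01}.h), in the form $w(a\overline{a})=(wa)\overline{a}$, with $w=z$ and $a=\overline{x}$; using $\overline{\overline{x}}=x$ from (\ref{prl-01}.f) this gives $(z\overline{x})x=z(\overline{x}x)$, and then $\overline{x}x=(x|x)$ by (\ref{prl-01}.g), so $(z\overline{x})x=z(x|x)=(x|x)z$. Granting this, the three vanishing relations are immediate: $T(x,x,z)=(x\overline{x})z-(z\overline{x})x=(x|x)z-(x|x)z=0$, using (\ref{prl-01}.g) for the first term and the identity for the second; $T(z,x,x)=(z\overline{x})x-(x\overline{x})z=(x|x)z-(x|x)z=0$ in the same way; and $T(x,z,x)=(x\overline{z})x-(x\overline{z})x=0$ trivially.

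It then remains to derive (\ref{nlp-07}) formally. Expanding $T(u+v,u+v,w)=0$ by trilinearity gives $T(u,v,w)=-T(v,u,w)$, and similarly $T(x,z,x)\equiv 0$ and $T(z,x,x)\equiv 0$ yield $T(u,v,w)=-T(u,w,v)$ and $T(u,v,w)=-T(w,v,u)$. Composing two transpositions shows a cyclic shift is even, so $T(x_1,x_2,x_3)=T(x_2,x_3,x_1)=T(x_3,x_1,x_2)=T(x_i,x_{i+1},x_{i+2})$, while swapping the last two arguments is a single transposition, giving $T(x_i,x_{i+1},x_{i+2})=-T(x_i,x_{i+2},x_{i+1})$, which is exactly the assertion. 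I do not anticipate a genuine obstacle here; the only point requiring care is to justify the re-bracketing $(z\overline{x})x=z(\overline{x}x)$ by means of the alternative-law identity (\ref{prl-01}.h) rather than by an unstated appeal to associativity of the subalgebra generated by $x$ and $z$, after which the argument is pure bookkeeping with trilinearity.
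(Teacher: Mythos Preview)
Your proof is correct and follows exactly the route the paper indicates: the paper's proof is the single line ``It follows from (\ref{prl-01}.h)'', and you have simply unpacked that reference by using $x(a\overline{a})=(xa)\overline{a}$ to obtain $(z\overline{x})x=z(\overline{x}x)=(x|x)z$, together with (\ref{prl-01}.g), and then applied the standard polarization argument to pass from vanishing on repeated arguments to full antisymmetry.
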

\begin{proof} 
It follows from (\ref{prl-01}.h).
\end{proof}
\medskip

\begin{lemma}\label{nlp-08}
Let $p,q\in{\rm Im}{\bf O}$ and $x,y\in{\bf O}.$
Then
\[
\tag{\ref{nlp-08}}
[\mathcal{G}_2(p)+\mathcal{G}_1(x),
~\mathcal{G}_2(q)+\mathcal{G}_1(y)]
=\mathcal{G}_2(2{\rm Im}(x\overline{y})).\]
\end{lemma}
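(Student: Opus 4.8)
The plan is to expand the four-term bracket, kill three of the terms by the grading established in \S\ref{nlp}, and then pin down the single surviving term by evaluating at $E_1$.

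First I would write
\[
[\mathcal{G}_2(p)+\mathcal{G}_1(x),\ \mathcal{G}_2(q)+\mathcal{G}_1(y)]
=[\mathcal{G}_2(p),\mathcal{G}_2(q)]+[\mathcal{G}_2(p),\mathcal{G}_1(y)]+[\mathcal{G}_1(x),\mathcal{G}_2(q)]+[\mathcal{G}_1(x),\mathcal{G}_1(y)].
\]
By Lemma~\ref{nlp-03}(1) and (\ref{nlp-05}.a)(i) we have $\mathcal{G}_2(p),\mathcal{G}_2(q)\in\mathfrak{g}_{2\alpha}$ and $\mathcal{G}_1(x),\mathcal{G}_1(y)\in\mathfrak{g}_{\alpha}$. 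Since root spaces satisfy $[\mathfrak{g}_{i\alpha},\mathfrak{g}_{j\alpha}]\subset\mathfrak{g}_{(i+j)\alpha}$ (Lemma~\ref{nlp-03}(2)), and $3\alpha,4\alpha\notin\Sigma$ by (\ref{nlp-05}.b) so that $\mathfrak{g}_{3\alpha}=\mathfrak{g}_{4\alpha}=\{0\}$, the first three brackets vanish. Thus only $[\mathcal{G}_1(x),\mathcal{G}_1(y)]$ survives, and by the same inclusion together with (\ref{nlp-05}.a)(i) it lies in $\mathfrak{g}_{2\alpha}=\mathfrak{g}_2=\{\mathcal{G}_2(r)\mid r\in{\rm Im}{\bf O}\}$. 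Since $r\mapsto\mathcal{G}_2(r)$ is injective (indeed $\mathcal{G}_2(r)E_1=F_3^1(r)$, see below), it suffices to determine the unique $r$ with $[\mathcal{G}_1(x),\mathcal{G}_1(y)]=\mathcal{G}_2(r)$.

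To do this I would compute the common value at $E_1$. From (\ref{cce-09}.b)(ii) one has $\tilde{A}_3^1(-p)E_1=F_3^1(p)$, and since $\delta(p)\in\mathfrak{d}_4$ annihilates every $E_i$, it follows that $\mathcal{G}_2(p)E_1=F_3^1(p)$; hence $r$ is read off from $[\mathcal{G}_1(x),\mathcal{G}_1(y)]E_1=F_3^1(r)$. Using (\ref{cce-09}.b) I find $\mathcal{G}_1(y)E_1=\tilde{A}_1^1(y)E_1+\tilde{A}_2^1(-\overline{y})E_1=F_2^1(-\overline{y})$, and then, by parts (v) and (iv) of (\ref{cce-09}.b), $\mathcal{G}_1(x)F_2^1(-\overline{y})=F_3^1(-y\overline{x})-2(x|y)(E_3-E_1)$. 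Subtracting the term obtained by interchanging $x$ and $y$ (whose diagonal part cancels against the one above) gives $[\mathcal{G}_1(x),\mathcal{G}_1(y)]E_1=F_3^1(x\overline{y}-y\overline{x})=F_3^1(2\,{\rm Im}(x\overline{y}))$, where I used $\overline{x\overline{y}}=y\overline{x}$. Therefore $r=2\,{\rm Im}(x\overline{y})$, which is the assertion.

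I do not expect a genuine obstacle: the grading of $\mathfrak{f}_{4(-20)}$ does the structural work, and what is left is one short application of (\ref{cce-09}.b). A more pedestrian alternative would be to compute the $\mathfrak{d}_4$-parts $[\tilde{A}_1^1(x),\tilde{A}_1^1(y)]$ and $[\tilde{A}_2^1(-\overline{x}),\tilde{A}_2^1(-\overline{y})]$ explicitly and verify their sum equals $-\delta(2\,{\rm Im}(x\overline{y}))$; this is feasible but would require the alternating identity (\ref{nlp-07}) for $T(x,y,z)$ together with a fair amount of octonion bookkeeping, so the evaluation-at-$E_1$ route is preferable.
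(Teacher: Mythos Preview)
Your proof is correct and is genuinely cleaner than the paper's. Both arguments dispose of the three ``mixed'' brackets by the grading (Lemma~\ref{nlp-03}(2) and (\ref{nlp-05}.b)), so the work reduces to identifying $[\mathcal{G}_1(x),\mathcal{G}_1(y)]$. Here the paths diverge: the paper takes precisely the route you call ``pedestrian'' at the end. It first uses (\ref{nlp-01}.b)(iii) to see that the $\tilde{A}_3^1$-component of $[\mathcal{G}_1(x),\mathcal{G}_1(y)]$ is $-\tilde{A}_3^1(2\,{\rm Im}(x\overline{y}))$, so that the difference $f=[\mathcal{G}_1(x),\mathcal{G}_1(y)]-\mathcal{G}_2(2\,{\rm Im}(x\overline{y}))$ lies in $\mathfrak{d}_4$; it then writes $f=d\varphi_0(D_1,D_2,D_3)$, computes $D_1 z$ explicitly from $fF_1^1(z)$, and shows $D_1=0$ via the alternating identity (\ref{nlp-07}) for $T(x,y,z)$, invoking Lemma~\ref{cce-05}(1) to conclude $D_2=D_3=0$. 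Your argument bypasses all of this octonion bookkeeping: having already established in (\ref{nlp-05}.a)(i) that $\mathfrak{g}_{2\alpha}=\mathfrak{g}_2$, you know a priori that the bracket has the form $\mathcal{G}_2(r)$, and a single evaluation at $E_1$ pins down $r$. The paper's approach has the minor virtue of being self-contained in the sense that it does not appeal to the dimension count behind (\ref{nlp-05}.a), but since that lemma precedes Lemma~\ref{nlp-08} anyway, your route is both legitimate and more efficient.
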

\begin{proof}
First, by Lemma~\ref{nlp-03}(2)
and (\ref{nlp-05}.b),
\[[\mathcal{G}_2(p),\mathcal{G}_2(q)]
=[\mathcal{G}_2(p),
\mathcal{G}_1(x)]
=0.
\]
Second, we show $[\mathcal{G}_1(x),
\mathcal{G}_1(y)]
=\mathcal{G}_2(2{\rm Im}(x\overline{y}))$.
Put $f=[\mathcal{G}_1(x),
\mathcal{G}_1(y)]
-\mathcal{G}_2(2{\rm Im}(x\overline{y}))$. 
From (\ref{nlp-01}.b),
we calculate that
\begin{gather*}
[\mathcal{G}_1(x),\mathcal{G}_1(y)]
=([\tilde{A}_1^1(x),\tilde{A}_1^1(y)]
+[\tilde{A}_2^1(\overline{x}),
\tilde{A}_2^1(\overline{x})])
-\tilde{A}_3^1(2{\rm Im}(x\overline{y})),\\ 
\mathcal{G}_2(2{\rm Im}(x\overline{y}))
=\tilde{A}_3^1(-2{\rm Im}(x\overline{y}))-
\delta(2{\rm Im}(x\overline{y})).
\end{gather*}
Then 
$f=[\tilde{A}_1^1(\overline{x}),\tilde{A}_1^1(\overline{y})]
+[\tilde{A}_2^1(x),\tilde{A}_2^1(y)]
+\delta(2{\rm Im}(x\overline{y}))$.
By (\ref{nlp-01}.b)(ii), $f\in \mathfrak{d}_4$
and from Lemma~\ref{cce-05}(3),
we can write
$f=d\varphi_0(D_1,D_2,D_3)$
where $(D_1,D_2,D_3) \in (\mathfrak{D}_4)^3$
satisfying the infinitesimal triality.
Fix $z\in{\bf O}.$
Then
\begin{align*}
F_1^1(D_1z)&=d\varphi_0(D_1,D_2,D_3)F_1^1(z)=fF_1^1(z)\\
&=([\tilde{A}_1^1(x),
\tilde{A}_1^1(y)]
+[\tilde{A}_2^1(\overline{x}),\tilde{A}_2^1(\overline{y})]
+\delta(2{\rm Im}(x\overline{y})))F_1^1(z)
\end{align*}
and using 
(\ref{cce-09}.b), (\ref{prl-01}.g),
and (\ref{nlp-07}),
we see that
\begin{align*}
D_1z&=-4(y|z)x+4(x|z)y+(z\overline{y})x
-(z\overline{x})y+
(x\overline{y})z-(y\overline{x})z\\
&=-2(y\overline{z})z-2(z\overline{y})x
+2(x\overline{z})y+2(z\overline{x})y\\
&~~+(z\overline{y})x-(z\overline{x})y
+(x\overline{y})z-(y\overline{x})z\\
&=(z\overline{x})y-(y\overline{x})z
+(x\overline{y})z-(z\overline{y})x
+2(x\overline{z})y-2(y\overline{z})x\\
&=T(z,x,y)+T(x,y,z)+2 T(x,z,y)\\
&=T(x,y,z)+T(x,y,z)-2 T(x,y,z)=0.
\end{align*}
Then $D_1=0$
and from Lemma~\ref{cce-05}(1),
we see
$D_2=D_3=0$.
Thus $f=0$
and so $[\mathcal{G}_1(x),
\mathcal{G}_1(y)]
=\mathcal{G}_2(2{\rm Im}(x\overline{y}))$.
Hence the result follows.
\end{proof}
\medskip

\begin{lemma}\label{nlp-09} 
There exists a neighborhood $U$ of $0$ 
in ${\rm Im}{\bf O}\times {\bf O}$
such that  
\begin{align*}
\tag{\ref{nlp-09}}
&\exp(\mathcal{G}_2(p)+\mathcal{G}_1(x))
\exp(\mathcal{G}_2(q)+\mathcal{G}_1(y))\\
=&\exp(\mathcal{G}_2(p+q+{\rm Im}(x\overline{y}))
+\mathcal{G}_1(x+y))
\end{align*}
for all $(p,x),(q,y)\in U$.
\end{lemma}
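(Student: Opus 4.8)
The plan is to read the right-hand side of \eqref{nlp-09} as the value of the truncated Baker--Campbell--Hausdorff series, which is legitimate precisely because $\mathfrak{n}^+$ is two-step nilpotent. Set $A=\mathcal{G}_2(p)+\mathcal{G}_1(x)$ and $B=\mathcal{G}_2(q)+\mathcal{G}_1(y)$; both lie in $\mathfrak{n}^+$, and the display just preceding the statement gives $[\mathfrak{n}^+,[\mathfrak{n}^+,\mathfrak{n}^+]]=0$, hence $[A,[A,B]]=[B,[A,B]]=0$. On a neighborhood of $0$ where the BCH series converges, it therefore collapses to
\[
\exp(A)\exp(B)=\exp\!\Bigl(A+B+\tfrac12[A,B]\Bigr),
\]
and I would simply take $U$ to lie inside that neighborhood of convergence. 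It remains to identify the exponent. Since each $\mathcal{G}_i$ is $\mathbb{R}$-linear in its argument, $A+B=\mathcal{G}_2(p+q)+\mathcal{G}_1(x+y)$; and Lemma~\ref{nlp-08} gives $[A,B]=\mathcal{G}_2(2\,{\rm Im}(x\overline{y}))$, so $\tfrac12[A,B]=\mathcal{G}_2({\rm Im}(x\overline{y}))$. Adding and using linearity of $\mathcal{G}_2$ once more yields $A+B+\tfrac12[A,B]=\mathcal{G}_2\bigl(p+q+{\rm Im}(x\overline{y})\bigr)+\mathcal{G}_1(x+y)$, which is exactly the exponent on the right of \eqref{nlp-09}.

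An equivalent route that avoids quoting the general BCH theorem uses only the earlier structural lemmas: by Lemma~\ref{nlp-06} one has $\exp(\mathcal{G}_2(r)+\mathcal{G}_1(z))=\exp\mathcal{G}_1(z)\exp\mathcal{G}_2(r)$, and the factors $\exp\mathcal{G}_2(\cdot)$ are central in $N^+$ because $\mathfrak{g}_{2\alpha}$ is central in $\mathfrak{n}^+$ (from Lemma~\ref{nlp-03}(2) and (\ref{nlp-05}.b), since $\mathfrak{g}_{3\alpha}=\mathfrak{g}_{4\alpha}=0$). Sliding every $\mathfrak{g}_{2\alpha}$-factor to the right and using that $\exp\mathcal{G}_2$ is additive on $\mathrm{Im}\mathbf{O}$ reduces the claim to $\exp\mathcal{G}_1(x)\exp\mathcal{G}_1(y)=\exp\bigl(\mathcal{G}_1(x+y)+\mathcal{G}_2({\rm Im}(x\overline{y}))\bigr)$, which is the two-term BCH identity inside the two-step nilpotent algebra $\mathfrak{g}_\alpha\oplus\mathfrak{g}_{2\alpha}$ together with the bracket computed in Lemma~\ref{nlp-08} (case $p=q=0$).

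The only point that genuinely needs care is the validity of the truncated BCH identity, which is exactly why the statement is phrased locally; this is dispatched by taking $U$ small enough. (If one wanted a global statement, one would instead observe that every element of $\mathfrak{n}^+$ acts nilpotently on $\mathcal{J}^1$: the subspace $\mathfrak{a}$ grades $\mathcal{J}^1$ into finitely many weight spaces and $\mathfrak{g}_\alpha,\mathfrak{g}_{2\alpha}$ strictly raise the weight, so $\exp$ is a polynomial map and the identity holds for all $(p,x),(q,y)$; but this is not needed here.) No step is expected to present a real obstacle — the content is entirely carried by Lemmas~\ref{nlp-06} and~\ref{nlp-08} and the vanishing of triple brackets.
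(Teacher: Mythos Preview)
Your argument is correct and matches the paper's proof essentially line for line: invoke the Campbell--Hausdorff--Dynkin formula on a small neighborhood, use $[\mathfrak{n}^+,[\mathfrak{n}^+,\mathfrak{n}^+]]=0$ to truncate to $X+Y+\tfrac12[X,Y]$, and identify the bracket via Lemma~\ref{nlp-08}. Your alternative route and the parenthetical remark about a global extension via nilpotency are extras the paper does not include here (the global version is established separately in Lemma~\ref{sdp-04} by a polynomial-identity argument).
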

\begin{proof}
Using Campbell-Hausdorff-Dynkin formulas
(cf. \cite[Theorem 3.4.4]{Fj2008}),
there exists  
a neighborhood $U_1$ of $0$ 
in ${\rm End}_{\mathbb{R}}(\mathcal{J}^1)$
such that for all $X,Y\in U_1,$ 
\begin{align*}
\exp X\exp Y&=\exp\left(X+Y+2^{-1}[X,Y]
+12^{-1}[X,[X,Y]]\right.\\
&~\left.+12^{-1}[Y,[Y,X]]
+({\rm terms~of~degree}\geq 4)\right).
\end{align*}
Because of 
$[\mathfrak{n}^+,[\mathfrak{n}^+,\mathfrak{n}^+]]=0$,
we see
\[\exp X\exp Y=\exp\left(X+Y+2^{-1}[X,Y]\right)\]
for all $X,Y\in \mathfrak{n}^+\cap U_1$.
Then by (\ref{nlp-08}), 
there exists a neighborhood $U$ of $0$ 
in ${\rm Im}{\bf O}\times {\bf O}$
such that for all $(p,x),(q,y)\in U,$
\begin{align*}
&\exp(\mathcal{G}_2(p)+\mathcal{G}_1(x))
\exp(\mathcal{G}_2(q)+\mathcal{G}_1(y)))\\
&=\exp\left(\mathcal{G}_2(p+q)
+\mathcal{G}_1(x+y)
+2^{-1}[\mathcal{G}_2(p)+\mathcal{G}_1(x),
\mathcal{G}_2(q)+\mathcal{G}_1(y)
]\right)\\
&=\exp(\mathcal{G}_2(p+q+{\rm Im}(x\overline{y}))
+\mathcal{G}_1(x+y)).
\end{align*}
\end{proof}

\begin{lemma}\label{nlp-10} 
Let $p,q\in{\rm Im}{\bf O}$ and $x,y\in{\bf O}$.
\begin{gather*}
\tag{\ref{nlp-10}.a}
\left\{\begin{array}{rl}
{\rm (i)}&
\mathcal{G}_2(p)(-E_1+E_2)=-2F_3^1(p), \\
\smallskip
{\rm (ii)}&
\mathcal{G}_2(p)P^-=0,\quad
{\rm (iii)}~~~
\mathcal{G}_2(p)E=0,\quad
{\rm (iv)}~~~
\mathcal{G}_2(p)E_3=0,\\
\smallskip
{\rm (v)}&\mathcal{G}_2(p)F_3^1(q)=-2(p|q)P^-,\\
\smallskip
{\rm (vi)}&
\mathcal{G}_2(p)Q^+(y)=0,\quad
{\rm (vii)}~~~\mathcal{G}_2(p)Q^-(y)
=-2Q^+(py).
\end{array}\right.\\
\tag{\ref{nlp-10}.b}
\left\{\begin{array}{rl}
{\rm (i)}&\mathcal{G}_1(x)(-E_1+E_2)=-Q^-(x),\\
\smallskip
{\rm (ii)}&\mathcal{G}_1(x)P^-=0,\quad
{\rm (iii)}~~~\mathcal{G}_1(x)E=0,
\quad
{\rm (iv)}~~~\mathcal{G}_1(x)E_3=Q^+(x),\\
\smallskip
{\rm (v)}&\mathcal{G}_1(x)F_3^1(q)
=-Q^+(qx),\quad
{\rm (vi)}~~~
\mathcal{G}_1(x)Q^+(y)
=2(x|y)P^-,\\
\smallskip
{\rm (vii)}&
  \mathcal{G}_1(x)Q^-(y)
  =2(x|y)(E-3E_3)+F_3^1(2{\rm Im}(x\overline{y})).\\
\end{array}\right.
\end{gather*}
\end{lemma}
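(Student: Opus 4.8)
The statement is a direct computation, organized as follows. By (\ref{prl-05}.b) the elements $-E_1+E_2$, $P^-$, $E$, $E_3$, $F_3^1(q)$ with $q\in{\rm Im}{\bf O}$, $Q^+(y)$ and $Q^-(y)$ span $\mathcal{J}^1$, so it suffices to evaluate $\mathcal{G}_2(p)$ and $\mathcal{G}_1(x)$ on each of these. First I would substitute the definitions $\mathcal{G}_2(p)=\tilde{A}_3^1(-p)-\delta(p)$ and $\mathcal{G}_1(x)=\tilde{A}_1^1(x)+\tilde{A}_2^1(-\overline{x})$ and then apply the explicit action formulas: (\ref{cce-09}.b) for each term $\tilde{A}_i^1(\cdot)$, and (\ref{cce}.5) for $\delta(p)=d\varphi_0(l_p,r_p,t_{-p})$, which annihilates every $E_i$ and acts by $F_1^1(z)\mapsto F_1^1(pz)$, $F_2^1(z)\mapsto F_2^1(zp)$, $F_3^1(z)\mapsto F_3^1(-pz-zp)$. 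One must keep careful track of the cyclic indices $i,i+1,i+2$ modulo $3$ and of the signs $\epsilon(i)$; for instance in $\mathcal{G}_2(p)$ the element $E_2$ is read as $E_{3+2}$ so one uses (\ref{cce-09}.b)(iii), and $\tilde{A}_3^1(-p)F_3^1(q)$ uses (\ref{cce-09}.b)(iv) with $\epsilon(3)=-1$.

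With those in hand I would reassemble the outputs using the octonion identities of Lemma~\ref{prl-01} together with $\overline{p}=-p$ for $p\in{\rm Im}{\bf O}$. The only non-mechanical simplifications are: $pq+qp=-2(p|q)$ for $p,q\in{\rm Im}{\bf O}$ (from (\ref{prl-01}.g)), which converts $\tilde{A}_3^1(-p)F_3^1(q)-\delta(p)F_3^1(q)=2(p|q)(E_1-E_2)+F_3^1(pq+qp)$ into $-2(p|q)P^-$; $x\overline{y}+y\overline{x}=2(x|y)$ (again (\ref{prl-01}.g)), used for $\mathcal{G}_1(x)Q^+(y)$; and $x\overline{y}-y\overline{x}=x\overline{y}-\overline{x\overline{y}}=2\,{\rm Im}(x\overline{y})$, using $\overline{x\overline{y}}=y\overline{x}$ from (\ref{prl-01}.f), used for $\mathcal{G}_1(x)Q^-(y)$; everything else is a matter of rewriting $F_1^1$- and $F_2^1$-components by means of $\overline{xy}=\overline{y}\,\overline{x}$ and $\overline{\overline{x}}=x$ so that they reassemble into $Q^+(\cdot)$, $Q^-(\cdot)$, or multiples of $P^-$, $E$. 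For the $\mathcal{G}_2(p)$ formulas the structural point worth noticing is the cancellation between $\tilde{A}_3^1(-p)$ and $\delta(p)$: on $P^-$, $E$, $E_3$, $Q^+(y)$ the two contributions cancel completely, on $-E_1+E_2$ only the $\delta(p)$ part vanishes, and on $F_3^1(q)$ and $Q^-(y)$ they add.

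I expect the main obstacle to be simply the bulk and the sign discipline of this case analysis; no new ideas are needed, since every action formula and octonion identity required is already recorded in (\ref{cce-09}.b), (\ref{cce}.5) and Lemma~\ref{prl-01}. The point that most repays checking is the $Q^+$/$Q^-$ bookkeeping in the $\mathcal{G}_2(p)$ rows: applying $\tilde{A}_3^1(-p)$ to $Q^+(y)=F_1^1(y)+F_2^1(\overline{y})$ gives $F_1^1(py)-F_2^1(\overline{py})=Q^-(py)$, the sign flip on the second slot coming from $\overline{y}p=-\overline{py}$, and $\delta(p)$ produces the same $Q^-(py)$, whence $\mathcal{G}_2(p)Q^+(y)=0$; the analogous computation on $Q^-(y)$ yields $-Q^+(py)$ from each of the two terms, giving $\mathcal{G}_2(p)Q^-(y)=-2Q^+(py)$. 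Collecting the seven cases for $\mathcal{G}_2(p)$ and the seven for $\mathcal{G}_1(x)$ gives the two displays (\ref{nlp-10}.a) and (\ref{nlp-10}.b).
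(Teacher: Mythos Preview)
Your approach is correct and is precisely what the paper does: its proof of this lemma reads in full ``Using (\ref{cce-09}.b) and the definition of $\delta(p)$, it follows from direct calculations.'' You have simply fleshed out the bookkeeping that the paper leaves to the reader, and the specific cancellations and octonion identities you flag are accurate.
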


\begin{proof}
Using (\ref{cce-09}.b) and the definition of $\delta(p)$,
it follows from
direct calculations. 
\end{proof}
\medskip

\begin{lemma}\label{nlp-11} 
Let $p,q\in{\rm Im}{\bf O}$ and $x,y\in{\bf O}$.
\begin{gather*}
\tag{\ref{nlp-11}.a}
\left\{\begin{array}{rl}
{\rm (i)}& \exp\mathcal{G}_2(p)(-E_1+E_2)
=(-E_1+E_2)-2F_3^1(p)+2(p|p)P^-,\\
\smallskip
{\rm (ii)}
& \exp\mathcal{G}_2(p)P^-
=P^-,\quad
{\rm (iii)}~~~\exp\mathcal{G}_2(p)E=E,
\\
\smallskip
{\rm (iv)}&\exp\mathcal{G}_2(p)E_3=E_3,
\\
\smallskip
{\rm (v)}&
\exp\mathcal{G}_2(p)F_3^1(q)=F_3^1(q)-2(p|q)P^-,\\
\smallskip
{\rm (vi)}& 
\exp\mathcal{G}_2(p)Q^+(y)
=Q^+(y),\\
\smallskip
{\rm (vii)}&
\exp\mathcal{G}_2(p)Q^-(y)=Q^-(y)
-2Q^+(py).
\end{array}\right.
\\
\left\{\begin{array}{rl}
\tag{\ref{nlp-11}.b}
{\rm (i)}&
\exp\mathcal{G}_1(x)(-E_1+E_2)
  =(-E_1+E_2)-Q^-(x)\\
\smallskip
&~~~-(x|x)(E-3E_3)+(x|x)Q^+(x)
+2^{-1}(x|x)^2P^-,\\
\smallskip
{\rm (ii)}&\exp\mathcal{G}_1(x)P^-=P^-,\quad
{\rm (iii)}~~~\exp\mathcal{G}_1(x)E=E,\\
\smallskip
{\rm (iv)}& \exp\mathcal{G}_1(x)E_3
=E_3+Q^+(x)+(x|x)P^-,\\
\smallskip
{\rm (v)}&
\exp\mathcal{G}_1(x)F_3^1(q)=F_3^1(q)-Q^+(qx),\\
\smallskip
{\rm (vi)}&
\exp\mathcal{G}_1(x)Q^+(y)=Q^+(y)+2(x|y)P^-,\\
\smallskip
{\rm (vii)}&
\exp\mathcal{G}_1(x)Q^-(y)=Q^-(y)
+2(x|y)(E-3E_3)\\
&~~~+F_3^1(2{\rm Im}(x\overline{y}))
-Q^+(3(x|y)x+{\rm Im}(x\overline{y})x)-2(x|y)(x|x)P^-.
\end{array}\right.
\end{gather*}
\end{lemma}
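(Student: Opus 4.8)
The plan is to compute every exponential in (\ref{nlp-11}.a) and (\ref{nlp-11}.b) by iterating the infinitesimal action recorded in Lemma~\ref{nlp-10} and then truncating the resulting power series. Since (\ref{prl-05}.b) exhibits $-E_1+E_2,\ P^-,\ E,\ E_3,\ F_3^1(\mathrm{Im}{\bf O}),\ Q^+({\bf O}),\ Q^-({\bf O})$ as a spanning family for $\mathcal{J}^1$, it suffices to verify each identity on these vectors; linearity then gives the action on all of $\mathcal{J}^1$. For a fixed $\phi\in\{\mathcal{G}_2(p),\mathcal{G}_1(x)\}$ and a spanning vector $X$, I would apply $\phi$ to $X$ using (\ref{nlp-10}.a) or (\ref{nlp-10}.b), re-express the result in the same spanning family, apply $\phi$ again, and so on. Because $\mathfrak{g}_{3\alpha}=\mathfrak{g}_{4\alpha}=0$ by (\ref{nlp-05}.b), these operators act nilpotently on $\mathcal{J}^1$, and in each case the chain $X,\phi X,\phi^2X,\dots$ terminates: after at most two steps for $\phi=\mathcal{G}_2(p)$ and after at most four steps for $\phi=\mathcal{G}_1(x)$. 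Consequently $\exp\phi\,X=\sum_{k=0}^{N}\tfrac{1}{k!}\phi^kX$ with $N\le 4$, and summing these finitely many explicit terms produces the stated right-hand side.

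Two octonionic facts are what make the terms collapse to the asserted shape. First, $x\overline{x}=(x|x)\in\mathbb{R}$ by (\ref{prl-01}.g), so $\mathrm{Im}(x\overline{x})=0$; this is what suppresses the $F_3^1$-contribution when $\mathcal{G}_1(x)$ is iterated on $-E_1+E_2$ via (\ref{nlp-10}.b)(i),(vii). Second, for $p\in\mathrm{Im}{\bf O}$ one has $(x|px)=0$: by (\ref{prl-01}.e) and $\overline{p}=-p$, $(px|x)=(x|\overline{p}x)=-(x|px)$, and symmetry of the inner product then forces $(x|px)=0$. Applied with $p=q$ this removes the spurious $P^-$-term in (\ref{nlp-11}.b)(v), and applied with $p=\mathrm{Im}(x\overline{y})$ it pins down the $P^-$-coefficient in (\ref{nlp-11}.b)(vii).

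I would organize the verification from shortest chain to longest. The vectors $P^-,\ E,\ E_3$ (together with $Q^+(y)$ for $\mathcal{G}_2(p)$) are either fixed or killed after one step, so (\ref{nlp-11}.a)(ii)--(iv),(vi) and (\ref{nlp-11}.b)(ii),(iii) are immediate; then $F_3^1(q)$, $Q^-(y)$ and $Q^+(y)$ require a single nonzero iterate each; and finally $-E_1+E_2$ for each of $\mathcal{G}_2(p)$ and $\mathcal{G}_1(x)$ uses the full chain. The only genuinely long computation is (\ref{nlp-11}.b)(vii): starting from $\mathcal{G}_1(x)Q^-(y)=2(x|y)(E-3E_3)+F_3^1(2\,\mathrm{Im}(x\overline{y}))$ one applies $\mathcal{G}_1(x)$ three more times, using (\ref{nlp-10}.b)(iii),(iv),(v),(vi) and the vanishing of $(x|\mathrm{Im}(x\overline{y})x)$, and then weights the successive iterates by $1,\tfrac{1}{2},\tfrac{1}{6}$. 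Keeping the signs straight and tracking which octonion arguments are real is the principal bookkeeping hazard; there is no conceptual difficulty once Lemma~\ref{nlp-10} and the two identities above are in hand.
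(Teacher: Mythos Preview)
Your approach is correct and is exactly what the paper does: its entire proof reads ``Using (\ref{nlp-10}.a) and (\ref{nlp-10}.b), it follows from direct calculations,'' and you have simply spelled out how that calculation runs on each spanning vector of (\ref{prl-05}.b), including the two octonionic simplifications that make the series collapse. One small caveat: the vanishing of $\mathfrak{g}_{3\alpha}$ and $\mathfrak{g}_{4\alpha}$ in the Lie algebra does not by itself bound the nilpotency order of $\mathcal{G}_2(p)$ and $\mathcal{G}_1(x)$ as endomorphisms of the representation $\mathcal{J}^1$; the precise bounds $\mathcal{G}_2(p)^3=0$ and $\mathcal{G}_1(x)^5=0$ come from the case-by-case termination you describe (this is exactly Lemma~\ref{nlp-12}, which the paper records immediately afterward), so your argument is fine once you drop that heuristic and rely on the explicit chains.
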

\begin{proof}
Using (\ref{nlp-10}.a) and (\ref{nlp-10}.b),
it follows from
direct calculations. 
\end{proof}
\medskip

\begin{lemma}\label{nlp-12} 
Let $p\in \mathrm{Im}{\bf O}$, $x\in {\bf O}$ and
$X\in \mathfrak{n}^+$. Then
$\mathcal{G}_2(p)^3=0$ and $\mathcal{G}_1(x)^5=0$
Especially,
\[
\tag{\ref{nlp-12}}
\exp\mathcal{G}_2(p)=
\sum{}_{n=0}^2 (n!)^{-1}\mathcal{G}_2(p)^n ,\quad
\exp\mathcal{G}_1(x)=
\sum{}_{n=0}^4 (n!)^{-1}
\mathcal{G}_1(x)^n .
\]
\end{lemma}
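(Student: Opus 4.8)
The plan is to compute, using Lemma~\ref{nlp-10}, how $\mathcal{G}_2(p)$ and $\mathcal{G}_1(x)$ act on the spanning set of $\mathcal{J}^1$ afforded by the decomposition (\ref{prl-05}.b) --- namely $-E_1+E_2$, $P^-$, $E$, $E_3$, the $F_3^1(q)$ with $q\in\mathrm{Im}{\bf O}$, and the $Q^+(y)$, $Q^-(y)$ with $y\in{\bf O}$ --- and to note that both operators are strictly filtration-decreasing: each block of (\ref{prl-05}.b) is carried into a sum of blocks lying ``nearer'' the line $\mathbb{R}P^-$, and $\mathbb{R}P^-$ itself is annihilated. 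It then suffices, for each generator, to bound the length of the chain of iterated images before it becomes $0$.

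For $\mathcal{G}_2(p)$, (\ref{nlp-10}.a) gives the two nontrivial chains $-E_1+E_2\mapsto -2F_3^1(p)\mapsto 4(p|p)P^-\mapsto 0$ and $Q^-(y)\mapsto -2Q^+(py)\mapsto 0$, and it annihilates $P^-$, $E$, $E_3$ and every $Q^+(y)$ outright. Hence $\mathcal{G}_2(p)^3$ kills every generator, so $\mathcal{G}_2(p)^3=0$ on $\mathcal{J}^1$.

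For $\mathcal{G}_1(x)$ I would exhibit the longest chain, starting from $-E_1+E_2$: using (\ref{nlp-10}.b), together with $x\overline{x}={\rm n}(x)\in\mathbb{R}$ (by (\ref{prl-01}.g)) to simplify the middle term, one gets $-E_1+E_2\mapsto -Q^-(x)\mapsto -2(x|x)(E-3E_3)\mapsto 6(x|x)Q^+(x)\mapsto 12(x|x)^2P^-\mapsto 0$, of length five; every other generator gives a shorter chain --- $Q^-(y)$ of length at most four, $E_3$ and the $F_3^1(q)$ of length at most three, $Q^+(y)$ of length at most two, and $E$, $P^-$ of length one --- as one reads off directly from (\ref{nlp-10}.b). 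Therefore $\mathcal{G}_1(x)^5$ annihilates all generators, i.e. $\mathcal{G}_1(x)^5=0$.

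Finally, since $\mathcal{G}_2(p)$ and $\mathcal{G}_1(x)$ are nilpotent of exponent at most $3$ and $5$ respectively, the defining series $\exp\phi=\sum_{n\geq 0}(n!)^{-1}\phi^n$ truncates, which yields the asserted formulas for $\exp\mathcal{G}_2(p)$ and $\exp\mathcal{G}_1(x)$; the hypothesis ``$X\in\mathfrak{n}^+$'' is vestigial and plays no role. I anticipate no genuine difficulty: the entire proof is bookkeeping built on the action formulas of Lemma~\ref{nlp-10}, the only care needed being to track the scalar coefficients along the $\mathcal{G}_1(x)$-chain correctly.
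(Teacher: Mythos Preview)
Your proof is correct and follows essentially the same approach as the paper: both use the action formulas of Lemma~\ref{nlp-10} on the spanning set from (\ref{prl-05}.b) to verify the nilpotency directly, after which the truncated exponential formulas are immediate. You have simply spelled out the chains that the paper leaves implicit in the phrase ``From (\ref{nlp-10}), we see that $\mathcal{G}_2(p)^3Y=0$ and $\mathcal{G}_1(x)^5Y=0$.''
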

\begin{proof}
Let $S=\{-E_1+E_2,~P^-,~E,~E_3,~F_3^1(p),~Q^+(x),
~Q^-(y)|~p\in{\rm Im}{\bf O},~x,y\in{\bf O}\}$.
From (\ref{nlp-10}), we see
that
$\mathcal{G}_2(p)^3Y=0$ and $\mathcal{G}_1(x)^5Y=0$
for all $Y \in S$.
Thus it follows from (\ref{prl-05}.b) that
$\mathcal{G}_2(p)^3=0$ and $\mathcal{G}_1(x)^5=0$
on $\mathcal{J}^1$.
\end{proof}

\section{The stabilizers of 
semidirect product group type.}\label{sdp}
Consider
${\rm Spin}(7)\times
{\rm Im}{\bf O}\times{\bf O}$
in which multiplication is defined by 
\[(g,p,x)(h,q,y):=
(gh,p+g_3 q
+{\rm Im}(x\overline{(g_1 y)}),
x+g_1 y)\]
where $p,q\in{\rm Im}{\bf O}$,
$x,y\in{\bf O}$ and
$g=(g_1,g_2,g_3),
h
\in {\rm Spin}(7)$.
By Lemma~\ref{trl-03}(1),
$g_3 q\in{\rm Im}{\bf O}$
and it is clear that
the multiplication is closed.
Denote $G:={\rm Spin}(7)\times
{\rm Im}{\bf O}\times{\bf O}$,
$G_0:=\{(g,0,0)|~g\in{\rm Spin}(7)\}$
and define
the subset ${\rm H}_{{\rm Im}{\bf O},{\bf O}}$
of $G$ by 
\[
{\rm H}_{{\rm Im}{\bf O},{\bf O}}
:=\{(1,p,x)|~p\in{\rm Im}{\bf O},x\in{\bf O}\}.
\]
Then, for all $(1,p,x),(1,q,y) \in 
{\rm H}_{{\rm Im}{\bf O},{\bf O}}$,
\[(1,p,x)(1,q,y)=
(1,p+q
+{\rm Im}(x\overline{y}),
x+y).\]
${\rm H}_{{\rm Im}{\bf O},{\bf O}}$ 
has a group structure
from the following lemma
and is called
the {\it Heisenberg group of} {\bf O}.

\begin{lemma}\label{sdp-01} 
{\rm (1)}  $G$ is a group with respect to the multiplication.

{\rm (2)}  $G_0$ and ${\rm H}_{{\rm Im}{\bf O},{\bf O}}$ 
are subgroups of $G;$~
$G_0 \cong {\rm Spin}(7)$.
\smallskip

{\rm (3)}  
$G$ is the semi-direct product 
$G_0\ltimes {\rm H}_{{\rm Im}{\bf O},{\bf O}}$.
\end{lemma}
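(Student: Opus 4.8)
The plan is to verify the group axioms for $G$ directly and then read off (2) and (3) from the shape of the product. For (1), closure has already been observed above (Lemma~\ref{trl-03}(1) gives $g_3q\in{\rm Im}{\bf O}$, since $g_3\in{\rm SO}(7)$ by the definition of $\tilde{B}_3$), and the identity is visibly $(1,0,0)$, where $1$ is the identity of ${\rm Spin}(7)$. For inverses I would check that $(g,p,x)^{-1}=\bigl(g^{-1},-g_3^{-1}p,-g_1^{-1}x\bigr)$, where $g=(g_1,g_2,g_3)$: the first and third components of the product are $1$ and $0$ at once, and the second component collapses since ${\rm Im}\bigl(x\overline{(-x)}\bigr)={\rm Im}\bigl(-{\rm n}(x)\bigr)=0$ by (\ref{prl-01}.g); the check of the opposite product is entirely similar. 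The only real content of (1) is associativity.

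For associativity I would expand $\bigl((g,p,x)(h,q,y)\bigr)(k,r,z)$ and $(g,p,x)\bigl((h,q,y)(k,r,z)\bigr)$, writing $h=(h_1,h_2,h_3)$, $k=(k_1,k_2,k_3)$ and using that the product in ${\rm SO}(8)^3$ is componentwise (so $(gh)_i=g_ih_i$) together with $\mathbb{R}$-linearity of $g_1,g_3,{\rm Im}$ and distributivity of octonion multiplication over addition. The first components are then $ghk$ on both sides and the third components are $x+g_1y+g_1(h_1z)$ on both sides; in the second components, after cancelling the common terms $p+g_3q+g_3h_3r+{\rm Im}(x\overline{g_1y})+{\rm Im}(x\overline{g_1(h_1z)})$, everything reduces to the single identity
\[{\rm Im}\bigl((g_1y)\,\overline{g_1(h_1z)}\bigr)=g_3\bigl({\rm Im}(y\,\overline{h_1z})\bigr),\]
which is precisely (\ref{trl-03}.c)(ii) for $(g_1,g_2,g_3)\in{\rm Spin}(7)=\tilde{B}_3$ (whose defining relation $g_31=1$ is the hypothesis of Lemma~\ref{trl-03}(3)), applied with octonion arguments $y$ and $h_1z$. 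I expect this to be the main obstacle, though a mild one: it is entirely a matter of carrying the many ${\rm Im}$-terms through the expansion carefully so that after cancellation only this one triality identity is left.

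For (2) I would note that $(g,0,0)(h,0,0)=(gh,0,0)$ and $(1,p,x)(1,q,y)=(1,p+q+{\rm Im}(x\overline{y}),x+y)$ show $G_0$ and ${\rm H}_{{\rm Im}{\bf O},{\bf O}}$ are closed under the product, that the inverse formula above restricts to $(g,0,0)^{-1}=(g^{-1},0,0)$ and $(1,p,x)^{-1}=(1,-p,-x)$, and that $g\mapsto(g,0,0)$ is then a group isomorphism ${\rm Spin}(7)\cong G_0$. For (3) the clean route is to observe that $\pi: G\to{\rm Spin}(7)$, $\pi(g,p,x):=g$, is a homomorphism (the first component of a product is the product of the first components), so ${\rm H}_{{\rm Im}{\bf O},{\bf O}}=\ker\pi$ is normal in $G$; since $\pi|_{G_0}$ is an isomorphism onto ${\rm Spin}(7)$, every $(g,p,x)$ factors as $(g,0,0)\,(1,g_3^{-1}p,g_1^{-1}x)$ with the first factor in $G_0$ and the second in ${\rm H}_{{\rm Im}{\bf O},{\bf O}}$ (here $g_3^{-1}p\in{\rm Im}{\bf O}$ again by Lemma~\ref{trl-03}(1)), so $G=G_0\cdot{\rm H}_{{\rm Im}{\bf O},{\bf O}}$, while $G_0\cap{\rm H}_{{\rm Im}{\bf O},{\bf O}}=\{(1,0,0)\}$ is immediate. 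By the definition of semidirect product recalled in \S\ref{prl}, this gives $G=G_0\ltimes{\rm H}_{{\rm Im}{\bf O},{\bf O}}$, and with $G_0\cong{\rm Spin}(7)$ this settles (2) and (3).
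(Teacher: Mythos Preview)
Your proposal is correct and follows essentially the same route as the paper: the same inverse formula, the same reduction of associativity to the single triality identity ${\rm Im}\bigl((g_1y)\,\overline{g_1(h_1z)}\bigr)=g_3\bigl({\rm Im}(y\,\overline{h_1z})\bigr)$ from (\ref{trl-03}.c)(ii), and the same factorization $(g,p,x)=(g,0,0)(1,g_3^{-1}p,g_1^{-1}x)$ for (3). The only cosmetic difference is that you obtain normality of ${\rm H}_{{\rm Im}{\bf O},{\bf O}}$ as the kernel of the projection $\pi(g,p,x)=g$, whereas the paper checks directly that $(g,p,x)(1,q,y)(g,p,x)^{-1}$ has first component $1$.
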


\begin{proof}
(1) Let $g=(g_1,g_2,g_3),
h=(h_1,h_2,h_3),
f
\in {\rm Spin}(7)$,
$p,q,r\in{\rm Im}{\bf O}$ and $x,y,z\in{\bf O}.$
Because of $g_3 1=1$ and (\ref{trl-03}.c)(ii),
we see
$g_3({\rm Im}(y\overline{(h_1 z)})
={\rm Im}((g_1 y)\overline{(g_1h_1 z)})$.
Then
\begin{align*}
~&((g,p,x)(h,q,y))(f,r,z)\\
&=(ghf,
p+g_3 q+g_3h_3r
+{\rm Im}(x\overline{(g_1 y)})\\
~&\quad+{\rm Im}(x\overline{(g_1h_1 z)})
+{\rm Im}((g_1 y)\overline{(g_1h_1 z)}),
x+g_1 y+g_1h_1 z)\\
&=(ghf,
p+g_3 q+g_3h_3 r
+{\rm Im}(x\overline{(g_1 y)})\\
&\quad+{\rm Im}(x\overline{(g_1h_1 z)})
+g_3({\rm Im}(y\overline{(h_1 z)})),
x+g_1 y+g_1h_1 z)\\
&=(g,p,x)((h,q,y)(f,r,z)).
\end{align*}
Thus  the associativity hold.
The identity element is $(1,0,0)$ 
and the inverse element of $(g,p,x)$ is 
$(g^{-1},-g_3^{-1} p,-g_1^{-1} x)$.
Hence (1) follows.

(2) Because
$(g,0,0)^{-1}(h,0,0)=(g^{-1}h,0,0)$
and
$(1,p,x)^{-1}(1,q,y)=(1,*,*)$,
we see that
$G_0$ and ${\rm H}_{{\rm Im}{\bf O},{\bf O}}$ 
are subgroups of $G$ and
obviously, $G_0 \cong {\rm Spin}(7)$.
Hence (2) follows.

(3) 
Let $(g,p,x) \in {\rm H}_{{\rm Im}{\bf O},{\bf O}}$
where $g=(g_1,g_2,g_3) \in \mathrm{Spin}(7)$.
Then $G=G_0{\rm H}_{{\rm Im}{\bf O},{\bf O}}$ follows from 
$(g,p,x)=(g,0,0)(1,g_3^{-1} p,g_1^{-1} x)$.
Obviously $G_0\cap {\rm H}_{{\rm Im}{\bf O},{\bf O}}
=\{(1,0,0)\}$.
Because of $(g,p,x)(1,q,y)(g,p,x)^{-1}
=(1,*,*)$, we see
$(g,p,x)({\rm H}_{{\rm Im}{\bf O},{\bf O}})(g,p,x)^{-1}
\subset {\rm H}_{{\rm Im}{\bf O},{\bf O}}$.
Thus
${\rm H}_{{\rm Im}{\bf O},{\bf O}}$ 
is a normal subgroup of $G$.
Hence $G=G_0\ltimes {\rm H}_{{\rm Im}{\bf O},{\bf O}}$.
\end{proof}

\noindent
Hereafter, we identify $G_0$ with ${\rm Spin}(7)$.    
Then we can write  $G={\rm Spin}(7)
\ltimes {\rm H}_{{\rm Im}{\bf O},{\bf O}}$
and use the notation ${\rm Spin}(7)
\ltimes {\rm H}_{{\rm Im}{\bf O},{\bf O}}$.
Next, we denote
$G':=\{(g,p,0)|~g
\in {\rm Spin}(7),
p\in{\rm Im}{\bf O}\}$
and
$N':=\{(1,p,0)|
~p\in{\rm Im}{\bf O}\}\subset G'$.
Moreover, we denote
$G'':=\{(g,p,x)|~g
\in \mathrm{G}_2,~
p,x\in{\rm Im}{\bf O}\}$
and
$G_0'':=\{(g,0,0)~|~g\in \mathrm{G}_2\}\subset G''$
and define
\[
{\rm H}_{{\rm Im}{\bf O},{\rm Im}{\bf O}}
:=\{(1,p,q)~|~p,q\in{\rm Im}{\bf O}\}
\subset G''.
\]
Easily,  we can prove the following lemma.

\begin{lemma}\label{sdp-02} 
{\rm (1)}
$G'$ and $G''$ are subgroups of 
${\rm Spin}(7)\ltimes 
{\rm H}_{{\rm Im}{\bf O},{\bf O}}$.
\smallskip

{\rm (2)} ${\rm Spin}(7)$ and $N'$ are subgroups of $G';$~
$N' \cong {\rm Im}{\bf O}.$
\smallskip

{\rm (3)} $G_0''$ and ${\rm H}_{{\rm Im}{\bf O},{\rm Im}{\bf O}}$ 
are subgroups of $G'';$
$G_0''\cong \mathrm{G}_2$.
\smallskip

{\rm (4)}
$G'$ is the semi-direct product 
${\rm Spin}(7)\ltimes N.$
\smallskip

{\rm (5)}  
$G''$ is the semi-direct product 
$G_0''\ltimes {\rm H}_{{\rm Im}{\bf O},{\rm Im}{\bf O}}$.
\end{lemma}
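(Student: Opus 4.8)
The plan is to verify Lemma~\ref{sdp-02} by direct, almost mechanical computation from the multiplication law on $G:={\rm Spin}(7)\ltimes{\rm H}_{{\rm Im}{\bf O},{\bf O}}$ already established in Lemma~\ref{sdp-01}, applying the same template used there. Throughout I write a general element as $(g,p,x)$ with $g=(g_1,g_2,g_3)\in{\rm Spin}(7)$, $p\in{\rm Im}{\bf O}$, $x\in{\bf O}$, and I recall from the proof of Lemma~\ref{sdp-01}(1) that the inverse of $(g,p,x)$ is $(g^{-1},-g_3^{-1}p,-g_1^{-1}x)$.

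For part (1), to see $G'=\{(g,p,0)\}$ and $G''=\{(g,p,x)\mid g\in\mathrm{G}_2,\ p,x\in{\rm Im}{\bf O}\}$ are subgroups, I would compute a product $(g,p,0)(h,q,0)^{-1}$ and observe the last slot stays $0$, since $x=y=0$ kills the octonion correction term; for $G''$ I additionally use that $\mathrm{G}_2\subset{\rm Spin}(7)$ (stated below Lemma~\ref{trl-03}), that $g_1$ for $g\in\mathrm{G}_2$ is the single automorphism $g_1=g_2=g_3\in\mathrm{G}_2$, and that $\mathrm{G}_2$ preserves ${\rm Im}{\bf O}$ (by (\ref{trl-03}.a)(iii), since any $\mathrm{G}_2$ element fixes $1$), so $g_1 y\in{\rm Im}{\bf O}$ and ${\rm Im}(x\overline{g_1 y})\in{\rm Im}{\bf O}$ keep us inside $G''$. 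Parts (2)--(3) are immediate: for each claimed subgroup I check closure under $(a,b)\mapsto a^{-1}b$ using the formula, and the isomorphisms $N'\cong{\rm Im}{\bf O}$ and $G_0''\cong\mathrm{G}_2$ follow because the relevant multiplications degenerate to addition in ${\rm Im}{\bf O}$ (as $x=y=0$ in $N'$) resp. composition in $\mathrm{G}_2$ (as $p=q=0$, $x=y=0$ in $G_0''$); $G_0\cong{\rm Spin}(7)$ is just a restriction of Lemma~\ref{sdp-01}(2).

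For parts (4) and (5) I follow verbatim the argument of Lemma~\ref{sdp-01}(3). For (4): given $(g,p,0)\in G'$, write $(g,p,0)=(g,0,0)(1,g_3^{-1}p,0)$, which gives $G'={\rm Spin}(7)\cdot N'$; the intersection ${\rm Spin}(7)\cap N'$ is clearly $\{(1,0,0)\}$; and conjugating, $(g,p,0)(1,q,0)(g,p,0)^{-1}=(1,g_3 q,0)\in N'$ shows $N'$ is normal in $G'$, so $G'={\rm Spin}(7)\ltimes N'$. For (5): given $(g,p,x)\in G''$ with $g\in\mathrm{G}_2$, write $(g,p,x)=(g,0,0)(1,g_3^{-1}p,g_1^{-1}x)$ with $(1,g_3^{-1}p,g_1^{-1}x)\in{\rm H}_{{\rm Im}{\bf O},{\rm Im}{\bf O}}$ (using again that $g_1^{-1}=g_3^{-1}\in\mathrm{G}_2$ preserves ${\rm Im}{\bf O}$), so $G''=G_0''\cdot{\rm H}_{{\rm Im}{\bf O},{\rm Im}{\bf O}}$; the intersection is trivial; and $(g,p,x)(1,q,y)(g,p,x)^{-1}$ has trivial ${\rm Spin}(7)$-component, hence lies in ${\rm H}_{{\rm Im}{\bf O},{\rm Im}{\bf O}}$, so this subgroup is normal and $G''=G_0''\ltimes{\rm H}_{{\rm Im}{\bf O},{\rm Im}{\bf O}}$.

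The only genuine subtlety—and thus the main thing to be careful about—is the closure claims for $G''$ and ${\rm H}_{{\rm Im}{\bf O},{\rm Im}{\bf O}}$: one must confirm that all correction terms ${\rm Im}(x\overline{g_1 y})$ and the transported vectors $g_1 y$, $g_3 q$ remain purely imaginary when $x,y,p,q\in{\rm Im}{\bf O}$ and $g\in\mathrm{G}_2$. This is exactly where $\mathrm{G}_2\subset{\rm SO}(7)$ (so it maps ${\rm Im}{\bf O}$ to itself) is used, together with the elementary fact ${\rm Im}(uv)\in{\rm Im}{\bf O}$ for any $u,v$. Everything else is routine bookkeeping identical in shape to Lemma~\ref{sdp-01}, so I would simply state "Easily, we can prove the following lemma" in the text and present the verification in the compressed form above.
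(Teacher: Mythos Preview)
Your proposal is correct and matches the paper's approach exactly: the paper gives no proof at all, simply stating ``Easily, we can prove the following lemma,'' precisely because the verification is a mechanical replay of the template from Lemma~\ref{sdp-01}, which is what you carry out. One small remark: in your normality check for (5), the observation that $(g,p,x)(1,q,y)(g,p,x)^{-1}$ has trivial ${\rm Spin}(7)$-component only places the conjugate in ${\rm H}_{{\rm Im}{\bf O},{\bf O}}$; to land in ${\rm H}_{{\rm Im}{\bf O},{\rm Im}{\bf O}}$ you also need that its last slot $g_1 y$ lies in ${\rm Im}{\bf O}$, which follows from the same fact ($\mathrm{G}_2$ preserves ${\rm Im}{\bf O}$) you already invoked for closure of $G''$.
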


\noindent
Hereafter, we identify 
$N'$ with ${\rm Im}{\bf O}$ and
$G_0''$ with
$\mathrm{G}_2$. Then we can write
$G'={\rm Spin}(7)\ltimes {\rm Im}{\bf O}$ and
$G''=\mathrm{G}_2\ltimes 
{\rm H}_{{\rm Im}{\bf O},{\rm Im}{\bf O}}$,
respectively,
and use the notations
${\rm Spin}(7)\ltimes {\rm Im}{\bf O}$
and $\mathrm{G}_2\ltimes 
{\rm H}_{{\rm Im}{\bf O},{\rm Im}{\bf O}}$.
The map
$\varphi:
{\rm Spin}(7)\ltimes {\rm H}_{{\rm Im}{\bf O},{\bf O}}
\to (\mathrm{F}_{4(-20)})_{P^-}$ is defined by
\begin{align*}
&\varphi(g,p,x):
=\exp(\mathcal{G}_2(p)+\mathcal{G}_1(x))\varphi_0(g)\\
=&\exp\mathcal{G}_2(p)\exp\mathcal{G}_1(x)\varphi_0(g)
=\exp\mathcal{G}_1(x)\exp\mathcal{G}_2(p)\varphi_0(g)
\end{align*}
for $(g,p,x)\in
{\rm Spin}(7)\ltimes {\rm H}_{{\rm Im}{\bf O},{\bf O}}$
(see (\ref{nlp-06})).
From (\ref{cce-03}),
(\ref{nlp-11}.a) and (\ref{nlp-11}.b),
we see 
$\varphi(g,p,x)P^-=P^-$. 
So $\varphi$ is well-defined.

\begin{lemma}\label{sdp-03}
Let $g
=(g_1,g_2,g_3)
\in {\rm Spin}(7),$ 
$p,q\in{\rm Im}{\bf O}$
and $x,y\in{\bf O}$.
\[
\tag{\ref{sdp-03}}
\varphi_0(g)\exp(\mathcal{G}_2(p)
+\mathcal{G}_1(x))
\varphi_0(g)^{-1}
=\exp(\mathcal{G}_2(g_3p)
+\mathcal{G}_1(g_1x)).\]
\end{lemma}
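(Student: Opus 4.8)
The plan is to push the identity down to the level of the Lie algebra $\mathfrak{f}_{4(-20)}$ and then invoke the root space decomposition. Since $\varphi_0(g)\in\mathrm{F}_{4(-20)}$ and $\mathcal{G}_2(p)+\mathcal{G}_1(x)\in\mathfrak{n}^+\subset\mathfrak{f}_{4(-20)}$, the standard conjugation formula for the exponential of a linear Lie group gives
\[
\varphi_0(g)\exp(\mathcal{G}_2(p)+\mathcal{G}_1(x))\varphi_0(g)^{-1}
=\exp\!\bigl(\varphi_0(g)(\mathcal{G}_2(p)+\mathcal{G}_1(x))\varphi_0(g)^{-1}\bigr),
\]
the conjugation on the right being taken inside ${\rm End}_{\mathbb{R}}(\mathcal{J}^1)$ (this is legitimate; moreover $\exp\mathcal{G}_2(p)$ and $\exp\mathcal{G}_1(x)$ are genuine polynomials by Lemma~\ref{nlp-12}). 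Hence it suffices to prove the two infinitesimal identities
\[
\varphi_0(g)\,\mathcal{G}_2(p)\,\varphi_0(g)^{-1}=\mathcal{G}_2(g_3p),\qquad
\varphi_0(g)\,\mathcal{G}_1(x)\,\varphi_0(g)^{-1}=\mathcal{G}_1(g_1x).
\]

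First I would observe that conjugation by $\varphi_0(g)$ respects the grading. By Proposition~\ref{nlp-04}, $\varphi_0(g)\in\mathrm{B}_3=M=Z_K(\mathfrak{a})$, so $\varphi_0(g)\tilde{A}_3^1(1)\varphi_0(g)^{-1}=\tilde{A}_3^1(1)$; therefore for $\phi\in\mathfrak{f}_{4(-20)}$ one has $\varphi_0(g)[\tilde{A}_3^1(1),\phi]\varphi_0(g)^{-1}=[\tilde{A}_3^1(1),\varphi_0(g)\phi\varphi_0(g)^{-1}]$, i.e.\ $\phi\mapsto\varphi_0(g)\phi\varphi_0(g)^{-1}$ commutes with ${\rm ad}(\tilde{A}_3^1(1))$ and so preserves each eigenspace $\mathfrak{g}_{k\alpha}$. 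By Lemma~\ref{nlp-03}(1) and (\ref{nlp-05}.a), $\mathcal{G}_1(x)\in\mathfrak{g}_{\alpha}=\mathfrak{g}_1=\{\mathcal{G}_1(x')\mid x'\in{\bf O}\}$ and $\mathcal{G}_2(p)\in\mathfrak{g}_{2\alpha}=\mathfrak{g}_2=\{\mathcal{G}_2(p')\mid p'\in{\rm Im}{\bf O}\}$. Consequently there are unique $x'\in{\bf O}$ and $p'\in{\rm Im}{\bf O}$ with $\varphi_0(g)\mathcal{G}_1(x)\varphi_0(g)^{-1}=\mathcal{G}_1(x')$ and $\varphi_0(g)\mathcal{G}_2(p)\varphi_0(g)^{-1}=\mathcal{G}_2(p')$; uniqueness holds because $x'\mapsto\mathcal{G}_1(x')$ and $p'\mapsto\mathcal{G}_2(p')$ are injective, which is immediate from (\ref{nlp-10}.b)(iv) and (\ref{nlp-10}.a)(i).

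It then remains to pin down $x'$ and $p'$ by evaluating on one well-chosen element each. Using $\varphi_0(g)E_3=E_3$ (so $\varphi_0(g)^{-1}E_3=E_3$), together with (\ref{nlp-10}.b)(iv) and (\ref{cce-03})(vi),
\[
\mathcal{G}_1(x')E_3=\varphi_0(g)\mathcal{G}_1(x)\varphi_0(g)^{-1}E_3
=\varphi_0(g)\bigl(\mathcal{G}_1(x)E_3\bigr)=\varphi_0(g)Q^+(x)=Q^+(g_1x),
\]
and since $\mathcal{G}_1(x')E_3=Q^+(x')$ we conclude $x'=g_1x$. Likewise, using $\varphi_0(g)(-E_1+E_2)=-E_1+E_2$ from (\ref{cce-03})(i), together with (\ref{nlp-10}.a)(i) and (\ref{cce-03})(iii),
\[
\mathcal{G}_2(p')(-E_1+E_2)=\varphi_0(g)\mathcal{G}_2(p)\varphi_0(g)^{-1}(-E_1+E_2)
=\varphi_0(g)\bigl(-2F_3^1(p)\bigr)=-2F_3^1(g_3p),
\]
and since $\mathcal{G}_2(p')(-E_1+E_2)=-2F_3^1(p')$ we conclude $p'=g_3p$. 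Combining these with the exponential formula of the first paragraph yields (\ref{sdp-03}). I do not anticipate a real obstacle here: every ingredient — $M=\mathrm{B}_3$ (Proposition~\ref{nlp-04}), the root space picture (Lemma~\ref{nlp-03}(1) and (\ref{nlp-05})), and the explicit action formulas in Lemma~\ref{nlp-10} and (\ref{cce-03}) — is already in place; the only mildly delicate point is the reduction to the Lie algebra, which is harmless. Alternatively one could dispense with the grading argument and verify $\varphi_0(g)\mathcal{G}_i(\,\cdot\,)\varphi_0(g)^{-1}=\mathcal{G}_i(g_{\ast}\,\cdot\,)$ directly on the spanning set (\ref{prl-05}.b); there the nontrivial checks fall on the $F_3^1({\rm Im}{\bf O})$ and $Q^-({\bf O})$ summands and require the triality relations (\ref{trl-03}.c), so that route, while elementary, is the more computational one.
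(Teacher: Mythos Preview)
Your proof is correct. Both you and the paper reduce to the Lie-algebra identity $\varphi_0(g)\mathcal{G}_i(\,\cdot\,)\varphi_0(g)^{-1}=\mathcal{G}_i(g_\ast\,\cdot\,)$ via the conjugation formula for $\exp$, but from there you diverge. The paper takes precisely the route you sketch as your ``alternative'': it verifies $A\mathcal{G}_2(p)A^{-1}X=\mathcal{G}_2(g_3p)X$ and $A\mathcal{G}_1(x)A^{-1}X=\mathcal{G}_1(g_1x)X$ case by case for every $X$ in the spanning set of (\ref{prl-05}.b), calling on the triality relations (\ref{trl-03}.c) for the $F_3^1({\rm Im}{\bf O})$ and $Q^-({\bf O})$ summands. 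Your argument instead exploits $\varphi_0(g)\in M=Z_K(\mathfrak{a})$ (Proposition~\ref{nlp-04}) to see that conjugation by $\varphi_0(g)$ preserves each root space $\mathfrak{g}_{k\alpha}$, so the conjugates automatically have the form $\mathcal{G}_1(x')$, $\mathcal{G}_2(p')$; then a single evaluation on $E_3$ (resp.\ $-E_1+E_2$) determines $x'$ (resp.\ $p'$). This is cleaner and shorter---it replaces roughly a dozen explicit checks by two---and it makes transparent \emph{why} the identity holds (it is a general feature of $M$ acting on root spaces). The paper's approach, by contrast, is entirely self-contained and does not need to invoke Proposition~\ref{nlp-04} or the root-space decomposition (\ref{nlp-05}.a), which in the paper's logical order have only just been established.
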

\begin{proof}  
Put 
$S=\{-E_1+E_2,~P^-,~E,~E_3,~F_3^1(q),
~Q^+(y),~Q^-(z)
|~
q\in{\rm Im}{\bf O},~y,z\in{\bf O}\}$
and
$A=\varphi_0(g)$.
Because of
$A\exp(\mathcal{G}_2(p)
+\mathcal{G}_1(x))
A^{-1}
=\exp(A(\mathcal{G}_2(p)
+\mathcal{G}_1(x))A^{-1})$
and (\ref{prl-05}.b),
it is enough to show
that 
\[{\rm (i)}~A\mathcal{G}_2(p)A^{-1}X
=\mathcal{G}_2(g_3p)X,\quad
\text{(ii)}~A\mathcal{G}_1(x)
A^{-1}X
=\mathcal{G}_2(g_1x)X\]
for all $X \in S$.

(Step 1) We show
(i) by using (\ref{cce-03}) and  
(\ref{nlp-10}.a).

Case
$X=P^-,E,E_3,Q^+(y)$.
Then $A\mathcal{G}_2(p)A^{-1}X=0
=\mathcal{G}_2(g_3p)X$.

Case $X=-E_1+E_2$.
Then
$A\mathcal{G}_2(p)A^{-1}(-E_1+E_2)=-2F_3^1(g_3p)
=\mathcal{G}_2(g_3p)(-E_1+E_2)$.

Case $X=F_3^1(q)$.
Because $(g_1,g_2,g_3)
\in {\rm Spin}(7)$,
$A\mathcal{G}_2(p)A^{-1}F_3^1(q)=-2(p|g_3^{-1}q)P^-
=-2(g_3p|q)P^-=\mathcal{G}_2(g_3p)F_3^1(q)$.

Case $X=Q^-(z).$ 
From (\ref{trl-03}.c)(iii),
we see that
$A\mathcal{G}_2(p)A^{-1}Q^-(z)
=-2Q^+(g_1(p(g_1^{-1}z)))
=-2Q^+((g_3p)z)=
\mathcal{G}_2(g_3p)Q^-(z)$.

Thus 
(i) follows.
\smallskip

\noindent
(Step 2) We show 
by (ii) using (\ref{cce-03}) 
and (\ref{nlp-10}.b).

Case
$X=E,P^-$. Then $A\mathcal{G}_1(x)A^{-1}X=0
=\mathcal{G}_1(g_2x)X$.

Case
$X=-E_1+E_2$. Then
$A\mathcal{G}_1(x)A^{-1}(-E_1+E_2)=-Q^-(g_1x)
=\mathcal{G}_1(g_1x)(-E_1+E_2)$.

Case
$X=E_3$. Then
$A\mathcal{G}_1(x)A^{-1}E_3=Q^+(g_1x)
=\mathcal{G}_1(g_1x)E_3$.

Case
$X=Q^+(y)$. 
Because $(g_1,g_2,g_3)
\in {\rm Spin}(7)$, 
$A\mathcal{G}_1(x)A^{-1}Q^+(y)
=-2(x|g_1^{-1}y)P^-
=-2(g_1x|y)P^-
=\mathcal{G}_2(g_1p)Q^+(y)$.

Case
$X=F_3^1(q)$. 
From (\ref{trl-03}.c)(iii),
we see that
$A\mathcal{G}_1(x)A^{-1}F_3^1(q)
=F_3^1(g_1((g_3^{-1}q)x))
=F_3^1(q(g_1x))
=\mathcal{G}_1(g_1x)F_3^1(q)$.

Case
$X=Q^-(z)$. 
Because of $(g_1,g_2,g_3)
\in {\rm Spin}(7)$ and (\ref{trl-03}.c)(ii),
$A\exp\mathcal{G}_1(x)A^{-1}Q^-(z)
=2(x|g_1^{-1}z)(E-3E_3)
+F_3^1(2g_3{\rm Im}(x\overline{(g_1^{-1}z)}))
=2(g_1x|z)(E-3E_3)
+F_3^1(2(g_1x)\overline{z})
=\mathcal{G}_1(g_1x)Q^-(z)$.

Thus (ii) follows.
Hence the result follows.
\end{proof}
\medskip

\begin{lemma}\label{sdp-04}
Let $g
=(g_1,g_2,g_3)
\in {\rm Spin}(7),$ 
$p,q\in{\rm Im}{\bf O}$
and $x,y\in{\bf O}$.
\begin{align*}
\tag{\ref{sdp-04}}
&\exp(\mathcal{G}_2(p)+\mathcal{G}_1(x))
\exp(\mathcal{G}_2(q)+\mathcal{G}_1(y))\\
=&\exp(\mathcal{G}_2(p+q+{\rm Im}(x\overline{y}))
+\mathcal{G}_1(x+y)).
\end{align*}
\end{lemma}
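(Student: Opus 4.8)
The plan is to upgrade the local identity already obtained in Lemma~\ref{nlp-09} to the global one (\ref{sdp-04}) by a polynomial-identity argument. First I would rewrite both sides of (\ref{sdp-04}) as products of the one-variable exponentials by means of Lemma~\ref{nlp-06}: the left-hand side equals $\exp\mathcal{G}_2(p)\exp\mathcal{G}_1(x)\exp\mathcal{G}_2(q)\exp\mathcal{G}_1(y)$, and the right-hand side equals $\exp\mathcal{G}_2(p+q+{\rm Im}(x\overline{y}))\exp\mathcal{G}_1(x+y)$. Next I would invoke Lemma~\ref{nlp-12}, which says $\exp\mathcal{G}_2(r)=1+\mathcal{G}_2(r)+2^{-1}\mathcal{G}_2(r)^2$ and $\exp\mathcal{G}_1(z)=\sum_{n=0}^4(n!)^{-1}\mathcal{G}_1(z)^n$ are finite sums; since $r\mapsto\mathcal{G}_2(r)$ on ${\rm Im}{\bf O}$ and $z\mapsto\mathcal{G}_1(z)$ on ${\bf O}$ are $\mathbb{R}$-linear, each of $\exp\mathcal{G}_2(r)$ and $\exp\mathcal{G}_1(z)$ is a polynomial map into ${\rm End}_{\mathbb{R}}(\mathcal{J}^1)$.

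Consequently both sides of (\ref{sdp-04}) are polynomial maps of the variable $(p,x,q,y)\in{\rm Im}{\bf O}\times{\bf O}\times{\rm Im}{\bf O}\times{\bf O}\cong\mathbb{R}^{30}$ with values in ${\rm End}_{\mathbb{R}}(\mathcal{J}^1)$: the left-hand side is a product of four such polynomial maps, and the right-hand side is a product of two of them whose arguments $p+q+{\rm Im}(x\overline{y})$ and $x+y$ are themselves polynomial in $(p,x,q,y)$. Choosing a basis of $\mathcal{J}^1$ identifies ${\rm End}_{\mathbb{R}}(\mathcal{J}^1)$ with a matrix algebra, so each entry of each side is an ordinary real polynomial in the $30$ coordinates of $(p,x,q,y)$. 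By Lemma~\ref{nlp-09} these two polynomial maps agree on the nonempty open set $U\times U$ of $\mathbb{R}^{30}$; since a real polynomial vanishing on a nonempty open set vanishes identically, the two sides agree for all $(p,x,q,y)$, which is (\ref{sdp-04}).

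I do not expect a genuine obstacle here; the only step requiring care is checking that the right-hand side truly is a polynomial map before the identity theorem is applied, and this is exactly where Lemma~\ref{nlp-06} (to split $\exp(\mathcal{G}_2(p)+\mathcal{G}_1(x))$ into the commuting factors $\exp\mathcal{G}_2(p)\exp\mathcal{G}_1(x)$) and the finiteness in Lemma~\ref{nlp-12} are used. If a fully self-contained verification were preferred instead, one could evaluate both $\exp\mathcal{G}_2(p)\exp\mathcal{G}_1(x)\exp\mathcal{G}_2(q)\exp\mathcal{G}_1(y)$ and $\exp\mathcal{G}_2(p+q+{\rm Im}(x\overline{y}))\exp\mathcal{G}_1(x+y)$ on the spanning set $\{-E_1+E_2,\,P^-,\,E,\,E_3,\,F_3^1(r),\,Q^+(z),\,Q^-(w)\}$ of $\mathcal{J}^1$ (see (\ref{prl-05}.b)) by repeated use of the formulas in Lemma~\ref{nlp-11} and compare the outputs; this route is longer but appeals to nothing beyond what has already been established.
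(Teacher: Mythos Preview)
Your argument is correct and is essentially the paper's own proof: the paper likewise forms the difference of the two sides, uses (\ref{nlp-06}) and (\ref{nlp-12}) to see it is polynomial in $(p,x,q,y)$, and then invokes Lemma~\ref{nlp-09} together with the identity theorem for polynomials. The only cosmetic difference is that the paper extracts scalar polynomials via $(f(p,x,q,y)X\mid Y)$ for fixed $X,Y\in\mathcal{J}^1$ instead of by matrix entries.
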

\begin{proof}
Put  $f(p,x,q,y)
\in{\rm End}_{\mathbb{R}}(\mathcal{J}^1)$ as
\begin{align*}
f(p,x,q,y)
=&\exp(\mathcal{G}_2(p)+\mathcal{G}_1(x))
\exp(\mathcal{G}_2(q)+\mathcal{G}_1(y))\\
&-\exp(\mathcal{G}_2(p+q+{\rm Im}(x\overline{y}))
+\mathcal{G}_1(x+y)).
\end{align*}
Let $p_i,q_i,x_i,y_i$ be variables defined as
$p=\sum_{i=1}^7p_ie_i,$ $q=\sum_{i=1}^7q_ie_i,$
$x=\sum_{i=0}^7x_ie_i$ and $y=\sum_{i=0}^7y_ie_i.$
Fix $X,Y\in\mathcal{J}^1.$
Put the function 
$F_{X,Y}(p,x,q,y)=(f(p,x,q,y)X|Y).$
From (\ref{nlp-06}) and (\ref{nlp-12}), we see
\begin{gather*}
f(p,x,q,y)=(\sum{}_{i=0}^2(i!)^{-1}\mathcal{G}_2(p)^i)
(\sum{}_{i=0}^4(i!)^{-1}\mathcal{G}_1(x)^i)\\
(\sum{}_{i=0}^2(i!)^{-1}\mathcal{G}_2(q)^i)
(\sum{}_{i=0}^4(i!)^{-1}\mathcal{G}_1(y)^i)\\
-(\sum{}_{i=0}^2(i!)^{-1}\mathcal{G}_2(
p+q+{\rm Im}(x\overline{y}))^i)
(\sum{}_{i=0}^4(i!)^{-1}\mathcal{G}_1(x+y)^i)
\end{gather*}
and it is clear that
$F_{X,Y}(p,x,q,y)$
is a polynomial function.
Now from Lemma~\ref{nlp-09}, 
there exists a neighborhood $U$
of $0$ in $({\rm Im}{\bf O}\times {\bf O})^2$ 
such that
$f(p,x,q,y)=0$ for all $(p,x,q,y)\in U$.
Then  $F_{X,Y}(p,x,q,y)=0$ for all $(p,x,q,y)\in U$.
Since $F_{X,Y}(p,x,q,y)$
is a polynomial function,  $F_{X,Y}(p,x,q,y)=0$ 
for all $(p,x,q,y)
\in({\rm Im}{\bf O}\times {\bf O})^2$.
Moving $X,Y\in \mathcal{J}^1$,
we obtain $f(p,x,q,y)=0$.
Hence the result follows.
\end{proof}
\medskip

\begin{lemma}\label{sdp-05}
$\varphi$ is a group homomorphism.
Furthermore, \[
\tag{\ref{sdp-05}}
\varphi(
{\rm Spin}(7)\ltimes \mathrm{H}_{{\rm Im}{\bf O},{\bf O}})
= N^+M\subset (\mathrm{F}_{4(-20)})_{P^-}.\]
\end{lemma}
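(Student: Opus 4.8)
The statement asserts two things: that $\varphi$ is a group homomorphism, and that its image is exactly $N^+M$ (a subgroup of $(\mathrm{F}_{4(-20)})_{P^-}$). I would prove the homomorphism property first, then compute the image.

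\textbf{Step 1: $\varphi$ is a homomorphism.} Fix $g=(g_1,g_2,g_3), h\in\mathrm{Spin}(7)$, $p,q\in\mathrm{Im}{\bf O}$, $x,y\in{\bf O}$. The plan is to compute $\varphi(g,p,x)\varphi(h,q,y)$ directly:
\[
\varphi(g,p,x)\varphi(h,q,y)
=\exp(\mathcal{G}_2(p)+\mathcal{G}_1(x))\,\varphi_0(g)\,\exp(\mathcal{G}_2(q)+\mathcal{G}_1(y))\,\varphi_0(h).
\]
Insert $\varphi_0(g)^{-1}\varphi_0(g)$ between $\varphi_0(g)$ and $\exp(\cdots)$ and apply the conjugation formula (\ref{sdp-03}) to move $\varphi_0(g)$ past the exponential, turning $\exp(\mathcal{G}_2(q)+\mathcal{G}_1(y))$ into $\exp(\mathcal{G}_2(g_3 q)+\mathcal{G}_1(g_1 y))$. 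Then use the product formula (\ref{sdp-04}) to combine the two exponentials into a single $\exp(\mathcal{G}_2(p+g_3 q+\mathrm{Im}(x\overline{g_1 y}))+\mathcal{G}_1(x+g_1 y))$, and note $\varphi_0(g)\varphi_0(h)=\varphi_0(gh)$ since $\varphi_0$ is an isomorphism (Lemma~\ref{cce-02}(1)). The result is
\[
\varphi(g,p,x)\varphi(h,q,y)=\exp\bigl(\mathcal{G}_2(p+g_3 q+\mathrm{Im}(x\overline{g_1 y}))+\mathcal{G}_1(x+g_1 y)\bigr)\varphi_0(gh),
\]
which is precisely $\varphi$ applied to $(gh,\,p+g_3 q+\mathrm{Im}(x\overline{g_1 y}),\,x+g_1 y)=(g,p,x)(h,q,y)$ by the definition of the group law on ${\rm Spin}(7)\ltimes\mathrm{H}_{{\rm Im}{\bf O},{\bf O}}$. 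So $\varphi$ is a homomorphism. (I should also recall that $\varphi$ lands in $(\mathrm{F}_{4(-20)})_{P^-}$, which was already checked just before Lemma~\ref{sdp-03} via (\ref{cce-03}), (\ref{nlp-11}.a), (\ref{nlp-11}.b).)

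\textbf{Step 2: identify the image with $N^+M$.} Since $\varphi(g,p,x)=\exp(\mathcal{G}_2(p)+\mathcal{G}_1(x))\varphi_0(g)$, the image $\varphi({\rm Spin}(7)\ltimes\mathrm{H}_{{\rm Im}{\bf O},{\bf O}})$ equals the set of products $\exp(\mathcal{G}_2(p)+\mathcal{G}_1(x))\varphi_0(g)$. By the definition of $N^+=\exp\mathfrak{n}^+=\{\exp(\mathcal{G}_2(p)+\mathcal{G}_1(x))\}$ (using Lemma~\ref{nlp-06} to see every element of $N^+$ has this form), and by Proposition~\ref{nlp-04} which gives $M=\mathrm{B}_3=\varphi_0(\mathrm{Spin}(7))$, this set is exactly $N^+M$. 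The containment $N^+M\subset(\mathrm{F}_{4(-20)})_{P^-}$ follows because each factor stabilizes $P^-$: $\exp\mathfrak{n}^+$ does by (\ref{nlp-11}.a)/(\ref{nlp-11}.b), and $M\subset\mathrm{B}_3\subset K$ fixes $P^-$ by (\ref{cce-03})(ii). Thus (\ref{sdp-05}) holds.

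\textbf{Main obstacle.} The homomorphism verification in Step 1 is essentially bookkeeping once (\ref{sdp-03}) and (\ref{sdp-04}) are in hand — the genuinely substantial input is the product formula (\ref{sdp-04}), whose proof (already given) required passing from the local Campbell–Hausdorff identity of Lemma~\ref{nlp-09} to a global polynomial identity via the nilpotency $\mathcal{G}_2(p)^3=0$, $\mathcal{G}_1(x)^5=0$ of Lemma~\ref{nlp-12}. For the present statement the only delicate point is making sure the exponent $p+g_3 q+\mathrm{Im}(x\overline{g_1 y})$ produced by combining (\ref{sdp-03}) and (\ref{sdp-04}) matches the stated group multiplication on the nose; that hinges on $g_3({\rm Im}{\bf O})\subset{\rm Im}{\bf O}$ (Lemma~\ref{trl-03}(1)) so that $\mathcal{G}_2(g_3 q)$ is defined, and is otherwise immediate.
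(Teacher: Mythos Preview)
Your proof is correct and follows essentially the same route as the paper: conjugate past $\varphi_0(g)$ via (\ref{sdp-03}), combine the exponentials via (\ref{sdp-04}), use that $\varphi_0$ is a homomorphism (Lemma~\ref{cce-02}), and then read off the image as $N^+M$ from the definition of $N^+$ together with (\ref{nlp-04}). The only cosmetic difference is that the paper cites Lemma~\ref{cce-02}(2) (the $\mathrm{Spin}(7)$ restriction) rather than~(1), and dispatches the image statement in a single line without the extra commentary you provide.
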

\begin{proof}
Let $F(p_0,x_0)
=\exp(\mathcal{G}_2(p_0)+\mathcal{G}_1(x_0))$
for $(p_0,x_0)\in\mathrm{H}_{{\rm Im}{\bf O},{\bf O}}$.
From (\ref{sdp-03}), (\ref{sdp-04}) 
and Lemma~\ref{cce-02}(2), we see
\begin{align*}
&\varphi(g,p,x)\varphi(h,q,y)
=F(p,x)(\varphi_0(g)
F(q,y)
\varphi_0(g)^{-1})\varphi_0(gh)\\
=&F(p,x)F(g_3q,g_1y)\varphi_0(gh)
=F(p+g_3 q+{\rm Im}(x\overline{(g_1y)}),x+g_1y)
\varphi_0(gh)\\
=&\varphi(
gh,
p+g_3 q+{\rm Im}(x\overline{(g_1y)}),x+g_1y).
\end{align*}
Thus $\varphi$ is a group homomorphism.
Moreover,
(\ref{sdp-05}) follows from
the definition of $\varphi$
and (\ref{nlp-04}).
\end{proof}
\medskip

Let $V$ be a $\mathbb{R}$-linear space and $N$ a
nilpotent subgroup of 
$\mathrm{GL}_{\mathbb{R}}(V).$
For $v\in V,$ the subset $Orb_N(v)$ of $V$ 
is called a {\it parabolic type plane}.  
Denote 
nilpotent subgroups $N_1$ and $N_2$ 
in $N^+=\varphi(\mathrm{H}_{{\rm Im}{\bf O},{\bf O}})$
as \[N_1:=\varphi({\rm Im}{\bf O}),\quad
N_2:=\varphi(
\mathrm{H}_{{\rm Im}{\bf O}, {\rm Im}{\bf O}})
\]
respectively, and the subsets 
$\mathcal{P}_{E_3,P^-},~\mathcal{P}_{P^-}$
and $\mathcal{P}_{Q^+(1)}$ of $\mathcal{J}^1$ as
\begin{align*}
\mathcal{P}_{E_3,P^-}&
:=\{X \in \mathcal{J}^1_{L^{\times}(2E_3),-1}
|~P^-\times X=-E_3,
~Q(X)=1\},\\
\mathcal{P}_{P^-}&:=\{X \in \mathcal{J}^1
|~P^-\times X=-2^{-1}P^-,~X^{\times 2}=0,
~\mathrm{tr}(X)=1\},\\
\mathcal{P}_{Q^+(1)}&:=\{X \in \mathcal{P}_{P^-}
|~Q^+(1)\times X=0\}
\end{align*}
respectively.

\begin{lemma}\label{sdp-06} 
The following equations hold.
\begin{gather*}
\tag{\ref{sdp-06}.a}
\begin{array}[t]{rcl}
\mathcal{P}_{E_3,P^-}&=&\{(-E_1+E_2)+2F_3^1(p)
+2(p|p)P^-
|~p\in {\rm Im}{\bf O}\}\\
\smallskip
&=&\{\exp\mathcal{G}_2(p)(-E_1+E_2)
|~p\in{\rm Im}{\bf O}\}\\
\smallskip
&=&Orb_{N_1}(-E_1+E_2)\\
\smallskip
&=&Orb_{(\mathrm{F}_{4(-20)})_{E_3,P^-}}(-E_1+E_2).
\end{array}\\
\tag{\ref{sdp-06}.b}
\begin{array}[t]{rcl}
\mathcal{P}_{P^-}&=&\{E_3+Q^+(x)+(x|x)P^-
|~x\in {\bf O}\}\\
\smallskip
&=&\{\exp\mathcal{G}_1(x)E_3
|~x\in{\bf O}\}\\
\smallskip
&=&Orb_{N^+}(E_3)\\
\smallskip
&=&Orb_{(\mathrm{F}_{4(-20)})_{P^-}}(E_3).
\end{array}\\
\tag{\ref{sdp-06}.c}
\begin{array}[t]{rcl}
\mathcal{P}_{Q^+(1)}&=&\{E_3+Q^+(x)+(x|x)P^-
|~x\in {\rm Im}{\bf O}\}\\
\smallskip
&=&\{\exp\mathcal{G}_1(x)E_3
|~x\in{\rm Im}{\bf O}\}\\
\smallskip
&=&Orb_{N_2}(E_3)\\
\smallskip
&=&Orb_{(\mathrm{F}_{4(-20)})_{Q^+(1)}}(E_3).
\end{array}
\end{gather*}
\end{lemma}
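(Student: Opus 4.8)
The plan is to prove each of the three chains of equalities \eqref{sdp-06}.a, \eqref{sdp-06}.b, \eqref{sdp-06}.c in the same pattern, working from the explicit parametrized description toward the orbit description and back. Take \eqref{sdp-06}.b as the model. First I would verify the set equality
\[
\mathcal{P}_{P^-}=\{E_3+Q^+(x)+(x|x)P^-\mid x\in{\bf O}\}.
\]
The inclusion $\supseteq$ is a direct computation: for $X=E_3+Q^+(x)+(x|x)P^-$ one checks using Lemma~\ref{prl-06} that $P^-\times X=-2^{-1}P^-$, $X^{\times 2}=0$ and $\mathrm{tr}(X)=1$, so $X\in\mathcal{P}_{P^-}$. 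For $\supseteq$ reversed (i.e. $\subseteq$), I would start from an arbitrary $X=\sum_{i=1}^3(\xi_iE_i+F_i^1(x_i))$ satisfying the three defining conditions, expand $P^-\times X=-2^{-1}P^-$ and $X^{\times 2}=0$ componentwise via (\ref{prl-06}.b), (\ref{prl-06}.d) and the formula for $P^-\times X$ already appearing in the proof of Lemma~\ref{cce-12}, and solve the resulting system; the conditions should force $\xi_1=\xi_2=(x_1|x_1)=(x_2|x_2)$, $\xi_3=1$, $x_1=\overline{x_2}$ and $x_3$ determined, which puts $X$ in the claimed form with $x:=x_1$. This is the calculational heart of the lemma.

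Next I would identify that set with $\{\exp\mathcal{G}_1(x)E_3\mid x\in{\bf O}\}$, which is immediate from (\ref{nlp-11}.b)(iv): $\exp\mathcal{G}_1(x)E_3=E_3+Q^+(x)+(x|x)P^-$. Then $\{\exp\mathcal{G}_1(x)E_3\}=Orb_{N^+}(E_3)$ follows because $N^+=\exp\mathfrak{n}^+$ and every element of $N^+$ can be written $\exp(\mathcal{G}_2(p)+\mathcal{G}_1(x))=\exp\mathcal{G}_2(p)\exp\mathcal{G}_1(x)$ by (\ref{nlp-06}), while $\exp\mathcal{G}_2(p)$ fixes $E_3$ by (\ref{nlp-11}.a)(iv); hence $Orb_{N^+}(E_3)=\{\exp\mathcal{G}_1(x)E_3\mid x\in{\bf O}\}$. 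Finally, for $Orb_{N^+}(E_3)=Orb_{(\mathrm{F}_{4(-20)})_{P^-}}(E_3)$: the inclusion $\subseteq$ holds since $N^+\subset N^+M=\varphi({\rm Spin}(7)\ltimes\mathrm{H}_{{\rm Im}{\bf O},{\bf O}})\subset(\mathrm{F}_{4(-20)})_{P^-}$ by (\ref{sdp-05}); for $\supseteq$ I would use (\ref{sdp-05}) to write any $g\in(\mathrm{F}_{4(-20)})_{P^-}\cap\varphi(\cdots)$... actually I must be careful here, since it is not yet established that $(\mathrm{F}_{4(-20)})_{P^-}=N^+M$. The clean route is: every $g\in(\mathrm{F}_{4(-20)})_{P^-}$ satisfies $gE_3\in\mathcal{P}_{P^-}$ because the three defining conditions of $\mathcal{P}_{P^-}$ are $\mathrm{F}_{4(-20)}$-invariant relations involving only $E_3$, $P^-$ (which $g$ fixes), $E$, $\mathrm{tr}$ and $\times$; hence $Orb_{(\mathrm{F}_{4(-20)})_{P^-}}(E_3)\subseteq\mathcal{P}_{P^-}=Orb_{N^+}(E_3)$, giving the reverse inclusion. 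So the logical order per chain is: (explicit set) $=\mathcal{P}_{\bullet}$ by computation; (explicit set) $=$ (exp-orbit) by Lemma~\ref{nlp-11}; (exp-orbit) $=Orb_{N_\bullet}$ by (\ref{nlp-06}) plus the fixed-point facts in Lemma~\ref{nlp-11}; $Orb_{N_\bullet}\subseteq Orb_{(\mathrm{F}_{4(-20)})_\bullet}\subseteq\mathcal{P}_\bullet$, closing the loop.

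For \eqref{sdp-06}.a the same template applies with $-E_1+E_2$ in place of $E_3$, using (\ref{nlp-11}.a)(i) for the exponential formula and (\ref{nlp-11}.b)(i) to check that $\exp\mathcal{G}_1(x)$ does \emph{not} fix $-E_1+E_2$, which is why the relevant nilpotent group is $N_1=\varphi({\rm Im}{\bf O})$ rather than $N^+$; one also needs that $(\mathrm{F}_{4(-20)})_{E_3,P^-}$ preserves the defining conditions of $\mathcal{P}_{E_3,P^-}$ (note $Q$ is $\mathrm{F}_{4(-20)}$-invariant by (\ref{spn-01}.a)). For \eqref{sdp-06}.c, among $X=E_3+Q^+(x)+(x|x)P^-\in\mathcal{P}_{P^-}$ the extra condition $Q^+(1)\times X=0$ should, after an expansion via (\ref{prl-06}.b), (\ref{prl-06}.d), pin down $\mathrm{Re}(x)=0$, i.e. $x\in{\rm Im}{\bf O}$; the exponential description then comes from (\ref{nlp-11}.b)(iv) restricted to imaginary $x$, and $Orb_{N_2}(E_3)$ equality follows since $\exp\mathcal{G}_2(p)$ fixes $E_3$ and $N_2=\varphi(\mathrm{H}_{{\rm Im}{\bf O},{\rm Im}{\bf O}})$ consists exactly of the $\exp(\mathcal{G}_2(p)+\mathcal{G}_1(x))$ with $p,x\in{\rm Im}{\bf O}$.

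The main obstacle I anticipate is the first step of each chain — solving the defining equations of $\mathcal{P}_{\bullet}$ to recover the explicit parametrization, especially checking that the conditions are not merely necessary but sufficient to force the stated normal form (this is where sign bookkeeping with $\epsilon(i)$ and the octonion identities of Lemma~\ref{prl-01} must be handled carefully). A secondary subtlety is making sure, in the last inclusion $Orb_{(\mathrm{F}_{4(-20)})_\bullet}(v)\subseteq\mathcal{P}_\bullet$, that every relation defining $\mathcal{P}_\bullet$ is genuinely invariant under the relevant stabilizer — this uses only that $\mathrm{F}_{4(-20)}$ preserves $\times$, $\mathrm{tr}$, $E$ and $Q$, together with the fact that the fixed base point ($E_3$, $P^-$, or $Q^+(1)$) is fixed by the group in question — so it is routine once spelled out, but it is the logical linchpin that turns the three explicit computations into the orbit statements.
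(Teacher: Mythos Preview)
Your approach is essentially the same as the paper's: both close the loop
\[
\mathcal{P}_\bullet \subseteq (\text{explicit set}) = \{\exp(\cdot)\,v\} \subseteq Orb_{N_\bullet}(v) \subseteq Orb_{(\mathrm{F}_{4(-20)})_\bullet}(v) \subseteq \mathcal{P}_\bullet,
\]
using Lemma~\ref{nlp-11} for the exponential formulas and invariance of the defining relations for the last inclusion. Two minor remarks: (i) for the computation $\mathcal{P}_{P^-}\subseteq(\text{explicit set})$ the paper expands $X$ in the adapted basis of (\ref{prl-05}.b) rather than the standard one, which makes the system for $P^-\times X=-2^{-1}P^-$ and $X^{\times 2}=0$ noticeably cleaner; (ii) your predicted output ``$\xi_1=\xi_2$'' is off --- in standard coordinates $E_3+Q^+(x)+(x|x)P^-$ has $\xi_1=-(x|x)$, $\xi_2=(x|x)$ --- but this does not affect the argument. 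For part (c) you will also need, as the paper notes, that $(\mathrm{F}_{4(-20)})_{Q^+(1)}\subset(\mathrm{F}_{4(-20)})_{P^-}$ (since $P^-=Q^+(1)^{\times 2}$) to get invariance of $\mathcal{P}_{Q^+(1)}$, and a one-line check that $N_2$ fixes $Q^+(1)$.
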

\begin{proof} (a) First, set $\mathcal{P}
=\{(-E_1+E_2)+F_3^1(2p)+2(p|p)P^-
~|~p\in {\rm Im}{\bf O}\}.$
By (\ref{nlp-11}.a)(i),
$(-E_1+E_2)+F_3^1(2p)+2(p|p)P^-=\exp\mathcal{G}_2(-p)(-E_1+E_2)$.
Thus $\mathcal{P}
=\{\exp\mathcal{G}_2(p)(-E_1+E_2)
~|~p\in{\rm Im}{\bf O}\}
\subset Orb_{N_1}(-E_1+E_2)$.

Second,
fix $X\in\mathcal{P}_{E_3,P^-}$.
Because of
$\mathcal{J}^1_{L^{\times}(2E_3),-1}=\mathbb{R}(-E_1+E_2)\oplus
\mathbb{R}P^-\oplus F_3^1({\rm Im}{\bf O})$, 
$X$ can be expressed by 
$X=r(-E_1+E_2)+F_3^1(2p)+sP^-$
for some $r,s\in \mathbb{R}~{\rm and}~p
\in {\rm Im}{\bf O}$.
By direct calculations,
$P^-\times X=-rE_3$.
Then
$r=1$
and $X=(-E_1+E_2)+F_3^1(2p)+sP^-$.
Because of $1=Q(X)=(1+s)^2-s^2-4(p|p)$,
we see
$s=2(p|p)$
and $X=(-E_1+E_2)+2F_3^1(p)+2(p|p)P^-$.
Thus  $X\in \mathcal{P}$ and so
$\mathcal{P}_{E_3,P^-} 
\subset \mathcal{P}$.

Third,
since $N_1$ is a subgroup of $(\mathrm{F}_{4(-20)})_{E_3,P^-}$,
we see $Orb_{N_1}(-E_1+E_2)
\subset Orb_{(\mathrm{F}_{4(-20)})_{E_3,P^-}}(-E_1+E_2)$. 

Last,
by virtue of
the definition of
$\mathcal{P}_{E_3,P^-},$ $(\mathrm{F}_{4(-20)})_{E_3,P^-}$ 
acts on $\mathcal{P}_{E_3,P^-}$.
Because of $-E_1+E_2\in\mathcal{P}_{E_3,P^-}$,
we see 
$Orb_{(\mathrm{F}_{4(-20)})_{E_3,P^-}}(-E_1+E_2)
\subset \mathcal{P}_{E_3,P^-}$.
Consequently 
$\mathcal{P}_{E_3,P^-} \subset \mathcal{P}
\subset Orb_{N_1}(-E_1+E_2)
\subset Orb_{(\mathrm{F}_{4(-20)})_{E_3,P^-}}(-E_1+E_2)
\subset \mathcal{P}_{E_3,P^-}$.
Hence (\ref{sdp-06}.a) follows.

(b) First, set $\mathcal{P}'=\{E_3+Q^+(x)+(x|x)P^-
~|~x\in {\bf O}\}$.
By (\ref{nlp-11}.b)(iv),
$E_3+Q^+(x)+(x|x)P^-=\exp\mathcal{G}_1(x)
E_3$.
Thus
$\mathcal{P}'=\{\exp\mathcal{G}_1(x)E_3
~|~x\in{\bf O}\}\subset Orb_{N^+}(E_3)$.

Second,
fix  $X\in\mathcal{P}_{P^-}$. 
By (\ref{prl-05}.b),
$X$ can be expressed by
$X=r(-E_1+E_2)+sP^-+uE+vE_3+F_3^1(p)
+Q^+(x)+Q^-(y)$
for some $r,s,u,v\in\mathbb{R}$
$p\in{\rm Im}{\bf O}$ and $x,y\in{\bf O}$.
By direct calculations, 
$P^-\times X
=-2^{-1}(u+v)P^--rE_3+Q^+(y)$
and therefore $r=y=0$, $u+v=1$
and
$X=sP^-+(1-v)E+vE_3+F_3^1(p)+Q^+(x)$. 
Because of $1=\mathrm{tr}(X)=3(1-v)+v$,
we see
$v=1$
and
$X=E_3+sP^-+F_3^1(p)+Q^+(x)$.
Because of $0=X^{\times 2}
=((x|x)-s)P^--F_3^1(p)+(p|p)E_3+Q^-(px)$,
we see
$p=0$ and $s=(x|x)$.
Thus  $X=E_3+(x|x)P^-+Q^+(x)
\in \mathcal{P}'$ 
and so $\mathcal{P}_{P^-}\subset \mathcal{P}'$.

Third,
since $N^+$ is a subgroup of $(\mathrm{F}_{4(-20)})_{P^-}$,
we see
$Orb_{N^+}(E_3)\subset Orb_{(\mathrm{F}_{4(-20)})_{P^-}}(E_3)$.

Last,
by virtue of
the definition of
$\mathcal{P}_{P^-},$ $(\mathrm{F}_{4(-20)})_{P^-}$ 
acts on $\mathcal{P}_{P^-}$.
Because of $E_3\in\mathcal{P}_{P^-}$, we see
$Orb_{(\mathrm{F}_{4(-20)})_{P^-}}(E_3)\subset \mathcal{P}_{P^-}$.
Consequently, we obtain 
$\mathcal{P}_{P^-} \subset \mathcal{P}'
\subset Orb_{N^+}(E_3)
\subset Orb_{(\mathrm{F}_{4(-20)})_{P^-}}(E_3)
\subset \mathcal{P}_{P^-}$.
Hence (\ref{sdp-06}.b) follows.

(c) First, set $\mathcal{P}''=\{E_3+Q^+(x)+(x|x)P^-
|x\in {\rm Im}{\bf O}\}.$
By (\ref{nlp-11}.b)(iv),
$E_3+Q^+(x)+(x|x)P^-=\exp\mathcal{G}_1(x)
E_3$.
Thus
$\mathcal{P}''
=\{\exp\mathcal{G}_1(x)E_3
|~x\in{\rm Im}{\bf O}\}
\subset Orb_{N_2}(E_3)$.

Second, fix $X\in\mathcal{P}_{Q^+(1)}$.
Because of $X\in \mathcal{P}_{P^-}$ and (2),
$X$ can be expressed by 
$X=E_3+Q^+(x_0)+(x_0|x_0)P^-$
for some $x_0\in{\bf O}$.
By direct calculations,
$0=Q^+(1)\times X={\rm Re}(x_0)P^-$
and $x_0\in {\rm Im}{\bf O}$.
Thus  $X\in \mathcal{P}''$ 
and so $\mathcal{P}_{Q^+(1)}\subset \mathcal{P}''$.

Third,
by (\ref{nlp-11}.a) and (\ref{nlp-11}.b)
$\exp\mathcal{G}_1(x)\exp\mathcal{G}_2(p)Q^+(1)
=Q^+(1)+(x|1)P^-=Q^+(1)$
for all $x,p\in{\rm Im}{\bf O}$.
Thus
$N_2$ 
is a subgroup of $(\mathrm{F}_{4(-20)})_{Q^+(1)}$ 
and so
$Orb_{N_2}(E_3)
\subset Orb_{(\mathrm{F}_{4(-20)})_{Q^+(1)}}(E_3)$.

Last, by virtue of the definition of
$\mathcal{P}_{Q^+(1)}$, 
$(\mathrm{F}_{4(-20)})_{Q^+(1)}$ 
acts on $\mathcal{P}_{Q^+(1)}$.
Thus $Orb_{(\mathrm{F}_{4(-20)})_{Q^+(1)}}(E_3)
\subset \mathcal{P}_{Q^+(1)}$
follows from $E_3\in\mathcal{P}_{Q^+(1)}$.
Consequently  $\mathcal{P}_{Q^+(1)}
\subset \mathcal{P}''\subset Orb_{N_2}(E_3)
\subset Orb_{(\mathrm{F}_{4(-20)})_{Q^+(1)}}(E_3)
\subset \mathcal{P}_{Q^+(1)}$.
Hence (\ref{sdp-06}.c) follows.
\end{proof}
\medskip

It follows from Lemma~\ref{sdp-06} 
that $\mathcal{P}_{E_3,P^-},$
$\mathcal{P}_{P^-}$ and $\mathcal{P}_{Q^+(1)}$
are parabolic type planes.
Let $f_i$ be the mappings from the suitable $\mathbb{R}^n$ 
to the parabolic type planes
defined as
\begin{align*}
f_1&:{\rm Im}{\bf O}\to\mathcal{P}_{E_3,P^-};
&&f_1(p):=\exp\mathcal{G}_2(p)(-E_1+E_2)
&&{\rm for}~p\in{\rm Im}{\bf O},\\
f_2&:{\bf O}\to\mathcal{P}_{P^-};
&&f_2(x):=\exp\mathcal{G}_1(x)E_3
&&{\rm for}~x\in{\bf O},\\
f_3&:{\rm Im}{\bf O}\to
\mathcal{P}_{Q^+(1)};
&&f_3(x):=\exp\mathcal{G}_1(x)E_3
&&{\rm for}~x
\in{\rm Im}{\bf O}
\end{align*}
respectively.

\begin{lemma}\label{sdp-07}
$f_i$ is a bijection for any $i\in\{1,2,3\}.$
\end{lemma}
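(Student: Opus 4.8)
The plan is to prove each $f_i$ is a bijection by establishing surjectivity and injectivity separately, exploiting the explicit formulas for $\exp\mathcal{G}_2(p)$ and $\exp\mathcal{G}_1(x)$ from Lemma~\ref{nlp-11}. Surjectivity is essentially already in hand: by Lemma~\ref{sdp-06}, each parabolic type plane coincides with a set of the form $\{\exp\mathcal{G}_2(p)(-E_1+E_2)\mid p\in{\rm Im}{\bf O}\}$, $\{\exp\mathcal{G}_1(x)E_3\mid x\in{\bf O}\}$, or $\{\exp\mathcal{G}_1(x)E_3\mid x\in{\rm Im}{\bf O}\}$, which is exactly the image of $f_1$, $f_2$, or $f_3$ respectively. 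So the content is injectivity.

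For injectivity I would read off the explicit normal forms. From (\ref{nlp-11}.a)(i), $f_1(p)=(-E_1+E_2)-2F_3^1(p)+2(p|p)P^-$ — wait, more carefully, $\exp\mathcal{G}_2(p)(-E_1+E_2)=(-E_1+E_2)-2F_3^1(p)+2(p|p)P^-$, so the $F_3^1$-component of $f_1(p)$ is $-2p$; hence $f_1(p)=f_1(p')$ forces $p=p'$ by comparing $F_3^1$-components, using the directness of the decomposition in Lemma~\ref{prl-05}. Similarly from (\ref{nlp-11}.b)(iv), $f_2(x)=E_3+Q^+(x)+(x|x)P^-$, and since the map $x\mapsto Q^+(x)$ is injective (it is $\mathbb{R}$-linear with trivial kernel), comparing the $Q^+$-components gives $x=x'$, so $f_2$ is injective; $f_3$ is the restriction of $f_2$ to ${\rm Im}{\bf O}$, hence also injective. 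The routine part here is just invoking the linear-algebra fact that the relevant components live in complementary summands of $\mathcal{J}^1$ and that $F_3^1$ and $Q^+$ are injective linear maps.

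Concretely, the proof would run: fix $i$; surjectivity of $f_i$ onto the corresponding parabolic type plane is immediate from the middle equality in (\ref{sdp-06}.a), (\ref{sdp-06}.b), or (\ref{sdp-06}.c); for injectivity, apply Lemma~\ref{nlp-11} to write $f_i$ in normal form, then compare components relative to the direct sum decomposition (\ref{prl-05}.b) (or (\ref{prl-05}.a) after noting $F_3^1({\bf O})\subset\mathcal{J}^1$), concluding that the parameter is uniquely determined. I do not expect any genuine obstacle; the only mild subtlety is bookkeeping which summand each component sits in, and confirming that ${\rm Im}{\bf O}\hookrightarrow{\bf O}$ is preserved so that $f_3$ is well-defined as the restriction claimed. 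Everything else is a direct consequence of the already-established Lemmas~\ref{sdp-06} and~\ref{nlp-11}.

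\begin{proof}
Fix $i\in\{1,2,3\}$. Surjectivity of $f_i$ is immediate: by (\ref{sdp-06}.a), (\ref{sdp-06}.b) and (\ref{sdp-06}.c), the parabolic type planes $\mathcal{P}_{E_3,P^-}$, $\mathcal{P}_{P^-}$ and $\mathcal{P}_{Q^+(1)}$ are precisely $\{\exp\mathcal{G}_2(p)(-E_1+E_2)\mid p\in{\rm Im}{\bf O}\}$, $\{\exp\mathcal{G}_1(x)E_3\mid x\in{\bf O}\}$ and $\{\exp\mathcal{G}_1(x)E_3\mid x\in{\rm Im}{\bf O}\}$, which are the images of $f_1,f_2,f_3$ respectively.

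For injectivity, we use Lemma~\ref{nlp-11}. By (\ref{nlp-11}.a)(i),
\[f_1(p)=\exp\mathcal{G}_2(p)(-E_1+E_2)
=(-E_1+E_2)-2F_3^1(p)+2(p|p)P^-,\]
so if $f_1(p)=f_1(p')$ then, comparing the components in $F_3^1({\rm Im}{\bf O})$ with respect to the direct sum (\ref{prl-05}.b), we get $-2p=-2p'$, hence $p=p'$. By (\ref{nlp-11}.b)(iv),
\[f_2(x)=\exp\mathcal{G}_1(x)E_3=E_3+Q^+(x)+(x|x)P^-,\]
so if $f_2(x)=f_2(x')$ then, comparing the $Q^+({\bf O})$-components with respect to (\ref{prl-05}.b), we get $Q^+(x)=Q^+(x')$; since $x\mapsto Q^+(x)$ is $\mathbb{R}$-linear and injective, $x=x'$. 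Finally $f_3$ is the restriction of $f_2$ to ${\rm Im}{\bf O}\subset{\bf O}$, hence injective. Therefore $f_i$ is a bijection for every $i\in\{1,2,3\}$.
\end{proof}
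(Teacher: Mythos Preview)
Your proof is correct and follows essentially the same approach as the paper: surjectivity from Lemma~\ref{sdp-06}, injectivity by reading off the explicit formulas from Lemma~\ref{nlp-11} and comparing components. The paper phrases injectivity contrapositively (``if $p\ne q$ then $f_1(p)\ne f_1(q)$'') rather than invoking the direct sum (\ref{prl-05}.b), and treats $f_3$ separately rather than as a restriction of $f_2$, but these are purely stylistic differences.
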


\begin{proof}
Case $f_1$. By (\ref{sdp-06}.a),
$
\mathcal{P}_{E_3,P^-}
=\{f_1(p)~|~p\in{\rm Im}{\bf O}\}
$.
Then $f_1$
is onto.
Now if $p,q \in {\rm Im}{\bf O}$ and $p\ne q$, then 
$(-E_1+E_2)+2F_3^1(p)+2(p|p)P^-
\ne (-E_1+E_2)+2F_3^1(q)+2(q|q)P^-$.
Thus  $f_1$
is one-to-one.

Case $f_2$.
By (\ref{sdp-06}.b),
$
\mathcal{P}_{P^-}
=\{f_2(x)~|~x\in{\bf O}\}
$.
Then $f_2$
is onto.
Now if $x,y \in {\bf O}$ and $x\ne y$, then 
$f_2(x)=E_3+Q^+(x)+(x|x)P^-
\ne E_3+Q^+(y)+(y|y)P^-=f_2(y)$.
Thus  $f_2$
is one-to-one.

Case $f_3$.
By (\ref{sdp-06},c),
$
\mathcal{P}_{Q^+(1)}
=\{f_3(x)~|~x\in{\rm Im}{\bf O}\}
$.
Then $f_3$
is onto.
Now if $x,y \in {\rm Im}{\bf O}$ and $x\ne y$, then 
$f_3(x)=E_3+Q^+(x)+(x|x)P^-
\ne E_3+Q^+(y)+(y|y)P^-=f_3(y).$
Thus  $f_3$
is one-to-one.
\end{proof}
\medskip

The homomorphism
$\varphi_1:{\rm Spin}(7)\ltimes {\rm Im}{\bf O}
\to (\mathrm{F}_{4(-20)})_{E_3,P^-}$ 
is defined by the restriction 
$\varphi_1:=\varphi|{\rm Spin}(7)
\ltimes {\rm Im}{\bf O}$:
\[\varphi_1(g,p)=\varphi(g,p,0)
=\exp\mathcal{G}_2(p)\varphi_0(g)
\quad\text{for}~
(g,p)\in {\rm Spin}(7)\ltimes{\rm Im}{\bf O}.\]
From (\ref{cce-03})
and (\ref{nlp-11}.a)(iv), 
we see
$\varphi(g,p)E_3=E_3$,
$\varphi(g,p)P^-=P^-$.
So $\varphi_1$ is well-defined.
The homomorphism 
$\varphi_2:
\mathrm{G}_2\ltimes 
\mathrm{H}_{{\rm Im}{\bf O},{\rm Im}{\bf O}}
\to (\mathrm{F}_{4(-20)})_{Q^+(1)}$ is 
defined by the restriction
$\varphi_2:=\varphi
|\mathrm{G}_2\ltimes \mathrm{H}_{{\rm Im}{\bf O},
{\rm Im}{\bf O}}$:
\[\varphi_2(g,p,q)
=\exp(\mathcal{G}_2(p)+\mathcal{G}_1(q))\varphi_0(g)
\quad\text{for}~
(g,p,q)
\in \mathrm{G}_2\ltimes \mathrm{H}_{{\rm Im}{\bf O}, 
{\rm Im}{\bf O}}.\]
From (\ref{cce-03}),
(\ref{nlp-11}.a) and (\ref{nlp-11}.b),
we see
$\varphi_2(g,p,x)Q^+(1)=Q^+(1)$. 
So $\varphi_2$ is well-defined.
\medskip

\begin{proposition}\label{sdp-08} 
{\rm (1)}
$\varphi_1$ is an isomorphism
onto $(\mathrm{F}_{4(-20)})_{E_3,P^-}$.
\smallskip

{\rm (2)} $\varphi$ is an isomorphism
onto $(\mathrm{F}_{4(-20)})_{P^-}$.
\smallskip

{\rm (3)} $\varphi_2$ is an isomorphism
onto $(\mathrm{F}_{4(-20)})_{Q^+(1)}$.
\end{proposition}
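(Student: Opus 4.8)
The plan is to deduce all three parts from a single scheme. By Lemma~\ref{sdp-05} the map $\varphi$ is a smooth group homomorphism with image contained in $(\mathrm{F}_{4(-20)})_{P^-}$, and by construction $\varphi_1$, $\varphi_2$ are its restrictions to the Lie subgroups ${\rm Spin}(7)\ltimes{\rm Im}{\bf O}$ and $\mathrm{G}_2\ltimes{\rm H}_{{\rm Im}{\bf O},{\rm Im}{\bf O}}$, landing in $(\mathrm{F}_{4(-20)})_{E_3,P^-}$ and $(\mathrm{F}_{4(-20)})_{Q^+(1)}$ respectively. So it is enough to prove that each of $\varphi$, $\varphi_1$, $\varphi_2$ is bijective, and then invoke the standard fact that a bijective smooth homomorphism of Lie groups is a Lie group isomorphism.

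\textbf{Injectivity.} First I would prove $\varphi$ is injective, which then gives injectivity of $\varphi_1$ and $\varphi_2$ for free. Suppose $\varphi(g,p,x)=1$. Evaluating at $E_3$ and using $\varphi_0(g)E_3=E_3$ (by (\ref{cce-03})), $\exp\mathcal{G}_2(p)E_3=E_3$ (by (\ref{nlp-11}.a)) and $\exp\mathcal{G}_1(x)E_3=E_3+Q^+(x)+(x|x)P^-$ (by (\ref{nlp-11}.b)) forces $x=0$; then evaluating $\varphi(g,p,0)=1$ at $-E_1+E_2$ and using $\varphi_0(g)(-E_1+E_2)=-E_1+E_2$ (by (\ref{cce-03})) together with (\ref{nlp-11}.a) forces $p=0$; finally $\varphi_0(g)=1$ forces $g=1$ since $\varphi_0$ is injective (Lemma~\ref{cce-02}).

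\textbf{Surjectivity.} The engine is Lemma~\ref{sdp-06}, which identifies the parabolic planes $\mathcal{P}_{E_3,P^-}$, $\mathcal{P}_{P^-}$, $\mathcal{P}_{Q^+(1)}$ both as full stabilizer orbits and as $N_1$-, $N^+$-, $N_2$-orbits. For (1): given $h\in(\mathrm{F}_{4(-20)})_{E_3,P^-}$, by (\ref{sdp-06}.a) there is $n_1\in N_1$ with $n_1(-E_1+E_2)=h(-E_1+E_2)$, so $n_1^{-1}h$ fixes $E$, $E_3$ and $-E_1+E_2$, hence fixes $E_1,E_2$, hence (since it fixes $P^-=-E_1+E_2+F_3^1(1)$) fixes $F_3^1(1)$; thus $n_1^{-1}h\in(\mathrm{F}_{4(-20)})_{E_1,E_2,E_3,F_3^1(1)}=\mathrm{B}_3=M$. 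Conversely $N_1,M\subset(\mathrm{F}_{4(-20)})_{E_3,P^-}$ by (\ref{nlp-11}.a) and the definition of $\mathrm{B}_3$, so $(\mathrm{F}_{4(-20)})_{E_3,P^-}=N_1M=\varphi_1({\rm Spin}(7)\ltimes{\rm Im}{\bf O})$. For (2): given $g\in(\mathrm{F}_{4(-20)})_{P^-}$, by (\ref{sdp-06}.b) there is $n\in N^+$ with $nE_3=gE_3$, so $n^{-1}g\in(\mathrm{F}_{4(-20)})_{E_3,P^-}\subset\varphi({\rm Spin}(7)\ltimes{\rm H}_{{\rm Im}{\bf O},{\bf O}})$ by (1), while $N^+=\varphi({\rm H}_{{\rm Im}{\bf O},{\bf O}})$; as the image of $\varphi$ is a subgroup, $g=n(n^{-1}g)$ lies in it. For (3): since $Q^+(1)^{\times2}=P^-$ (from the description of $\mathcal{R}^-$ in (\ref{cce-12}.b)), fixing $Q^+(1)$ entails fixing $P^-$, so $(\mathrm{F}_{4(-20)})_{Q^+(1)}\subset(\mathrm{F}_{4(-20)})_{P^-}$; given $g$ there, by (\ref{sdp-06}.c) there is $n_2\in N_2$ with $n_2E_3=gE_3$, so $n_2^{-1}g\in(\mathrm{F}_{4(-20)})_{E_3,Q^+(1)}$, and by (1) every element of the latter has the form $\exp\mathcal{G}_2(p)\varphi_0(h)$; applying it to $Q^+(1)$ and using $\varphi_0(h)Q^+(1)=Q^+(h_11)$ (by (\ref{cce-03})) and $\exp\mathcal{G}_2(p)Q^+(y)=Q^+(y)$ (by (\ref{nlp-11}.a)) forces $h_11=1$, i.e.\ $h\in\mathrm{G}_2$ (Lemma~\ref{trl-04}), whence $n_2^{-1}g=\varphi_2(h,p,0)$; combined with $N_2=\varphi_2({\rm H}_{{\rm Im}{\bf O},{\rm Im}{\bf O}})$ and the fact that $\varphi_2$ has subgroup image, this gives $g\in\varphi_2(\mathrm{G}_2\ltimes{\rm H}_{{\rm Im}{\bf O},{\rm Im}{\bf O}})$.

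\textbf{Main obstacle.} The delicate point is the identity $(\mathrm{F}_{4(-20)})_{E_3,P^-}=N_1M$ of step (1), together with its refinement $(\mathrm{F}_{4(-20)})_{E_3,Q^+(1)}=\{\exp\mathcal{G}_2(p)\varphi_0(h):p\in{\rm Im}{\bf O},\,h\in\mathrm{G}_2\}$ used in (3): it is exactly the description of $\mathcal{P}_{E_3,P^-}$ as an $N_1$-orbit (Lemma~\ref{sdp-06}) that collapses the residual stabilizer down to $M$, and surjectivity of $\varphi$ and $\varphi_2$ then follows by applying the same orbit trick one layer up. Everything else reduces to routine substitution into the explicit action formulas of Lemmas~\ref{cce-03} and~\ref{nlp-11}.
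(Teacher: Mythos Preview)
Your proposal is correct and follows essentially the same route as the paper: both use Lemma~\ref{sdp-06} to peel off a nilpotent factor via the orbit descriptions of $\mathcal{P}_{E_3,P^-}$, $\mathcal{P}_{P^-}$, $\mathcal{P}_{Q^+(1)}$ and then identify the residual stabilizer with $\mathrm{B}_3$ (resp.\ $\mathrm{G}_2$) using (\ref{cce-03}), (\ref{nlp-11}) and (\ref{trl-04}.b). The only cosmetic differences are that you prove injectivity of $\varphi$ once up front (the paper does $\varphi_1$ first and bootstraps), and for part~(3) you reduce via (\ref{sdp-06}.c) before applying part~(1), whereas the paper applies part~(2) directly and then reads off $g_1 1=1$ and $x\in\mathrm{Im}\,{\bf O}$ from $\varphi(g,p,x)Q^+(1)=Q^+(1)$.
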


\begin{proof} (1) We show that $\varphi_1$
is an onto and one-to-one.
Let  $\tilde{g}\in(F_{4(-20)})_{E_3,P^-}.$
From (\ref{sdp-06}.a),
we see
$
\tilde{g}(-E_1+E_2)=
\exp\mathcal{G}_2(p)(-E_1+E_2)
$
for some $p\in{\rm Im}{\bf O}$.
Because of
$\exp\mathcal{G}_2(-p)
\tilde{g}(-E_1+E_2)=-E_1+E_2$
and (\ref{nlp-11}.a),
we see
$\exp\mathcal{G}_2(-p)
\tilde{g}Y=Y$
where $Y=E_3$ or $P^-$
and therefore
$\exp\mathcal{G}_2(-p)\tilde{g}
\in (F_{4(-20)})_{-E_1+E_2,E_3,P^-}
=(F_{4(-20)})_{E_1,E_2,E_3,F_3^1(1)}
=B_3$.
Thus 
$\exp\mathcal{G}_2(-p)\tilde{g}
=\varphi_0(g)$
for some $g\in{\rm Spin}(7)$
and so $\tilde{g}
=\exp\mathcal{G}_2(p)\varphi_0(g)
=\varphi_1(g,p)$.
Hence $\varphi_1$ is onto.

Take $(g,p)\in {\rm Ker}(\varphi_1).$
By (\ref{cce-03}),  
$-E_1+E_2=\varphi_1(g,p)(-E_1+E_2)
=\exp\mathcal{G}_2(p) \varphi_0(g)(-E_1+E_2)
=\exp\mathcal{G}_2(p)(-E_1+E_2)
=f_1(p)$.
From Lemma~\ref{sdp-07}, we see $p=0$
and  $\varphi_1(g,p)=\varphi_0(g)$, 
and Lemma~\ref{cce-02}(2), 
$g=1$
where $1$ denotes the identity element of
$\mathrm{Spin}(7)$.
Thus  ${\rm Ker}(\varphi_1)
=\{(1,0)\}$
and so $\varphi_1$ is  one-to-one.
Hence (1) follows.

(2) 
Let $\tilde{g}\in(\mathrm{F}_{4(-20)})_{P^-}$.
From (\ref{sdp-06}.b), we see 
$\tilde{g}E_3=\exp\mathcal{G}_1(x)E_3$
for some $x\in{\bf O}$.
Because of
$\exp\mathcal{G}_1(-x)\tilde{g}E_3=E_3$
and 
$\exp\mathcal{G}_1(-x)\tilde{g}P^-=P^-$
(see (\ref{nlp-11}.b)),
we see
$\exp\mathcal{G}_1(-x)\tilde{g}
\in (\mathrm{F}_{4(-20)})_{E_3,P^-}$.
Thus by (1),
$\exp\mathcal{G}_1(-x)\tilde{g}
=\exp\mathcal{G}_2(p)\varphi_0(g)$
for some $(g,p)\in{\rm Spin}(7)
\ltimes{\rm Im}{\bf O}$
and so  $\tilde{g}
=\exp\mathcal{G}_1(x)\exp\mathcal{G}_2(p)\varphi_0(g)
=\varphi(g,p,x)$.
Hence  $\varphi$ is onto.

Next, take $(g,p,x)\in {\rm Ker}(\varphi)$.
From (\ref{cce-03}) 
and (\ref{nlp-11}.b), we see
$E_3=\varphi(g,p,x)E_3
=\exp\mathcal{G}_1(x)E_3
=f_2(x)$.
By Lemma~\ref{sdp-07}, $x=0$ 
and  $\varphi(g,p,0)
=\varphi_1(g,p)\in(\mathrm{F}_{4(-20)})_{E_3,P}$.
Then by (1),
$(g,p)\in {\rm Ker}(\varphi_1)
=\{(1,0)\}$.
Thus $ {\rm Ker}(\varphi)=\{(1,0,0)\}$ 
and so $\varphi$ is  one-to-one.
Hence (2) follows.

(3) By (2), the map $\varphi_2$ 
is a restriction map of isomorphism $\varphi$.
Thus $\varphi_2$ is a mono-morphism.
Take $\tilde{g}\in (\mathrm{F}_{4(-20)})_{Q^+(1)}.$
Because of $P^-=Q^+(1)^{\times 2}$, 
we easily
see that $(\mathrm{F}_{4(-20)})_{Q^+(1)}$
is a subgroup of $(\mathrm{F}_{4(-20)})_{P^-}$.
By (2), $\tilde{g}=\varphi(g,p,x)$
for some
$(g,p,x)\in{\rm Spin}(7)\ltimes
\mathrm{H}_{{\rm Im}{\bf O}, {\rm Im}{\bf O}}$
with
$g=(g_1,g_2,g_3)$.
From (\ref{cce-03}),
(\ref{nlp-11}.a)(vi) and (\ref{nlp-11}.b)(vi),
we see
$Q^+(1)=\varphi(g,p,x)Q^+(1)
=Q^+(g_1 1)+2(x|g_1 1)P^-$.
Then $g_1 1=1$ and $0=(x|g_1 1)$.
Because of $0=(x|g_1 1)=(x|1)$, we see 
$x\in{\rm Im}{\bf O}$, and
because of $g\in {\rm Spin}(7)$, 
$g_1 1=1$
and (\ref{trl-04}.b),
we see 
$g\in \mathrm{G}_2$.
Thus  $(g,p,x)\in \mathrm{G}_2\ltimes 
\mathrm{H}_{{\rm Im}{\bf O}, {\rm Im}{\bf O}}$
and so $\varphi_2$ is onto.
Hence (3) follows.
\end{proof}
\medskip

\begin{corollary}\label{sdp-09} 
\[\tag{\ref{sdp-09}}
(\mathrm{F}_{4(-20)})_{P^-}=N^+M=MN^+.\]
\end{corollary}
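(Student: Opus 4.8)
The plan is to read off the first equality directly from the structure theory already assembled, and then deduce the second equality from the normalization formula of Lemma~\ref{sdp-03}. First I would recall that by Proposition~\ref{sdp-08}(2) the homomorphism $\varphi$ is an isomorphism of ${\rm Spin}(7)\ltimes {\rm H}_{{\rm Im}{\bf O},{\bf O}}$ onto $(\mathrm{F}_{4(-20)})_{P^-}$; in particular $\varphi$ is surjective. On the other hand, Lemma~\ref{sdp-05} identifies the image of $\varphi$ as $\varphi({\rm Spin}(7)\ltimes {\rm H}_{{\rm Im}{\bf O},{\bf O}})=N^+M$. Combining these two facts gives
\[
(\mathrm{F}_{4(-20)})_{P^-}=\varphi({\rm Spin}(7)\ltimes {\rm H}_{{\rm Im}{\bf O},{\bf O}})=N^+M .
\]

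It then remains to see that $N^+M=MN^+$, i.e. that $M$ normalizes $N^+$. Here I would use Proposition~\ref{nlp-04}, which says $M=\mathrm{B}_3=\varphi_0({\rm Spin}(7))$, together with Lemma~\ref{sdp-03}: for $g=(g_1,g_2,g_3)\in{\rm Spin}(7)$, $p\in{\rm Im}{\bf O}$ and $x\in{\bf O}$,
\[
\varphi_0(g)\exp(\mathcal{G}_2(p)+\mathcal{G}_1(x))\varphi_0(g)^{-1}
=\exp(\mathcal{G}_2(g_3p)+\mathcal{G}_1(g_1x)).
\]
Since $g_3p\in{\rm Im}{\bf O}$ by Lemma~\ref{trl-03}(1) and $g_1x\in{\bf O}$, the right-hand side again lies in $N^+=\exp\mathfrak{n}^+$. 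Hence $mN^+m^{-1}=N^+$ for every $m\in M$. From this, for $n\in N^+$ and $m\in M$ one writes $nm=m(m^{-1}nm)\in MN^+$ and $mn=(mnm^{-1})m\in N^+M$, so $N^+M\subseteq MN^+$ and $MN^+\subseteq N^+M$; therefore $N^+M=MN^+$, and the corollary follows.

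I do not expect any genuine obstacle here: the statement is essentially a bookkeeping consequence of Proposition~\ref{sdp-08}, Lemma~\ref{sdp-05}, Proposition~\ref{nlp-04} and Lemma~\ref{sdp-03}. The only point requiring a moment's care is the normalization step, and that is settled at once by the explicit conjugation formula of Lemma~\ref{sdp-03} once one invokes $M=\mathrm{B}_3=\varphi_0({\rm Spin}(7))$.
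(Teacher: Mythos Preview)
Your proof is correct and follows essentially the same route as the paper: the first equality comes from combining Lemma~\ref{sdp-05} with Proposition~\ref{sdp-08}(2), and the second from the conjugation formula (\ref{sdp-03}). The paper is just slightly terser, writing (\ref{sdp-03}) directly as $\varphi_0(g)\exp(\mathcal{G}_2(p)+\mathcal{G}_1(x))=\exp(\mathcal{G}_2(g_3p)+\mathcal{G}_1(g_1x))\varphi_0(g)$ and concluding $N^+M=MN^+$ without spelling out the normalization argument.
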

\begin{proof}
Because of (\ref{sdp-05}) 
and Proposition~\ref{sdp-08}(2),
$(\mathrm{F}_{4(-20)})_{P^-}=N^+M$.
By (\ref{sdp-03}), 
$\varphi_0(g)\exp(\mathcal{G}_2(p)
+\mathcal{G}_1(x))
=\exp(\mathcal{G}_2(g_3p)
+\mathcal{G}_1(g_1x))\varphi_0(g)$
for all
$(g,p,x)\in{\rm Spin}(7)\ltimes
\mathrm{H}_{{\rm Im}{\bf O}, {\bf O}}$ where 
$g=(g_1,g_2,g_3)$.
Thus $N^+M=MN^+$.
\end{proof}
\medskip

\begin{proposition}\label{sdp-10} 
Let $p,q\in\mathbb{R}$ and $p\ne q.$

{\rm (1)} Let $Y=P^-+q(E-E_3)+pE_3$
$\in\mathcal{J}^1$.
Then
$(\mathrm{F}_{4(-20)})_Y=(\mathrm{F}_{4(-20)})_{E_3,P^-}
\cong{\rm Spin}(7)
\ltimes {\rm Im}{\bf O}$.
\smallskip

{\rm (2)} Let  $Y'=P^++q(E-E_3)+pE_3$
$\in\mathcal{J}^1$.
Then
$(\mathrm{F}_{4(-20)})_{Y'}
=\tilde{\sigma}((\mathrm{F}_{4(-20)})_{E_3,P^-}
\cong{\rm Spin}(7)
\ltimes{\rm Im}{\bf O}$.
\end{proposition}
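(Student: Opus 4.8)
The plan is to prove Proposition~\ref{sdp-10} by the same mechanism already used for Propositions~\ref{cce-04}, \ref{spn-09} and \ref{spn-12}: one inclusion of stabilizers is immediate from the $\mathrm{F}_{4(-20)}$-invariance of $E$, and the reverse inclusion is obtained by recovering $E_3$ and $P^{\pm}$ from $Y$ (resp.\ $Y'$) via the equivariant maps $X\mapsto E_{X,\lambda_0}$ and $X\mapsto W_{X,\lambda_0}$ together with (\ref{prl-10}).

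For part (1), I would first observe that $(\mathrm{F}_{4(-20)})_{E_3,P^-}\subseteq(\mathrm{F}_{4(-20)})_Y$, since every $g\in\mathrm{F}_{4(-20)}$ fixes $E$ and $Y=P^-+q(E-E_3)+pE_3$. For the converse, rewrite $Y$ in the basis of Lemma~\ref{prl-05} as $Y=h^1(q-1,q+1,p;0,0,1)$, so $\mathrm{tr}(Y)=2q+p$, and use (\ref{prl-06}.d) to compute $(\lambda_0E-Y)^{\times 2}$; the key observation is that at $\lambda_0=p$ this drops to $(p-q)^2E_3$, whose trace $(p-q)^2$ is nonzero because $p\neq q$. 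Hence, by Lemma~\ref{prl-09}, $E_{Y,p}$ and $W_{Y,p}$ are well defined, $E_{Y,p}=E_3$, and substituting $\mathrm{tr}(Y)-p=2q$ into the definition of $W$ gives $W_{Y,p}=Y-\bigl(pE_3+q(E-E_3)\bigr)=P^-$. Then for $g\in(\mathrm{F}_{4(-20)})_Y$, (\ref{prl-10})(iii)(iv) give $gE_3=E_{gY,p}=E_{Y,p}=E_3$ and $gP^-=W_{gY,p}=W_{Y,p}=P^-$, so $g\in(\mathrm{F}_{4(-20)})_{E_3,P^-}$; this proves $(\mathrm{F}_{4(-20)})_Y=(\mathrm{F}_{4(-20)})_{E_3,P^-}$, and Proposition~\ref{sdp-08}(1) identifies the latter with ${\rm Spin}(7)\ltimes{\rm Im}{\bf O}$ via $\varphi_1$.

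For part (2), the same computation with $Y'=h^1(q+1,q-1,p;0,0,1)$ again yields $(pE-Y')^{\times 2}=(p-q)^2E_3$, hence $E_{Y',p}=E_3$ and $W_{Y',p}=P^+$, so the argument of part (1) gives $(\mathrm{F}_{4(-20)})_{Y'}=(\mathrm{F}_{4(-20)})_{E_3,P^+}$. It then remains to identify this group with $\tilde{\sigma}\bigl((\mathrm{F}_{4(-20)})_{E_3,P^-}\bigr)=\sigma(\mathrm{F}_{4(-20)})_{E_3,P^-}\sigma=(\mathrm{F}_{4(-20)})_{\sigma E_3,\sigma P^-}$. Since $\sigma=\sigma_1$ fixes $E_3$ and sends $P^-=-E_1+E_2+F_3^1(1)$ to $-E_1+E_2-F_3^1(1)=-P^+$, and since the stabilizers of a vector and of its negative coincide for a linear action, one gets $(\mathrm{F}_{4(-20)})_{\sigma E_3,\sigma P^-}=(\mathrm{F}_{4(-20)})_{E_3,P^+}$, which is the desired equality; finally $\tilde{\sigma}$ is an automorphism of $\mathrm{F}_{4(-20)}$, so it carries $(\mathrm{F}_{4(-20)})_{E_3,P^-}$ isomorphically onto its image, and the group is again isomorphic to ${\rm Spin}(7)\ltimes{\rm Im}{\bf O}$.

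I do not expect a genuine obstacle here: the argument is computational rather than conceptual. The only real choice is spotting the value $\lambda_0=p$ at which $(\lambda_0E-Y)^{\times 2}$ becomes a scalar multiple of $E_3$, and the only point requiring a little care is that $\sigma$ sends $P^-$ to $-P^+$ rather than to $P^+$ in part (2)---which is exactly why the statement is phrased with the $\tilde{\sigma}$-twist instead of an honest conjugacy between $Y$ and $Y'$. The main bookkeeping effort is the cross-product computation through (\ref{prl-06}.d), which is routine.
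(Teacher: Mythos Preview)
Your proposal is correct and follows essentially the same approach as the paper. For part (1) the arguments are identical (compute $(pE-Y)^{\times 2}=(p-q)^2E_3$ to get $E_{Y,p}=E_3$, then recover $P^-$); for part (2) you redo the $E_{Y',p}$ computation and then apply $\sigma$ at the level of $(E_3,P^\pm)$, whereas the paper applies $\sigma$ to $-Y'$ first and invokes part (1) directly---a cosmetic reordering of the same ingredients.
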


\begin{proof} (1) Obviously 
$(\mathrm{F}_{4(-20)})_{E_3,P^-}\subset(\mathrm{F}_{4(-20)})_Y.$
Conversely, take $g\in (\mathrm{F}_{4(-20)})_Y$.
Because of $pE-Y=(p-q+1)E_1+(p-q-1)E_2+F_1^1(-1)$
and (\ref{prl-06}.d), we see
$(pE-Y)^{\times 2}=(p-q)^2E_3$
and 
$\mathrm{tr}((pE-Y)^{\times 2})=(p-q)^2\ne 0$.
Then 
$E_{Y,p}\in\mathcal{J}^1$ is well-defined 
and $E_{Y,p}=E_3$.
By (\ref{prl-10})(iii),
$g E_3=gE_{Y,p}=E_{g Y,p}= E_{Y,p}=E_3$. 
Then 
$g P^-=g(Y-pE_3-q(E-E_3)))
=Y-pE_3-q(E-E_3)=P^-$.
Thus $g \in (\mathrm{F}_{4(-20)})_{E_3,P^-}$ 
and so $(\mathrm{F}_{4(-20)})_Y
\subset (\mathrm{F}_{4(-20)})_{E_3,P^-}$.
Hence  $(\mathrm{F}_{4(-20)})_Y=(\mathrm{F}_{4(-20)})_{E_3,P^-}
\cong{\rm Spin}(7)
\ltimes {\rm Im}{\bf O}$
follows from Proposition~\ref{sdp-08}(1). 

(2) Obviously
$(\mathrm{F}_{4(-20)})_{Y'}= (\mathrm{F}_{4(-20)})_{-Y'}$. 
Put $Z=P^--q(E-E_3)-pE_3.$
Because of $\sigma(-Y')=P^--q(E-E_3)-pE_3=Z$
and (1), we see that  
$(\mathrm{F}_{4(-20)})_{-Y'}
=\tilde{\sigma}((\mathrm{F}_{4(-20)})_{Z})
=\tilde{\sigma}((\mathrm{F}_{4(-20)})_{E_3,P^-})$.
By Proposition~\ref{sdp-08}(1), we have
$(\mathrm{F}_{4(-20)})_{Y'}
=\tilde{\sigma}((\mathrm{F}_{4(-20)})_{E_3,P^-})
\cong (\mathrm{F}_{4(-20)})_{E_3,P^-}\cong{\rm Spin}(7)
\ltimes {\rm Im}{\bf O}$.
\end{proof}
\medskip

\begin{proposition}\label{sdp-11} 
Let $r\in\mathbb{R}.$

{\rm (1)} Let $Y=P^-+rE$
$\in\mathcal{J}^1$.
Then
$(\mathrm{F}_{4(-20)})_Y
=(\mathrm{F}_{4(-20)})_{P^-}
\cong{\rm Spin}(7)
\ltimes \mathrm{H}_{{\rm Im}{\bf O},{\bf O}}$.
\smallskip

{\rm (2)} Let  $Y'=P^++rE$ 
$\in\mathcal{J}^1$.
Then
$(\mathrm{F}_{4(-20)})_{Y'}
=\tilde{\sigma}((\mathrm{F}_{4(-20)})_{P^-})
\cong{\rm Spin}(7)
\ltimes \mathrm{H}_{{\rm Im}{\bf O},{\bf O}}$.
\end{proposition}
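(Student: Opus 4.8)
The plan is to reduce both statements to Proposition~\ref{sdp-08}(2), using the $\mathrm{F}_{4(-20)}$-invariance of the identity element $E$ (Proposition~\ref{prl-08}) to strip off the summand $rE$.

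First, for (1): since $gE=E$ for every $g\in\mathrm{F}_{4(-20)}$, any such $g$ satisfies $gY=g(P^-+rE)=gP^-+rE$, so $gY=Y$ if and only if $gP^-=P^-$. Hence $(\mathrm{F}_{4(-20)})_Y=(\mathrm{F}_{4(-20)})_{P^-}$. By Proposition~\ref{sdp-08}(2) the map $\varphi$ is an isomorphism of ${\rm Spin}(7)\ltimes{\rm H}_{{\rm Im}{\bf O},{\bf O}}$ onto $(\mathrm{F}_{4(-20)})_{P^-}$, which yields the asserted isomorphism and completes (1).

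For (2): First observe $(\mathrm{F}_{4(-20)})_{Y'}=(\mathrm{F}_{4(-20)})_{-Y'}$, since the elements of $\mathrm{F}_{4(-20)}$ are $\mathbb{R}$-linear. Next, a direct computation from the definitions $P^+=E_1-E_2+F_3^1(1)$, $P^-=-E_1+E_2+F_3^1(1)$ and $\sigma=\sigma_1$ gives $\sigma(-P^+)=P^-$, and, as $\sigma E=E$, $\sigma(-Y')=P^--rE$. Put $Z=P^--rE$. Using $\sigma\in\mathrm{F}_{4(-20)}$ and $\sigma^2=1$, for $g\in\mathrm{F}_{4(-20)}$ one has $g(-Y')=-Y'$ iff $g\sigma Z=\sigma Z$ iff $\sigma g\sigma Z=Z$ iff $\sigma g\sigma\in(\mathrm{F}_{4(-20)})_Z$ iff $g\in\tilde\sigma((\mathrm{F}_{4(-20)})_Z)$; hence $(\mathrm{F}_{4(-20)})_{Y'}=\tilde\sigma((\mathrm{F}_{4(-20)})_Z)$. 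Finally, $Z=P^--rE$ is of the form treated in part (1), so $(\mathrm{F}_{4(-20)})_Z=(\mathrm{F}_{4(-20)})_{P^-}$, and therefore $(\mathrm{F}_{4(-20)})_{Y'}=\tilde\sigma((\mathrm{F}_{4(-20)})_{P^-})\cong(\mathrm{F}_{4(-20)})_{P^-}\cong{\rm Spin}(7)\ltimes{\rm H}_{{\rm Im}{\bf O},{\bf O}}$, the first isomorphism because $\tilde\sigma$ is an automorphism of $\mathrm{F}_{4(-20)}$.

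There is essentially no serious obstacle: the heavy lifting — the construction of $\varphi$ and the identification of $(\mathrm{F}_{4(-20)})_{P^-}$ with ${\rm Spin}(7)\ltimes{\rm H}_{{\rm Im}{\bf O},{\bf O}}$ — was done in Lemmas~\ref{sdp-03}--\ref{sdp-07} and Proposition~\ref{sdp-08}. The only point requiring care is the bookkeeping with the outer involution $\tilde\sigma$ in part (2), namely verifying $\sigma(-P^+)=P^-$ and that conjugation by $\sigma$ carries $(\mathrm{F}_{4(-20)})_Z$ to $(\mathrm{F}_{4(-20)})_{\sigma Z}$; this parallels the argument already used in the proof of Proposition~\ref{sdp-10}(2).
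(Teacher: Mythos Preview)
Your proof is correct and follows essentially the same approach as the paper's: both parts reduce to Proposition~\ref{sdp-08}(2) by stripping off $rE$ via $\mathrm{F}_{4(-20)}$-invariance of $E$, and part (2) passes through the involution $\sigma$ via $\sigma(-Y')=P^--rE$ exactly as the paper does. Your write-up is in fact slightly more explicit about the conjugation step $(\mathrm{F}_{4(-20)})_{-Y'}=\tilde\sigma((\mathrm{F}_{4(-20)})_Z)$ than the paper's.
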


\begin{proof} (1) 
Since the element $E$ is invariant
under the $\mathrm{F}_{4(-20)}$-action, 
$(\mathrm{F}_{4(-20)})_Y=(\mathrm{F}_{4(-20)})_{P^-}.$
By Proposition~\ref{sdp-08}(2),
we have
 $(\mathrm{F}_{4(-20)})_Y
\cong{\rm Spin}(7)
\ltimes \mathrm{H}_{{\rm Im}{\bf O},{\bf O}}$.

(2)
Obviously
$(\mathrm{F}_{4(-20)})_{Y'}= (\mathrm{F}_{4(-20)})_{-Y'}$.
Put $Z=P^--rE.$
Because of $\sigma(-Y')=P^--rE=Z$ 
and (1), we see
$(\mathrm{F}_{4(-20)})_{-Y'}
=\tilde{\sigma}((\mathrm{F}_{4(-20)})_{Z})=
\tilde{\sigma}((\mathrm{F}_{4(-20)})_{P^-})$.
By Proposition~\ref{sdp-08}(2),
we obtain that
$(\mathrm{F}_{4(-20)})_{Y'}
=\tilde{\sigma}((\mathrm{F}_{4(-20)})_{P^-})
\cong (\mathrm{F}_{4(-20)})_{P^-}
\cong{\rm Spin}(7)
\ltimes \mathrm{H}_{{\rm Im}{\bf O},{\bf O}}$.
\end{proof}
\medskip

\begin{proposition}\label{sdp-12}
Let  $Y=Q^+(1)+rE$
$\in\mathcal{J}^1$
where $r\in\mathbb{R}.$
Then
$(\mathrm{F}_{4(-20)})_Y
=(\mathrm{F}_{4(-20)})_{Q^+(1)}
\cong \mathrm{G}_2
\ltimes \mathrm{H}_{{\rm Im}{\bf O},{\rm Im}{\bf O}}$.
\end{proposition}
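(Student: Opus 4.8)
The plan is to deduce the statement immediately from two facts already at our disposal: the $\mathrm{F}_{4(-20)}$-invariance of the identity element $E$ and the isomorphism of Proposition~\ref{sdp-08}(3).

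First I would note that $gE=E$ for all $g\in\mathrm{F}_{4(-20)}$ by Proposition~\ref{prl-08} (equivalently, every element of $\mathfrak{f}_{4(-20)}$ annihilates $E$). Hence for $Y=Q^+(1)+rE$ the equation $gY=Y$ is equivalent to $g(Q^+(1))+rE=Q^+(1)+rE$, that is, to $g(Q^+(1))=Q^+(1)$. Consequently $(\mathrm{F}_{4(-20)})_Y=(\mathrm{F}_{4(-20)})_{Q^+(1)}$, and this holds for every $r\in\mathbb{R}$ with no case analysis.

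Second I would invoke Proposition~\ref{sdp-08}(3): the homomorphism $\varphi_2:\mathrm{G}_2\ltimes{\rm H}_{{\rm Im}{\bf O},{\rm Im}{\bf O}}\to(\mathrm{F}_{4(-20)})_{Q^+(1)}$ is an isomorphism onto the stabilizer of $Q^+(1)$. Combining the two steps yields $(\mathrm{F}_{4(-20)})_Y=(\mathrm{F}_{4(-20)})_{Q^+(1)}\cong\mathrm{G}_2\ltimes{\rm H}_{{\rm Im}{\bf O},{\rm Im}{\bf O}}$, which is exactly the assertion.

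There is essentially no obstacle here: all the substantive work — the construction of the maps $\varphi$, $\varphi_1$, $\varphi_2$ from $\exp\mathcal{G}_1$, $\exp\mathcal{G}_2$ and $\varphi_0$, together with the surjectivity and injectivity arguments — has already been carried out in Section~\ref{sdp}. The one point worth stating cleanly is that the reduction in the first step uses only that $E$ is $\mathrm{F}_{4(-20)}$-fixed, so that the parameter $r$ plays no role and every $Y=Q^+(1)+rE$ shares the same stabilizer. Thus the proof is a short corollary of Proposition~\ref{sdp-08}(3).
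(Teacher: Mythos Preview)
Your proof is correct and follows exactly the same approach as the paper: use the $\mathrm{F}_{4(-20)}$-invariance of $E$ to reduce $(\mathrm{F}_{4(-20)})_Y$ to $(\mathrm{F}_{4(-20)})_{Q^+(1)}$, then apply Proposition~\ref{sdp-08}(3). There is nothing to add.
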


\begin{proof} Since the element $E$ is invariant
under the $\mathrm{F}_{4(-20)}$-action, we see
$(\mathrm{F}_{4(-20)})_Y=(\mathrm{F}_{4(-20)})_{Q^+(1)}$. 
Hence  it
follows from Proposition~\ref{sdp-08}(3).
\end{proof}
\medskip

\begin{proof}[{\bf Proof of Main Theorem~\ref{stabilizer-main}}]
By Main Theorem~\ref{orb-decomposition}, 
we have  a concrete orbit decomposition of $\mathcal{J}^1$
under the group $\mathrm{F}_{4(-20)}$.
Because of Propositions~\ref{cce-04}, 
\ref{spn-09}, 
\ref{spn-12},
\ref{sdp-10}, 
\ref{sdp-11}, 
\ref{sdp-12} 
and $g E=E$ for all $g \in \mathrm{F}_{4(-20)}$,
we determine the Lie group structure of
the stabilizer for each $\mathrm{F}_{4(-20)}$-orbit 
on $\mathcal{J}^1$.
\end{proof}
\medskip

\begin{remark}\label{remark-spd-11}{\rm
Denote the quaternions
${\bf H}:=\{\sum_{i=0}^3 a_i e_i|~a_i\in \mathbb{R}\}$
and ${\bf F}$  
a real division algebra $\mathbb{R}$, 
${\bf C}$, ${\bf H}$
or ${\bf O}$.
J.A.~Wolf (\cite{{Wja2007},{Wja1975}}) gave
{\it Heisenberg groups} 
$H_{p,q,{\bf F}}$ and $G_{p,q,{\bf F}}.$
Then $H_{1,0,{\bf O}}$ 
is equal to the group 
$\mathrm{H}_{{\rm Im}{\bf O}, {\bf O}}$
and $G_{1,0,{\bf O}}$ is equal to the group 
${\rm Spin}(7)\ltimes
\mathrm{H}_{{\rm Im}{\bf O}, {\bf O}}$.
F.W.~Keene showed $MN^+\cong G_{1,0,{\bf O}}$
in his thesis (cf. \cite{Ken1978}, \cite{Wja1975}).
In Propositions~\ref{sdp-08} 
and \ref{sdp-09}, 
it appears that the subgroup
$MN^+\cong G_{1,0,{\bf O}}$ 
in $\mathrm{F}_{4(-20)}$ is the stabilizer
of the element $P^-$.
}\end{remark}

{\bf Acknowledgment.}\quad\quad
The author would like to thank Professor Osami Yasukura 
for his advices and encouragements.
\bigskip

\end{document}